\newcommand{\RR}{\mathbf{R}}
\newcommand{\NN}{\mathbf{N}}
\newcommand{\ZZ}{\mathbf{Z}}
\newcommand{\EE}{\mathbb{E}}
\newcommand{\PP}{\mathbb{P}}
\newcommand{\mL}{\mathcal{L}}
\newcommand{\mC}{\mathcal{C}}
\newcommand{\mT}{\mathcal{T}}
\newcommand{\mI}{\mathcal{I}}
\newcommand{\mY}{\mathcal{Y}}
\newcommand{\mE}{\mathcal{E}}
\newcommand{\mU}{\mathcal{U}}
\newcommand{\mV}{\mathcal{V}}
\newcommand{\mK}{\mathcal{K}}
\newcommand{\mf}[1]{\mathfrak{#1}}
\newcommand{\ve}{\varepsilon}
\newcommand*{\ud}{\mathrm{\,d}}
\newcommand{\red}[1]{{ \color{red} #1 }}
\def\dash{\leavevmode\unskip\kern0.18em--\penalty\exhyphenpenalty\kern0.18em}
\def\drawx{\draw[-,solid, line width = 1.0mm] (-3pt,-3pt) -- (3pt,3pt);\draw[-,solid, line width = 1.0mm] (-3pt,3pt) -- (3pt,-3pt);}
\tikzset{
	dot/.style={circle,fill=black,draw=black, solid,inner sep=0pt,minimum size=0.95mm},
	root/.style={circle, thick, fill=white,draw=purple, solid,inner sep=0pt,minimum size=0.95mm},
	idot/.style={circle, thick, fill=white,draw=purple, solid,inner sep=0pt,minimum size=0.95mm},
	square/.style={rectangle,fill=black,draw=black, solid,inner sep=0pt,minimum size=1mm},
	empty/.style={circle,fill=white,draw=white, solid,inner sep=0pt,minimum size=0.5mm},
	var/.style={circle,fill=black!10,draw=black,inner sep=0pt, minimum size=
	2mm},
	symb/.style={circle,fill=symbols,draw=symbols, solid,inner sep=0pt,minimum size=0.5mm},
	yy/.style={circle,fill=gray!20,draw=black,inner sep=0pt,minimum size=0.8mm},
	>=stealth,
	dotred/.style={circle,fill=black!50,inner sep=0pt, minimum size=2mm},
	generic/.style={semithick,shorten >=1pt,shorten <=1pt},
	dist/.style={ultra thick,draw=testcolor,shorten >=1pt,shorten <=1pt},
	testfcn/.style={ultra thick,testcolor,shorten >=1pt,shorten <=1pt,<-},
	testfcnx/.style={ultra thick,testcolor,shorten >=1pt,shorten <=1pt,<-,
		postaction={decorate,decoration={markings,mark=at position 0.6 with {\drawx}}}},
	kprime/.style={semithick,shorten >=1pt,shorten <=1pt,densely dashed,->},
	kprimex/.style={semithick,shorten >=1pt,shorten <=1pt,densely dashed,->,
		postaction={decorate,decoration={markings,mark=at position 0.4 with {\drawx}}}},
	kernel/.style={semithick,shorten >=1pt,shorten <=1pt,->},
	multx/.style={shorten >=1pt,shorten <=1pt,
		postaction={decorate,decoration={markings,mark=at position 0.5 with {\drawx}}}},
	kernelx/.style={semithick,shorten >=1pt,shorten <=1pt,->,
		postaction={decorate,decoration={markings,mark=at position 0.4 with {\drawx}}}},
	kernel1/.style={->,semithick,shorten >=1pt,shorten <=1pt,postaction={decorate,decoration={markings,mark=at position 0.45 with {\draw[-] (0,-0.1) -- (0,0.1);}}}},
	kernel2/.style={->,semithick,shorten >=1pt,shorten <=1pt,postaction={decorate,decoration={markings,mark=at position 0.45 with {\draw[-] (0.05,-0.1) -- (0.05,0.1);\draw[-] (-0.05,-0.1) -- (-0.05,0.1);}}}},
	kernelBig/.style={semithick,shorten >=1pt,shorten <=1pt,decorate, decoration={zigzag,amplitude=1.5pt,segment length = 3pt,pre length=2pt,post length=2pt}},
	rho/.style={dotted,semithick,shorten >=1pt,shorten <=1pt},
	renorm/.style={shape=circle,fill=white,inner sep=1pt},
	labl/.style={shape=rectangle,fill=white,inner sep=1pt},
	xi/.style={circle,fill=symbols!10,draw=symbols,inner sep=0pt,minimum size=1.2mm},
	xix/.style={crosscircle,fill=symbols!10,draw=symbols,inner sep=0pt,minimum size=1.2mm},
	xib/.style={circle,fill=symbols!10,draw=symbols,inner sep=0pt,minimum size=1.6mm},
	xibx/.style={crosscircle,fill=symbols!10,draw=symbols,inner sep=0pt,minimum size=1.6mm},
	not/.style={circle,fill=symbols,draw=symbols,inner sep=0pt,minimum size=0.5mm},
	>=stealth,
	}
\def\DeclareSymbol#1#2#3{\expandafter\gdef\csname MH@symb@#1\endcsname{\tikz[baseline=#2,scale=0.3]{#3}}%scale=0.15 is original one
\expandafter\gdef\csname
MH@symb@#1s\endcsname{\scalebox{0.6}{\tikz[baseline=#2,scale=0.3]{#3}}}}
\def\<#1>{\csname MH@symb@#1\endcsname}
\def\DeclareLargeSymbol#1#2#3{\expandafter\gdef\csname MH@symb@#1\endcsname{\tikz[baseline=#2,scale=0.6]{#3}}%scale=0.15 is original one
\expandafter\gdef\csname
MH@symb@#1s\endcsname{\scalebox{0.6}{\tikz[baseline=#2,scale=0.15]{#3}}}}
\def\<#1>{\csname MH@symb@#1\endcsname}
\def\DeclareSmallSymbol#1#2#3{\expandafter\gdef\csname MH@symb@#1\endcsname{\tikz[baseline=#2,scale=0.2]{#3}}
\expandafter\gdef\csname
MH@symb@#1s\endcsname{\scalebox{0.6}{\tikz[baseline=#2,scale=0.15]{#3}}}}
\def\<#1>{\csname MH@symb@#1\endcsname}
\newcommand\T{\mathscr{T}}
\newcommand{\E}{\mathbf{E}}
\newtheorem{theorem}{Theorem}[section]
\newtheorem{lemma}[theorem]{Lemma}
\newtheorem{proposition}[theorem]{Proposition}
\newtheorem{corollary}[theorem]{Corollary}
\newtheorem{remark}[theorem]{Remark}
\newtheorem{definition}[theorem]{Definition}
\newtheorem{example}{Example}
\title{The Allen--Cahn equation with weakly critical random initial datum}
\numberwithin{equation}{section}
\begin{document}
\author{Simon Gabriel\footnote{Email:
\href{mailto:simon.gabriel@uni-muenster.de}{simon.gabriel@uni-muenster.de}, University of
M\"unster, DE and   University of Warwick, UK}, \ Tommaso Rosati\footnote{Email:
\href{mailto:t.rosati@warwick.ac.uk}{t.rosati@warwick.ac.uk}, University of
Warwick, UK} \ and Nikos
Zygouras\footnote{Email:
\href{mailto:n.zygouras@warwick.ac.uk}{n.zygouras@warwick.ac.uk},  University
of Warwick, UK}}
\date{}

\maketitle

\begin{abstract}
\noindent
This work considers the two-dimensional Allen--Cahn equation
\begin{equation*}
    \partial_t u = \frac{1}{2}\Delta u + \mathfrak{m}\,  u
-u^3\;, \quad u(0,x)= \eta (x)\;, \qquad \forall (t,x) \in [0, \infty) \times
\RR^{2} \;,
\end{equation*}
where the initial condition $ \eta $ is a two-dimensional white noise, which lies
in the scaling critical space of initial data to the
equation. In a weak coupling scaling, 
we establish a Gaussian limit
with nontrivial size of fluctuations, thus casting the nonlinearity as
marginally relevant. The result builds on a precise analysis of
the Wild expansion of the solution  and an understanding of the underlying stochastic and
combinatorial structure. This gives rise to a representation for the limiting variance 
in terms of Butcher series associated to the solution of an ordinary differential equation.
\end{abstract}

\setcounter{tocdepth}{2}
\begingroup
  \hypersetup{hidelinks}
  \tableofcontents
\endgroup

\section{Introduction} 
We consider the two-dimensional Allen--Cahn equation
\begin{equation}\label{e:main}
    \partial_t u = \frac{1}{2}\Delta u + \mathfrak{m}\,   u
-u^3\;, \quad u(0,\cdot)= \eta ( \cdot)\;, \quad t \geqslant 0 \;, 
\end{equation}
with $\mathfrak{m} \in \RR $ and initial condition a
spatial white noise $ \eta$ on $ \RR^{2} $, namely a homogeneous Gaussian random field with
correlation function
\begin{equation*}
\begin{aligned}
\EE [\eta (x) \eta (y)] = \delta (x-y) \;, \qquad \forall x, y \in
\RR^{2} \;,
\end{aligned}
\end{equation*}
where $ \delta $ is the Dirac delta at zero.
Under these assumptions, there exists no
solution theory for \eqref{e:main}, since the initial condition is scaling
critical for the equation. By this we mean that formally (assuming the solution
$ u $ exists and setting for simplicity $ \mf{m} =0 $), under the re-scaling $
u^{\delta}(t, x) := \delta u ( \delta^{2} t, \delta x) $, which leaves the
initial condition invariant in law, $ u^{\delta} $ solves again
\begin{equation}\label{renorm}
\begin{aligned}
\partial_{t} u^{\delta} = \frac{1}{2} \Delta u^{\delta} - (u^{\delta})^{3} \;.
\end{aligned}
\end{equation}
The invariance under this re-scaling indicates that the nonlinearity cannot be treated 
perturbatively and so for instance modern theories on singular SPDEs do
not apply, since they
work under the assumption that on small scales the equation is governed by the
effect of the Laplacian.

Obtaining a solution theory to \eqref{e:main} is therefore 
challenging but also of great interest.
Indeed, if $ \mf{m} > 0 $, then
\eqref{e:main} is a model for the formation of phase fields, since $
\mf{m} u - u^{3} $ corresponds to the gradient of a double-well potential and
the solution tends (for large times) to take the value of one of the two minima of
the potential, leading to the evolution of two competing phases. Here, starting the
equation at a generic initial condition should lead to many conjectured
long-time properties, for instance regarding the speed of coarsening of sets in
the evolution of mean curvature flow (see \cite{OhtaKawasaki,bray} from the physics
literature and \cite{otto2002,HLR22} for some mathematical results and a discussion of
the problem). In this context, space
white noise plays the role of a canonical ``totally mixed'' initial condition, which gets
instantaneously smoothened by the heat flow, leading to random level sets which
then evolve under mean curvature flow.

On the other hand, when $ \mf{m} \leqslant 0 $, and if instead of a random initial
condition one chooses an additive space-time white noise, then equation~\eqref{e:main} is a
fundamental model in stochastic quantisation. In this case the
invariant measure of the equation is a celebrated model in
quantum field theory, and a recent proof of triviality in the critical
dimension $ d=4 $ has been a breakthrough in the mathematical understanding of
such measure \cite{MR4276286}. At the same time, there is no result for
the dynamics of the equation in $ d =4 $. 
%and one of the purposes of our work is to make a step towards understanding some of  the problematics and the structures emerging in thissetting.}
%\blue{Still, we point out that enormous additional challenges appear in the presence of a driving space-time white noise, which are not addressed by this work.}

We will study \eqref{e:main} in a \emph{weak coupling} regime.
Namely, for $ \ve \in (0, \tfrac{1}{2}) $\footnote{The choice $ \ve< 1/2 $ is arbitrary to avoid issues
with the blow-up at $ \ve =1 $ of the logarithm.} and 
$p_{t}( x) = \tfrac{1}{2 \pi t} \exp \left( -\tfrac{|x|^2}{ 2 t} \right)$,
we study the limiting behaviour of the solution $ \mU_{\ve} $ to
\begin{equation}\label{e:acn}
    \partial_t \mU_\varepsilon = \frac{1}{2}\Delta \mU_\varepsilon
+ \mathfrak{m}\, \mU_{\ve} - \frac{ 1}{\log{ \tfrac{1}{\ve}}} \mU_\varepsilon^3\;, \qquad
\mU_\varepsilon(0,\cdot)= \hat{\lambda}\,  p_{\ve^{2}} \star \eta (\cdot)\;,
\end{equation}
where $\star$ denotes spatial convolution. The parameter
$\hat{\lambda} > 0 $ will be referred to as the \emph{coupling constant}. 
By scaling, \eqref{e:acn} is equivalent to the problem
\begin{equation}\label{e:ac2}
    \partial_t u_\varepsilon = \frac{1}{2}\Delta u_\varepsilon
+ \mathfrak{m}\, u_{\ve} - u_\varepsilon^3\;, \qquad
u_\varepsilon(0,\cdot)=  \eta_\varepsilon(\cdot):= \frac{ \hat{\lambda}}{\sqrt{\log{ \tfrac{1}{\ve}}}}  p_{\ve^{2}} \star \eta (\cdot) \;,
\end{equation}
and the solutions to \eqref{e:acn} and \eqref{e:ac2} are related via
 $\mU_\varepsilon(t,x)=\sqrt{\log\tfrac{1}{\varepsilon}} \,\,
u_\varepsilon(t,x)$.
We will also use the notation $\lambda^2_\varepsilon:= \hat{\lambda}^{2} \big(\log{ \tfrac{1}{\ve}}\big)^{-1}$.
We remark that even though the nonlinearity is attenuated to zero, it still has
a non-trivial effect in the limit.
To make this point more precise let us, first, state our main result:

\begin{theorem}\label{thm:main}
There exists a $ \hat{\lambda}_{\mathrm{fin}} \in (0, \infty) $ such that if $
\mf{m} \in \RR $ and  $T, \hat{\lambda} \in (0, \infty)$
satisfy
\begin{equation}\label{e_main_ass}
\begin{aligned}
\overline{\mf{m}} \, T \leqslant  \log{ (\hat{\lambda}_{\mathrm{fin}} /
\hat{\lambda})}\,, \qquad \text{with} \qquad \overline{\mf{m}} = \max \{ \mf{m}\;, 0 \} \;,
\end{aligned}
\end{equation}
and if $\sigma_{\Hat{\lambda}} := \sigma( \hat{\lambda}^{2}) :=
( 1+\frac{3}{\pi}\Hat{\lambda}^{2})^{- \frac{1}{2}} $, 
where $ \zeta \mapsto
\sigma (\zeta) $ is the solution to
\begin{align}\label{ODE}
\frac{ \ud}{\ud \zeta} 
 \sigma = - \frac{3}{2 \pi}
\sigma^{3} \qquad \text{with} \qquad  \sigma (0) = 1 \;,
\end{align}
 then we have:
\begin{equation*}
\begin{aligned}
\lim_{\ve \to 0} \EE \left[ \big| \mU_{\ve}(t, x) - \hat{\lambda}\,  \sigma_{ \hat{\lambda}}
\,e^{\mf{m} t} \, p_{t} \star \eta (x)
\big|^{2}\right] = 0 \;, \qquad \forall (t, x) \in (0, T] \times
\RR^{2} \;.
\end{aligned}
\end{equation*}
\end{theorem}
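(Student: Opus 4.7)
The strategy is to expand $u_\ve$ via a Wild (Picard) tree expansion and analyse each term through its Wiener chaos decomposition. Iterating the mild formulation
\begin{equation*}
u_\ve(t,x) = e^{\mf{m}t}\, p_t \star \eta_\ve(x) - \int_0^t e^{\mf{m}(t-s)}\, p_{t-s} \star u_\ve^3(s,\cdot)(x)\,\ud s
\end{equation*}
produces $u_\ve = \sum_\tau c_\tau\, \Psi_\tau^\ve$ indexed by rooted ternary trees $\tau$, where $c_\tau$ records signs and symmetries and $\Psi_\tau^\ve$ is a tree integral with a factor $e^{\mf{m}\cdot} p_\cdot\star\eta_\ve$ at each leaf and a Duhamel heat kernel along each inner edge. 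Each $\Psi_\tau^\ve$ is a polynomial in $\eta$ of degree $n(\tau)$ (the number of leaves) and decomposes into Wiener chaoses $\Psi_\tau^\ve = \sum_k \Psi_\tau^{\ve,k}$ with $k$ of the same parity as $n(\tau)$. After the rescaling $\mU_\ve = \sqrt{\log(1/\ve)}\, u_\ve$, I would treat the first-chaos and the higher-chaos components of each tree separately.

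The key observation is that in the critical dimension $d=2$, every leaf pair-contraction through the heat kernel produces, after time integration, a leading factor of order $\tfrac{1}{\pi}\log(1/\ve)$ which cancels exactly the $1/\log(1/\ve)$ suppression carried by the covariance of two leaves of $\eta_\ve$. Hence for a tree with $n(\tau)=2k+1$ leaves, its first-chaos projection involves $k$ contractions: these produce $k$ log-factors that balance the coupling-induced $\log(1/\ve)^{-k}$, while the $\sqrt{\log(1/\ve)}$ rescaling compensates the single surviving leaf. The limiting coefficient of each tree is a pure combinatorial object — the Butcher weight of $\tau$ for the elementary differential of $\sigma\mapsto -\tfrac{3}{2\pi}\sigma^3$ at $\sigma=1$. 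Summing over $\tau$ then yields the Butcher series solution at parameter $\hat\lambda^2$ of the ODE \eqref{ODE}, which equals $\sigma_{\hat\lambda}$ in closed form, producing the claimed limit $\hat\lambda\,\sigma_{\hat\lambda}\,e^{\mf{m}t}\, p_t \star \eta(x)$. Higher chaos components $\Psi_\tau^{\ve,k}$ with $k\geq 3$, as well as the subleading (non-resonant) parts of the first-chaos contractions, are controlled by the same counting: they carry an extra $(\log(1/\ve))^{-(k-1)/2}$ or $(\log(1/\ve))^{-1/2}$ per tree and hence vanish in $L^2$ once the sum over $\tau$ is controlled.

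The main obstacle is precisely this absolute, uniform-in-$\ve$ convergence of the tree sum. The number of ternary trees with $n$ leaves grows Catalan-exponentially, so bounding the sum requires a small effective parameter. This is the role of the hypothesis $\overline{\mf{m}} T \leqslant \log(\hat\lambda_{\mathrm{fin}}/\hat\lambda)$: it ensures that the combined factor $\hat\lambda^2\, e^{2\mf{m}T}$ lies within the radius of convergence of the Butcher series associated with \eqref{ODE}, up to a small margin that absorbs spatial heat-kernel bounds. The core analytic work is therefore (i) proving sharp, tree-structure-dependent bounds on each $\Psi_\tau^{\ve,k}$ that beat this combinatorial growth, (ii) verifying that non-leading contractions vanish uniformly in the tree, and (iii) justifying the interchange of $\ve\to 0$ with the summation over $\tau$, for which the uniform convergence provided by the hypothesis plays an essential role.
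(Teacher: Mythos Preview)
Your overall architecture---Wild expansion indexed by ternary trees, Wiener chaos decomposition of each tree integral, identification of the surviving first-chaos pieces with Butcher weights for $h(y)=-y^3$, and resummation via the Butcher series of \eqref{ODE}---is exactly the route the paper takes. The combinatorial mechanism you sketch (each leaf pair-contraction yielding one factor of $\log(1/\ve)$ that cancels a $\lambda_\ve^2$) is also correct in spirit, and the paper makes it precise through an analysis of ``$\mathrm{v}$-cycles'' in contracted trees: only contractions reducible to iterated $1$-cycle removals survive, and these are counted and evaluated exactly.

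There is, however, a genuine gap in your plan. You write $u_\ve = \sum_\tau c_\tau \Psi_\tau^\ve$ and then propose to control the full sum by absolute convergence under the smallness hypothesis. The paper does \emph{not} establish that the infinite Wild series converges to $u_\ve$, and it is not clear that your summability argument alone would deliver this identification. Instead, the paper truncates the expansion at level $N_\ve = \lfloor \log(1/\ve)\rfloor$ and controls the remainder $u_\ve - u_\ve^{N_\ve}$ by a PDE argument: the difference solves a linear equation with potential $V_\ve^N = \tfrac{(u_\ve^N)^3 - u_\ve^3}{u_\ve^N - u_\ve} \geqslant 0$, and a Feynman--Kac representation together with this sign (coming from the monotonicity of $u\mapsto u^3$) gives a pointwise bound by the Duhamel integral of the residual $R_\ve^N$. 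This is Proposition~\ref{prop_max_principal}, and it is what forces the choice $N_\ve \sim \log(1/\ve)$ and the particular form of the smallness constraint \eqref{e_main_ass}: one needs $(\sqrt{C}\,\hat\lambda\, e^{\overline{\mf m}T})^{N_\ve}$ to beat the $1/\ve$ blow-up coming from the time integral of $R_\ve^N$. Without this truncation-plus-maximum-principle step, you have no link between the tree sum and the actual solution $u_\ve$; the smallness hypothesis gives you convergence of a series, but not that its value is $u_\ve$. This is the missing idea in your outline, and it is essential.
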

Our result shows that the non-linearity is {\it marginally}\footnote{{ ``marginally'' 
refers to the fact that the nonlinearity has neither a dominant nor a negligible effect
as seen via the renormalisation scaling $u^{\delta}(t, x) = \delta u ( \delta^{2} t, \delta x) $ 
and the invariance in \eqref{renorm}}} 
{\it relevant} as it affects the size of the limiting
fluctuations $ \hat{\lambda}\, \sigma_{ \hat{\lambda}}$, which are strictly weaker than the
fluctuations of the limit when simply dropping the non-linearity in
\eqref{e:acn}
(the solution to this linear problem reads as $ \hat{\lambda} e^{\mf{m} t} \, p_{t} \star \eta (x)$).
The latter describes the limiting fluctuations of a \emph{sub-critical}
scaling of the initial condition, considered in the previous work by
Hairer--L\^e--Rosati \cite{HLR22}, which studied the long-time
behaviour of \eqref{e:ac2}.
%\blue{Indeed, this approach has been the starting point for the
%analysis performed in a previous work by
%Hairer--L\^e--Rosati \cite{HLR22}, which in the same
%setup as our work, but with a \emph{sub-critical} scaling of the initial condition, considered the long-time
%behaviour of \eqref{e:ac2}.}
Let us note that our requirement $
\lambda< \hat{\lambda}_{\mathrm{fin}} < \infty$ emerges as a technical constraint and
we do not expect any actual phase transition. The restriction comes 
from the necessity to control certain series expansions (see in particular 
Propositions \ref{prop_max_principal} and \ref{prop_exp_close_to_but}
as well as Remark~\ref{rem:needWild}).
We conjecture that the result should extend to all $ \hat{\lambda}, t > 0 $.
Such extension would be especially interesting in relation to the study
of the metastable behaviour of the Allen--Cahn equation
(with $ \mf{m} > 0 $) at large scales.

The understanding of stochastic PDEs (and related statistical mechanics models) 
at the critical dimension is only now starting to take shape.
The only studied examples, so far, are two-dimensional stochastic
heat equations (SHE) with multiplicative space-time white noise, the two-dimensional
isotropic and anisotropic KPZ equation and the two-dimensional Burgers
equation.
 For the linear SHE, a weak coupling regime was noted by Bertini--Cancrini \cite{BC98} and explored 
 by Caravenna--Sun--Zygouras \cite{CSZ17}. 
 Here the coupling constant which appears in the
weak scaling plays a crucial role as a phase transition takes place at a precisely defined,
critical value. Below this value, Gaussian fluctuations,
 similar in spirit to Theorem~\ref{thm:main}
emerge \cite{CSZ17}, while a limiting field, which is not Gaussian or an
exponential of Gaussian,
emerges at the critical value \cite{caravenna2023critical, CSZ24}.
A similar phase transition takes place for the two-dimensional isotropic KPZ equation \cite{DC20, CSZ_kpz, Gu19}
but, so far, only Gaussian fluctuations below the critical coupling value have been established \cite{CSZ_kpz, Gu19}.
This result has been obtained via the use of the Cole-Hopf transformation, which relates the solution of the linear
SHE to that of the KPZ equation. 
For the two-dimensional anisotropic KPZ and Burgers equation, 
the weak coupling limit has been studied in \cite{CES_aKPZ, CET_aKPZ, CGT_burg} and Gaussian fluctuations 
have been established, building crucially on the explicit Gaussian
invariant measure available for that equation. We note that no phase transition takes place in this setting.
Finally, Dunlap--Gu \cite{DG22} and Dunlap--Graham \cite{dunlap20232d} have proposed another approach to
weak coupling limits of the (nonlinear) stochastic heat equations through the study of 
forwards-backwards SDEs, while the linear SHE on a critical hierarchical lattice has
been explored by Clark \cite{clark}.

Our approach in this work attempts to make a first step towards analysing 
singular SPDEs at the {\it critical} dimension via a systematic study 
of its Wild expansion, i.e.\ an expansion of the solution (in the spirit of a Picard iteration) 
in terms of iterated space-time stochastic integrals that are indexed by trees and live in
certain Wiener chaoses. 
An approach of this sort has been successful in the 
study of {\it subcritical} SPDEs via the theory of regularity structures \cite{Hairer2014}, thanks to the
fact that one can restrict attention to a finite number of
terms in the expansion, before exploiting sophisticated analytic solution theories. 
On the contrary, at the critical case {\it all} terms in the expansion contribute and their contribution 
needs to be accounted for. 

%<<<<<<<  SIMONS TEXT
In the
case of \eqref{e:acn} and \eqref{e:ac2}, 
analysing the terms in the Wild expansion, we discover that the main contribution comes from certain projections of the Wild 
terms on the first Wiener chaos, see Proposition~\ref{prop_single_tree}.
Hence, the Gaussian limiting behaviour. 
%======= NIKOS TEXT
%\blue{This approach has been successful in the study of long-time behaviour of 
%the Allen-Cahn with subcritical initial data by Hairer--L\^e--Rosati \cite{HLR22}.}
%\blue{At the critical case, however, this approach does not work as {\it all} terms in the expansion contribute and their contribution 
%needs to be accounted for}. Analysing the terms in the Wild expansion we discover that in the
%case of \eqref{e:acn}, \eqref{e:ac2} the main contribution comes from certain projections of the Wild 
%terms on the first Wiener chaos, see Proposition~\ref{prop_single_tree},\blue{
%hence,} the Gaussian limiting behaviour. 
An interesting structure emerges that clarifies the role of 
the ODE \eqref{ODE} as determining the limiting order of the fluctuations of
the Gaussian field. 
Roughly speaking, the terms in the Wild expansion, which have the dominant contribution, 
appear in the limit as a Gaussian variable, multiplied by an iterated integral in time variables
indexed by another tree structure. The new iterated integrals are recognised as terms in the
celebrated Butcher series (or B-series) expansion of the ODE \eqref{ODE}.
Our approach has both an analytical and a combinatorial component. 
%%TOMMASOS ORIGINAL TEXT
%On the analytical side, we take advantage of the
%contractive properties of \eqref{e:ac2} to control the error of the truncation Wild series 
%(see Proposition \ref{prop_max_principal} \blue{and in particular the
%discussion in Remark~\ref{rem:needWild}}). 
%On the combinatorial side, we make a detailed analysis on Wiener chaos decompositions.
%\blue{This analysis is inspired by the study of \emph{finite}
%Wild expansions performed in \cite{HLR22}, but in order to treat a diverging
%number of terms, it requires a new and deeper understanding of the
%interplay between the graphical properties of the trees that index terms in the
%expansion, and their analytic contribution.}
%Here \blue{we find that specific cycles appearing in contracted trees (dubbed $
%\mathrm{v} $-cycles) play a fundamental r\^ole}, and interesting links to
%permutation cycles and their statistics appear: see Sections \ref{sec:exmpl}
%and \ref{sec:estimates-single-tree}.
%<<<<<<< SIMONS TEXT
On the analytical side, we take advantage of the
contractive properties of \eqref{e:ac2} to control the error of the truncated Wild series 
(see Proposition \ref{prop_max_principal} and Remark~\ref{rem:needWild}). This analysis is inspired by the study of \emph{finite}
Wild expansions performed in \cite{HLR22}. However, in order to treat a diverging
number of terms, it requires a new and deeper understanding of the
interplay between the graphical properties of the trees, that index terms in the
expansion, and their analytic contribution.
Thus, we introduce a combinatorial component to perform a detailed analysis of
Wiener chaos decompositions in terms of graph theoretical trees.
%===== NIKOS TEXT
%component. On the analytical side, we take advantage of the
%contractive properties of \eqref{e:ac2} to control the error of the truncation Wild series 
%(see Proposition \ref{prop_max_principal} \blue{and Remark~\ref{rem:needWild}}). 
%On the combinatorial side, we make a detailed analysis on Wiener chaos decompositions.
%%This analysis is inspired by the study of \emph{finite} Wild expansions performed in \cite{HLR22},
% but in order to treat a diverging
%% number of terms, it requires a new and deeper understanding of the
%%interplay between the graphical properties of the trees that index terms in the
%%expansion, and their analytic contribution.}
%>>>>>>> 
Here we find that specific cycles appearing in contracted trees (dubbed $
\mathrm{v} $-cycles) play a fundamental r\^ole, and interesting links to
permutation cycles and their statistics appear: see Sections \ref{sec:exmpl}
and \ref{sec:estimates-single-tree}.

Let us close this introduction with a curious link of our main result to a mean-field equation
of McKean-Vlasov type. More precisely, the limiting fluctuations of
\eqref{e:acn} appear to agree
with the limiting fluctuations of the McKean-Vlasov equation
\begin{equation}\label{e_homog_ac}
\begin{aligned}
\partial_t \mV_{ \ve} = \frac{1}{2} \Delta \mV_{ \ve}
+\mf{m}\,
\mV_{ \ve} - \frac{3 }{\log{\tfrac{1}{\ve}}} 
\EE \left[ \mV^{2}_{\ve}\right] \cdot
\mV_{ \ve}\,, \qquad \mV_{ \ve}(0,\cdot) = \hat{\lambda}\, p_{\ve^{2}} \star \eta ( \cdot)\,.
\end{aligned}
\end{equation}
In particular, we have that 
\begin{proposition}\label{p_homog_ac}
For any $ \hat{\lambda} > 0 $ there exists a unique solution $ \mV_{ \ve}$
to \eqref{e_homog_ac}, in the sense of Definition~\ref{def:solmf}, which satisfies:
\begin{equation*}
\begin{aligned}
\lim_{\ve \to 0} \EE \left[ | \mV_{\ve}(t, x) - \hat{\lambda}\, \sigma_{ \hat{\lambda}}
 e^{\mf{m} t}  p_{t} \star \eta (x) |^{2}\right] = 0\;, \qquad
\forall (t, x) \in (0, \infty) \times
\RR^{2} \,,
\end{aligned}
\end{equation*}
with $\sigma_{\hat \lambda}$ as in \eqref{ODE}.
\end{proposition}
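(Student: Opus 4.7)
The strategy is to exploit the fact that \eqref{e_homog_ac} is \emph{linear} in $\mV_\ve$ once the coefficient $\EE[\mV_\ve^2]$ is treated as a prescribed deterministic multiplier, which reduces the whole problem to an explicitly solvable scalar ODE. Since the initial datum $\hat{\lambda}\, p_{\ve^{2}}\star\eta$ is a centred, spatially stationary Gaussian field and the evolution is affine-linear with a deterministic, $x$-independent multiplier $\mf{m} - \tfrac{3}{\log(1/\ve)}\, v_\ve(t)$ (where $v_\ve(t) := \EE[\mV_\ve(t,x)^2]$, constant in $x$ by stationarity), $\mV_\ve(t,\cdot)$ remains a centred stationary Gaussian field for all $t$. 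Duhamel's formula then forces the ansatz
\begin{equation*}
\mV_\ve(t,x) = \hat{\lambda}\, A_\ve(t)\, (p_{t+\ve^{2}}\star\eta)(x), \qquad A_\ve(t) = \exp\Big(\mf{m}\,t - \tfrac{3}{\log(1/\ve)}\int_0^t v_\ve(s)\,\ud s\Big),
\end{equation*}
and combining this with $\|p_s\|_{L^2(\RR^{2})}^2 = \tfrac{1}{4\pi s}$ yields the self-consistency relation $v_\ve(t) = \hat{\lambda}^2 A_\ve(t)^2/(4\pi(t+\ve^{2}))$.

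Introducing $\phi_\ve(t) := \tfrac{6}{\log(1/\ve)}\int_0^t v_\ve(s)\,\ud s$ then turns this self-consistent identity into the separable ODE $\phi_\ve'(t) = \tfrac{3\hat{\lambda}^2 e^{2\mf{m}t}}{2\pi\log(1/\ve)(t+\ve^{2})}\, e^{-\phi_\ve(t)}$ with $\phi_\ve(0)=0$, which after multiplication by $e^{\phi_\ve(t)}$ integrates explicitly to
\begin{equation*}
e^{\phi_\ve(t)} - 1 = \tfrac{3\hat{\lambda}^2}{2\pi\log(1/\ve)} \int_0^t \tfrac{e^{2\mf{m}s}}{s+\ve^{2}}\,\ud s.
\end{equation*}
This closed form simultaneously produces existence and uniqueness of $\mV_\ve$ in the sense of Definition~\ref{def:solmf}: it determines $\phi_\ve$, hence $v_\ve$ and the multiplier, uniquely, and plugging $v_\ve$ back into the linear equation recovers $\mV_\ve$ via the ansatz. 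The dissipativity of the $e^{-\phi_\ve}$ factor makes this solution exist globally for every $\hat{\lambda} > 0$ and $t \geqslant 0$, so no smallness assumption is needed.

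For the $\ve \to 0$ limit I would use the elementary asymptotic
\begin{equation*}
\int_0^t \tfrac{e^{2\mf{m}s}}{s+\ve^{2}}\,\ud s = 2\log(1/\ve) + \log(t+\ve^{2}) + \int_0^t \tfrac{e^{2\mf{m}s}-1}{s+\ve^{2}}\,\ud s = 2\log(1/\ve) + O_t(1),
\end{equation*}
so that $e^{\phi_\ve(t)} \to 1 + \tfrac{3\hat{\lambda}^2}{\pi} = \sigma_{\hat{\lambda}}^{-2}$ and hence $A_\ve(t) \to \sigma_{\hat{\lambda}}\, e^{\mf{m}t}$ for each fixed $t > 0$. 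Decomposing the error as
\begin{equation*}
\mV_\ve(t,x) - \hat{\lambda}\sigma_{\hat{\lambda}}\, e^{\mf{m}t}(p_t\star\eta)(x) = \hat{\lambda}\big(A_\ve(t) - \sigma_{\hat{\lambda}} e^{\mf{m}t}\big)(p_{t+\ve^{2}}\star\eta)(x) + \hat{\lambda}\sigma_{\hat{\lambda}} e^{\mf{m}t}\big((p_{t+\ve^{2}} - p_t)\star\eta\big)(x),
\end{equation*}
both pieces vanish in $L^2$: the first by the convergence of $A_\ve(t)$ combined with the identity $\EE[(p_{t+\ve^{2}}\star\eta)(x)^2] = 1/(4\pi(t+\ve^{2}))$ (bounded for $t>0$), and the second from $\|p_{t+\ve^{2}} - p_t\|_{L^2(\RR^{2})} \to 0$. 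The only non-routine step is the justification of the Gaussian/stationary ansatz in the first paragraph; everything downstream is an explicit ODE calculation, which is also why the conclusion holds without any upper bound on $\hat{\lambda}$, in contrast to Theorem~\ref{thm:main}.
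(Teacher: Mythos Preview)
Your existence and convergence arguments are essentially the same as the paper's: you write $\mV_\ve(t,x)=\hat{\lambda}A_\ve(t)\,p_{t+\ve^2}\star\eta(x)$, reduce to a scalar ODE, solve it explicitly, and use $\tfrac{1}{2\log(1/\ve)}\int_0^t\tfrac{e^{2\mf{m}s}}{s+\ve^2}\,\ud s\to 1$ to identify the limit. The paper does exactly this, parametrising instead via $\sigma_{\hat\lambda,\ve}(t):=e^{-\mf{m}t}A_\ve(t)$, which solves $\dot\sigma=-\tfrac{3\hat\lambda^2}{\log(1/\ve)}\tfrac{e^{2\mf{m}t}}{4\pi(t+\ve^2)}\sigma^3$; your $e^{-\phi_\ve(t)}$ is precisely $\sigma_{\hat\lambda,\ve}(t)^2$.

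The genuine gap is uniqueness. Your argument that ``the evolution is affine-linear with a deterministic, $x$-independent multiplier $\mf{m}-\tfrac{3}{\log(1/\ve)}v_\ve(t)$'' is circular: $v_\ve(t)=\EE[\mV_\ve(t,x)^2]$ is $x$-independent only if $\mV_\ve(t,\cdot)$ is already known to be stationary, which is what you are trying to establish. For an arbitrary solution in the sense of Definition~\ref{def:solmf}, nothing prevents $\EE[\mV_\ve^2(t,x)]$ from depending on $x$, in which case the ansatz is not forced and your ODE argument does not apply. You flag this yourself as ``the only non-routine step'', but it is not a step you actually carry out. The paper handles this differently: it proves a general uniqueness result (its Proposition~\ref{prop:uniqueness}) for \eqref{e:mfr} with any initial datum $v_0\in\mE$ satisfying $\sup_x\EE[v_0^2(x)]<\infty$, via an a priori bound $\sup_{t,x}\EE[v^2(t,x)]\leqslant C_0 e^{\mf{m}t}$ (from the equation for $v^2$ and the maximum principle), followed by a Feynman--Kac representation of the difference of two solutions and a Gr\"onwall argument on $\sup_x\EE[w(t,x)^2]$. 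This works without any Gaussianity or stationarity assumption and is what is needed to conclude uniqueness in the stated sense. A smaller omission is that you do not verify the growth conditions of Definition~\ref{def:solmf} (that $\sup_{s\leqslant T}|p_{s+\ve^2}\star\eta|\in\mE$), which the paper checks via a weighted-sup estimate.
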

An emergence of an equation like \eqref{e_homog_ac} might appear plausible 
if one considers an ansatz where the
leading order terms of the solution to \eqref{e:acn} is Gaussian. In such a setting and if
 $ \mU_{\ve} $ is Gaussian, then the projection on the
first homogeneous Wiener chaos of $ \mU_{\ve}^{3} $ is given by $ 3 \EE
[\mU_{\ve}^{2} ] \mU_{\ve} $, which agrees with the nonlinearity in \eqref{e_homog_ac}. 
However, $\mU_\varepsilon$, itself is far from Gaussian and
it is also far from obvious, a priori, that its limit \eqref{e:acn} is Gaussian.
A deeper understanding of the relations between \eqref{e:acn} and \eqref{e_homog_ac} is desirable.

\subsection*{Outline of the paper}

The remainder of the paper is structured as follows. In Section~\ref{sec:wild}
we give an introduction to rooted trees and their use in the analysis of ODEs
and SPDEs.
In Section~\ref{sec:prf-main}, we  present the main steps of the proof of Theorem~\ref{thm:main} while assuming the paper's
key ingredient,
Proposition~\ref{prop_single_tree}.
Section~\ref{sec:exmpl} introduces notation and estimates required, before we
 provide a proof of
Proposition~\ref{prop_single_tree} in  Section~\ref{sec:estimates-single-tree}.
In Section \ref{sec:MV} we prove Proposition \ref{p_homog_ac}.
Finally, we establish some technical results in the Appendix.

\subsection*{Notation}
Let $ \NN= \{0,1,2,\ldots \}$.
We denote with
$P_t= \exp \left(\frac{t}{2}\Delta\right)$ the heat semigroup on $\RR^2$:
\begin{equation*}
\begin{aligned}
P_t \varphi (x)  = p_t \star \varphi (x) \;, \quad  p_t (x)  = \frac{1}{2 \pi t} \exp \left( -\frac{|x|^2}{2 t} \right) \mathds{1}_{[0,
\infty)}(t) \;, \quad  \forall (t, x) \in \RR \times \RR^{2}\;,
\end{aligned}
\end{equation*}
where $\star$ denotes spatial convolution.
Similarly, we define the heat semigroup with mass $ \mf{m} \in \RR $ by $
P^{(\mathfrak{m})}_{t} = e^{\mathfrak{m}\, t} P_{t}$ and associated kernel $
p^{(\mathfrak{m})}_{t}(x) = e^{\mathfrak{m}\, t}p_{t}(x)
$. Here we allow the semigroup to be defined for any Schwartz distribution
$ \varphi $ on $ \RR^{2} $: if such $ \varphi $ is not locally integrable the integral should be
interpreted in the usual generalised sense.
We will abuse notation and sometimes write singletons of the form $\{a\}$
simply as $a$. Thus, $ A \setminus a := A \setminus \{a\}$, for some set $A$.

\subsection*{Acknowledgments}
The authors thank Felix Otto for pointing out the possibility 
of connecting our main result to 
the McKean-Vlasov problem \eqref{e_homog_ac}.
We also thank Giuseppe Cannizzaro, Khoa L\^e, Hao Shen, Daniel Ueltschi, Lorenzo Zambotti, Rongchan Zhu and Xiangchan  Zhu for helpful discussions.
SG was supported by the Warwick Mathematics Institute Centre for Doctoral
Training, and acknowledges funding from the University of Warwick and EPSRC through grant EP/R513374/1.
NZ was supported by the EPSRC grant EP/R024456/1

\section{Trees, Wild expansion and Butcher series}\label{sec:wild}

The linchpin of our argument is a precise control on the Wild expansion of
the solution $ u_{\ve} $ to the equation \eqref{e:ac2}.
Wild expansions were popularised in the
context of stochastic PDEs by Hairer's seminal work \cite{Hairer2013SolvingKPZ},
and are originally attributed
to the work of Wild \cite{wild_1951}.
A Wild expansion is an expansion of a solution to a parabolic PDE or an ODE in
terms of iterated integrals. The terms in such an expansion are naturally
indexed by rooted trees, which in our setting are associated -- similarly to Feynman
diagrams -- to integrals involving the heat kernel and the correlation
function of the noise $ \eta_{\ve} $. Wild expansions are also naturally linked
to Butcher series' \dash see for example \cite{Gub_ram} and the many references
therein \dash which allow for a tidy bookkeeping of the coefficients appearing in the Wild
expansion of a solution to an ODE. This section is devoted to establish all such
connections rigorously, and in a manner that will be useful to our analysis.

\subsection{Finite rooted trees}\label{sec:trees}
We start by introducing basic concepts concerning trees.
We will work with finite, rooted trees. A tree $ \tau $ is a
connected, undirected planar graph
that contains no cycle.
We  denote by $\mV(\tau)$ the set of vertices of a tree $\tau$ and by
$\mE(\tau)$ the set of its edges.
A finite rooted tree is a tree with a finite number of vertices, and
one particular vertex $ \mf{o} \in \mV (\tau) $ singled out as the
\textbf{root}. Rooted trees induce a partial order on the set of vertices $
\mV (\tau) $, by writing $ v \prec w $ if the unique path from $ w $ to the
root $ \mf{o} $ passes through $ v $, for any pair $ v, w \in \mV(\tau) $
with $ v \neq w $. In particular, if $ v \prec w $ we say that $ w $ is a \textbf{descendant} of
$ v $ or that $v$ is an \textbf{ancestor} of $w$, and we say that $ v $ is a \textbf{leaf} if it has no descendants.
The closest ancestor $v \in \mV ( \tau)$ of $w \in \mV ( \tau) \setminus \mf{o}$ is called \textbf{parent} of $w$,  we write
$\mathfrak{p}(w)=v$.
The set of leaves of a tree $\tau$ will be denoted by $\mL(\tau)$ and we define
$\ell(\tau):=|\mL(\tau)|$ the total number of leaves in a tree.
The collection of vertices of a tree which are not leaves will be
called {\bf inner vertices}, it will be denoted by $\mI(\tau)$ and
its cardinality is defined to be $i(\tau)$.
An exception to this convention will be made when the root is
the only vertex of the tree, in which case
it will considered as a leaf and thus {\it not} an inner vertex.
The cardinality of all the vertices of a tree
$\tau$ will be denoted by $|\tau|$ and we have that
$|\tau|=\ell(\tau)+i(\tau)$.
Finally, we call the {\bf degree} of a vertex the number of incident edges.

As a convention, we draw trees (from now on always
finite and rooted) growing upwards, out of the root of the tree, which is placed at its
bottom. For example, the following are trees with inner nodes coloured
white (and circled red) and leaves coloured black:
\begin{align}\label{treeexample}
   \mathbf{1}  \,,\ \<0>\,,\  \<3>\,,\  \<113_noise>\, ,\ \<10>\,,\  \<2>\,,\  \<210_noise>\,,\  \<1130_noise>\,.
\end{align}
Here $ \mathbf{1} $ is the empty tree, which is different from the single root $ \<0>
$\,.
Next, we will be working with {\it unordered} trees, which means, for
example, that the three trees below will be considered to be identical:
\begin{equation*}
\<113_noise> \qquad \<131_noise> \qquad \<311_noise> \,\,.
\end{equation*}
We will denote by $ \mT $ the set of all finite, unordered, rooted
trees. By convention $ \mT $ contains also the empty tree $ \mathbf{1} $.

In our setting, large trees arise naturally from smaller ones by
combining them, as we will now explain.
Suppose that
$\tau_1,...,\tau_n\in \mT$ are given.
Then we define the grafted tree
$\tau=[\tau_1 \cdots \tau_n]$ which is
built by connecting
the roots of the trees $\tau_1,...,\tau_n\in \mT$, by means of
new edges, to a new common vertex, which acts as the root of the new tree $\tau$.
The following graphical representation is perhaps the best explanation
of this construction:
\begin{equation}\label{e_plant_def}
\begin{aligned}
[\tau_1 \cdots \tau_n] =\begin{tikzpicture}[baseline=2]
			\node[root] (0) at (0,0) {};
			\draw (0) to (-.5,0.4) node at (-.6, 0.6) {\scalebox{0.8}{$\tau_1$}} ;
			\draw (0) to (-.2,0.4) node at (-.2,.6) {\scalebox{0.8}{$\tau_2$}} ;
			\draw (0) to (0,0.4) node at (0.2,.5) {\tiny$\cdots$} ;
			\draw (0) to (0.2,0.4) ;
			\draw (0) to (0.5,0.4) node at (0.6,.6) {\scalebox{0.8}{$\tau_n$}} ;
		\end{tikzpicture}.
\end{aligned}
\end{equation}
Here are some examples of graftings of trees:
    \begin{align*}
	[\, \mathbf{1}\,] =\<0>\,, \qquad
        [ \, \<0> \, ] = \<10>\,, \qquad
        [ \; \<10> \  \<0> \;]= \<21>
        \qquad \text{or}\qquad
        [\; \<3>\  \<0> \  \<0> \;]=\<113_noise> \;.
    \end{align*}
We observe that the empty
tree $\mathbf{1}$ can be ignored in a grafting (unless it is the only
tree): $[\mathbf{1}\ \tau_{1} \cdots \tau_{n} ]= [\tau_{1} \cdots \tau_{n}]$.
Next, one obtains the space of finite,
unordered rooted trees from the space of
finite rooted trees by quotienting through the equivalence relation that identifies
$$[\tau_1 \cdots \tau_n] \equiv [\tau_{\upsigma(1)} \cdots \tau_{\upsigma(n)}]$$
for every $\upsigma \in S_n$ (the group of permutations on $n$ indices)
and any choice of $ n \in \NN $ and trees $ \tau_{1}, \cdots, \tau_{n}
$ and the same for any subtree of a given tree.

In particular, due to the cubic nonlinearity that characterises the Allen--Cahn
equation, we will be dealing with {\bf sub-ternary} trees:
\begin{align*}
    \mT_{\leqslant 3}
    :=
    \{
    \tau \in \mT\,:\, \text{any inner node in $\tau$ has at most $3$
descendants}
    \}\,,
\end{align*}
and its subset $\mT_3$ of {\bf ternary} trees:
\begin{align*}
    \mT_{3}
    :=
    \{
    \tau \in \mT \setminus \{ \mathbf{1} \}\,:\, \text{any inner node in $\tau$
has exactly $3$ descendants} \}\,.
\end{align*}
For example, $\mT_{\leqslant 3}$ contains all the trees appearing in
\eqref{treeexample},
and the second, third and fourth trees in the above list
additionally lie in $\mT_3$.
We work with the conventions that
the empty tree belongs to $ \mT$ and $\mT_{\leqslant 3} $, but
\emph{not} $ \mT_{3} $
and that the ``single node'' tree $ \tau = \<0> $ belongs to $ \mT_{3} $
(here by convention the root counts as a leaf and not as an inner node).

It will be convenient to introduce some terminology for subtrees of sub-ternary trees. We will
call the  tree $\<3r>$ a {\bf trident},
$\<2r>$ a {\bf cherry} and $\<1r>$ a {\bf
lollipop}. Further, when these trees are embedded into
larger trees, we will call the root vertices of these components (marked in red
white here) respectively the {\bf basis of the trident}, {\bf basis of the cherry}
or {\bf basis of the lollipop}.
Finally, we note that for ternary trees $\tau \in \mT_3$, the number of leaves $\ell(\tau)$ and the number of inner vertices
$i(\tau)$ satisfy the relation $\ell(\tau)=2i (\tau)
+1$.

We close this subsection by introducing several important
quantities:
\begin{itemize}
\item[{\bf 1.}] {\bf Symmetry factors.} For any $\tau\in \mT$ we write
$s(\tau) \in \NN $ for the {\it symmetry factor} associated to the tree.
This amounts to the cardinality of the symmetry
group  associated with ${\tau}$.
More precisely, if we assign a label to each vertex of the tree $ \tau
$, then $ s(\tau) $ is the number of permutations of the labels that leave the
structure of rooted unordered tree invariant.
It is given by the following recursive formula. First, set $s(\<0>)=s(\mathbf{1})=1$. Then,
if $ \tau $ is of the form ${\tau} = [(\tau_1)^{k_1} \cdots
(\tau_n)^{k_n}]$ for pairwise distinct ${\tau_i}$'s, each one appearing
$k_i$ times and with $ \tau_{i} \neq \mathbf{1} $, for $i=1,...,n$, then
\begin{align}\label{eq_def_symfactor}
    s({\tau}) = \prod_{i=1}^n k_i!\; s({\tau_i})^{k_i}\,.
\end{align}
Since any rooted tree can be constructed by grafting together strictly
smaller trees, the above defines the symmetry factor for all rooted
trees.

\item[{\bf 2.}] {\bf Tree factorials.} Similarly we define the tree
factorial $ \tau ! \in \NN $ for any $ \tau \in \mT $.
For the empty tree $\mathbf{1}$ we define $\mathbf{1}!=1$ and, iteratively,
for a tree  ${\tau }= [\tau_1 \cdots \tau_n]$ we define $\tau !$ by
\begin{align*}
    \tau ! = | \tau | \, \tau_1 ! \cdots  \tau_n ! \, ,
\end{align*}
where  $|{\tau}|=1+|\tau_1| + \cdots |\tau_n|$
is the total number of vertices of the tree $\tau$.
We observe that the tree factorial of a linear
tree over $n$ vertices
(that is, the tree over $ n $ vertices in which every inner node has exactly one
descendant), is equal to $n!$. Thus, the notion of tree factorial generalises the
usual notion of factorial.

\item[{\bf 3.}]{\bf Tree differentials.} For an analytic function $h
\colon \RR \to \RR$, we define recursively its
{\it tree differential} (sometimes called {\it elementary
differential}) $h^{(\tau)}: \RR \to \RR$ as follows:
For all $y\in\RR$, we define $h^{(
\mathbf{1})}(y) = y$ for the empty tree $\mathbf{1}$ and  we set
$h^{( \<0>)}(y) =h(y) $ for the single node tree $\<0>$. Then, for an arbitrary tree $ \tau \in
\mT\setminus \{\mathbf{1},\<0>\}$ such that ${\tau} = [\tau_1 \cdots
\tau_n]$, $ \tau_{i} \neq \mathbf{1}$, we define
inductively
\begin{equation}\label{e:def_elDiff}
\begin{aligned}
  h^{ ( \tau)} (y) =  h^{(n)} (y)\, \prod_{j=1}^n h^{(
\tau_{j})}  (y)\;, \quad  \text{for all} \quad y \in \RR \;,
\end{aligned}
\end{equation}
where $h^{(n)}$ is the usual $n^{th}$ derivative of $h$.
\end{itemize}

\subsection{Butcher series} \label{sec:butcher}
In this section we will review how trees are used to index series
expansions of solutions to ordinary differential
equations. In particular, the kind of expansion that we are
interested in goes under the name of {\it Butcher series} (or {\it B-series} for short) \cite{But63,HaWa74}.
As we have already mentioned, the structure of
such series expansions in
combination with the structure of our Wild expansion
plays an important role in our analysis. In particular, this
structure lies behind the identification of the limiting
fluctuation strength $ \sigma_{ \hat{\lambda}} $ in Theorem~\ref{thm:main}.
To start our brief discussion on Butcher series, consider an analytic function $h\colon \RR \to \RR$ and
the differential equation
\begin{align*}
  \frac{ \ud y}{\ud \zeta}=h(y)\,, \quad \forall \zeta>0
   \quad \text{and} \quad y(0)=y_0 \in \RR\,.
\end{align*}
Then  the solution $y(\zeta)$ can be expressed, locally around
$\zeta =0$, through the following series:
\begin{equation}\label{e:bs}
    y(\zeta) =
    \sum_{{\tau} \in \mT} \frac{h^{( \tau)} (y_0)}{{\tau}!\, s({\tau})}
\zeta^{|{\tau}|} =: B_h(\zeta, y_{0}) \,, \qquad \forall \zeta \in [0, \zeta_{\star} )  \;,
\end{equation}
where $ \zeta_{\star} > 0 $ depends on $ h $ and $ y_{0} $. The sum
runs over all rooted, unordered trees (including the empty tree $\mathbf{1}$)
and $s (\tau),\tau !$ and $ h^{( \tau)}$ have been defined in Section
\ref{sec:trees} above.
For details on this derivation we refer to \cite{But63,HaWa74,butcherbook}, see also
\cite[Theorem 5.1]{Gub_ram}.
In any case, at the heart of \eqref{e:bs} lies the identity (see
for example \cite[Theorem 311C]{butcherbook})
\begin{equation}\label{e_tay_as_b}
\begin{aligned}
\frac{y^{(n)}(0)}{n!}
=
\sum_{ \substack{ \tau \in \mT \\ | \tau| =n}} \frac{h^{( \tau)}(y_{0})}{
\tau!\, s ( \tau)}\,,
\end{aligned}
\end{equation}
which allows to express the solution of the ODE in terms of the Butcher series,
whenever its Taylor series centered at zero converges
absolutely.
In this work, the following ODEs are of particular relevance:
\begin{equation}\label{e_bseries_examp}
\begin{aligned}
\begin{cases}
\dot{y}&=-y^3\\
y (0)&=1
 \end{cases}
\qquad \text{ and} \qquad
\begin{cases}
\dot{\overline{y}}&=\overline{y}^3\\
\overline{y}(0)&=1
 \end{cases} \,,
\end{aligned}
\end{equation}
both of which admit explicit solutions
$y(\zeta)=(1+2\zeta)^{-1/2}$ and $\overline{y}(\zeta)=(1-2\zeta)^{-1/2}$, respectively.
Observe that both solutions are real analytic at $0$ with radius
of convergence
$\tfrac{1}{2}$, so that
\eqref{e:bs}
holds with $\zeta_{\star}= \tfrac{1}{2}$.
Strikingly $ \overline{y} $
explodes at $ \zeta = 1/2 $, whereas $ y $ is defined for all times.
Our approach to treating the coefficients of the Butcher series does not
allow to distinguish the behaviour of the solution $ y $ from that of $
\overline{y} $ and  leads ultimately
 to one of the requirements in
Theorem~\ref{thm:main} for the coupling constant $ \hat{\lambda} $ to be sufficiently small.
Overcoming this issue would
require to take into account the sign of the terms in the Butcher series
(or avoid the series entirely), and this lies beyond the reach of our current proofs.

\subsection{Wild expansion}
In this subsection we establish some basic properties concerning the
Wild expansion of the solution $ u_{\ve} $ to \eqref{e:ac2}.
By convolving with the heat kernel, we can explain the heuristics that lead formally to the derivation of
the Wild expansion to \eqref{e:ac2}.
We write the solution of
equation \eqref{e:ac2} in its mild formulation
\begin{align}\label{Pic1}
u_\ve(t,x)
&=  P_t^{(\mathfrak{m})} \eta_\ve(x) - \int_0^t \Big(P^{(\mathfrak{m})}_{t-s}
u_\ve^3(s,\cdot) \Big) (x)\,\ud s  \\
&= \int_{\RR^2} p^{(\mathfrak{m})}_{t}(x-y) \eta_\ve (y) \ud y
- \int_0^t\int_{\RR^2} p^{(\mathfrak{m})}_{t-s}(x-y) u_\ve^3(s,y) \ud s \, \ud y \notag \\
&= \int_0^t\int_{\RR^2} p^{(\mathfrak{m})}_{t-s}(x-y) \eta_\ve (y) \delta_0(s) \ud s  \ud y
- \int_0^t\int_{\RR^2} p^{(\mathfrak{m})}_{t-s}(x-y) u_\ve^3(s,y) \ud s  \ud y  \notag\,,
\end{align}
where in the last line we  rewrote the first integral
using a delta function at time $ s=0 $.
We now introduce our first diagrammatic notation:
\begin{align}\label{Feyn-tree1}
\<10>_{\, \ve}\,(t,x) := \int_0^t\int_{\RR^2} p^{(\mathfrak{m})}_{t-s}(x-y) \, \delta_0(s) \,
\eta_\ve (y) \, \ud s \ud y =  P^{(\mathfrak{m})}_t \eta_\ve(x) \;,
  \end{align}
with the right-hand side interpreted in the It\^o sense.
Here we associate to the lollipop a random function in the
following way.
We assign the time-space variable $(t,x)$ to the root,
the time-space variable $(s,y)$ as well as the weight $\delta_0(s) \, \eta_\ve (y)$ to the leaf and the kernel
$p_{t-s}^{(\mathfrak{m})}(x-y)$ to the connecting edge. Finally, we
integrate over the variables associated to all nodes except the root.
In other words, the edge represents a time-space convolution between the heat kernel and the weight of the
leaf, evaluated at the time-space variables assigned to the root.
Therefore, we can rewrite \eqref{Pic1} as follows:
\begin{align}\label{Pic2}
u_\ve(t,x)
&=\<10>_{\, \ve}\,(t,x)  - \int_0^t \Big(P_{t-s}^{(\mathfrak{m})} u_\ve^3(s,\cdot) \Big) (x)\,\ud s .
\end{align}
Writing explicitly the arguments of the functions appearing in \eqref{Pic2} can be
cumbersome, so to shorten the notation we will equivalently write
\begin{equation*}
\begin{aligned}
u_{\ve} = \<10>_{\, \ve}  - P^{(\mathfrak{m})} \ast  u^3_{\ve} \;,
\end{aligned}
\end{equation*}
where $ P^{(\mathfrak{m})} \ast \varphi = \int_{\RR \times \RR^{2}} p^{(\mathfrak{m})}_{t-s}(x-y)
\varphi_{s} (y)  \ud y \ud s $ denotes space-time convolution with the kernel
$ p^{(\mathfrak{m})} $.
Now, we can iterate this
description of $ u_{\ve} $ by inserting the identity for
$u_\ve$ in \eqref{Pic2} into the right-hand
side of the expression itself:
\begin{align}\label{Pic3}
u_\ve
&=\<10>_{\, \ve}  - P^{(\mathfrak{m})} \ast \Big( \<10>_{\, \ve}
- P^{(\mathfrak{m})} \ast u_\ve^3 \Big)^3  \;.
\end{align}
Expanding the cube on the right-hand side
above, we find the expression
\begin{equation} \label{e:Pic4}
\begin{aligned}
u_\ve
= \<10>_{\, \ve}  - P^{(\mathfrak{m})} \ast \<10>_{\, \ve}^{\, 3}
&+3 P^{(\mathfrak{m})} \ast \left( \<10>_{\, \ve}^2 \cdot
P^{(\mathfrak{m})} * u_\ve^3 \right) \\
&-3 P^{(\mathfrak{m})} \ast \left(  \<10>_{\, \ve} \cdot
\Big(P^{(\mathfrak{m})} \ast u_\ve^3\Big)^2 \right) + P^{(\mathfrak{m})} \ast \Big( P^{(\mathfrak{m})}\ast u_\ve^3\Big)^3 \;.
\end{aligned}
\end{equation}
In order to motivate the representations we are after, let us just focus on the
second term of the above expression.
We use the following representation of the cubic power
\begin{align*}
\<3>_{\! \ve}\,(t,x) := \Big( \<10>_{\, \ve}\,(t,x)\Big)^3,
\end{align*}
where we have glued together three lollipops at a common root, thus forming a
trident whose basis is associated to time-space coordinates
$(t,x)$. We then introduce the planted trident:
\begin{align*}
\<30_3>_{\! \ve}\, (t,x)
& := \int_0^t P_{t-s_1}^{(\mathfrak{m})} \Big( \<10>_{\, \ve}\,(s_1,\cdot)\Big)^3 (x) \ud s_1 \\
& =\int_0^t \Big( P_{t-s_1}^{(\mathfrak{m})} \<3>_{\! \ve}\,(s_1,\cdot)\Big)(x) \ud s_1 =
P^{(\mathfrak{m})} \ast  \<3>_{\! \ve} (t, x)\;,
\end{align*}
which allows us to rewrite \eqref{e:Pic4} in terms of
\begin{equation} \label{e:Pic5}
\begin{aligned}
u_\ve
= \<10>_{\, \ve}  - \<30_3>_{\! \ve}
&+3 P^{(\mathfrak{m})} \ast \left( \<10>_{\, \ve}^2 \cdot
P^{(\mathfrak{m})} * u_\ve^3 \right) \\
&-3 P^{(\mathfrak{m})} \ast \left(  \<10>_{\, \ve} \cdot
\Big(P^{(\mathfrak{m})} \ast u_\ve^3\Big)^2 \right) + P^{(\mathfrak{m})} \ast \Big( P^{(\mathfrak{m})}\ast u_\ve^3\Big)^3 \;.
\end{aligned}
\end{equation}
We observe that, crucially, the first two terms are rather explicit: they live
in a finite inhomogeneous Wiener chaos and we are able to control
them with tools from stochastic analysis. Of course, we can continue this
expansion at will and iterating the procedure above we obtain formally an
expression for $ u_{\ve} $ that we call Wild expansion. The elements of this
expansion are given in the next
definition.

\begin{definition}\label{def:Wild}
To any tree $ \tau \in \mT_{3} $ and $ \ve \in (0, \tfrac{1}{2})$ we associate a random function
 $(t, x) \mapsto X_\ve^{ \tau} (t, x) $ for $ (t, x) \in (0, \infty) \times
\RR^{2}$ as follows. For $ \tau
= \<0>$ we set
\begin{align*}
    X_{\ve}^{\, \<0>} (t, x) &:= P_{t}^{(\mathfrak{m})} \eta_\varepsilon (x) \,.
\end{align*}
Then, iteratively, assuming we have defined $X_{\ve}^{\tau_1}, X_{\ve}^{\tau_2}, X_{\ve}^{\tau_3}$
for trees $\tau_1,\tau_2,\tau_3 \in \mT_3$,
we define $X_\ve^\tau$ for the tree $\tau=[\tau_1 \  \tau_2 \ \tau_3] \in \mT_3$ as
\begin{align*}
    X_\ve^{ \tau} (t, x)&:=
     -a( \tau)  P^{(\mathfrak{m})} \ast \big(\,
X^{\tau_1}_{\ve} X^{\tau_2}_{\ve}X^{
\tau_3}_{\ve} \, \big)  \,,
\end{align*}
where $ a( \tau) $ is the combinatorial factor
\begin{align}\label{def:a-n}
    a(\tau) = \ \begin{cases}
    1\;,\qquad \text{if} \quad \tau_1=\tau_2=\tau_3\;,\\
    3\;,\qquad \text{if exactly two of the trees $\tau_1,\tau_2,\tau_3$
coincide}\;, \\
    6\;,\qquad \text{if all trees $\tau_1,\tau_2,\tau_3$ are distinct}\;,
    \end{cases}
\end{align}
which appears because we are
considering unordered trees.
The Wild expansion associated to the
Allen--Cahn equation \eqref{e:ac2}
is then defined as the (formal) series $\sum_{\tau\in \mT_3} X^\tau_\ve.$
\end{definition}

Now we will connect the terms $ X^{\tau}_{\ve} $ from
Definition~\ref{def:Wild} to explicit stochastic integrals. This will follow
in two steps: first we generalize the diagrammatic representation that we have
started to introduce in \eqref{e:Pic5}. Then we show (see
Lemma~\ref{lem_expansion_factor} below) how an iterated stochastic integral
relates, up to an appropriate combinatorial factor, to the associated element of the Wild
expansion.

To this end, for any tree $\tau\in \mT_3$ with vertices $\mV( \tau)$,
partitioned into leaves $\mL(\tau)$, inner vertices $\mI(\tau)$, and
edges $\mE(\tau)$, we associate the Wiener--Stratonovich integral \cite[Remark 7.38]{Janson1997}
\begin{equation}\label{e:tau-int}
[\tau]_\ve (t,x)=
\int_{D_{t}^{\mV (\tau)}}
\prod_{u \in \mV( \tau)}
p_{s_{\mathfrak{p}(u)} - s_{u}}^{(\mathfrak{m})}(y_{\mathfrak{p}(u)} - y_{u})\ud s_{\mI(\tau)} \ud y_{\mI(\tau)}
 \prod_{v \in \mL(\tau)}
\delta_{0}(\ud s_v)\eta_\ve( \ud y_v)\;,
\end{equation}
where $[ \tau]$ denotes the planted version of $ \tau$, cf. \eqref{e_plant_def}.
Here
\begin{equation} \label{e:Dt}
\begin{aligned}
 D_{t} := [0, t] \times \RR^{2}
\end{aligned}
\end{equation}
and $D_{t}^{\mV (\tau)} $ is the Cartesian
product of $ D_{t} $ over the index set $\mV( \tau)$.
Moreover, we recall that
$\mathfrak{p}(u)$ denotes the unique parent of $u$ with
$\mathfrak{p}(\mf{o}_{ \tau}) = \mf{o}_{[ \tau]}$ the node
associated to the time-space point $(t,x)$, and $ \mf{o}_{[ \tau]}$ being the root of
the planted tree $ [\tau] $.
Note that in the integral \eqref{e:tau-int} we are \emph{not} integrating over the root
variable of the tree $ [\tau] $ (which is assigned to the point $
(t, x) $), but integrate instead over all other vertices of $ [\tau] $, i.e.\
all vertices of $ \tau$. 
Further, observe that with this definition, by construction, if $ [\tau] = [ [\tau_{1}] \,
[\tau_{2}] \, [\tau_{3}]] $, then
\begin{equation} \label{e:new}
\begin{aligned}
[ \tau]_{ \ve} =P^{(\mathfrak{m})} \ast ( [ \tau_{1}]_{ \ve} [ \tau_{2}]_{
\ve} [ \tau_{3}]_{ \ve}) \;.
\end{aligned}
\end{equation}
Next, we remark that in general, a term $ X^{\tau}_{\ve} $ of the Wild expansion is related to the
Wiener integral represented by the associated planted tree $ [\tau]_{\ve} $ via a combinatorial
factor $ c_\tau \in \NN$ as $X_\ve^{ \tau} =c_{\tau}\cdot [\tau]_{\ve}$.
With the formulation above we have for example:
\begin{equation*}
\begin{aligned}
X_\ve^{\scalebox{0.7}{\<3>} }= -\scalebox{0.9}{ \<30_3>}_{\!\ve}\;,
\qquad X_\ve^{\! \! \scalebox{0.6}{\<113_noise>}} = 3 \scalebox{0.9}{\<113_noise_0>}_{\!\ve} \;.
\end{aligned}
\end{equation*}
The factors $ c_\tau$ appear because we are considering unordered trees (and
ultimately because of the commutative property of the product).
It will be important to obtain a precise expression for $ c_{\tau} $, and 
this is the objective of the remainder of this section.
Our eventual expression for $ c_{\tau} $ contains tree derivatives
with respect to a ``trimmed'' version of $ \tau $. Therefore, we start by
introducing a ``trimming'' operator on trees.
\begin{definition}\label{def:trim}
We call the map
\begin{equation}\label{trimming}
\begin{aligned}
\mathscr{T}: \mT_{3} \rightarrow \mT_{\leqslant 3} \;, \qquad
\mathscr{T}(\tau) = \red{\tau} \;,
\end{aligned}
\end{equation}
the
\emph{trimming} operator,
where $\mathscr{T}(\tau)$ is the tree that is spanned by the inner nodes of $\tau$, i.e.
$ \mathscr{T} $ ``cuts off'' all the leaves and the edges attached to them.
\end{definition}
To lighten later notation, we have also used the chromatic notation by which $
\red{\tau} = \mathscr{T}(\tau)$,
for example
\begin{align*}
    \<r0> = \mathscr{T}(\<0>) = 1\,, \quad \mathscr{T}\left(
\,  \vcenter{\hbox{\scalebox{0.85}{\<3>}}} \, \right)  = \<0>\,, \quad \text{and }\quad
    \mathscr{T}\left( \,  \vcenter{\hbox{\scalebox{0.85}{\<113_noise>}}} \,
\right) = \<10>\,.
\end{align*}
The reader should have the following pictorial description of $
\mathscr{T}$ in mind
\begin{equation}\label{e:trimExample}
\begin{aligned}
 \vcenter{\hbox{
        \begin{tikzpicture}[scale=0.45]
	\draw[thick, densely dotted]  (-1.05,2.75) -- (-1.05, 3.35) node[dot] {} ;
	\draw[thick, densely dotted] (-1.45, 3.35) node[dot] {}  -- (-1.05,2.75) -- (-0.65, 3.35) node[dot] {} ;
	\draw[thick, densely dotted]  (-1.75,2.75) node[dot] {} -- (-1.75,2) ;
	\draw[thick, densely dotted]  (-1.75,2) -- (-2.45,2.75) node[dot] {};
	\draw (-1.05,2.75) node[idot] {} -- (-1.75,2) ;
	\draw[thick, densely dotted]  (1.75,2.75) node[dot] {} -- (1.75,2) ;
	\draw[thick, densely dotted]  (1.05,2.75) node[dot] {} -- (1.75,2) -- (2.45,2.75) node[dot] {} ;
	\draw (-1.75,2) node[idot] {}  -- (0,1) -- (1.75,2) node[idot] {} ;
	\draw[thick, densely dotted] (0,1) node[idot] {}  -- (0,2) node[dot] {};
	\end{tikzpicture}  }}
	\mapsto
        \vcenter{\hbox{
        \begin{tikzpicture}[scale=0.45]
	\draw (-1.05,2.75) node[idot] {} -- (-1.75,2) ;
	\draw (-1.75,2) node[idot] {}  -- (0,1) -- (1.75,2) node[idot] {} ;
	\draw (0,1) node[idot] {};
	\end{tikzpicture}  }}
\mapsto
\vcenter{\hbox{
        \begin{tikzpicture}[scale=0.4]
	\draw (-0.7,3) node[dot] {} -- (-0.7,2) ;
	\draw (-0.7,2) node[idot] {}  -- (0,1) -- (0.7,2) node[dot] {} ;
	\draw (0,1) node[idot] {};
	\end{tikzpicture}  }}\,,
\end{aligned}
\end{equation}
where we again coloured leaves black in the last expression according to the
chosen convention.
A first result regarding the trimming operation guarantees that it is a
bijection between finite families of ternary and sub-ternary trees of the
following form, for arbitrary \( N \in \NN \)
\begin{equation}\label{e:def_T3N}
\begin{aligned}
 \mT_3^N &  = \{ \tau \in \mT_{3}  \; \colon \; i(\tau) \leqslant N \} \subseteq
\mT_{3}\,, \\
\mT_{\leqslant 3}^{N} & = \{ \tau \in \mT_{\leqslant 3}  \; \colon \; | \tau
| \leqslant N \} \subseteq \mT_{\leqslant 3} \,.
\end{aligned}
\end{equation}
The next lemma summarises this and other properties of $\mathscr{T}$.
\begin{lemma}\label{lem_trimming_bij}
The following hold.
\begin{enumerate}[label=\textup{(\roman*)}]
\item The map $\mathscr{T}$ is a bijection from $\mT_3^N$ to $
\mT^{N}_{\leqslant 3} $, for every $N\in \NN$.
\item Let $ \tau \in \mT_{3}\setminus \{\<0>\}$ and $ \tau_{1}, \tau_{2} , \tau_{3} \in
\mT_{3} $ such that $ \tau = [ \tau_{1}\ \tau_{2}\ \tau_{3}]$, then
\begin{equation*}
\begin{aligned}
\red{ \tau} = [ \red{ \tau_{1}}\ \red{\tau_{2}}\ \red{ \tau_{3}}]\,.
\end{aligned}
\end{equation*}
In other words, trimming via $\mathscr{T}$ and grafting via $[ \ldots]$ commute.
\end{enumerate}
\end{lemma}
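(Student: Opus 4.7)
The plan is to prove (ii) first and then bootstrap (i) from it by exhibiting an explicit inverse to $\mathscr{T}$. For (ii), fix $\tau = [\tau_1\ \tau_2\ \tau_3]$ with $\tau_i \in \mT_3$. The root $\mf{o}_\tau$ has three children (the roots of $\tau_1, \tau_2, \tau_3$), so it is an inner vertex of $\tau$. For a non-root vertex $v \in \mV(\tau_i) \subset \mV(\tau)$, the descendants of $v$ in $\tau$ coincide with those in $\tau_i$, so $v$ is a leaf of $\tau$ iff it is a leaf of $\tau_i$; the only subtle case is the root of $\tau_i$ itself, which becomes a leaf of $\tau$ exactly when $\tau_i = \<0>$, and in that case $\red{\tau_i} = \mathbf{1}$ by the single-node convention. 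Hence $\mI(\tau) = \{\mf{o}_\tau\} \sqcup \mI(\tau_1) \sqcup \mI(\tau_2) \sqcup \mI(\tau_3)$ with the edge structure inherited from $\tau$, and invoking the convention that empty trees are absorbed in graftings yields $\red{\tau} = [\red{\tau_1}\ \red{\tau_2}\ \red{\tau_3}]$.

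For (i), I would construct an explicit inverse $\mathscr{G}\colon \mT_{\leqslant 3}^N \to \mT_3^N$ of $\mathscr{T}$. Set $\mathscr{G}(\mathbf{1}) := \<0>$, and for nonempty $\sigma \in \mT_{\leqslant 3}$ attach to each vertex $v \in \mV(\sigma)$ exactly $3 - k_v$ new leaves, where $k_v \in \{0,1,2,3\}$ denotes the number of children of $v$ in $\sigma$. By construction every original vertex of $\sigma$ becomes an inner vertex of $\mathscr{G}(\sigma)$ with exactly three children, so $\mathscr{G}(\sigma) \in \mT_3$ and $i(\mathscr{G}(\sigma)) = |\sigma|$, which gives the size control needed to land in $\mT_3^N$. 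Well-definedness of $\mathscr{T}$ at the level of the truncations is equally direct since $|\mathscr{T}(\tau)| = i(\tau) \leq N$ for $\tau \in \mT_3^N$.

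It then remains to verify $\mathscr{T}\circ\mathscr{G} = \mathrm{id}$ and $\mathscr{G}\circ\mathscr{T} = \mathrm{id}$. The first is immediate from the construction of $\mathscr{G}$, since the inner vertex set of $\mathscr{G}(\sigma)$ is precisely $\mV(\sigma)$ with the same edges. For the second, each inner vertex $v$ of a ternary tree $\tau$ has three children in $\tau$, say $k_v$ of them also inner vertices and the other $3 - k_v$ leaves; trimming discards exactly those $3-k_v$ leaves and then $\mathscr{G}$ restores $3 - k_v$ leaves at each $v$, recovering $\tau$. The only case requiring separate attention is $\tau = \<0>$, where $i(\tau) = 0$ and $\red{\<0>} = \mathbf{1}$ is matched by $\mathscr{G}(\mathbf{1}) = \<0>$. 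I do not anticipate any serious obstacle; the only delicate points are the bookkeeping of the root in (ii) and the exceptional treatment of $\<0>$, whose root is counted as a leaf by convention, which is why I would keep these two edge cases explicit throughout.
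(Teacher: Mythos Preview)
Your proposal is correct and follows essentially the same approach as the paper: the same explicit inverse $\mathscr{G}$ (attach $3-k_v$ new leaves at each vertex) and the same direct inspection of inner versus leaf vertices for (ii), including the special handling of $\tau_i=\<0>$. If anything you are slightly more thorough, since you verify both $\mathscr{T}\circ\mathscr{G}=\mathrm{id}$ and $\mathscr{G}\circ\mathscr{T}=\mathrm{id}$, whereas the paper only spells out surjectivity; note also that despite your phrasing, (ii) is not actually used in your argument for (i).
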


\begin{proof}
By definition of $\mT_{3}^N$, its image under $\mathscr{T}$ is a subset of $
\mT^{N}_{\leqslant 3}$. On the other hand, for any
$\tau\in \mT_{\leqslant 3}^{N}$ we can construct a $\sigma \in \mT_{3}^N$ such
that $ \T(\sigma) = \tau $ as follows.
To each node that has $ 3 - k $ descendants, for $ k \in \{ 1,2,3 \} $, we
append exactly $ k $ lollipops $\<10>$, so that in the new tree that node has
exactly three outgoing edges.
The constructed tree $\sigma$ lies in $\mT_3^N$,
since $ i ( \sigma) = | \tau| \leqslant N$ and every inner node of $ \sigma$ has exactly
three descendants.
Moreover, it satisfies
$ \mathscr{T}(\sigma)=\tau$.
This concludes the proof of the first part of the statement.

In order to prove (ii), let $ \tau \in \mT_{3} \setminus \{\<0>\}$ and $ \tau_{1},
\tau_{2}, \tau_{3} \in \mT_{3} $ such that $ \tau = [ \tau_{1}\
\tau_{2}\ \tau_{3}]$.
Now, because $\mathscr{T}$ does not act on the root of $ \tau$, cf.
Definition~\ref{def:trim}, we necessarily
have
\begin{equation*}
\begin{aligned}
\red{ \tau}=
\mathscr{T}( \tau) = \left[ \mathscr{T}( \tau_{1})\ \mathscr{T}( \tau_{2})\
\mathscr{T}( \tau_{3}) \right] =  [ \red{ \tau_{1}}\ \red{\tau_{2}}\ \red{
\tau_{3}}]\,.
\end{aligned}
\end{equation*}
In order to avoid confusion, let us discuss explicitly the case where
 $ \tau_{i}= \<0>$ and thus $\red{ \tau_{i}}= \mathbf{1}$, for some $i \in \{1,2,3\}$.
Without loss of generality let us assume that $ \tau_{2}= \<0> $, as in the
example  \eqref{e:trimExample} displayed above.
Then, by convention of $[\ldots]$, we have
\begin{equation*}
\begin{aligned}
\red{ \tau}= [ \red{\tau_{1}}\ \red{\tau_{2}}\ \red{\tau_{3}}]
=
[\red{\tau_{1}}\ \red{\tau_{3}}]\,.
\end{aligned}
\end{equation*}
This identity propagates to $\red{ \tau}= [\red{ \tau_{1}}]$ if additionally $
\tau_{3}= \<0>$.
Moreover, in the most extreme case $ \tau = \<3>$, this reduces further to
$\red{ \tau} =  \<0> = [\mathbf{1}]= [\mathbf{1}\ \mathbf{1}\ \mathbf{1} ]$.
\end{proof}

The following result establishes the link between the Butcher series and the
Wild expansion, according to our definitions.

\begin{lemma}\label{lem_expansion_factor}
The following identity holds for $ h(y) = - y^{3} $ and any $ \tau \in
\mT_{3}$:
\begin{align*}
    X^{\tau}_{ \ve}(t,x)
    =
    \frac{h^{(\red{\tau})}(1)}{s(\red{\tau})}
    [\tau]_{ \ve}(t,x)\,, \quad \forall \ve \in (0,\tfrac{1}{2})\,, \ (t,x) \in
(0, \infty) \times \RR^{2} \;,
\end{align*}
with the symmetry factor $s(
\red{ \tau})$ and elementary differential $h^{( \red{ \tau})}$  defined in
\eqref{eq_def_symfactor} and \eqref{e:def_elDiff}, respectively.
\end{lemma}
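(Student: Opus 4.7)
My plan is to prove the claim by strong induction on the number of inner vertices $i(\tau)$ of $\tau \in \mT_{3}$. For the base case $\tau = \<0>$ one has $\red{\tau} = \mathbf{1}$ by Definition~\ref{def:trim}, so $h^{(\mathbf{1})}(1) = 1$ and $s(\mathbf{1}) = 1$, while Definition~\ref{def:Wild} combined with \eqref{Feyn-tree1} gives $X^{\,\<0>}_{\ve} = P^{(\mf{m})}_{t}\eta_{\ve} = [\<0>]_{\ve}$, so that the two sides of the claimed identity coincide immediately.

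For the inductive step I will write $\tau = [\tau_{1}\ \tau_{2}\ \tau_{3}]$ with each $\tau_{i} \in \mT_{3}$ satisfying $i(\tau_{i}) < i(\tau)$. Plugging the inductive hypothesis $X^{\tau_{i}}_{\ve} = h^{(\red{\tau_{i}})}(1)\,[\tau_{i}]_{\ve}/s(\red{\tau_{i}})$ into the recursion of Definition~\ref{def:Wild} and invoking \eqref{e:new} to collapse the convolution back into $[\tau]_{\ve}$, the proof reduces to the purely combinatorial identity
\begin{equation*}
-\,a(\tau)\,\prod_{i=1}^{3} \frac{h^{(\red{\tau_{i}})}(1)}{s(\red{\tau_{i}})} \;=\; \frac{h^{(\red{\tau})}(1)}{s(\red{\tau})}\,.
\end{equation*}
To handle this, I will use Lemma~\ref{lem_trimming_bij}(ii) to write $\red{\tau} = [\red{\tau_{1}}\ \red{\tau_{2}}\ \red{\tau_{3}}]$, with the convention that empty subtrees $\mathbf{1}$ are dropped from the grafting. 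Setting $m := \#\{i : \tau_{i} = \<0>\} \in \{0,1,2,3\}$, the trimmed tree $\red{\tau}$ has $3-m$ non-empty children, and \eqref{e:def_elDiff} combined with $h^{(\mathbf{1})}(1) = 1$ will yield
\begin{equation*}
h^{(\red{\tau})}(1) \;=\; h^{(3-m)}(1)\,\prod_{i=1}^{3} h^{(\red{\tau_{i}})}(1) \;=\; -\frac{3!}{m!}\,\prod_{i=1}^{3} h^{(\red{\tau_{i}})}(1)\,,
\end{equation*}
where the last equality uses that $h(y) = -y^{3}$ satisfies $h^{(n)}(1) = -3!/(3-n)!$ for $0 \leqslant n \leqslant 3$.

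After this cancellation, the remaining task is to check $a(\tau)\,s(\red{\tau}) = (3!/m!)\prod_{i=1}^{3} s(\red{\tau_{i}})$, which I plan to verify by a short case analysis on $m$. Bijectivity of the trimming map (Lemma~\ref{lem_trimming_bij}(i)) ensures that the multiplicity pattern among the non-empty $\red{\tau_{i}}$'s coincides with that of the non-$\<0>$ entries of $(\tau_{1}, \tau_{2}, \tau_{3})$; expanding $s(\red{\tau})$ via \eqref{eq_def_symfactor} and reading $a(\tau)$ off \eqref{def:a-n} will then combine the factorials coming from repeated subtrees with the multinomial $a(\tau)$ to produce exactly $3!/m!$ in every case. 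The hard part, modest as it is, will be the bookkeeping of the ``$\mathbf{1}$ is ignored'' convention for grafting in the extreme case $\tau = \<3>$, $\red{\tau} = \<0>$, where $m = 3$ and the identity collapses to $1 = 1$; once this is disentangled, no conceptual obstacle remains, and the matching is precisely what identifies the Wild expansion of Definition~\ref{def:Wild} as a Butcher series of the form \eqref{e:bs} for the ODE \eqref{e_bseries_examp}.
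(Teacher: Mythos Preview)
Your proposal is correct and follows essentially the same approach as the paper: induction on the size of $\tau$, reduction to the combinatorial identity via \eqref{e:new} and Lemma~\ref{lem_trimming_bij}(ii), and the use of bijectivity of $\mathscr{T}$ to match multiplicity patterns between the $\tau_{i}$'s and the $\red{\tau_{i}}$'s. The only cosmetic difference is that the paper packages your case analysis on $m$ into a single ratio-of-symmetry-factors computation, observing directly that $-6\,\frac{s(\red{\tau})\,s(\tau_{1})s(\tau_{2})s(\tau_{3})}{s(\tau)\,s(\red{\tau_{1}})s(\red{\tau_{2}})s(\red{\tau_{3}})} = -\frac{3!}{(3-k)!} = h^{(k)}(1)$ with $k = 3 - m$.
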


\begin{proof}
The statement is true for $X^{\<0>}$ since $ h^{( \<r0>)} (1) =
h^{(\mathbf{1})}(1) = 1 $. Now we proceed by induction. Assume that the statement is true for all trees
$\tau\in \mT_3$ with $|\tau|\leqslant n$, for some given $ n \in \NN $,
and let $\tau\in \mT_3$ be a tree with $n+1$ nodes such that $\tau = [\tau_1\
\tau_2\ \tau_3]$, $ \tau_{i} \in \mT_{3}$.
Furthermore, observe that the combinatorial factor $ a(\tau) $ appearing in
Definition~\ref{def:Wild} can be expressed as follows:
\begin{equation*}
\begin{aligned}
a(\tau) = 3! \frac{s(\tau_1) s(\tau_2) s(\tau_3)}{s(\tau)} \;,
\end{aligned}
\end{equation*}
with the symmetry factor $ s(\tau) $ defined in
\eqref{eq_def_symfactor}. Or, in other words
\begin{equation}\label{e:symFac_ratio}
\begin{aligned}
\frac{s(\tau_1) s(\tau_2) s(\tau_3)}{s(\tau)}
 =\
    \begin{cases}
    \tfrac{1}{6},\ \ \text{if} \ \ \tau_1=\tau_2=\tau_3 \;,\\
    \tfrac{1}{2},\ \ \text{if exactly two of the trees $\tau_1,\tau_2,\tau_3$
coincide}\;, \\
    1,\ \ \text{if all trees $\tau_1,\tau_2,\tau_3$ are distinct}\;.
    \end{cases}
\end{aligned}
\end{equation}
Therefore, from the definition of the terms of the Wild
expansion, see again Definition~\ref{def:Wild}, we have
\begin{equation}\label{e:Wild2But_supp}
\begin{aligned}
 X^{\tau}_{ \ve}
    &= -6\frac{s(\tau_1) s(\tau_2) s(\tau_3)}{s(\tau)}\,
    P^{(\mathfrak{m})} \ast (X_{ \ve}^{\tau_1}X_{ \ve}^{\tau_2}X_{ \ve}^{\tau_3})\\
    &=
    -6\frac{{s(\red{\tau})}\, s(\tau_1) s(\tau_2) s(\tau_3)}{s(\tau)\,
{s(\red{\tau_1})s(\red{\tau_2})s(\red{\tau_3})}}
    \frac{{h^{( \red{\tau_1})}(1)h^{(\red{\tau_2})}(1)h^{( \red{\tau_3})}(1)}}{{s(\red{\tau})}}
    [\tau]_{ \ve}\,,
\end{aligned}
\end{equation}
by our induction hypothesis, and by \eqref{e:new}.
Now, let $k \in \{0,1,2,3\}$ be the number of trees $ \tau_{i}$ satisfying
$ \tau_{i} \neq \<0>$. As we consider unordered trees, we can write
\begin{equation*}
\begin{aligned}
 \tau = [ \tau_{1}\  \cdots\  \tau_{k}\ \<0>\ \cdots \ \<0>]\,,
\end{aligned}
\end{equation*}
without loss of generality. Thus
\begin{equation*}
\begin{aligned}
-6\frac{{s(\red{\tau})}\, s(\tau_1) s(\tau_2) s(\tau_3)}{s(\tau)\,
{s(\red{\tau_1})s(\red{\tau_2})s(\red{\tau_3})}}
& =
-6
\frac{s(\red{\tau})
\, s(\tau_1) \cdots s(\tau_{k})
}{s(\tau)\, s(\red{\tau_1})\cdots s(\red{\tau_{k}})} \\
& =
- \frac{6}{(3-k)!}
\frac{ s(\tau_1) \cdots s(\tau_{k})}{s([\tau_{1}\  \cdots\ \tau_{k}])}
\frac{{s(\red{\tau})}}{{s(\red{\tau_1})\cdots s(\red{\tau_{k}})}} \;,
\end{aligned}
\end{equation*}
since $s
(\<0>)=s(\mathbf{1}) = 1$ and by using the fact that
\begin{equation*}
\begin{aligned}
 s ( \tau)=
(3-k)!\, s(\<0>)^{3-k} \, s([\tau_{1}\ \cdots\ \tau_{k}])
 = (3-k)!\, s([\tau_{1}\ \cdots\ \tau_{k}])\,,
\end{aligned}
\end{equation*}
which is a direct consequence of the symmetry factor's definition
\eqref{eq_def_symfactor}.
Next, due to commutativity of the trimming and grafting cf. Lemma~\ref{lem_trimming_bij}(ii), we have
$\red{ \tau} = [\red{ \tau_1}\  \red{  \tau_2}\  \red{\tau_3}]$.
In particular this implies, using Lemma~\ref{lem_trimming_bij}(i), that
if $j$ of the $\tau_i$'s agree, then also $j$ of the
$\red{\tau_i}$'s agree.
Therefore, since as in \eqref{e:symFac_ratio}, the ratio
\begin{equation*}
\begin{aligned}
\frac{ s(\tau_1) \cdots s(\tau_{k})}{s([\tau_{1}\  \cdots\ \tau_{k}])}
\end{aligned}
\end{equation*}
only depends on the number of
identical subtrees, we obtain
\begin{equation*}
\begin{aligned}
-6\frac{{s(\red{\tau})}\, s(\tau_1) s(\tau_2) s(\tau_3)}{s(\tau)\,
{s(\red{\tau_1})s(\red{\tau_2})s(\red{\tau_3})}}
=
-\frac{3!}{(3-k)!}
=
h^{(k)}(1)
 \,.
\end{aligned}
\end{equation*}
Overall, \eqref{e:Wild2But_supp} can therefore be rewritten as
\begin{equation*}
\begin{aligned}
X^{\tau}_{ \ve}
      &=
    \frac{h^{(k)}(1)
\,{h^{( \red{\tau_1})}(1) \cdots h^{( \red{\tau_k})}(1)}}{{s(\red{\tau})}}
    [\tau]_{ \ve}
=
\frac{h^{( \red{\tau})}(1) }{{s(\red{\tau})}}
[\tau]_{ \ve}
\,,
\end{aligned}
\end{equation*}
where we used the definition of the elementary differential \eqref{e:def_elDiff} together
with the fact that $h^{(\<r0>)}(1)=h^{(\mathbf{1})}(1)=1$.
This concludes the proof.
\end{proof}

With this, we have introduced all the elements which allow us to discuss the
proof of Theorem~\ref{thm:main}, without entering into technical details.
These are deferred to later sections and require the introduction of additional
tools.

\section{Outline and proof of main result}\label{sec:prf-main}

The first step towards the proof of Theorem~\ref{thm:main} is the
analysis of single terms in our Wild expansion. The key result in this
direction (Proposition~\ref{prop_single_tree}) is presented in the
upcoming Section~\ref{sec_outline}. The proof of
Theorem~\ref{thm:main} is then carried out in Section~\ref{sec:prf-thm}.

\subsection{From Wild expansion to single-tree estimates}\label{sec_outline}
Formally, the solution of \eqref{e:ac2} can be represented in terms of the Wild series expansion as
\begin{equation*}
\begin{aligned}
u_\ve &= \sum_{\tau\in \mT_3} X^\tau_\ve = \sum_{\tau\in \mT_3^N } X^\tau_\ve +
\sum_{\tau \in \mT_{3} \setminus \mT_3^N } X^\tau_\ve
=: u_\ve^N + \big( u_\ve-u_\ve^N\big)\,,
\end{aligned}
\end{equation*}
where $u_\ve^N$ is defined to be the series $\sum_{\tau\in \mT_3^N }
X^\tau_\ve$ truncated at level $ N \in \NN $. Unlike the full Wild expansion,
$ u_{\ve}^{N} $ is well defined as it is a finite sum.
The structure of the Allen--Cahn equation and in particular the fact that the
non-linearity $-u^3$ is monotone in $u$, allows to circumvent a direct treatment
of  the infinite part of the series. More precisely,
we have that  $u_\ve^N$ solves 
\begin{align}\label{eq:wild_exp_pde}
    \partial_t u_\ve^{N} = \frac{1}{2} \Delta u_\ve^{N}+ \mathfrak{m} \,
u_{ \ve}^{N}- (u_\ve^{N})^3
    + R_\ve^{N}\,, \quad u_\ve^{N}(0,\cdot) = \eta_{\ve} ( \cdot)\,,
\end{align}
where the error term $ R_\ve^{ N} $ depends only on trees at the ``boundary'' of
$ \mT^{N}_{3} $:
\begin{equation}\label{e:def-R}
    R_\ve^{N} =
    \sum_{\substack{ \tau_1, \tau_2, \tau_3\in \mT_3^N \\ [\tau_1\, \tau_2 \, \tau_3]\notin \mT_3^N}}
    X_\ve^{\tau_1}X_\ve^{ \tau_2}X_\ve^{ \tau_3}\,.
\end{equation}
Here we used the fact that any tree
in $\tau\in \mT_3\setminus \{\<0> \}$ can be written recursively as
$\tau =[\tau_1 \, \tau_2\, \tau_3]$, for some smaller $\tau_1, \tau_2,\tau_3
\in \mT_3$, hence, 
\begin{align*}
    \Big(\sum_{\tau\in \mT_3^N} X_\ve^{\tau} \Big)^3
    =
    \sum_{\substack{\tau \in \mT_3^N\\ \tau = [\tau_1\, \tau_2\, \tau_3]}}
    a(\tau)\,
    X_\ve^{\tau_1}X_\ve^{\tau_2}X_\ve^{\tau_3}
    +R_\ve^{N}\,.
\end{align*}
Utilising a maximum principle in combination with more structural estimates,
which we will describe in detail below,
we are able to control the error of the approximation as follows.

\begin{proposition}\label{prop_max_principal}
Let $ \hat{\lambda}>0$ and $T>0$ satisfy
\begin{equation}\label{e_choice_lam}
\begin{aligned}
 \Hat{\lambda} e^{ \overline{\mathfrak{m}} \,
T}  < \frac{1}{10\sqrt{C}} \;,
\end{aligned}
\end{equation}
with $ \overline{\mathfrak{m}}=  \max \{ \mf{m}\;, 0 \}$ and  
\begin{equation}\label{e_def_CT}
\begin{aligned}
C:=
\frac{6e^{ 2 + 2 \pi}}{\pi}
\,.
\end{aligned}
\end{equation}
 Then
 uniformly over all
$(t,x) \in (0,T]\times \RR^{2}$, $ \ve \in (0, \tfrac{1}{T}
\wedge \tfrac{1}{2})$ and $N \leqslant
\lfloor \log{ \tfrac{1}{\ve}} \rfloor$
    \begin{align}\label{errror-estim}
        \sqrt{\log \tfrac{1}{\ve}} \left\| \, u_\ve^{ N}(t,x) - u_{\ve}(t,x) \,
\right\|_{L^{2}(\PP)} \leqslant
\frac{ C_{0}}{\log{ \frac{1}{\ve}}}
\frac{ \big( \sqrt{C}\hat{\lambda} e^{ \overline{\mathfrak{m}}\, t}\big)^{N}}{\ve}
\,,
    \end{align}
with $C_{0}=
C_{0}(T, \mathfrak{m}, \hat{\lambda}) \in (0, \infty)
$ the constant defined in
\eqref{e_def_C0}.
\end{proposition}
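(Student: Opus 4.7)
My plan is to view $v := u_\ve - u_\ve^N$ as the solution of a reaction-diffusion equation with monotone nonlinearity and explicit forcing $-R_\ve^N$, thereby reducing the task to an $L^2(\PP)$ bound on $R_\ve^N$, which I would then estimate termwise using Proposition~\ref{prop_single_tree}.

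Subtracting \eqref{eq:wild_exp_pde} from \eqref{e:ac2} and factoring $u_\ve^3 - (u_\ve^N)^3 = Q\,v$ with $Q := u_\ve^2 + u_\ve u_\ve^N + (u_\ve^N)^2 \geqslant 0$, the difference $v$ solves
\begin{equation*}
\partial_t v = \tfrac{1}{2}\Delta v + (\mf{m} - Q)\, v - R_\ve^N\,, \qquad v(0,\cdot) = 0\,.
\end{equation*}
Since $Q\geqslant 0$, a Kato-type inequality applied to $|v|$, or equivalently a Feynman--Kac representation in which the non-positive weight coming from $-Q$ is dropped, yields the almost-sure pointwise bound
\begin{equation*}
|v(t,x)| \leqslant \int_0^t \bigl(P^{(\overline{\mf{m}})}_{t-s}|R_\ve^N(s,\cdot)|\bigr)(x)\,\ud s\,.
\end{equation*}
Taking $L^2(\PP)$-norms, Minkowski's inequality and the identity $P^{(\overline{\mf{m}})}_{t-s}\mathbf{1} = e^{\overline{\mf{m}}(t-s)}$ reduce the problem to controlling $\|R_\ve^N(s,y)\|_{L^2(\PP)}$ uniformly in $(s,y) \in [0,t] \times \RR^2$, the $s$-integration producing a bounded prefactor that is absorbed into $C_0$. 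This maximum-principle reduction is essentially the strategy of \cite{HLR22}, here adapted to an expansion of diverging order $N$.

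For the second step I would expand $R_\ve^N$ via \eqref{e:def-R}, apply the triangle inequality, and then use H\"older with exponents $(6,6,6)$ together with Gaussian hypercontractivity in the finite inhomogeneous Wiener chaos of order $\ell(\tau_i)$ to reduce each summand to $\prod_{i=1}^3 \|X_\ve^{\tau_i}(s,y)\|_{L^2(\PP)}$, up to a constant depending only on the total chaos order $\ell(\tau_1)+\ell(\tau_2)+\ell(\tau_3) \leqslant 3(2N+1)$. At this stage Proposition~\ref{prop_single_tree} should supply, for each individual $\|X_\ve^\tau(s,y)\|_{L^2(\PP)}$, a quantitative bound roughly of the form $\hat\lambda^{\ell(\tau)}\, e^{\overline{\mf{m}}s}\, C^{|\tau|/2}$ with $C$ as in \eqref{e_def_CT}; the factors $(\log\tfrac{1}{\ve})^{-1/2}$ are absorbed inside each lollipop $X_\ve^{\<0>}$ via the normalisation of $\eta_\ve$, and the residual $\ve^{-1}$ on the right-hand side of \eqref{errror-estim} reflects the best $\ve$-uniform pointwise control available for $\|\eta_\ve(x)\|_{L^2(\PP)} \sim \hat\lambda\, \|p_{\ve^2}\|_{L^2}/\sqrt{\log\tfrac{1}{\ve}} \sim \hat\lambda/(\ve\sqrt{\log\tfrac{1}{\ve}})$.

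To conclude, I would sum over boundary triples, i.e.\ over $(\tau_1,\tau_2,\tau_3)\in (\mT_3^N)^3$ with $[\tau_1\,\tau_2\,\tau_3]\notin \mT_3^N$. Every such triple corresponds to a ternary tree $\sigma=[\tau_1\,\tau_2\,\tau_3]$ with $i(\sigma)\geqslant N+1$, and the number of ternary trees of a given size grows at most at a Catalan-type rate that is absorbed into $C^{|\sigma|/2}$, yielding a geometric series with ratio $\sqrt{C}\,\hat\lambda\, e^{\overline{\mf{m}} t}$. By \eqref{e_choice_lam} this ratio is strictly smaller than $1/10$, so the tail starting at $N+1$ contributes the announced $(\sqrt{C}\hat\lambda e^{\overline{\mf{m}} t})^N$. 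The main obstacle will be Proposition~\ref{prop_single_tree} itself: one needs the effective constant per grafted ``generation'' to be exactly $\sqrt{C} = \sqrt{6e^{2+2\pi}/\pi}$, with the factor $6=3!$ arising from the unordered-versus-ordered combinatorial weight at each inner vertex and $e^{2+2\pi}/\pi$ from explicit two-dimensional Gaussian computations involving $\|p_t\|_{L^2}^2$ up to renormalising logarithmic factors. Making this balance tight uniformly in $N \leqslant \lfloor \log\tfrac{1}{\ve}\rfloor$ is the technical heart of the argument and relies on the graphical/combinatorial analysis developed in Sections~\ref{sec:exmpl}--\ref{sec:estimates-single-tree}.
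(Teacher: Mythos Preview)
Your first two reductions---the Feynman--Kac/maximum-principle comparison and the H\"older-plus-hypercontractivity step---match the paper's proof. The genuine gap is in how you obtain the factor $\ve^{-1}$.

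You claim that the $s$-integral produces ``a bounded prefactor that is absorbed into $C_0$'' and that $\ve^{-1}$ instead reflects $\|\eta_\ve(x)\|_{L^2(\PP)}\sim \hat\lambda/(\ve\sqrt{\log\tfrac{1}{\ve}})$. This is not how the argument runs, and as written your plan would give a worse bound. The estimate coming out of Proposition~\ref{prop_single_tree} for each factor is
\[
\|X_\ve^{\tau_i}(s,y)\|_{L^2(\PP)}\ \lesssim\ \frac{1}{\sqrt{\log\tfrac{1}{\ve}}}\,\frac{|h^{(\red{\tau_i})}(1)|}{\red{\tau_i}!\,s(\red{\tau_i})}\,(C\hat\lambda^2 e^{2\overline{\mf m}s})^{|\red{\tau_i}|}\,\frac{\hat\lambda e^{\mf m s}}{2\sqrt{s+\ve^2}}\,,
\]
so after taking the product of three such factors the $s$-dependence is $(s+\ve^2)^{-3/2}$. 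Bounding this uniformly over $s\in[0,t]$ would cost $\ve^{-3}$, not $\ve^{-1}$. What the paper does is keep the $s$-dependence and compute
\[
\int_0^\infty (s+\ve^2)^{-3/2}\,\ud s \;=\; \frac{2}{\ve}\,,
\]
which is the source of the $\ve^{-1}$. You need to carry the singularity through the time integral rather than take a supremum.

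On the summation over boundary triples: the paper does not invoke a Catalan-type tree count. It keeps the weights $|h^{(\red{\tau_i})}(1)|/(\red{\tau_i}!\,s(\red{\tau_i}))$ from Proposition~\ref{prop_single_tree}, pulls out the factor $(\sqrt{C}\hat\lambda e^{\overline{\mf m}t})^{N}$ using $|\red{\tau_1}|+|\red{\tau_2}|+|\red{\tau_3}|\geqslant N$, and then recognises the remaining sum over $\mT_3$ as the Butcher series of $\dot{\overline y}=\overline y^{\,3}$ evaluated at $5\sqrt{C}\hat\lambda e^{\overline{\mf m}t}<\tfrac12$, which converges under \eqref{e_choice_lam}. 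Your Catalan sketch ignores both these weights and the hypercontractivity constants $5^{|\red{\tau_i}|}$, so it is not clear the constants would close; the Butcher-series identification is what makes the sum explicit.
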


The proof of this proposition is deferred to Section~\ref{sec:prf-thm}.
In order for this estimate to help us prove Theorem~\ref{thm:main},
we would like the right-hand side of \eqref{errror-estim} to vanish as $ \ve
\to 0 $. This forces us to choose a cut-off level $N= N_{ \ve}$ that grows to
infinity 
as $ \ve \to 0 $, and a suitably small coupling constant $ \hat{\lambda} $
satisfying \eqref{e_choice_lam}.
In particular, we fix the cut-off level $ N_{\ve} $ given by
 \begin{align}\label{Nchoice}
 N_\ve= \left\lfloor \log \tfrac{1}{\ve} \right\rfloor \,.
 \end{align}
We note that this estimate runs along the same lines as \cite[Proposition
3.15]{HLR22}, with the fundamental difference that we need to push the estimate here to be uniform over
a growing $ N $, more precisely $ N \leqslant \lfloor \log{ \tfrac{1}{\ve} }
\rfloor $. Indeed in \cite{HLR22}, for $\eta_\ve$ scaled (in the
two-dimensional setting) as $\ve^{1-\alpha} p_\ve\star\eta$ with $\alpha\in
(0,1)$, the bound that one obtains is of the form:
\begin{equation*}
\begin{aligned}
\left\| \, u_\ve^{ N}(t,x) - u_{\ve}(t,x) \,
\right\|_{L^{2}(\PP)} \leqslant C(N, \mf{m}, t) \ve^{2 - 3\alpha} \left(
e^{\mf{m} t} \ve^{1- \alpha} (1+\log{(t \ve^{-2})}) \right)^{N}\,,
\end{aligned}
\end{equation*}
for some constant $ C $. The parameter $ \alpha \in
(0, 1)$ modulates the sub-critical level of the noise. We see here that if $ \alpha < 1 $, then in order to make the
left-hand side small, it suffices to choose $ N $ finite, but sufficiently
large. Instead, in our setting which corresponds to $\alpha=1$, this bound degenerates.
and it is replaced, instead, by \eqref{errror-estim}. Having a control on the error, then, leads to the need of a growing  
choice of $N_\ve$.
We collect the essential elements of this discussion in the following remark. 

\begin{remark}\label{rem:needWild}
The estimate in Proposition~\ref{prop_max_principal} is crucial to understand
our approach. First, this bound forces us to control the Wild expansion up to
the (diverging) level $ N_{\ve} = \lfloor \log{ \tfrac{1}{\ve} } \rfloor $, uniformly over
$ \ve $. It is for this reason that we require the precise estimates on the
stochastic integrals associated to trees up to size $ | \tau | \leqslant
N_{\ve} $, which are at the heart of our work. Second, this bound imposes us to
work with a \emph{small} coupling
constant $ \hat{\lambda} $ and small times (if $ \mf{m} > 0 $), although we
expect our main result to hold for all times and coupling constants.
\end{remark}
Given the error estimate in Proposition~\ref{prop_max_principal} above, the next task
is to identify the convergence of the truncated sequence $u_\ve^{N_\ve}= \sum_{\tau\in
\mT^{N_\ve}_3} X^\tau_\ve$. This convergence is very delicate,
in particular because the number of terms in the sum now grows with $ \ve $.
The next proposition contains the key estimate that allows us to overcome this
issue.
\begin{proposition}\label{prop_single_tree}
Let $T>0$ and $ \hat{\lambda}>0$. Then,
uniformly over any $ \ve \in (0, \tfrac{1}{T}
\wedge \tfrac{1}{2}) $,  $ \tau \in
\mT_{3}^{N_{\ve}} $, with $ N_{\ve} = \lfloor
\log{\tfrac{1}{\ve}}\rfloor $, and uniformly over all $ ( t,x)
\in [0, T] \times \RR^{2}$, we have
\begin{align*}
    & \Bigg\| \, \sqrt{\log \tfrac{1}{\ve} } \cdot X^{\tau}_\varepsilon(t,x)-
\frac{h^{( \red{ \tau})} (1)}{\red{\tau}!\, s(\red{\tau})}
\left(\frac{3\Hat{\lambda}^2}{2\pi}\right)^{|\red{\tau}|} \hat{\lambda}
e^{ \mathfrak{m} \, t}
 P_{t +
\ve^{2}} \eta (x) \, \Bigg\|_{L^2(\PP)}\\
    &  \qquad \leqslant
 \frac{|h^{( \red{ \tau})} (1)|}{\red{\tau}!\, s(\red{\tau})}
\left(C \, \hat{\lambda}^{2} e^{2 \overline{\mathfrak{m}}\, t} \right)^{ |\red{ \tau}|}
\frac{e^{2 |\mathfrak{m}|\, t}+|\log{(t+
\ve^{2})}| + \sqrt{\log \tfrac{1}{\ve}} }{ 2\log{ \tfrac{1}{\ve}}}
 \frac{ \hat{\lambda} e^{\mathfrak{m}\, t}
}{\sqrt{4 (t + \ve^{2})}}  \,,
\end{align*}
where $ \red{ \tau}$ is the trimmed tree $\mathscr{T} ( \tau)$ defined in
\eqref{trimming} and $C$ is the constant defined in
\eqref{e_def_CT}.
\end{proposition}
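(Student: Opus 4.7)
The plan is to invoke Lemma~\ref{lem_expansion_factor} to write $X^\tau_\ve = (h^{(\red{\tau})}(1)/s(\red{\tau}))\,[\tau]_\ve$, reducing the claim to an $L^2$ estimate on
\begin{equation*}
\sqrt{\log\tfrac{1}{\ve}}\,[\tau]_\ve(t,x) \;-\; \frac{1}{\red{\tau}!}\left(\frac{3\hat{\lambda}^2}{2\pi}\right)^{|\red{\tau}|}\hat{\lambda}\,e^{\mathfrak{m}t}\,P_{t+\ve^2}\eta(x).
\end{equation*}
The iterated stochastic integral $[\tau]_\ve$ contains $\ell(\tau)=2i(\tau)+1$ noise factors, so its Wiener chaos expansion lives in the odd chaoses of orders $1,3,\ldots,2i(\tau)+1$, while the target lies in the first chaos. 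By orthogonality the estimate splits into (i) showing that $\Pi_1[\tau]_\ve$ matches the target up to the stated error, and (ii) controlling the higher chaos projections $\Pi_{2k+1}[\tau]_\ve$, $k\geqslant 1$, in $L^2$.

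For (i) I would expand $\Pi_1[\tau]_\ve(t,x)$ as a sum over a ``free'' leaf $v_0\in\mL(\tau)$ and a perfect matching $\pi$ on $\mL(\tau)\setminus\{v_0\}$, each pair $\{a,b\}\in\pi$ contributing a Wick contraction $\lambda_\ve^2\,p_{2\ve^2}(y_a-y_b)$ with $\lambda_\ve^2=\hat{\lambda}^2/\log\tfrac{1}{\ve}$. Performing the contractions turns $\tau$ into a ``contracted graph'' in which each Wick pair becomes an edge between the parents of the two paired leaves. The key observation is that the only local configuration producing the logarithmic divergence $\sim\tfrac{1}{2\pi}\log\tfrac{1}{\ve}$ needed to cancel one power of $\lambda_\ve^{-2}$ is a \emph{v-cycle}: two leaves sharing a common parent $v\in\mI(\tau)$, whose contraction collapses to the local factor $\lambda_\ve^2 e^{2\mathfrak{m}s_v}/[4\pi(s_v+\ve^2)]$ and, after integration in $s_v$, yields the compensating logarithm. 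The dominant pairings are therefore those achieving the maximal number $i(\tau)=|\red{\tau}|$ of v-cycles, and a combinatorial reorganisation built inductively along $\red{\tau}$ shows that the count of such pairings is exactly $3^{|\red{\tau}|}$ --- three choices per vertex of which two of the three children to v-couple and which one to propagate upwards --- giving the factor $(3\hat{\lambda}^2/2\pi)^{|\red{\tau}|}$. After all v-cycles are collapsed, the remaining simplex-type integral along $\red{\tau}$ evaluates, by the tree analogue of $\int_{0<s_n<\cdots<s_1<t}\!\ud\mathbf{s}=t^n/n!$, to $(\log\tfrac{1}{\ve})^{|\red{\tau}|}/\red{\tau}!$ modulo subleading terms; this is precisely the B-series coefficient of \eqref{e:bs}. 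Multiplying by $\sqrt{\log\tfrac{1}{\ve}}$ and rewriting $P^{(\mathfrak{m})}_t\eta_\ve=\hat{\lambda}(\log\tfrac{1}{\ve})^{-1/2}e^{\mathfrak{m}t}P_{t+\ve^2}\eta$ reproduces the asserted leading term.

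Two sources contribute to the error in (i): the defect $|\log(t+\ve^2)|$ between the exact logarithm $\log((t+\ve^2)/\ve^2)$ and its leading part $2\log\tfrac{1}{\ve}$, together with the mass correction $e^{2|\mathfrak{m}|t}$, each of relative order $1/\log\tfrac{1}{\ve}$; and every pairing containing a non-v-cycle pair replaces a local $(s_v+\ve^2)^{-1}$ by the non-local kernel $p_{s_v+s_{v'}+2\ve^2}(y_v-y_{v'})$, which yields no logarithm and is therefore subleading by at least a further factor $1/\log\tfrac{1}{\ve}$. The higher-chaos bound (ii) is analogous: each additional pair of leaves left uncontracted costs an uncompensated factor $\lambda_\ve^2=\hat{\lambda}^2/\log\tfrac{1}{\ve}$, and the remaining $\sqrt{\log\tfrac{1}{\ve}}$ accounts for the $\sqrt{\log\tfrac{1}{\ve}}/\log\tfrac{1}{\ve}$ piece of the stated error. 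The principal difficulty is uniformity in $\tau$ up to size $N_\ve=\lfloor\log\tfrac{1}{\ve}\rfloor$: both the number $\ell(\tau)(2i(\tau)-1)!!$ of Wick pairings and the number of non-v-cycle configurations blow up super-exponentially, and the compensation must come from the Butcher factor $1/\red{\tau}!$ combined with per-diagram heat kernel estimates producing the uniform constant $C=6e^{2+2\pi}/\pi$. Setting up these diagrammatic bounds and the combinatorial reorganisation of pairings by v-cycle patterns on $\red{\tau}$ --- the content of Sections~\ref{sec:exmpl}--\ref{sec:estimates-single-tree} --- is the main technical hurdle, as is extracting cleanly from this bookkeeping the prefactors $|h^{(\red{\tau})}(1)|/(\red{\tau}!\,s(\red{\tau}))$ appearing in the stated bound.
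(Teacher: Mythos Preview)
Your outline is essentially the paper's approach: reduce via Lemma~\ref{lem_expansion_factor}, decompose $[\tau]_\ve$ into contracted-tree components, isolate those whose contractions can be iteratively collapsed to what the paper calls \emph{$1$-cycles} (your ``v-cycles''; beware that the paper reserves ``${\rm v}$-cycle'' for the more general object of Definition~\ref{def:vcycle}), compute their contribution exactly, and bound the rest. The paper organises the split not by chaos order but as \emph{contributing} versus \emph{non-contributing} contractions (Definition~\ref{def:contr_tree}, Lemma~\ref{lem:characContr}); this is the same partition you describe, since contributing contractions are precisely the first-chaos ones admitting an iterative $1$-cycle removal. Lemma~\ref{lem:contr} then gives the exact identity you sketch for each $\kappa\in C(\tau)$, and $|C(\tau)|=3^{i(\tau)}$ is Lemma~\ref{lem:count_contr}; combined with Corollary~\ref{c_1cycle_taylor} this yields the $(e^{2|\mathfrak m|t}+|\log(t+\ve^2)|)/\log\tfrac{1}{\ve}$ piece of the error.

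Where your proposal is only heuristic is the uniform bound on the non-contributing part, and here the paper's mechanism is sharper than a chaos-by-chaos count. The paper passes to paired trees $[\tau,\tau]_\gamma$ and introduces a \emph{cycle extraction map} $\Pi_\tau\colon \mY(\tau,\tau)\to S_{2i(\tau)}$ (Definitions~\ref{def:cycle-extraction}--\ref{def:permCycleMap}) recording the lengths of the successively removed ${\rm v}$-cycles as a permutation; Lemma~\ref{lem_unif_bound_contraction_perm} bounds $[\tau,\tau]_{\gamma,\ve}$ by an integral $\Psi_{\pi,\ve}$ depending only on $\pi=\Pi_\tau(\gamma)$. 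The pre-image sizes are controlled crudely by $6^{2i(\tau)}$ (Lemma~\ref{lem:nonContrPreImage}), and the sum over $\pi\neq\mathrm{Id}$ is handled by noting that $\sum_{\pi\neq\mathrm{Id}}\Psi_{\pi,\ve}$ is symmetric (Lemma~\ref{lem_phibar_symmetric}), so the tree-simplex integral factors out $1/(\red\tau!)^2$ via Lemma~\ref{symmetricintegral}, and the remaining sum over $S_n$ is evaluated through the generating function $\mathrm{E}_{S_n}[M^{K(\pi)}]=\binom{n+M-1}{n}$ of the cycle count of a uniform permutation (Lemma~\ref{lem:unifBoundOverPreimage}). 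This permutation-cycle bookkeeping is what produces the uniform constant $C=6e^{2+2\pi}/\pi$ and the clean $1/\sqrt{\log\tfrac{1}{\ve}}$ rate for all non-contributing contractions at once; your per-chaos accounting, while morally correct, would not directly deliver this uniformity.
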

The above Proposition both identifies the limit of $ \sqrt{\log \tfrac{1}{\ve}
} \, X^{\tau}_\varepsilon$ and gives a quantitative estimate of its rate of
convergence.
The proof of Proposition~\ref{prop_single_tree} is at the heart of this article
and can be found at the end of Section~\ref{sec:estimates-single-tree}. It builds on
all the results that are derived on the way. We
highlight that the bound we obtain is uniform over all trees $ \tau \in
\mT_{3} $ with $ | \tau | =O( \log{ \tfrac{1}{\ve}} )$. This is rather
remarkable: As $ | \tau | $ grows, every tree consists of a growing number of
components living in distinct homogeneous chaoses and it requires precise
estimates to bound all of them at once.
It is thus crucial that the right hand-side is summable over $ \tau $ and decays for $ \ve \to 0 $,
in order to justify the asymptotics
\begin{align*}
\sqrt{\log \tfrac{1}{\ve}}
u_{ \ve}^{N_{\ve}}(t,x)=
\sqrt{\log \tfrac{1}{\ve}} \sum_{\tau\in\mT_3^{N_\ve}} X^{\tau}_\ve(t,x)
&\sim \hat{\lambda} \, \sum_{\tau\in\mT_3^{N_\ve}}  \frac{h^{( \red{ \tau})} (1)}{\red{\tau}!\, s(\red{\tau})}
\Big(\frac{3\Hat{\lambda}^2}{2\pi}\Big)^{|\red{\tau}|}
e^{\mathfrak{m}\, t}
P_{t +
\ve^{2}} \eta (x)  \\
&\sim \hat{\lambda} \,\left\{ \sum_{\tau\in\mT_3}  \frac{h^{( \red{ \tau})} (1)}{\red{\tau}!\, s(\red{\tau})}
\Big(\frac{3\Hat{\lambda}^2}{2\pi}\Big)^{|\red{\tau}|} \right\}
P_{t}^{(\mathfrak{m})} \eta (x)\, ,
\end{align*}
as $ \ve \to 0$.
The series appearing in the expression above equals
\begin{equation}\label{sigmalambda}
  \sum_{\tau\in\mT_3}  \frac{h^{( \red{ \tau})} (1)}{\red{\tau}!\, s(\red{\tau})}
\Big(\frac{3\Hat{\lambda}^2}{2\pi}\Big)^{|\red{\tau}|} 
 =  y\Big(\frac{3\hat\lambda^2}{2\pi} \Big)
=\sigma_{\hat\lambda}
\end{equation}
where $y( \cdot)$ is the solution of the differential equation $\dot{y}=- y^{3}$ with
$y(0)=1$, cf. \eqref{e_bseries_examp}, which leads to the expression for the
limiting variance in Theorem~\ref{thm:main}.
Making the previous argument rigorous
is the content of the following proposition, which is the final step towards
the proof of Theorem~\ref{thm:main}.

\begin{proposition}\label{prop_exp_close_to_but}
Let $ \hat{\lambda}>0$ and $T>0$ satisfy
\begin{equation}\label{e:condition}
\begin{aligned}
 \hat{\lambda} e^{ \overline{\mathfrak{m}}\, T} < \frac{1}{\sqrt{2C}} \;,
\end{aligned}
\end{equation}
with $ \overline{\mathfrak{m}}=  \max \{ \mf{m}\;, 0 \}$ and $C$ be the
constant defined in \eqref{e_def_CT}.
Then for all $ (t, x) \in
(0, T] \times \RR^{2}  $
    \begin{align*}
        \lim_{\ve\to0}  \Bigg\| \sqrt{\log{\tfrac{1}{\ve}}} \, u_\ve^{
N_{ \ve}}(t, x)  - \hat{\lambda}\,  \sigma_{\Hat{\lambda}} P_t^{(\mathfrak{m})} \eta(x)
\Bigg\|_{L^2(\PP)}= 0\,,
    \end{align*}
where $\sigma_{\hat\lambda}$ is as in \eqref{sigmalambda}.
\end{proposition}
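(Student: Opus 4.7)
\emph{Proof proposal.} The plan is to insert and subtract the leading Gaussian profile identified in Proposition~\ref{prop_single_tree}, decomposing the quantity to bound as
\begin{align*}
\sqrt{\log \tfrac{1}{\ve}}\, u_\ve^{N_\ve}(t,x) - \hat\lambda \sigma_{\hat\lambda} P_t^{(\mf{m})}\eta(x)
 &= \sum_{\tau \in \mT_3^{N_\ve}} \Big( \sqrt{\log \tfrac{1}{\ve}}\, X_\ve^\tau(t,x) - c_\tau\, \hat\lambda e^{\mf{m}t} P_{t+\ve^2}\eta(x) \Big)\\
 & \quad + \Big( A_{N_\ve} - \sigma_{\hat\lambda}\Big) \hat\lambda e^{\mf{m}t} P_{t+\ve^2}\eta(x) \\
 & \quad + \hat\lambda \sigma_{\hat\lambda}\, e^{\mf{m}t}\big( P_{t+\ve^2}\eta(x) - P_t \eta(x)\big),
\end{align*}
where $c_\tau = \tfrac{h^{(\red\tau)}(1)}{\red\tau!\, s(\red\tau)}\bigl(\tfrac{3\hat\lambda^2}{2\pi}\bigr)^{|\red\tau|}$ and $A_N := \sum_{\tau \in \mT_3^N} c_\tau$. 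I would then show that each of the three pieces vanishes in $L^2(\PP)$ as $\ve \to 0$.

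For the first piece, I would apply Proposition~\ref{prop_single_tree} term-by-term and use the triangle inequality. The resulting bound is of the form
\[
\frac{\hat\lambda e^{\mf{m}t}}{\sqrt{4(t+\ve^2)}}\cdot \frac{e^{2|\mf{m}|t}+|\log(t+\ve^2)|+\sqrt{\log\tfrac{1}{\ve}}}{2\log\tfrac{1}{\ve}}\cdot \sum_{\tau \in \mT_3^{N_\ve}} \frac{|h^{(\red\tau)}(1)|}{\red\tau!\, s(\red\tau)} \bigl(C\hat\lambda^2 e^{2\overline{\mf{m}}t}\bigr)^{|\red\tau|}.
\]
Using the trimming bijection from Lemma~\ref{lem_trimming_bij}, the sum over $\tau \in \mT_3^{N_\ve}$ can be rewritten as a sum over $\sigma \in \mT_{\leqslant 3}^{N_\ve}$. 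Since $h(y)=-y^3$ satisfies $|h^{(k)}(1)| = \bar h^{(k)}(1)$ for $\bar h(y)=y^3$ and $k\le 3$, and $\bar h^{(k)}(1)=0$ for $k\ge 4$, this in turn is bounded by the Butcher series for the ODE $\dot{\bar y} = \bar y^3$, $\bar y(0)=1$, evaluated at $\zeta = C\hat\lambda^2 e^{2\overline{\mf{m}}T}$; by assumption \eqref{e:condition}, $\zeta < 1/2$, so this series converges (to $(1-2\zeta)^{-1/2}$ by \eqref{e_bseries_examp}). The remaining prefactor is $O((\log\tfrac{1}{\ve})^{-1/2})$, so the first piece vanishes.

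For the second piece, I again apply the trimming bijection to write $A_{N_\ve} = \sum_{\sigma \in \mT_{\leqslant 3}^{N_\ve}} \tfrac{h^{(\sigma)}(1)}{\sigma!\, s(\sigma)} \bigl(\tfrac{3\hat\lambda^2}{2\pi}\bigr)^{|\sigma|}$. Since $h^{(\sigma)}(1)$ vanishes as soon as some inner vertex of $\sigma$ has more than three descendants, the sum over $\mT_{\leqslant 3}^{N_\ve}$ coincides with the truncation at level $N_\ve$ of the full Butcher series over $\mT$. By the same absolute summability argument as above and \eqref{sigmalambda}, $A_{N_\ve} \to \sigma_{\hat\lambda}$; combined with the uniform bound $\|P_{t+\ve^2}\eta(x)\|_{L^2(\PP)} = (4\pi(t+\ve^2))^{-1/2}$, this piece vanishes. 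For the third piece, the standard identity $\|(P_{t+\ve^2}-P_t)\eta(x)\|_{L^2(\PP)}^2 = \int_{\RR^2}(p_{t+\ve^2}(y)-p_t(y))^2\,\ud y$ together with dominated convergence gives convergence to zero.

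The main analytical obstacle is the first piece, because the number of trees in $\mT_3^{N_\ve}$ grows rapidly with $\ve$ and the single-tree error in Proposition~\ref{prop_single_tree} must be summable against this combinatorial explosion. The key observation making this work is that the $\tau$-dependent part of the single-tree bound is exactly the summand of the Butcher series of an auxiliary ODE, so absolute summability reduces to the radius-of-convergence condition \eqref{e:condition} on $\hat\lambda e^{\overline{\mf{m}}T}$. This is precisely the constraint inherited by Theorem~\ref{thm:main} and, as flagged in Remark~\ref{rem:needWild}, removing it would require capturing the sign cancellations in the Butcher series of $\dot y=-y^3$ (which is globally well-posed) rather than majorising by that of $\dot{\bar y}=\bar y^3$ (which blows up at $\zeta=1/2$).
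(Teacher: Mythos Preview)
Your proposal is correct and follows essentially the same approach as the paper's proof: your three-piece decomposition matches the paper's split into the two limits \eqref{e:prf-prop-1} and \eqref{e:prf-prop2}, with the latter further broken into exactly your second and third pieces; your $A_{N_\ve}$ is precisely the truncated Butcher series $B_h^\ve(\zeta_{\hat\lambda},1)$ introduced there, and the summability argument via the auxiliary ODE $\dot{\bar y}=\bar y^3$ under \eqref{e:condition} is identical.
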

We will provide the proof of Proposition~\ref{prop_exp_close_to_but} at the end
of the
next subsection.
Before we pass to the proof of Theorem~\ref{thm:main}, let us explain  the
structure that underlies our main estimate, which is contained in Proposition
\ref{prop_single_tree}.

\subsubsection{Outline of the structure governing the terms $X^\tau_\ve$}
\label{sec:outline}
Each Wiener integral $X^\tau_\ve$ is an element of an {\it inhomogeneous
Wiener chaos} (cf. \cite{Nualart}). An element of an inhomogeneous Wiener chaos
admits a decomposition into its {\it homogeneous Wiener chaos} components.
These projections are obtained via all possible
{\it pairwise contractions} of noises. In  \eqref{e:tau-int}, this means that $X^\tau_\ve$ can be written as a sum
over all possible subsets of pairs of leaves $\kappa \subset \mL(\tau)\times \mL(\tau) $, where
for each $(u,v)\in \kappa$ we replace the product of noise terms
$\eta_\ve( y_u) \eta_\ve( y_v) $ by $\EE [ \eta_{\ve}(y_{u}) \eta_{\ve}(y_{v}) ]=   \lambda_{ \ve}^{2} p_{2\ve^2}(y_u-y_v)$. In the $\ve\to0$ limit
this corresponds to
``contracting'' the noises to the ``diagonal'' $y_v=y_u$, as $ p_{2 \ve^{2}}$ approximates a
Dirac \(\delta\). A diagrammatic example of a possible contraction  (or homogeneous Wiener chaos) configuration
 is the following:
\begin{equation*}
\begin{aligned}
        \vcenter{\hbox{
        \begin{tikzpicture}[scale=0.55]
	\draw[purple, thick] (-1.45,3.35) to[out=90,in=180] (-1.25,3.75);
        \draw[purple, thick] (-1.25,3.75) to[out=0,in=90] (-1.05,3.35);
	\draw[purple, thick] (1.75,2.75) to[out=90,in=180] (2.1,3.25);
        \draw[purple, thick] (2.1,3.25) to[out=0,in=90] (2.45,2.75);
	\draw[purple, thick] (0,2) to[out=90,in=180] (0.7,3);
        \draw[purple, thick] (0.7,3) to[out=0,in=90] (1.05,2.75);
	\draw[purple, thick] (-1.75,2.75) to[out=120,in=180] (-1,4.15);
        \draw[purple, thick] (-1,4.15) to[out=0,in=90] (-0.65,3.35);
	\draw (-1.05,2.75) -- (-1.05, 3.35) node[dot] {} ;
	\draw (-1.45, 3.35) node[dot] {}  -- (-1.05,2.75) -- (-0.65, 3.35) node[dot] {} ;
	\draw (-1.75,2.75) node[dot] {} -- (-1.75,2) ;
	\draw (-1.05,2.75) node[idot] {} -- (-1.75,2) -- (-2.45,2.75) node[dot] {} ;
	\draw (1.75,2.75) node[dot] {} -- (1.75,2) ;
	\draw (1.05,2.75) node[dot] {} -- (1.75,2) -- (2.45,2.75) node[dot] {} ;
	\draw (-1.75,2) node[idot] {}  -- (0,1) -- (1.75,2) node[idot] {} ;
	\draw (0,1) -- (0,2) node[dot] {};
        \draw  (0,0)   -- (0,1) node[idot] {}  ;
	\node at (-.3,0.8) {\scalebox{0.6}{$1$}};
	\node at (-2.05,1.8) {\scalebox{0.6}{$2$}};
	\node at (2.05,1.8) {\scalebox{0.6}{$3$}};
	\node at (-.75,2.65) {\scalebox{0.6}{$4$}};
        \node at (.4,0) {\scalebox{0.8}{$\ve$}};
\end{tikzpicture}  }} \;,
\end{aligned}
\end{equation*}
which lies in the first homogeneous Wiener chaos (as only one leaf is
uncontracted) and is therefore Gaussian.
At the heart of our approach lies the observation that in a contracted tree as
depicted above, the eventual contribution to the limit is determined by the
total number of so-called \emph{$ 1 $-cycles} that one can iteratively extract from the tree. These
are cycles that involve two leaves and one inner vertex, such as the one
incident on the inner vertex $ 4 $ (or equivalently $ 3 $) above. The reason for their importance is a
time-space decoupling. Indeed the contribution of the cycle based at $ 4
$ can be computed explicitly as follows:
     \begin{equation} \label{e:time-kernel}
	\lambda_{ \ve}^{2}
     \int_{(\RR^2)^2} p_{s_4}^{(\mathfrak{m})}(x_4-y) p_{\ve^2}(y-y')
p_{s_4}^{(\mathfrak{m})}(x_4-y') \ud y \ud y' = \lambda_{ \ve}^{2}
e^{2\mathfrak{m}\, s_{4}}
 p_{2(s_4+\ve^2)}(0)\;,
     \end{equation}
by using the Chapman--Kolmogorov equations, where we denoted the
time-space variable associated to the node $i$ by $(s_{i},x_{i})$.
Notably, the result
is independent of the space variable $ x_{4} $. We can therefore replace the
original kernel through a time-dependent kernel based at $ x_{4} $ (all the
rest unchanged), which graphically we visualize as a red loop around $
x_{4} $:
\begin{equation*}
\begin{aligned}
        \vcenter{\hbox{
        \begin{tikzpicture}[scale=0.55]
	\draw[purple, thick] (-1.45,3.35) to[out=90,in=180] (-1.25,3.75);
        \draw[purple, thick] (-1.25,3.75) to[out=0,in=90] (-1.05,3.35);
	\draw[purple, thick] (1.75,2.75) to[out=90,in=180] (2.1,3.25);
        \draw[purple, thick] (2.1,3.25) to[out=0,in=90] (2.45,2.75);
	\draw[purple, thick] (0,2) to[out=90,in=180] (0.7,3);
        \draw[purple, thick] (0.7,3) to[out=0,in=90] (1.05,2.75);
	\draw[purple, thick] (-1.75,2.75) to[out=120,in=180] (-1,4.15);
        \draw[purple, thick] (-1,4.15) to[out=0,in=90] (-0.65,3.35);
	\draw (-1.05,2.75) -- (-1.05, 3.35) node[dot] {} ;
	\draw (-1.45, 3.35) node[dot] {}  -- (-1.05,2.75) -- (-0.65, 3.35) node[dot] {} ;
	\draw (-1.75,2.75) node[dot] {} -- (-1.75,2) ;
	\draw (-1.05,2.75) node[idot] {} -- (-1.75,2) -- (-2.45,2.75) node[dot] {} ;
	\draw (1.75,2.75) node[dot] {} -- (1.75,2) ;
	\draw (1.05,2.75) node[dot] {} -- (1.75,2) -- (2.45,2.75) node[dot] {} ;
	\draw (-1.75,2) node[idot] {}  -- (0,1) -- (1.75,2) node[idot] {} ;
	\draw (0,1) -- (0,2) node[dot] {};
        \draw  (0,0)   -- (0,1) node[idot] {}  ;
	\node at (-.3,0.8) {\scalebox{0.6}{$1$}};
	\node at (-2.05,1.8) {\scalebox{0.6}{$2$}};
	\node at (2.05,1.8) {\scalebox{0.6}{$3$}};
	\node at (-.75,2.65) {\scalebox{0.6}{$4$}};
        \node at (.4,0) {\scalebox{0.8}{$\ve$}};
\end{tikzpicture}  }}
\ & \mapsto \
  \vcenter{\hbox{
        \begin{tikzpicture}[scale=0.55]
	\draw[purple, thick] (-1.05,2.75) to[out=150,in=180] (-1.3,3.5);
        \draw[purple, thick] (-1.3,3.5) to[out=0,in=90] (-1.05,2.75);
	\draw[purple, thick] (1.75,2) to[out=90,in=180] (2,3);
        \draw[purple, thick] (2,3) to[out=0,in=30] (1.75,2);
	\draw[purple, thick] (0,2) to[out=90,in=180] (0.7,3);
        \draw[purple, thick] (0.7,3) to[out=0,in=90] (1.05,2.75);
	\draw[purple, thick] (-1.75,2.75) to[out=120,in=180] (-1,4.15);
        \draw[purple, thick] (-1,4.15) to[out=0,in=90] (-0.65,3.35);
	\draw (-1.05,2.75) -- (-0.65, 3.35) node[dot] {} ;
	\draw (-1.75,2.75) node[dot] {} -- (-1.75,2) ;
	\draw (-1.05,2.75) node[idot] {} -- (-1.75,2) -- (-2.45,2.75) node[dot] {} ;
	\draw (1.05,2.75) node[dot] {} -- (1.75,2);
	\draw (-1.75,2) node[idot] {}  -- (0,1) -- (1.75,2) node[idot] {} ;
	\draw (0,1) -- (0,2) node[dot] {};
        \draw  (0,0)   -- (0,1) node[idot] {}  ;
	\node at (-.3,0.8) {\scalebox{0.6}{$1$}};
	\node at (-2.05,1.8) {\scalebox{0.6}{$2$}};
	\node at (2.05,1.8) {\scalebox{0.6}{$3$}};
	\node at (-.75,2.65) {\scalebox{0.6}{$4$}};
         \node at (.4,0) {\scalebox{0.8}{$\ve$}}; 
\end{tikzpicture}  }} \;.
\end{aligned}
\end{equation*}
Similar approach has been followed for removing the cycle rooted at $3$.
Now integrating over $ x_{4} $, again by the Chapman--Kolmogorov equations as
\begin{equation}\label{e:time-kernel2}
\begin{aligned}
\lambda_{ \ve}^{2}
     &\int_{(\RR^2)^2} p_{s_2-s_4}^{(\mathfrak{m})}(x_4-x_2) p_{s_4}^{(\mathfrak{m})}(y'-x_4)
     p_{s_2} ^{(\mathfrak{m})}(y-x_2)
     p_{2\ve^2}(y-y')
 \ud y \ud y' \ud x_4\\
&= \lambda_{ \ve}^{2}
e^{2\mathfrak{m}\, s_{2}}
 p_{2(s_2+\ve^2)}(0)\;,
\end{aligned}
\end{equation}
 we
are left with the product between the time-only-dependent kernels in
\eqref{e:time-kernel}  and \eqref{e:time-kernel2}  and the kernel associated to the tree in which we remove
node $2$ and $ 4 $ (and the cycles that are incident to  them). This tree now has $ 1 $-cycles based at nodes $ 2 $ and $4$.
We can follow the same procedure iteratively for the rest of the cycles as indicated in:
\begin{equation*}
\begin{aligned}
 \vcenter{\hbox{
        \begin{tikzpicture}[scale=0.55]
	\draw[purple, thick] (-1.05,2.75) to[out=150,in=180] (-1.3,3.5);
        \draw[purple, thick] (-1.3,3.5) to[out=0,in=90] (-1.05,2.75);
	\draw[purple, thick] (1.75,2) to[out=90,in=180] (2,3);
        \draw[purple, thick] (2,3) to[out=0,in=30] (1.75,2);
	\draw[purple, thick] (0,2) to[out=90,in=180] (0.7,3);
        \draw[purple, thick] (0.7,3) to[out=0,in=90] (1.05,2.75);
	\draw[purple, thick] (-1.75,2.75) to[out=120,in=180] (-1,4.15);
        \draw[purple, thick] (-1,4.15) to[out=0,in=90] (-0.65,3.35);
	\draw (-1.05,2.75) -- (-0.65, 3.35) node[dot] {} ;
	\draw (-1.75,2.75) node[dot] {} -- (-1.75,2) ;
	\draw (-1.05,2.75) node[idot] {} -- (-1.75,2) -- (-2.45,2.75) node[dot] {} ;
	\draw (1.05,2.75) node[dot] {} -- (1.75,2);
	\draw (-1.75,2) node[idot] {}  -- (0,1) -- (1.75,2) node[idot] {} ;
	\draw (0,1) -- (0,2) node[dot] {};
        \draw  (0,0)   -- (0,1) node[idot] {}  ;
	\node at (-.3,0.8) {\scalebox{0.6}{$1$}};
	\node at (-2.05,1.8) {\scalebox{0.6}{$2$}};
	\node at (2.05,1.8) {\scalebox{0.6}{$3$}};
	\node at (-.75,2.65) {\scalebox{0.6}{$4$}};
         \node at (.4,0) {\scalebox{0.8}{$\ve$}};
\end{tikzpicture}  }} 
\ & \mapsto \
  \vcenter{\hbox{
        \begin{tikzpicture}[scale=0.55]
	\draw[purple, thick] (-1.05,2.75) to[out=110,in=180] (-1.05,3.75);
        \draw[purple, thick] (-1.05,3.75) to[out=0,in=70] (-1.05,2.75);
	\draw[purple, thick] (1.75,2) to[out=90,in=180] (2,3);
        \draw[purple, thick] (2,3) to[out=0,in=30] (1.75,2);
	\draw[purple, thick] (0,2) to[out=90,in=180] (0.7,3);
        \draw[purple, thick] (0.7,3) to[out=0,in=90] (1.05,2.75);
	\draw[purple, thick] (-1.75,2) to[out=110,in=180] (-1.75,3);
        \draw[purple, thick] (-1.75,3) to[out=0,in=70] (-1.75,2);
	\draw (-1.05,2.75) node[idot] {} -- (-1.75,2) -- (-2.45,2.75) node[dot] {} ;
	\draw (1.05,2.75) node[dot] {} -- (1.75,2);
	\draw (-1.75,2) node[idot] {}  -- (0,1) -- (1.75,2) node[idot] {} ;
	\draw (0,1) -- (0,2) node[dot] {};
        \draw  (0,0)   -- (0,1) node[idot] {}  ;
	\node at (-.3,0.8) {\scalebox{0.6}{$1$}};
	\node at (-2.05,1.8) {\scalebox{0.6}{$2$}};
	\node at (2.05,1.8) {\scalebox{0.6}{$3$}};
	\node at (-.75,2.65) {\scalebox{0.6}{$4$}};
         \node at (.4,0) {\scalebox{0.8}{$\ve$}};
\end{tikzpicture}  }}
\ \mapsto \
  \vcenter{\hbox{
        \begin{tikzpicture}[scale=0.55]
	\draw[purple, thick] (-1.05,2.75) to[out=110,in=180] (-1.05,3.75);
        \draw[purple, thick] (-1.05,3.75) to[out=0,in=70] (-1.05,2.75);
	\draw[purple, thick] (1.75,2) to[out=110,in=180] (1.75,3);
        \draw[purple, thick] (1.75,3) to[out=0,in=70] (1.75,2);
	\draw[purple, thick] (0,1) to[out=110,in=180] (0,2);
        \draw[purple, thick] (0,2) to[out=0,in=70] (0,1);
	\draw[purple, thick] (-1.75,2) to[out=110,in=180] (-1.75,3);
        \draw[purple, thick] (-1.75,3) to[out=0,in=70] (-1.75,2);
	\draw (-1.05,2.75) node[idot] {} -- (-1.75,2) -- (-2.45,2.75) node[dot] {} ;
	\draw (-1.75,2) node[idot] {}  -- (0,1) -- (1.75,2) node[idot] {} ;
        \draw  (0,0)   -- (0,1) node[idot] {}  ;
	\node at (-.3,0.8) {\scalebox{0.6}{$1$}};
	\node at (-2.05,1.8) {\scalebox{0.6}{$2$}};c	\node at (2.05,1.8) {\scalebox{0.6}{$3$}};
	\node at (-.75,2.65) {\scalebox{0.6}{$4$}};
         \node at (.4,0) {\scalebox{0.8}{$\ve$}};
\end{tikzpicture}  }} \;,
\end{aligned}
\end{equation*}
removing all $ 1 $-cyclcs until there are none left.
Hence, in the last diagram we remain with a tree which
depicts an iterated (time-only) integral, with (time-only dependent) weights
$\lambda_{ \ve}^{ 2}e^{2\mathfrak{m}\, s_{i}}
 p_{2(s_i+\ve^2)}(0)$ at every node $i$.
Note that the ordering of the time variables within this integral is inherited from the tree
decorated by loops.
A crucial observation will be that only homogeneous chaos
configurations which share precisely this property, contribute in the limit $\ve\to 0$.
This will be the content of Section~\ref{sec:estimates-single-tree}. We also note that because
of the particular structure we have found, determining the eventual limiting
contribution is now a simpler task, as we are left with only a time-dependent
integral.

How to rigorously determine or estimate the contribution of contracted trees
through the removal of certain cycles is the content of the next sections.
We will formally introduce contractions (and pairings) and the notion of
${\rm v}$-cycles in Section~\ref{sec:exmpl}.
In Section~\ref{sec:estimates-single-tree} we will then finally prove
Proposition~\ref{prop_single_tree}.

\subsection{Proof of Theorem~\ref{thm:main}} \label{sec:prf-thm}
We are now ready to prove our main result, given the estimates in
Proposition~\ref{prop_max_principal} and Proposition~\ref{prop_exp_close_to_but},
the proofs of which are postponed to further below in the section.

\begin{proof}[Proof of Theorem~\ref{thm:main}]
We define
$ \hat{\lambda}_{\mathrm{fin}}:= \tfrac{1}{10 \sqrt{C}}$, where $C$ is the
positive constant defined in \eqref{e_def_CT}.
Let
 $ \hat{\lambda} \in (0, \hat{\lambda}_{\mathrm{fin}})$ and $T>0$ such that
\eqref{e_main_ass} is satisfied, which equivalently reads
\begin{equation}\label{e_m_supp}
\begin{aligned}
 \Hat{\lambda} e^{ \overline{\mathfrak{m}} \,
T}  < \frac{1}{10\sqrt{C}} \;.
\end{aligned}
\end{equation}
By the triangle inequality, for $ \ve \in (0,\tfrac{1}{T}
\wedge \tfrac{1}{2})$ and $(t,x) \in
(0,T] \times \RR^{2}$
\begin{equation*}
\begin{aligned}
\big\| \mathcal{U}_{\ve}(t, x) - \hat{\lambda}\, \sigma_{ \hat{\lambda}}  \,  P_{t}^{(\mf{m})} \eta (x) \big\|_{L^{2}(\PP)}
&\leqslant
\sqrt{\log\tfrac{1}{ \ve}}  \left\|  u_{\ve}(t,x)  - u_\ve^{ N_{
\ve}}(t,x)
\right\|_{L^{2}(\PP)}\\
&\qquad +
\Big\| \sqrt{\log\tfrac{1}{\ve}} \,\,
  u^{ N_{ \ve}}_{ \ve}(t, x) -  \hat{\lambda}\, \sigma_{\Hat{\lambda}} \,
P_t^{(\mf{m})} \eta(x) \Big\|_{L^{2}(\PP)}\,.
\end{aligned}
\end{equation*}
The second term on the right-hand side vanishes as $ \ve \to 0 $ by
Proposition~\ref{prop_exp_close_to_but}, since \eqref{e_m_supp} implies
\eqref{e:condition}.
On the other hand, by Proposition~\ref{prop_max_principal}, the first term is
upper bounded by
\begin{equation*}
\begin{aligned}
\sqrt{\log \tfrac{1}{\ve}}  \, \left\|  u_{\ve}(t,x)  - u_\ve^{ N_{ \ve}}(t,x)
\right\|_{L^{2}(\PP)}
\leqslant
\frac{ C_{0}}{\log{ \frac{1}{\ve}}}
\frac{ \big( \sqrt{C} \hat{\lambda} e^{ \overline{\mathfrak{m}}\,
T}\big)^{N_{ \ve}}}{\ve}
\,.
\end{aligned}
\end{equation*}
The blow-up on the right-hand side must be compensated, and here we will crucially use that
$N_{ \ve} \sim \log\tfrac{1}{\ve} $ as $ \ve \to 0$, so that we have a compensating effect from the term $
\big( \sqrt{C} \hat{\lambda} e^{ \overline{\mathfrak{m}}\,
T}\big)^{N_{ \ve}}$.
More precisely, by the choice of $T$ in \eqref{e_m_supp}, we have
\begin{equation*}
\begin{aligned}
- \log{\big( \sqrt{C} \hat{\lambda} e^{ \overline{\mathfrak{m}}\,
T}  \big)} \geqslant \log{10}  >1\,,
\end{aligned}
\end{equation*}
thus,
\begin{equation*}
\begin{aligned}
 \big(\sqrt{C} \hat{\lambda} e^{ \overline{\mathfrak{m}}\, T}\big)^{N_{\ve}}
\leqslant
\exp \left(\log{\big( \sqrt{C} \hat{\lambda} e^{ \overline{\mathfrak{m}}\,
T}\big)}
\left( \log{ \tfrac{1}{\ve}} -1 \right)\right)
 \leqslant
10\,
\ve^{ \log{10} }
\;, \qquad \forall \ve \in (0, \tfrac{1}{T}
\wedge \tfrac{1}{2})  \;.
\end{aligned}
\end{equation*}
Hence, we obtain that
\begin{equation*}
\begin{aligned}
\sqrt{\log \tfrac{1}{\ve}}  \left\|  u_{\ve}(t,x)  - u_\ve^{ N_{ \ve}}(t,x)
\right\|_{L^{2}(\PP)}
\leqslant
\frac{10\, C_{0}}{\log{ \frac{1}{\ve}}}
\ve^{(\log{10}) -1}
 \,,
\end{aligned}
\end{equation*}
which vanishes in the limit $ \ve \to 0 $. This concludes the proof.
\end{proof}

In the remainder of this section we prove
that the truncated Wild expansion $ u_{ \ve}^{ N_{ \ve}}$ indeed approximates the solution $
u_{ \ve}$ (Proposition~\ref{prop_max_principal})
and that $\sqrt{ \log{ \tfrac{1}{\ve}}}  u_{ \ve}^{N_{ \ve}}(t,x)$ is close to $ \hat{\lambda}\, \sigma_{ \hat{\lambda
} }P_{t} \eta(x)$  in $L^{2}(\PP)$ (Proposition~\ref{prop_exp_close_to_but}).
The proof of Proposition~\ref{prop_single_tree} will be given at the end of
Section~\ref{sec:estimates-single-tree}.

\begin{proof}[Proof of Proposition~\ref{prop_max_principal}]
Let $T>0$,  $(t,x) \in (0 , T] \times \RR^2$ and $ \ve \in (0,\tfrac{1}{T}
\wedge \tfrac{1}{2})$.
From \eqref{e:ac2} and \eqref{eq:wild_exp_pde}, we obtain that the difference $
w^{N}_{\ve} = u^{N}_{\ve} - u_{\ve} $ solves the equation
\begin{equation}\label{e:sup-prop2}
\begin{aligned}
\partial_{t} w^{N}_{\ve} =  \frac{1}{2} \Delta w^{N}_{\ve} + \mf{m} w^{N}_{\ve} -
(u_\ve^N)^3 + u_\ve^{3} + R_\ve^N\,, \quad w^{N}_{\ve} (0, \cdot)  = 0\,,
\end{aligned}
\end{equation}
with $ R_\ve^N $ defined in \eqref{e:def-R}. Defining $V^{N}_{\ve}(t,x)
:=\tfrac{(u_\ve^N)^3 - u_\ve^{3}}{u_\ve^N - u_\ve}$, we can write \eqref{e:sup-prop2} as
\begin{align*}
\partial_{t} w^{N}_{\ve}  =  \frac{1}{2} \Delta w^{N}_{\ve} + \mf{m}
w^{N}_{\ve}  - V^{N}_{ \ve} \cdot w^{N}_{\ve} + R_\ve^N\,, \quad w^{N}_{\ve}
(0, \cdot)  = 0\,.
\end{align*}
The Feynman--Kac formula \cite[Theorem 5.7.6]{karatzas1991brownian} then allows
to represent $ w^{N}_{\ve}$
as
\begin{align*}
w^{N}_{\ve} (t, x)=  \mathbf{E}_x\Bigg[\int_0^t R_\ve^N(t-s,\beta(s)) \exp\Big(
\mf{m} s -\int_0^s V^{N}_{ \ve }(s-r,\beta(r)) \ud r\Big) \ud s \Bigg] \;,
\end{align*}
where $\beta(\cdot)$ is a two dimensional Brownian path and $\mathbf{E}_x$ is the expectation with respect to it
when the path starts from $x\in\RR^2$.
Using the fact that $V^{N}_{ \ve}\geqslant 0$, which is due to the monotonicity of the
mapping $u\mapsto u^3$, we obtain that\footnote{We note that this estimate, via triangle inequality and dropping part of the exponential term
is not expected to be optimal and is the place where we lose. This  leads subsequently to the need of a growing $N_\ve$ and 
the restrictions to the time horizon.}
\begin{align*}
\big|\, u_\ve^N (t,x)- & u_\ve(t,x) \,\big| \\
&\leqslant \mathbf{E}_x\Bigg[ \int_0^t \big|R_\ve^N(t-s,\beta(s)) \big|
\exp\Big(\mf{m} \, s -\int_0^s V^{N}_{ \ve}(s-r,\beta(r)) \ud r\Big) \ud s \Bigg] \\
&\leqslant \mathbf{E}_x\Bigg[ \int_0^t e^{\mf{m}\, s}  \big| R_\ve^N(t-s,\beta(s))
\big|  \ud s \Bigg]\;.
\end{align*}
Writing the latter in terms of the heat kernel we conclude that
\begin{align*}
\big|\, u_\ve^N (t,x)- u_\ve(t,x) \,\big|
&\leqslant e^{ \overline{\mf{m}}\, t} \int_0^t \int_{\RR^d}  p_{t-s}(y-x)\, |R_\varepsilon^{N}(s,y)| \ud y
\ud s \;.
\end{align*}
Taking the $L^2(\PP)$-norm, we arrive at the bound that we will be working
with:
\begin{align}\label{max-prin-est}
     \| u^{N}_{\ve} (t,x)- u_\varepsilon(t,x) \|_{L^{2} (\PP)}
        & \leqslant e^{\overline{\mf{m}}\, t}\int_0^t   \| R_\varepsilon^{N}(s,0) \|_{L^{2} (\PP)}
\ud s\,,
\end{align}
where we have used that $ R^{N}_{\ve} $ is spatially homogeneous.
To continue, we use the definition of $R_\ve^N$ from \eqref{e:def-R}, the triangle inequality
and  H\"older's inequality, to obtain
\begin{align}\label{eq_supp1_maximPrinc}
    \|R_\varepsilon^{N}(s,0)\|_{L^2(\PP)}
    &\leqslant
    \sum_{\substack{ \tau_1, \tau_{2}, \tau_3\in \mT_3^{N} \\ [\tau_1\,
\tau_2 \, \tau_3]\notin \mT_3^{N}}}
    \|
    (X_\varepsilon^{\tau_1}X_\varepsilon^{\tau_2}X_\varepsilon^{\tau_3})(s,0)\|_{L^2(\PP)}\\
    &\leqslant
    \sum_{\substack{ \tau_1, \tau_{2}, \tau_3\in \mT_3^{N} \\ [\tau_1\,
\tau_2 \, \tau_3]\notin \mT_3^{N}}}
    \|
    X_\varepsilon^{\tau_1}(s,0)\|_{L^6(\PP)}
    \|
    X_\varepsilon^{\tau_2}(s,0)\|_{L^6(\PP)}
    \|
    X_\varepsilon^{\tau_3}(s,0)\|_{L^6(\PP)} \;. \nonumber
\end{align}
At this point we use
hypercontractivity, namely estimates of the $L^q(\PP)$-norm by the $L^p(\PP)$-norm, for $q>p>1$, for random variables in a fixed inhomogeneous Wiener
chaos \cite[Theorem 5.10]{Janson1997}. In our case it is important to quantify
the constant appearing in the hypercontractivity estimates in terms of the chaos level in
which the random variable lies. In particular, we will use the following
estimate, which is an immediate consequence of \cite[Remark~5.11]{Janson1997}:
\begin{align*}
    \| X_\varepsilon^{\tau_i}(s,y)\|_{L^6(\PP)} \leqslant
5^{\frac{\ell(\tau_i)}{2}}\,  \|
X_\varepsilon^{\tau_i}(s,y)\|_{L^2(\PP)}\,.
\end{align*}
To bound the $ L^{2} (\PP) $-norm, we will make use of
Proposition~\ref{prop_single_tree} to obtain
\begin{equation}\label{e:sup-prop3}
       \|X^{\tau}_\varepsilon(s,y)\|_{L^2(\PP)}
\leqslant \frac{\Tilde{c} (T,\mathfrak{m})}{ \sqrt{ \log{ \frac{1}{ \ve} }}}
\frac{|h^{( \red{ \tau})} (1)|}{\red{\tau}!\, s(\red{\tau})}
 \left( C\,
\Hat{\lambda}^2 e^{2 \overline{\mathfrak{m}} \, s }\right)^{|\red{\tau}|}
\frac{ \hat{\lambda} e^{ \mathfrak{m}\, s}}{2 \sqrt{s + \ve^{2}}}
   \,,
    \end{equation}
with  $ \Tilde{c} (T,\mathfrak{m}):= e^{ 2|\mathfrak{m}|\, T}+4$.
The verification of this bound is deferred to the bottom of this proof.
Assuming \eqref{e:sup-prop3} is true, and using the identity \( \ell( \tau_{i}) = 2|
\red{\tau_{i}}| +1\) (which holds since $ \tau \in \mT_{3}$), we obtain
\begin{align}\label{e:sup-prop3.1}
\| X_\varepsilon^{\tau_i}(s,y)\|_{L^6(\PP)}
 \leqslant 5^{|\red{\tau_i}|+ \frac{1}{2} } \frac{\Tilde{c} (T,\mathfrak{m})}{ \sqrt{ \log{ \frac{1}{ \ve} }}}
 \frac{|h^{( \red{ \tau_{i}})}
(1)|}{\red{\tau_{i}}!\, s(\red{\tau_{i}})}
 \left(C\, \Hat{\lambda}^2 e^{ 2 \overline{\mathfrak{m}}\, s}\right)^{|\red{\tau_{i}}|}
\frac{\hat{\lambda}  e^{ \mathfrak{m}\, s}}{2 \sqrt{s + \ve^{2}}}  \,.
\end{align}
Combining \eqref{max-prin-est} with \eqref{eq_supp1_maximPrinc} and \eqref{e:sup-prop3.1}, we therefore conclude
that
\begin{equation}\label{e:prf-sup5}
\begin{aligned}
        \sqrt{\log \tfrac{1}{\ve}}  &\cdot   \| u^{N}_{ \ve} (t,x)-
u_\varepsilon(t,x) \|_{L^{2} (\PP)} \\
& \leqslant \sqrt{\log \tfrac{1}{\ve}} \cdot   e^{ \overline{\mf{m}} t}
\int_0^{\infty}   \|
R^\varepsilon(s,0) \|_{L^{2} (\PP)} \ud s  \\
&\leqslant
\frac{\big(\sqrt{5} \hat{\lambda}\, \Tilde{c} (T,\mathfrak{m}) \big)^{3}\,
e^{4
\overline{\mathfrak{m}} \,t}}{8 \, \log{ \frac{1}{ \ve}}}   \left\{ \int_{0}^{\infty} \frac{1}{(s + \ve^{2})^{\frac{3}{2}}} \ud s \right\}
\cdot \\
& \qquad  \qquad \cdot \sum_{\substack{
\tau_1, \tau_{2}, \tau_3\in \mT_3^{N} \\ [\tau_1\, \tau_2 \, \tau_3]\notin \mT_3^{N}}}
    \prod_{i=1}^3  \left\{ \frac{|h^{( \red{ \tau_i})} (1)|}{\red{\tau_i}!\, s(\red{\tau_i})}
     \left( 5 C \Hat{\lambda}^2 e^{2 \overline{\mathfrak{m}} \, t} \right)^{|\red{\tau_i}|}
\right\}\,.
\end{aligned}
\end{equation}
At this point we notice that the time integral appearing in the last estimate blows up
polynomially in $ \ve $, since
\begin{align}\label{e:prf-prop4}
 \int_0^\infty  \frac{1}{(s+\varepsilon^2)^{\frac{3}{2}} } \ud s =
    \left[ -2(s+\varepsilon^2)^{-\frac{1}{2}} \right]_{s=0}^{s= \infty}
    = \frac{2
}{\varepsilon}\,.
\end{align}
On the other hand, for any $ \{ \tau_{i} \}_{i=1}^{3} $ such that $ [\tau_{1}
\ \tau_{2} \ \tau_{3}] \not\in \mT^{N}_{3} $ we have that
\begin{equation*}
\begin{aligned}
 N < i ( [\tau_{1} \ \tau_{2} \ \tau_{3}] ) & = i( \tau_{1} ) + i(
\tau_{2} ) + i(
\tau_{3} ) + 1
= | \red{\tau_{1}} | + | \red{\tau_{2}} | + | \red{\tau_{3}} | + 1  \;.
\end{aligned}
\end{equation*}
Moreover, by assumption,
$ \hat{\lambda}$ is sufficiently small to satisfy
$5\sqrt{ C}\, \Hat{\lambda} e^{ \overline{\mathfrak{m}} \,
t}  < \frac{1}{2}$.
Therefore, we can estimate the sum in the last line of \eqref{e:prf-sup5} as
follows. First, observe that
\begin{equation}\label{e:prf_supp10}
\begin{aligned}
\sum_{\substack{
\tau_1, \tau_{2}, \tau_3\in \mT_3^{N} \\ [\tau_1\, \tau_2 \, \tau_3]\notin
\mT_3^{N}}} &
    \prod_{i=1}^3
         \left\{ \frac{|h^{( \red{ \tau_i})} (1)|}{\red{\tau_i}!\, s(\red{\tau_i})}
        \left( 5 C \Hat{\lambda}^2 e^{2 \overline{\mathfrak{m}} \,
t} \right)^{|\red{\tau_i}|}
\right\} \\
& \leqslant  \big(\sqrt{C}\hat{\lambda} e^{ \overline{\mathfrak{m}}\, t}\big)^{N}\sum_{\substack{
\tau_1, \tau_{2}, \tau_3\in \mT_3^{N} \\ [\tau_1\, \tau_2 \, \tau_3]\notin
\mT_3^{N}}}
    \prod_{i=1}^3
         \left\{ \frac{|h^{( \red{ \tau_{i}})} (1)|}{\red{\tau_i}!\, s(\red{\tau_i})}
        \left( 5 \sqrt{C} \Hat{\lambda} e^{ \overline{\mathfrak{m}} \,
t} \right)^{|\red{\tau_i}|}
\right\} \;.
\end{aligned}
\end{equation}
Then we complete the remaining sums on the right-hand side to Butcher series:
\begin{equation*}
\begin{aligned}
\sum_{\substack{
\tau_1, \tau_{2}, \tau_3\in \mT_3^{N} \\ [\tau_1\, \tau_2 \, \tau_3]\notin
\mT_3^{N}}}
    \prod_{i=1}^3
         \left\{ \frac{|h^{( \red{ \tau_i})} (1)|}{\red{\tau_i}!\, s(\red{\tau_i})}
        \left(5 \sqrt{ C}\, \Hat{\lambda} e^{ \overline{\mathfrak{m}} \,
t}  \right)^{|\red{\tau_i}|}
\right\} & \leqslant  \left(
    \sum_{\substack{ \tau\in  \mT_3^{N}}}
        \frac{|h^{( \red{ \tau_i})} (1)|}{\red{\tau}!\, s(\red{\tau})}
        \left( 5\sqrt{ C}\,  \Hat{\lambda} e^{ \overline{\mathfrak{m}} \,
t}  \right)^{|\red{\tau}|}
    \right)^3 \\
& \leqslant \left(
    \overline{y}  \left(5 \sqrt{ C}\,  \Hat{\lambda} e^{ \overline{\mathfrak{m}} \,
t}  \right)\right)^3
=
\frac{1}{( 1- 2\cdot 5 \sqrt{  C}\,  \Hat{\lambda} e^{ \overline{\mathfrak{m}} \,
t}   )^{\frac{3}{2}}}
 \;,
\end{aligned}
\end{equation*}
where $ \overline{y}$ is the solution to the ODE $ \dot{
\overline{y}} =\overline{y}^{3} $ with \emph{positive} initial condition $ \overline{y}(0) =1 $.
The solution of this ODE is $\overline{y}(\zeta)=(1-2\zeta)^{-1/2}$ and so the associated Butcher
series converges for $ \zeta < 1/2 $, i.e.\ if \eqref{e_choice_lam} holds in the
present case.
Thus, with \eqref{e:prf_supp10}, we have
\begin{equation}\label{e:prf_supp8}
\begin{aligned}
\sum_{\substack{
\tau_1, \tau_{2}, \tau_3\in \mT_3^{N} \\ [\tau_1\, \tau_2 \, \tau_3]\notin
\mT_3^{N}}} &
    \prod_{i=1}^3
         \left\{ \frac{|h^{( \red{ \tau_i})} (1)|}{\red{\tau_i}!\, s(\red{\tau_i})}
        \left( 5 C \Hat{\lambda}^2 e^{2 \overline{\mathfrak{m}} \,
t} \right)^{|\red{\tau_i}|}
\right\} 
& \leqslant  \big(\sqrt{C}\hat{\lambda} e^{ \overline{\mathfrak{m}} \,
t}\big)^{N}
\frac{1}{( 1- 10 \sqrt{C} \Hat{\lambda} e^{ \overline{\mathfrak{m}} \,
t}  )^{\frac{3}{2}}}
\;.
\end{aligned}
\end{equation}
Hence,  combining \eqref{e:prf-sup5},
\eqref{e:prf-prop4} and
\eqref{e:prf_supp8}, we obtain
\begin{equation*}
\begin{aligned}
& \sqrt{\log \tfrac{1}{\ve}}  \cdot   \| u^{N}_{ \ve} (t,\cdot)-
u_\varepsilon(t,\cdot) \|_{L^{2} (\PP)}
\leqslant
\frac{C_{0}(T, \mathfrak{m}, \hat{\lambda})}{ { \log{ \frac{1}{\ve}
}}}
\frac{\big(\sqrt{C}\hat{\lambda} e^{ \overline{\mathfrak{m}} \,
t}\big)^{N}
}{\ve}
 \end{aligned}
\end{equation*}
with
\begin{equation}\label{e_def_C0}
\begin{aligned}
C_{0}(T, \mathfrak{m}, \hat{\lambda})
:=
\frac{
e^{4
\overline{\mathfrak{m}} \,T}}{4}
\left(\frac{\sqrt{5} \hat{\lambda}\, \Tilde{c} (T,\mathfrak{m})
}{\sqrt{
1- 10\sqrt{ C}\, \Hat{\lambda} e^{ \overline{\mathfrak{m}} \,
T}  }}  \right)^{3}
\,.
\end{aligned}
\end{equation}
This concludes the proof of
the proposition, modulo the proof of \eqref{e:sup-prop3}.
The latter bound follows simply from the triangle inequality and
Proposition~\ref{prop_single_tree} (which we can apply in view of the
constraint $ | \tau | \leqslant \lfloor \log{\tfrac{1}{\ve}} \rfloor $):
\begin{align}\label{eq_supp2_pos_max_princ}
 \|& X^{\tau}_\varepsilon(t,x)\|_{L^2(\PP)} \\
& \leqslant
 \frac{|h^{( \red{ \tau})} (1)|}{\red{\tau}!\, s(\red{\tau})}
        \left(
 \left(\frac{3\Hat{\lambda}^2}{2\pi}\right)^{|\red{\tau}|}
\left\|\<10>_{\, \varepsilon}(t,x)\right\|_{L^2(\PP)}
+
 \left(C\, \hat{\lambda}^{2} e^{2 \overline{\mathfrak{m}}\, t} \right)^{ |\red{ \tau}|}
\frac{ 2 e^{ 2|\mathfrak{m}|\, T}+
4}{ \sqrt{ \log{ \frac{1}{\ve}}}}
\frac{ \hat{\lambda} e^{\mathfrak{m}\, t}
}{\sqrt{4 (t + \ve^{2})}} \right) \nonumber
\,,
\end{align}
where we made use of the crude estimate
\begin{equation*}
\begin{aligned}
\frac{ e^{ 2|\mathfrak{m}|\, t}+|\log{(t+
\ve^{2})}| + \sqrt{\log \tfrac{1}{\ve}} }{2 \log{ \tfrac{1}{\ve}}}
\leqslant
\frac{ e^{ 2|\mathfrak{m}|\, t}}{2 \log{2}} +2+ \frac{1}{2 \sqrt{\log{2}} }
\leqslant
e^{ 2|\mathfrak{m}|\, T}+3\,,
% 2 e^{ 2|\mathfrak{m}|\, T}+4\,,
\end{aligned}
\end{equation*}
which is a consequence of $ \ve \in (0, \tfrac{1}{T}
 \wedge \tfrac{1}{2}) $ and the uniform estimate
\begin{equation}\label{e:def_const_c}
\begin{aligned}
\sup_{0 \leqslant  t \leqslant T}
\frac{|\log (t+\varepsilon^2)|}{2\,\log \frac{1}{\varepsilon}}
&=
\sup_{ 0 \leqslant t \leqslant 1- \ve^{2}}
\frac{|\log (t+\varepsilon^2)|}{2\,\log \frac{1}{\varepsilon}}
\vee
\sup_{ 1- \ve^{2} <t< T}
\frac{\log (t+\varepsilon^2)}{2\,\log \frac{1}{\varepsilon}}\\
& \leqslant
1+
\frac{ \log{( \tfrac{1}{\ve}+1)}}{2\,\log \frac{1}{\ve}  }
\leqslant 2 \,.
\end{aligned}
\end{equation}
Now the statement follows, since
\begin{equation*}
\begin{aligned}
\left(\frac{3\Hat{\lambda}^2}{2\pi}\right)^{|\red{\tau}|}
\left\| \<10>_{\, \ve} (t, x) \right\|_{L^{2}(\PP)}
& =
\left(\frac{3\Hat{\lambda}^2}{2\pi}\right)^{|\red{\tau}|}
\left(
\hat{\lambda}^{2}_{\ve} e^{2 \mathfrak{m}\, t}
 \int_{\RR^{2}}
p_{t+ \ve^{2} }(y)^{2}
\ud y  \right)^{\frac{1}{2}} \\
 & =
\frac{1}{ \sqrt{ \log{ \tfrac{1}{\ve}}}}
\left(\frac{3\Hat{\lambda}^2}{2\pi}\right)^{|\red{\tau}|}
 \frac{
\hat{\lambda} e^{ \mathfrak{m}\, t}}{
\sqrt{4 \pi (t + \ve^{2})} } \\
& \leqslant
\frac{
 \left(C\,
\Hat{\lambda}^2 e^{2 \overline{\mathfrak{m}}\,
t}\right)^{|\red{\tau}|}}{ \sqrt{ \log{ \frac{1}{ \ve}}}}
 \frac{
\hat{\lambda} e^{ \mathfrak{m} \, t}}{2
\sqrt{t + \ve^{2}} }
\;,
\end{aligned}
\end{equation*}
where $C$ is the  constant from \eqref{e_def_CT}.
Thus together with \eqref{eq_supp2_pos_max_princ}, we obtain
\begin{equation*}
\begin{aligned}
 \| X^{\tau}_\varepsilon(t, x)\|_{L^2(\PP)}
\leqslant
\frac{\Tilde{c} (T,\mathfrak{m})}{ \sqrt{ \log{ \frac{1}{ \ve} }}}
\frac{|h^{( \red{ \tau})} (1)|}{\red{\tau}!\, s(\red{\tau})}
 \left( C\,
\Hat{\lambda}^2 e^{2 \overline{\mathfrak{m}} \, t }\right)^{|\red{\tau}|}
\frac{ \hat{\lambda} e^{ \mathfrak{m}\, t}}{2 \sqrt{t + \ve^{2}}}\,.
\end{aligned}
\end{equation*}
with $ \Tilde{c} (T,\mathfrak{m})=e^{ 2|\mathfrak{m}|\, T}+4$.
This completes the proof.
 \end{proof}

\begin{proof}[Proof of Proposition~\ref{prop_exp_close_to_but}]
For $h(y)=-y^3$, we introduce the truncated Butcher series
\begin{equation}\label{e_trunc_b_series}
\begin{aligned}
    B^{\ve}_{h} (\zeta, 1)  =
    \sum_{\tau \in \mT_{\leqslant 3}^{N_{\ve}}} \frac{h^{( \tau)} (1)}{\tau!\, s(\tau)} \zeta^{|\tau|}
=
\sum_{\substack{\tau \in \mT_{ 3}^{N_{ \ve}}}} \frac{h^{( \red{ \tau})} (1)}{\red{\tau}!\, s(\red{\tau})} \zeta^{|\red{\tau}|}
     \,,
\end{aligned}
\end{equation}
where the second equality is a consequence of Lemma~\ref{lem_trimming_bij}.
Therefore, Proposition~\ref{prop_exp_close_to_but} will follow if we can show that the
following two limits hold true with $ \zeta_{ \hat{\lambda}} = \tfrac{3 \hat{\lambda}^{2}}{2 \pi} $:
\begin{align}
&\lim_{ \ve \to 0}  \,\, \Big\| \sqrt{\log{\tfrac{1}{\ve}}} \, u_\ve^{N_{ \ve}}
(t, x) -
\hat{\lambda} B^{\ve}_{h} (\zeta_{ \hat{\lambda}}, 1) \, e^{\mathfrak{m}\, t}
P_{t+ \ve^{2}} \eta (x) \,\Big\|_{L^{2} (\PP)} = 0 \;,
\label{e:prf-prop-1} \\
&\lim_{\ve \to 0}  \,\, \Big \| \hat{\lambda} B^{\ve}_{h} (\zeta_{
\hat{\lambda}}, 1)\, e^{\mathfrak{m}\, t} P_{t+ \ve^{2}} \eta (x) -  \hat{\lambda}\, \sigma_{
\hat{\lambda}} P^{(\mathfrak{m})}_{t} \eta (x)  \,\Big\|_{L^{2} (\PP)} = 0
\label{e:prf-prop2}\;.
\end{align}
The limit \eqref{e:prf-prop-1} follows from Proposition~\ref{prop_single_tree}, provided \eqref{e:condition} holds.
Indeed, via the named proposition, we can bound for $ \ve \in (0,\tfrac{1}{T}
\wedge \tfrac{1}{2})$
\begin{equation}\label{e_supp1clBut}
\begin{aligned}
&\Big \| \sqrt{\log\tfrac{1}{\ve}} \, u_\ve^{ N_\ve}(t, x)
 - \hat{\lambda} B^{\ve}_{h} (\zeta_{ \hat{\lambda}}, 1) \, e^{\mathfrak{m}\, t} P_{t+ \ve^{2}} \eta (x) \Big \|_{L^{2} (\PP)} \\
& \leqslant    \sum_{ \substack{ \tau \in \mT_{3}^{N_{\ve}}}}
 \left\|\sqrt{ \log \tfrac{1}{\ve}}\cdot
X^{\tau}_{\ve}(t, x) - \hat{\lambda}
\frac{h^{( \red{ \tau})} (1)}{\red{\tau}!\, s(\red{\tau})}
 \zeta_{\hat\lambda}^{|\red{\tau}|} e^{\mathfrak{m}\, t} P_{t + \ve^{2}} \eta (x) \right\|_{L^{2} (\PP)} \\
& \leqslant \sum_{  \tau \in
\mT_{\leqslant 3}^{N_{\ve}} }\frac{|h^{(  \tau)} (1)|}{ s(\tau)\  \tau!}
    \left(C \Hat{\lambda}^2 e^{2 \overline{\mathfrak{m}}\, t} \right)^{|\tau|}
\frac{ e^{2| \mathfrak{m}|\, t}+|\log{(t+
\ve^{2})}| + \sqrt{\log \tfrac{1}{\ve}} }{2 \log{ \tfrac{1}{\ve}}}
 \frac{ \hat{\lambda} e^{ \mathfrak{m}\, t} }{ \sqrt{4(t + \ve^{2})}}
\,,
\end{aligned}
\end{equation}
where we  used \eqref{e_trunc_b_series}.
Now, \eqref{e:prf-prop-1} will follow, if we can show that the series on the
right-hand side is summable, that is if
\begin{equation}\label{particularB}
\begin{aligned}
\sum_{ \tau \in
\mT_{\leqslant 3} }\frac{|h^{( \tau)} (1)|}{ s(\tau)\  \tau!}
    \left(C \hat{\lambda}^2 e^{2 \overline{\mathfrak{m}}\, t}\right)^{|\tau|} < \infty
\;.
\end{aligned}
\end{equation}
This is the Butcher series associated to the ODE $ \dot{
\overline{y}} =\overline{y}^{3} $ with initial condition $ \overline{y}(0) =1
$, which converges as long as \eqref{e:condition} holds.
See also the discussion in the proof of
Proposition~\ref{prop_max_principal}.
Hence, \eqref{e_supp1clBut}, and thus \eqref{e:prf-prop-1}, vanish for
arbitrary fixed $t \in (0,T]$, because the second-to-last ratio on the right-hand side in \eqref{e_supp1clBut} vanishes in the
limit $ \ve \to 0 $.

To complete the proof of the proposition we must now check \eqref{e:prf-prop2}.
Here we observe that
\begin{equation*}
\begin{aligned}
&\Big \| \, \hat{\lambda} B^{\ve}_{h} (\zeta_{ \hat{\lambda}}, 1) \,
e^{ \mathfrak{m} \, t} P_{t+ \ve^{2}} \eta (x) -  \hat{\lambda}\,
\sigma_{\hat{\lambda}}\,  P_{t}^{( \mathfrak{m})}\eta (x)  \, \Big\|_{L^{2}
(\PP)}\\
&\leqslant
 \hat{\lambda}\,
 | B^{\ve}_{h} (\zeta_{ \hat{\lambda}}, 1)  - \sigma_{
\hat{\lambda}}|\cdot
 \big \| e^{ \mathfrak{m} \, t} P_{t+ \ve^{2}} \eta (x) \,\big \|_{L^{2} (\PP)}\\
&\qquad +  \hat{\lambda}\, \sigma_{ \hat{\lambda}}
\, \big \| e^{ \mathfrak{m} \, t} P_{t+ \ve^{2}} \eta (x) -
P_{t}^{(\mathfrak{m})} \eta (x) \big \|_{L^{2} (\PP)}\,.
\end{aligned}
\end{equation*}
The second term is converging to $0$ by the continuity properties of the
heat semigroup. Instead, for the first term we observe that \( B^{\ve}_{h}
(\zeta, 1) \) is an approximation to the Butcher series associated to
the solution $ y(\zeta) $ of the ODE $ \dot{ y} = - y^{3}, y(0)=1 $, which is
given by $y(\zeta) = (1 + 2 \zeta)^{-1/2}$. This solution is analytic for
$ | \zeta | < 1/2 $ and the associated Butcher series converges (see the discussion in Section~\ref{sec:butcher}).
Thus, recalling the form of $\sigma_{\hat\lambda}$ \eqref{sigmalambda},
we have that  $\lim_{ \ve \to 0}|  B^{\ve}_{h} (\zeta_{ \hat{\lambda}}, 1)  - \sigma_{ \hat{\lambda}}| =0 $, as long as
$\frac{3 \hat{\lambda}_{\mathrm{fin}}^{2}}{2 \pi} < \frac{1}{2} \;,$
which is implied by \eqref{e:condition}.
This concludes the proof.
 \end{proof}

\section{Contracted trees, Wiener chaoses and their structure}\label{sec:exmpl}

In Section~\ref{sec_outline}, and in particular in 
Subsection~\ref{sec:outline}, we outlined the
structure underlying the main estimate contained in Proposition~\ref{prop_single_tree}.
In this section we will introduce the notion of contraction and
present estimates on the integration kernels associated to the Wild expansion
terms $X_{ \ve}^{ \tau}$, which will allow us to analyze them rigorously in
Section~\ref{sec:estimates-single-tree}.

\subsection{Wiener chaos decomposition, contractions and cycles}

The multiple stochastic integrals $X^{\tau}$ appearing in the Wild expansion
\eqref{e:ac2} lie in
the $\ell(\tau)$-th {\it inhomogeneous} Wiener chaos.
Elements in a finite inhomogeneous Wiener chaos can be decomposed into
terms belonging to distinct {\it homogeneous} Wiener chaoses.
We refer to \cite[Chapter 2]{Janson1997} and  \cite[Chapter 1]{Nualart} for a more detailed discussion about Wiener spaces and
their decomposition to homogeneous components.

Our asymptotic analysis builds on a precise understanding of the decomposition
of the components of the Wild expansion into
its homogeneous chaos terms. The goal of this detailed study will be to show
 that only terms in the first chaos (and not all of them) contribute to the
Gaussian limit in Theorem \ref{thm:main}.

 In our setting, homogeneous components of the Wild expansion will be represented by stochastic integrals
 indexed by trees with additional {\it contraction in pairs} between elements of
 a subset of their leaves. A \emph{contraction} of a given tree is a
paring among the elements of an arbitrary subset of the leaves of the tree.
Unlike the stochastic integrals indexed by the initial tree,
 the stochastic integral indexed by a contracted tree lies in a homogeneous
chaos, whose order is given by the number of
uncontracted leaves. One can then recover the integral associated to the original tree
 by summing over integrals indexed by the same tree with all possible contractions.
Let us now be more precise and start with the definition of a contraction.
\begin{definition}\label{def:contractions}
For any $ \tau \in \mT $\ we define a {\bf contraction} to be a subset 
$\kappa \subset \binom{\mL(\tau)}{2}$, where $\binom{\mL(\tau)}{2}$ denotes the
set of all unordered pairs of leaves of the tree $\tau$, such that
every $v\in \mL(\tau)$ lies in at most one element of  $\kappa$.
We define the corresponding {\it set of contractions} by
\begin{align}\label{def:cont}
    \mathcal{K}(\tau):=\left\{ \kappa \subset \binom{\mL(\tau)}{2}\,:\, \kappa
\text{ is a contraction of }\tau \right\}\,.
\end{align}
Furthermore, we denote a tree $\tau=  (\mV, \mE)$ that is being contracted according to a
contraction $\kappa$ by $\tau_\kappa$:
\begin{equation*}
\begin{aligned}
\tau_{\kappa} := (\mV, \mE \cup \kappa) \;,
\end{aligned}
\end{equation*}
and call this a $\kappa$-{\it contracted tree} or simply a {\it contracted tree}.
We will also denote by $\mL(\tau_\kappa)$  the set of
leaves of $\tau_\kappa$, namely, the set of leaves of $\tau$ which are not included in the contraction $\kappa$.
\end{definition}
If all leaves of $\tau$ are contracted via $\kappa$ we will call $\kappa$ a
{\it complete contraction} and $\tau_\kappa$ a {\it completely
contracted} tree. If this is not the case, we will often talk of {\it partial contraction} and a {\it partially contracted} tree.

Graphically, a contracted tree $\tau_\kappa$ is represented by the original graph of $\tau$ augmented
with edges connecting the pairs of vertices in $\kappa$. We will colour the
additional edges arising from $\kappa$ in red.
For example, the possible contractions of the tree
\begin{align}\label{e:emptylist}
 \<1130_noise>
\end{align}
   are (up to symmetries)
\begin{equation}\label{e:list}
\begin{aligned}
    \<1130_noise>_{\emptyset}
    \,,\
    \<1130_3i>
    \,,\
    \<1130_3ii>
    \,,\
    \<1130_3iii>
    \,,\
    \<1130_1i>
    \,,\
    \<1130_1ii>
    \,,\
    \<1130_1iii> ,
\end{aligned}
\end{equation}
where we denoted the tree without any contraction with a subscript $\emptyset$
to emphasize the empty contraction.\footnote{In the $\emptyset$-contracted
tree $\tau_\emptyset$, the iterated stochastic integral will correspond to a {\it homogeneous chaos},
 and the purpose of the $\emptyset$ subscript in its graphical depiction is to
distinguish it from the
 graph representation \eqref{e:emptylist}, which corresponds to an element of the {\it inhomogeneous chaos}.
 On the other hand, there is no such danger of confusion in the rest of the graphical depictions of $\tau_\kappa$ with $\kappa\neq \emptyset$
 and so we do not use any similar subscript in order not to overload notation.}
Intuitively, when seen as a stochastic integral,
the uncontracted vertices in $\tau_\kappa$ will have all assigned space variables
being distinct, while the edge with space variables $(y_{u_1},y_{u_2})$
connecting a
pair of $(u_1,u_2) \in \kappa$ will be assigned a weight
$p_{2\ve^2}(y_{u_2}-y_{u_1})$.
More precisely,  
to any contracted tree $\tau_\kappa$ we associate a stochastic integral lying in
a {\it homogeneous} Wiener chaos through the following
definition:
\begin{align}
   &{ \tau}_{\kappa, \ve} (t, x)
   : =  \int_{ D_{t}^{\mV(\tau)\setminus \mf{o}}  }
K_{\tau_{\kappa}, \ve}^{t, x} (s_{\mV (\tau)}, y_{\mV(\tau)} )
\, \ud y_{\mV(\tau)\setminus (\mf{o} \cup
\mL(\tau_{\kappa})) }\ud s_{\mV ( \tau) \setminus \mf{o}} \ \eta_\ve( \ud y_{\mL(\tau_\kappa)}) \label{e:tk}
\end{align}
with $D_{t}= [0,t]\times \RR^{2}$ and
\begin{align}
  K_{\tau_{\kappa}, \ve}^{t, x} (s_{\mV (\tau)}, y_{\mV(\tau)})
 & :=
\prod_{u \in \mV( \tau)\setminus \mf{o}}
p_{s_{\mathfrak{p}(u)} - s_{u}}^{( \mathfrak{m })}(y_{\mathfrak{p}(u)} - y_{u})
  \Bigg\{ \prod_{v \in\mL(\tau_\kappa)} \delta_{0}(s_v)\Bigg\}  \label{e:tk2}\\
&  \hskip 3cm \times  \prod_{(u_1,u_2) \in \kappa }
\lambda_{\ve}^{2} \, \delta_{0}(s_{u_1}) \, \delta_{0}(s_{u_2}) \,
p_{2\ve^2}(y_{u_2} - y_{u_1})   \, ,  \notag
\end{align}
with $\mf{p}(u)$ denoting the parent of $u$ and $ ( s_{\mf{o}}, y_{\mf{o}})=
(t,x)$.
To lighten notation, we will often drop the index $t,x$ that indicates the time-space coordinates of the root,
if the explicit indication is not necessary.
Note that \eqref{e:tk} is to be interpreted as an Wiener--It\^o integral. We
stress the difference in notation once and for all here: For the
Wiener--It\^o integral we write $\eta_\ve( \ud y_{\mL(\tau)})$, whereas for the
Wiener--Stratonovich integral we use $ \prod_{v \in \mL ( \tau)} \eta(\ud
y_{v})$, cf. \eqref{e:tau-int}.

With this definition, $\tau_{\kappa,\ve}$
 lies in the homogeneous Wiener chaos of order
$\ell(\tau_k)=|\mL(\tau_\kappa)|$.
Given the decomposition of an element in a
Wiener chaos into its homogeneous
components, see \cite[Remark 7.38]{Janson1997}, we have that for any $\tau=
[ \tau_{1} \ \ldots \ \tau_{n}]$, the associated stochastic integral $\tau_\ve$
 can be decomposed as follows
\begin{equation}\label{e:homchaos}
    \tau_\ve\,
:=
[\tau_{1}]_{\ve}\cdots [ \tau_{n}]_{ \ve}
 = \sum_{\kappa \in \mathcal{K}(\tau)} \tau_{\kappa, \ve}\,,
\end{equation}
with $ [ \tau_{i}]_{ \ve}$ defined in \eqref{e:tau-int}.
In the example of the trees in \eqref{e:emptylist} and \eqref{e:list},
we have that the decomposition of the inhomogeneous element represented by the tree \eqref{e:emptylist} to its
homogeneous components is given by
\begin{align*}
    \<1130_noise>_{\! \ve}
    =
    \<1130_noise>_{\! \ve, \emptyset}
    +
    \<1130_3i>_{\! \ve }
    +
    3
    \<1130_3ii>_{\! \ve}
    +
    6
    \<1130_3iii>_{\! \ve}
    +
    3
    \<1130_1i>_{\! \ve}
    +
    6
    \<1130_1ii>_{\! \ve}
    +
    6
    \<1130_1iii>_{\! \ve}\,,
\end{align*}
where the right-hand side corresponds to the homogeneous stochastic integrals
indexed by the contracted trees in \eqref{e:list}.
Here we have taken into account multiplicities of homogeneous components due
to equivalent contractions. For example, the contractions
\begin{equation*}
\begin{aligned}
\<1130_1i>\ , \qquad
\<1130_1ia>
\quad \text{ and}
\quad 
\<1130_1ib>\,,
\end{aligned}
\end{equation*}
are all different. However, they correspond to the same stochastic integrals.
Lastly, let us mention that for a planted tree $[ \tau]$ we use both notations $[
\tau]_{\kappa} $ and $ [ \tau_{ \kappa}]$ for a contracted version of that tree.

\vskip 2mm
\noindent
{\bf Contractions between trees and $ L^{2}(\PP)$ estimates.}
We now want to extend the notion of contraction from within a single tree to a pair of trees.
This will be necessary in order to encode second moments of stochastic
integrals of the form \eqref{e:tk}.
The Gaussianity and correlation structure of the white noise, imply via Wick's theorem, that the second moment
can be expressed as the sum over all possible {\it pairwise contractions} over
the (uncontracted) leaves (or precisely over the noise
variables that lie on the leaves) of two copies of the tree, connected to the same root with
time-space variables $(t,x)$. In other words, we look at the stochastic integral $[\tau,\tau]_\ve(t,x)$
corresponding to the tree $[\tau,\tau]$ with root variable $(t,x)$. Let us look at the example of computing the second moment of the stochastic integral
$[\tau]_\ve(t,x)=\scalebox{0.6}{\<30_3>}_{\!\ve}(t,x)$, evaluated at a time-space point $(t,x)$.
Its second moment will be represented by 
\begin{align}\label{intercontractions:pic}
    \EE \big[ [\tau]_\ve(t,x)^{2}\big] & = \EE \big[ [\tau, \tau]_\ve(t, x)
\big]
    = \EE \big[ \big[\<3>\ \<3>\big]_\ve ({t, x}) \big] \notag \\
& = 6 \underset{(t, x)}{
    \vcenter{\hbox{
        \begin{tikzpicture}[scale=0.3]
        \draw  (0,0)   -- (-.7,1) node[dot] {}  ;
        \draw  (0,0)   -- (0,1) node[dot] {}  ;
        \draw (0,0) -- (.7,1) node[dot] {};
        \draw  (3,0)   -- (2.3,1) node[dot] {}  ;
        \draw  (3,0)   -- (3,1) node[dot] {}  ;
        \draw (3,0) -- (3.7,1) node[dot] {};
        \draw[purple, thick] (-.7,1) to[out=90,in=180] (1.5,2.5);
        \draw[purple, thick] (1.5,2.5) to[out=0,in=90] (3.7,1);
        \draw[purple, thick] (0,1) to[out=90,in=180] (1.5,2);
        \draw[purple, thick] (1.5,2) to[out=0,in=90] (3,1);
        \draw[purple, thick] (.7,1) to[out=90,in=180] (1.5,1.5);
        \draw[purple, thick] (1.5,1.5) to[out=0,in=90] (2.3,1);
	 \draw (0,0)  node[idot] {}  -- (1.5,-1)   ;
        \draw (3,0)  node[idot] {}   -- (1.5,-1) node[idot] {};
    \end{tikzpicture}  }}}
    +
    9 \underset{(t, x)}{
    \vcenter{\hbox{
        \begin{tikzpicture}[scale=0.3]
        \draw  (0,0)   -- (-.7,1) node[dot] {}  ;
        \draw  (0,0)   -- (0,1) node[dot] {}  ;
        \draw (0,0) -- (.7,1) node[dot] {};
        \draw  (3,0)   -- (2.3,1) node[dot] {}  ;
        \draw  (3,0)   -- (3,1) node[dot] {}  ;
        \draw (3,0) -- (3.7,1) node[dot] {};
        \draw[purple, thick] (.7,1) to[out=90,in=180] (1.5,1.5);
        \draw[purple, thick] (1.5,1.5) to[out=0,in=90] (2.3,1);
        \draw[purple, thick] (-.7,1) to[out=90,in=180] (-.35,1.5);
        \draw[purple, thick] (-.35,1.5) to[out=0,in=90] (0,1);
        \draw[purple, thick] (3,1) to[out=90,in=180] (3.35,1.5);
        \draw[purple, thick] (3.35,1.5) to[out=0,in=90] (3.7,1);
 	\draw (0,0)  node[idot] {}  -- (1.5,-1)   ;
        \draw (3,0)  node[idot] {}   -- (1.5,-1) node[idot] {};
    \end{tikzpicture}  }}} \,.
\end{align}
In other words, the computation of the second moment of a stochastic integral $[\tau]_\ve$ gives rise to
a {\it completely} contracted tree $[\tau,\tau]_\kappa$ in accordance with the definition of \eqref{def:cont}.
We note that, if we first decompose  $[\tau]_\ve(t,x)$
into its homogeneous Wiener chaos components, then an alternative computation
would yield
\begin{equation*}
\begin{aligned}
\EE [ [\tau](t,x)^{2}] =
    \EE\bigg| \; \underset{(t, x)}{\<30_3>_{\!\!\emptyset}} \, \bigg|^2
    + \EE\bigg| \, 3 \underset{(t, x)}{\<30_1>} \, \bigg|^2 \;,
\end{aligned}
\end{equation*}
where we used the orthogonality between different homogeneous chaos
components.

Let us  introduce a notation that will allow us to encode contractions between
trees that are glued together,
in a way that distinguishes them from the contractions of Definition~\ref{def:contractions}.
This will be useful to encode covariances between $[\tau]_\ve{(t, x)}$ and $ [\tau']_\ve{(t, x)}$.
\begin{definition}\label{def:doublecont}
For two rooted trees $\tau, \tau' \in \mT $ define the set of {\bf pairings} among the union of leaves as
\begin{align*}
    \mY(\tau, \tau')
    :=
    \big\{\gamma \in \mathcal{K}([\tau,\tau'])\,:\,
\text{ $ \gamma$ is a complete contraction }
 \, \big\}\,.
\end{align*}
We also define the subsets of pairings which complete a given pair of contractions
$ (\kappa, \kappa^{\prime} ) \in \mK (\tau) \times \mK (\tau^{\prime}) $ by
\begin{align*}
    \mY(\tau_\kappa, \tau'_{\kappa'})
    :=
    \big\{\gamma \in \mY(\tau,\tau')\,:\, \kappa \cup \kappa' \subset \gamma \text{
and all pairs in $\gamma \setminus ( \kappa \cup \kappa')$ connect $\tau$ to $\tau'$} \big\}\,.
\end{align*}
We will write $[\tau,\tau']_\gamma$ to denote the tree $[\tau,\tau']$ where all leaves
are contracted according to $\gamma\in \mY(\tau,\tau')$.
\end{definition}
The pictorial representations in \eqref{intercontractions:pic} show all possible elements (up to symmetries) of
$\mY(\tau,\tau)$ for that example.
Furthermore, note that for any two contracted trees $ \tau_{\kappa} ,
\tau_{\kappa^{\prime}}^{\prime}  $, and  $ \gamma \in \mY(\tau_\kappa,
\tau'_{\kappa'})$, the pairing $ [\tau_{\kappa} ,
\tau^{\prime}_{\kappa^{\prime}}]_{\gamma}  $  gives rise to a {\it
completely contracted} tree and, therefore,
$\mY(\tau_\kappa, \tau'_{\kappa'})=\emptyset$, if the number of
uncontracted leaves in $\tau_\kappa$ and $\tau'_{\kappa'}$ differ. This agrees with the fact that homogeneous chaoses are orthogonal with
respect to one another.
We can now express covariances between contracted trees as follows:
\begin{equation}\label{e_cov_contr_tree}
\begin{aligned}
 \EE\Big[\,[\tau]_{\kappa, \ve} \, [\tau']_{\kappa',\ve}\,\Big]
=
\sum_{\gamma \in \mY(\tau_{\kappa},\tau'_{\kappa'})}
    [\, \tau,\tau']_{\gamma, \ve}\,.
\end{aligned}
\end{equation}
Moreover, it is clear that
$\mY (\tau_{\kappa}, \tau'_{ \kappa'} )$ allows to partition $\mY ( \tau, \tau')$  as
\begin{equation}\label{e:partitionPairings}
\begin{aligned}
\mY ( \tau, \tau') = \bigsqcup_{ (\kappa, \kappa') \in \mK ( \tau) \times
\mK ( \tau')} \mY (\tau_{\kappa}, \tau'_{ \kappa'} )\,,
\end{aligned}
\end{equation}
where $\bigsqcup$ denotes a disjoint union. This is clear since, if we want to
find all pairwise contractions of $[\tau,\tau']$,
we can first identify the contractions that are internal to each $\tau,\tau'$ and then identify the contractions that connect the
leaves of one tree to those of the other.
This partitioning then allows us to express covariances in terms of
\begin{equation*}
\begin{aligned}
 \EE\Big[\,[\tau]_\ve \, [\tau']_\ve \,\Big]
    &=
    \sum_{\kappa \in \mathcal{K}(\tau)}
    \sum_{\kappa' \in \mathcal{K}(\tau')}
    \EE\Big[\,[\tau]_{\kappa, \ve} \, [\tau']_{\kappa',\ve}\,\Big] \\
    & =
    \sum_{\kappa \in \mathcal{K}(\tau)} \,
    \sum_{\kappa' \in \mathcal{K}(\tau')} \,
    \sum_{\gamma \in \mY(\tau_{\kappa},\tau'_{\kappa'})}
    [\, \tau,\tau']_{\gamma, \ve}\\
& =
    \sum_{\gamma\in \mY(\tau,\tau')} [ \,\tau,\tau']_{\gamma, \ve}\,,
\end{aligned}
\end{equation*}
where we used \eqref{e_cov_contr_tree} in the second step, and
\eqref{e:partitionPairings} in the last.
Finally, the partitioning \eqref{e:partitionPairings} allows us to recover
the internal contractions associated to a given pairing. This motivates the following
definition.

\begin{definition}\label{def:PairingTocontraction}
For any $ \tau, \tau' \in \mT_{3}$,
let
$\mathfrak{s}_{ [ \tau, \tau']} : \mY( \tau, \tau') \to \mK( \tau ) \times \mK(
\tau')$ be the map that for any $ \gamma \in \mY ( \tau, \tau')$
identifies the unique pair $\mathfrak{s}_{ [\tau, \tau']}(\gamma) := (\kappa_1(\gamma),
\kappa_{2}( \gamma))$ such that
\begin{equation*}
\begin{aligned}
\gamma \in \mY ( \tau_{ \kappa_{1}( \gamma)}, \tau'_{ \kappa_{2} (\gamma )})\,.
\end{aligned}
\end{equation*}
In other words, the map $\mathfrak{s}$ identifies the subset of edges in $\gamma$ that only connect within
$\tau$ and $ \tau'$, respectively.
\end{definition}

\subsection{$1$-cycles and their removal}

Let us now introduce the notion of a {\it $1$-cycle}. Suppose that a contracted
tree $\tau_\kappa$
contains a component of the form
\begin{equation}\label{example1cycle}
\begin{aligned}
\tau_{\kappa} = \<contain1cycle> \;,
\end{aligned}
\end{equation}
namely where we observe a cycle consisting of an inner vertex $(s_1,y_1)$ connected to two leaves
that are themselves connected to one another by a red edge (part of $\kappa$).
We call such a cycle a $1$-{\it cycle}.
Let us remark that in the above picture, the point $(s_2,y_2)$ denotes the coordinates of
the basis of the sub-tree $\tau_1$ and $(s_0,y_0)$ denotes the coordinates of the parent
of the inner vertex with coordinates $(s_1,y_1)$. A more formal definition is
the following.
\begin{definition}\label{def:onecycle}
Given a tree $\tau$ and a contraction $\kappa \in \mK(\tau)$, we call a $1$-{\it cycle}
a connected component of $\tau_\kappa$ which consists of two leaves,
which are connected by an element of $\kappa$, and the inner vertex, which is the parent of these
leaves, as well as the three edges that connect these three vertices. We call the inner vertex of the
cycle the basis of the $1$-cycle.
\end{definition}
Given a contracted tree $\tau_\kappa$ with a $1$-cycle $\mC$,
we write $ (\tau \setminus \mC)_{ \widetilde{\kappa}} $ for the contracted
tree that is obtained by ``removing'' the cycle $ \mC $ from $\tau_\kappa$.
That is, the graph that remains after removing all
edges and nodes that belong to $ \mC $ and replacing the remaining two edges
which used to connect to the basis of the $ 1 $-cycle by a new single edge, which connects the only
remaining descendant of the basis we removed to the parent of this basis.
The contraction $ \widetilde{\kappa} $ is the one  induced naturally on $ \tau
\setminus \mC $ by $ \kappa $ after the removal of the element that connects the leaves of $\mC$.
The removal is simply described by the following picture:
\begin{equation}\label{remove1cycle}
\begin{aligned}
\tau_{\kappa} = \<contain1cycle> \,\,\,\,
\longmapsto \,\,\,\,
(\tau\setminus \mC)_{\tilde \kappa} = \<containno1cycle> \;.
\end{aligned}
\end{equation}
Observe that if $ \tau \in\mT_{3} $, then also $ \tau \setminus \mC \in \mT_{3} $.
An important lemma is the following, which records the effect of the $1$-cycle on the associated
stochastic integrals.
\begin{lemma}\label{lem:cncl1cycl}
Let $ \tau \in \mT $ be of the form $ \tau= [ \tau_{1} \cdots \tau_{n}]$, $
\tau_{i} \in \mT_{3}$, and $ \kappa \in \mK ( \tau) $. Further, let $ \mC $ be
a 1-cycle in the contracted tree $ \tau_{\kappa} $ with the
coordinates  of its root being $(t,x) \in (0, \infty) \times \RR^{2}$. Denote by
$(s_{1}, y_{1}) $ the coordinates of the basis of $\mC$, by
$(s_{0}, y_{0}) $ the coordinates of the parent of $(s_{1}, y_{1}) $
and by $(s_{2}, y_{2}) $ the  coordinates of the only descendant of $(s_{1}, y_{1})$ that does not belong to $\mC $.
Denote also by $ z_{3} = (s_{3}, y_{3}) $ and $ z_{4} = (s_{4} , y_{4}) $ the  coordinates
of the leaves of the 1-cycle $ \mC $ (where we recall that the time coordinates $s_3$ and $s_4$
of the leaves will coincide with $0$).  Then
\begin{equation}\label{integral1removal}
\begin{aligned}
\int_{D_{t}^{2}} \int_{\RR^{2}} & K^{t,x}_{\tau_{\kappa}, \ve} (s_{\mV}, y_{\mV}) \ud y_{1} \ud z_{3} \ud
z_{4}=  \<cycle1>_\ve (s_{1})
\mathds{1}_{\{s_{2} \leqslant  s_{1} \leqslant s_{0} \}}
K^{t,x}_{ \widetilde{\tau}_{ \widetilde{\kappa}}, \ve} (s_{\mV \setminus \mC }, y_{\mV \setminus \mC}) \;,
\end{aligned}
\end{equation}
where the kernel $ K $ is defined in \eqref{e:tk2}, $ D_{t} $ is defined in
\eqref{e:Dt} and
\begin{align}\label{def:1cyclekernel}
\<cycle1>_\varepsilon(s_1) := \lambda_{\ve}^{2} e^{2 \mathfrak{m} \,
s_{1}} \, p_{2(s_1+\varepsilon^2)}(0)\;,
\end{align}
$ \widetilde{\tau} := \tau \setminus \mC $ and $ \widetilde{\kappa} $ the
contraction induced on $\tilde\tau$ by the removal of $\mC$ from $\tau_\kappa$.
\end{lemma}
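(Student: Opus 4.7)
The plan is to identify the specific factors of $K^{t,x}_{\tau_\kappa, \ve}$ that depend on the integration variables $y_1, z_3, z_4$, compute the resulting triple integral explicitly via the Chapman--Kolmogorov equations, and recognise what remains as $\<cycle1>_\ve(s_1)\,\mathds{1}_{\{s_2 \leqslant s_1 \leqslant s_0\}}$ times $K^{t,x}_{\widetilde{\tau}_{\widetilde{\kappa}}, \ve}$. First, I would isolate in the definition \eqref{e:tk2} the only six factors that involve the variables being integrated: the four heat kernels
\begin{equation*}
p^{(\mathfrak{m})}_{s_0 - s_1}(y_0 - y_1), \quad p^{(\mathfrak{m})}_{s_1 - s_2}(y_1 - y_2), \quad p^{(\mathfrak{m})}_{s_1 - s_3}(y_1 - y_3), \quad p^{(\mathfrak{m})}_{s_1 - s_4}(y_1 - y_4),
\end{equation*}
corresponding to the four edges incident on either $(s_1,y_1)$ or its parent, together with the contraction factor $\lambda_\ve^2 \,\delta_0(s_3)\delta_0(s_4)\, p_{2\ve^2}(y_4 - y_3)$ coming from the pair in $\kappa$ that forms the cycle $\mC$. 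All other factors in $K^{t,x}_{\tau_\kappa,\ve}$ are independent of $y_1, z_3, z_4$ and pull out of the integral.

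Second, the delta functions $\delta_0(s_3)\delta_0(s_4)$ collapse the $s_3, s_4$ integrations to $s_3 = s_4 = 0$. The remaining $y_3, y_4$ integration then splits off the factor $e^{2\mathfrak{m}\, s_1}$ and two applications of Chapman--Kolmogorov give
\begin{equation*}
\int_{\RR^2} p_{s_1}(y_1 - y_4)\, p_{2\ve^2}(y_4 - y_3)\, \ud y_4 = p_{s_1 + 2\ve^2}(y_1 - y_3)\,,
\end{equation*}
and then
\begin{equation*}
\int_{\RR^2} p_{s_1}(y_1 - y_3)\, p_{s_1 + 2\ve^2}(y_1 - y_3)\, \ud y_3 = p_{2(s_1 + \ve^2)}(0)\,.
\end{equation*}
The result does not depend on $y_1$, and after multiplying by $\lambda_\ve^2 e^{2\mathfrak{m}\, s_1}$ it equals exactly $\<cycle1>_\ve(s_1)$ as in \eqref{def:1cyclekernel}.

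Third, I would integrate the two remaining heat kernels over $y_1$. Since $p_t$ vanishes for $t < 0$, the integrand is identically zero unless $s_2 \leqslant s_1 \leqslant s_0$; whenever this holds, one further Chapman--Kolmogorov step gives
\begin{equation*}
\int_{\RR^2} p^{(\mathfrak{m})}_{s_0 - s_1}(y_0 - y_1)\, p^{(\mathfrak{m})}_{s_1 - s_2}(y_1 - y_2)\, \ud y_1 = \mathds{1}_{\{s_2 \leqslant s_1 \leqslant s_0\}}\, p^{(\mathfrak{m})}_{s_0 - s_2}(y_0 - y_2)\,.
\end{equation*}

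Finally, I would observe that in the trimmed tree $\widetilde{\tau} = \tau \setminus \mC$ the basis $(s_1,y_1)$ and the two cycle leaves have been removed, and their three incident edges have been replaced by a single new edge from $(s_0, y_0)$ directly to $(s_2, y_2)$; the corresponding factor in $K^{t,x}_{\widetilde{\tau}_{\widetilde{\kappa}}, \ve}$ is precisely $p^{(\mathfrak{m})}_{s_0 - s_2}(y_0 - y_2)$. All other kernel factors and Dirac deltas in $K^{t,x}_{\tau_\kappa, \ve}$ are unaffected by the three integrations and agree with those of $K^{t,x}_{\widetilde{\tau}_{\widetilde{\kappa}}, \ve}$, since by construction $\widetilde{\kappa}$ is the restriction of $\kappa$ to $\mL(\tau) \setminus \mC$. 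Combining the three steps yields \eqref{integral1removal}. There is no serious obstacle here: the lemma is a bookkeeping computation, and the only subtlety is the explicit indicator $\mathds{1}_{\{s_2 \leqslant s_1 \leqslant s_0\}}$, which must be retained because $s_1$ is not integrated out in \eqref{integral1removal} and survives as a free time variable in the outer integration defining $\tau_{\kappa,\ve}$.
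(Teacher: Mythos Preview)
Your proposal is correct and follows essentially the same two-step Chapman--Kolmogorov computation as the paper's proof: first integrate out the spatial variables of the two leaves to produce $\lambda_\ve^2 e^{2\mathfrak{m} s_1} p_{2(s_1+\ve^2)}(0)$, then integrate out $y_1$ to merge the two remaining edges into the single edge $p^{(\mathfrak{m})}_{s_0-s_2}(y_0-y_2)$ of the reduced kernel. Your treatment is slightly more explicit about the origin of the indicator $\mathds{1}_{\{s_2\leqslant s_1\leqslant s_0\}}$ (from the support of the heat kernel), which the paper leaves implicit.
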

\begin{proof}
We start by performing the integration over the spatial coordinates $
y_{3}, y_{4} $ of the part of the kernel $K_{\tau_{\kappa, \ve}}^{t,x} (s_{\mV}, y_{\mV})$
that depends on the variables $z_3$ and $z_4$. This corresponds to the
following integral (recall form \eqref{e:tk2} that the kernel
$K_{\tau_{\kappa, \ve}}^{t,x} (s_{\mV}, y_{\mV} )$
 contains factors $\delta_0(s_3) \,\delta_0(s_4)$):
\begin{equation}\label{e:1cycle}
\begin{aligned}
\lambda_{\ve}^{2} \int_{(\RR^{2})^{2}} p_{s_1}^{( \mathfrak{m })}(y_3 - y_1)
p_{s_1}^{( \mathfrak{m })} (y_4 - y_1) p_{2\ve^{2}}(y_3- y_4)  \ud y_3 \ud y_4
& = \lambda_{\ve}^{2} e^{2 \mathfrak{m}\, s_{1}}\, p_{2(s_1+\varepsilon^2)}(0)
\\
& =  \<cycle1>_\varepsilon(s_1)  \;.
\end{aligned}
\end{equation}
Next, we integrate the remaining part of the kernel  over $y_1$. This reduces to
the Chapman--Kolmogorov identity (refer also to the pictures in
\eqref{example1cycle} and \eqref{remove1cycle} for guidance):
\begin{align*}
\int_{\RR^2} p_{s_1-s_2}^{( \mathfrak{m })}(y_2 - y_1) p_{s_0-s_1}^{( \mathfrak{m })} (y_1 - y_0) \,\ud y_1
= p_{s_0-s_2}^{( \mathfrak{m })}(y_2 - y_0) \;.
\end{align*}
Combining the results of the  two integrations above with the remaining components
of the kernel  $K_{\tau_{\kappa, \ve}}^{t,x} (s_{\mV}, y_{\mV})$, yields the expression on the right-hand side of
\eqref{integral1removal}.
\end{proof}
As it turns out, 1-cycles play an important role in our analysis. We will
see that the contracted trees in the Wild expansion that
contribute to the limiting fluctuations, are exactly those
whose contraction consists of  only  1-cycles (which may also emerge in an
iterative way, see the second example below).
To get an idea of this phenomenon, let us look at the following examples.
\vskip 2mm
\begin{example}\label{example:loli}
{\rm
Consider the contracted tree $\scalebox{0.8}{\<30_1>}_{\!\! \ve}$.
Using Lemma \ref{lem:cncl1cycl} (or in this case even a by-hand computation) we
find that
\begin{align*}
   \scalebox{0.9}{ \<30_1>}_{\!\!\!\ve}(t,x) &=
    \int_{[0,t]}  \int_{\RR^{2}}  p_{t-s}^{( \mathfrak{m })}(y-x)\,  \<10>_{\, \ve}(s,y)\,
    \<cycle1>_\varepsilon(s) \ud y\ud s \;,
 \end{align*}
where inside the integral $(s,y)$ are the time-space coordinates associated to the basis of the trident.
We can next compute the spatial integral via Chapman--Kolmogorov as
\begin{align*}
    \int_{\RR^{2}}
    p_{t-s}^{( \mathfrak{m })}(y-x) \, \<10>_{\, \ve}(s,y) \ud y
    &=  \int_{(\RR^{2})^{2}} p_{t-s}^{( \mathfrak{m })}(y-x)\,p_{s}^{( \mathfrak{m })}(z-y) \, \eta_\varepsilon(z) \ud y \ud z \\
    &= \int_{\RR^{2}} p_{t}^{( \mathfrak{m })}(z-x)  \, \eta_\varepsilon(z) \ud z
    =\<10>_{\, \ve}(t,x)\,.
\end{align*}
Therefore, we obtain
 \begin{align*}
    \scalebox{0.9}{\<30_1>}_{\!\!\ve}(t,x)
    &= \<10>_{\, \ve}(t,x) \int_0^t \<cycle1>_\varepsilon(s) \ud s
    = \<10>_{\, \ve}(t,x)\,  \lambda_\varepsilon^2 \int_0^t
\frac{e^{2 \mathfrak{m} \, s}}{4\pi(s+\varepsilon^2)} \ud s\,,
  \end{align*}
and hence, by Lemma~\ref{l_app_expInt} and definition of $ \lambda_{ \ve}$, for
$ t \in (0, \infty)$ we find that
\begin{equation*}
\begin{aligned}
\scalebox{0.9}{\<30_1>}_{\!\!\ve}(t,x)
=
\frac{ \hat{\lambda}^{2}}{2 \pi}
 \<10>_{\, \ve}(t,x) \cdot
( 1+ o(1))
 \,,
\end{aligned}
\end{equation*}
where the $ o(1) $ is with respect to $ \ve \to 0 $.
}
\end{example}
\begin{example}
{\rm
This example demonstrates the iterative appearance of 1-cycles, after successive extractions, and their overall contribution.
Consider the contracted tree
\begin{align*}
\<lab1130_1iii>\!\!\!(t,x) \;,
\end{align*}
 where we have tagged some vertices for reference in the following integrals. In particular,
 the coordinates of vertex $i$ will be $(s_i,y_i)$.
First, extracting the 1-cycle with basis $2$, we have  that, using
Lemma~\ref{lem:cncl1cycl},
\begin{align*}
\<lab1130_1iii>\!\!\! (t,x) & =
 \underset{\substack{[0,t]^{3}\times (\RR^2)^2\\ \{ s_2,s_3 \leq s_1 \leq t\} } }{ \int\int}
\<cycle1>_{\ve}(s_{2}) \,\,
  K_{\!\!\!\<s3m>} \!\! (y_1,y_3; s_1,s_3 ) \,\eta_{\ve}(\ud y_{3}) \ud y_1 \ud
s_{2} \ud s_1 \ud s_3 \;.
\end{align*}
Now, applying once more Lemma~\ref{lem:cncl1cycl} on the kernel, or just  the previous
example, the above integral equals
\begin{align*}
 \<10>_{\, \ve} (t, x)\int_{0}^{t} \int_{0}^{s_{1}}  \<cycle1>_{\ve} (s_{2})
\<cycle1>_{\ve} (s_{1})\ud s_{2} \ud s_{1}
= \frac{1}{2}\Big(\frac{\hat \lambda^2}{2\pi}\Big)^2 \<10>_{\, \ve} (t, x)
\cdot \big( 1+o(1) \big)\;.
\end{align*}
Thus, the contribution of this diagram is of the same order as in the previous example (albeit with a different constant)
and will also  contribute to the limiting Gaussian fluctuations.

Following the same steps as above, we can determine similarly
the contribution of the contracted tree
\begin{equation*}
\begin{aligned}
    \<1130_1i>_{\ve}(t,x)
= \frac{1}{2}\Big(\frac{\hat \lambda^2}{2\pi}\Big)^2 \<10>_{\, \ve} (t, x) \cdot \big( 1+o(1) \big)
\,.
\end{aligned}
\end{equation*}
}
\end{example}

\subsection{${\rm v}$-cycles and their removal}\label{sec_rem_vcycle}

Contrary to the above two examples, where only 1-cycles appeared, the next example will demonstrate a
different cycle structure, which will lead to lower order contributions.
This will motivate the study of $\rm{v}$-cycles of arbitrary length, which will play an
important role in our analysis.

\begin{example}\label{example:tidentbound}
{\rm
Let us look at the order of magnitude of  $\scalebox{0.8}{\<30_3>}_{\!\! \emptyset, \ve}$.
Its second moment has the diagrammatic representation in terms of the
completely contracted tree
\begin{align*}
    \EE\left|\<30_3>_{\!\!\! \emptyset, \varepsilon} (t,x)\right|^2
    &= 6
    \vcenter{\hbox{
\begin{tikzpicture}[scale=0.3]
\draw  (0,0) -- (0,1) node[dot] {};
\draw (0,0) -- (.9,1) node[dot] {};
 \draw (0,0)--(-.9,1) node[dot]{} ;
\draw  (4,0) -- (4,1) node[dot] {};
\draw (4,0) -- (4.9,1) node[dot] {};
\draw (4,0)--(3.1,1) node[dot]{} ;
\draw[purple, thick] (0,1) to[out=90,in=90] (4,1);
\draw[purple, thick] (0.9,1) to[out=90,in=90] (3.1,1);
\draw[purple, thick] (-0.9,1) to[out=90,in=90] (4.9,1);
\node at (2.4, -1.5) {\scalebox{0.7}{$(t,x), \ve$}};
\draw  (0,0) node[idot] {}--(2, -1) node[idot] {} --(4,0) node[idot] {} ;
\end{tikzpicture}  }}\,,
\end{align*}
where the factor $6$ counts the number of symmetries of the pairing at hand.
Denoting by $(s_1,y_1)$ and $(s_2,y_2)$ the time-space coordinates of the bases of the left and right tridents, respectively,
we can explicitly write the integral corresponding to the above diagram as
\begin{align*}
     6 \lambda_\varepsilon^6 \int_{D_{t}^2} p_{t-s_1}^{(\mathfrak{m})}(y_1-x)
   \, \big( e^{\mathfrak{m}\, ( s_{1}+ s_{2})} p_{s_1+s_2+2\varepsilon^2}(y_1-y_2) \big)^3
    \, p_{t-s_2}^{( \mathfrak{m})}(y_2-x) \ud y_{1} \ud y_2 \ud s_1 \ud s_2 \;.
\end{align*}
Then, using the estimate $( e^{\mathfrak{m}\, ( s_1 + s_2)} p_{s_1+s_2+2\varepsilon^2}(y_1-y_2))^2\leqslant
 e^{4 \overline{\mathfrak{m}}\, t}(2\pi(s_1+s_2+2\varepsilon^2))^{-2} $ together with Chapman--Kolmogorov, we can bound this by
\begin{align*}
     6 \lambda_\varepsilon^6  e^{4 \overline{\mathfrak{m}}\, t} \int_{D_{t}^2}&\frac{ p_{t-s_1}^{(\mathfrak{m})}(y_1-x)
   \,  e^{ \mathfrak{m}\, ( s_{1}+ s_{2})} p_{s_1+s_2+2\varepsilon^2}(y_1-y_2)
    \, p_{t-s_2}^{(\mathfrak{m})}(y_2-x)}{\big( 2\pi
(s_1+s_2+2\varepsilon^2)\big)^2} \ud y_{1} \ud y_2 \ud s_1 \ud s_2  \\
   & \leqslant    \frac{6 \lambda_\varepsilon^6}{(2\pi)^2}  e^{6
\overline{\mathfrak{m}} \, t}
    p_{2(t+\varepsilon^2)}(0)
    \int_{[0,t]^2}
    \frac{1}{(s_1+s_2+2 \ve^2)^2} \ud s_1 \ud s_2 \\
    &\leqslant  \frac{6 \lambda_\varepsilon^6}{(2\pi)^2} \log
(1+\tfrac{1}{2}t\varepsilon^{-2})\,  e^{6 \overline{\mathfrak{m}}\, t}  p_{2(t+\varepsilon^2)}(0) \,.
\end{align*}
Since $\lambda_\varepsilon^6 = O((\log \tfrac{1}{\ve})^{-3})$, we can conclude that
%\begin{align*}
 $   \EE\Big[\,\Big| {\sqrt{ \log{\tfrac{1}{\ve}} } \<30_3>_{\!\! \emptyset, \ve}} (t,x) \Big|^2 \Big]
    \leqslant  \tfrac{C(t)}{\log{\frac{1}{\ve}}}\,,$
%\end{align*}
for some constant $C(t)$ only depending on $t>0$.
}
\end{example}
In the last example there was no 1-cycle appearing. Instead, the contracted tree
that emerged from the diagrammatic representation of
the second moment, presented cycles containing {\it more than one} inner
vertex, with
every edge of the cycle incident to at least one leaf.
We will call such cycles \emph{${\rm v}$-cycles}. The emergence of ${\rm
v}$-cycles and the quantitative estimate of their
contribution will play a crucial role.
The key observation is that contracted trees which do not consist
of 1-cycles {\it only}, will have their second moment represented by a paired
tree which necessarily contains a ${\rm v}$-cycle of length strictly greater
than one. Such trees will turn out to have a lower order contribution.
The main estimates in this section, which provide a quantitative control on
${\rm v}$-cycles, are given in Lemmas \ref{lem:cnclmcycl}  and
\ref{lem_v_cycle_order_bnd_new} below. Let us start with the rigorous
definition of a ${\rm v}$-cycle.
\begin{definition}\label{def:vcycle}
For a given contracted tree $ \tau_{\kappa}, $ a subgraph $ \mC = (\mV_{\mC},
\mE_{\mC}) \subseteq
\tau_{\kappa} $ is a \textbf{${\rm v}$-cycle} if it is a cycle in $ \tau_{\kappa} $
(viewed as a graph) in which every edge is incident to at least one leaf of
the tree $ \tau $. We define the \emph{length} of a ${\rm v}$-cycle to be the number of
inner nodes of $ \tau $ contained in $ \mC $ and we also denote by $\mI_{\mC}$ and $\mL_{\mC}$  the collection
of the inner vertices and leaves of $ \tau $ that belong to $\mC$, respectively.
We will call a ${\rm v}$-cycle of length $ m \in  \NN$ a $ m $-cycle for short.
\end{definition}
A pictorial example of a ${\rm v}$-cycle is the one that appears in the following component of a contracted tree:
 \begin{align*}
        \vcenter{\hbox{
        \begin{tikzpicture}[scale=0.4]
        \draw  (0,0)   -- (-.7,1) node[dot] {}  ;
        \draw[blue, thick]  (0,0)   -- (0,1) node[dot] {}  ;
        \draw[blue, thick] (0,0) -- (.7,1) node[dot] {};
        \draw (0,0) -- (1,-1);
        \draw[blue, thick] (1,0) node[dot] {} -- (1,-1);
        \draw[blue, thick] (1.7,0) node[dot] {} -- (1,-1);
        \draw[blue, thick]  (6,0)   -- (5.3,1) node[dot] {}  ;
        \draw (6,0) -- (6.7,1) node[dot] {};
        \draw[blue, thick] (6,0) -- (6,1) node[dot] {};
        \draw (6,0) -- (5,-1);
        \draw[thick, dotted] (4.9,-1.1) -- (4.5,-1.5);
        \draw[thick, dotted] (2.1,-2.1) -- (2.6,-2.6);
        \draw (5,-1)--(5,0); \node at (4.9,0.4) {\scalebox{0.8}{$\tau_4$}};
        \draw (5,-1)--(4.3,0); \node at (4.1,0.4) {\scalebox{0.8}{$\tau_3$}};
        \draw (1,-1) -- (2,-2);  \draw (2,-2) -- (2,-1);  \node at (2,.-0.8) {\scalebox{0.8}{$\tau_1$}};
         \draw (2,-2) -- (2.7,-1);  \node at (2.8,.-0.8) {\scalebox{0.8}{$\tau_2$}};
        %\draw[blue, thick] (-.7,1) to[out=90,in=180] (2.5,3);
        %\draw[blue, thick] (2.5,3) to[out=0,in=90] (6.7,1);
        \draw[blue, thick] (0,1) to[out=90,in=180] (2.5,2.5);
        \draw[blue, thick] (2.5,2.5) to[out=0,in=90] (6,1);
        \draw[blue, thick] (0.7,1) to[out=90,in=180] (.9,1.5);
        \draw[blue, thick] (0.9,1.5) to[out=0,in=90] (1,0);
        \draw[blue, thick] (1.7,0) to[out=90,in=180] (3,1.5);
        \draw[blue, thick] (3,1.5) to[out=0,in=90] (5.3,1);
	\node[idot] at (0,0) {};
	\node[idot] at (1,-1) {};
	\node[idot] at (2,-2) {};
	\node[idot] at (5,-1) {};
	\node[idot] at (6,0) {};
        \end{tikzpicture}  }}
        \,.
    \end{align*}
Note that a $1$-cycle (Definition~\ref{def:onecycle}) is simply a ${\rm v}$-cycle of length $1$.
    Consider a ${\rm v}$-cycle of length $m$ and denote its inner vertices by $v_1,...,v_m$,
    where we will always keep the convention that in such an encoding we start from the left-most
inner vertex of the cycle, in the graph picture of the tree, and register the following inner vertices
 as we trace the cycle clock-wise. Let us also denote the time coordinates of $v_1,...,v_m$ by
$s_1,...,s_m$, respectively. We introduce the following kernel, which will play
an important role in our estimates:
\begin{equation}\label{e_def_vident}
\begin{aligned}
	\<2cycle>^{\otimes m}(s_1, \cdots,  s_m)
	& := \lambda_{\ve}^{2m} \prod_{k=1}^{m} e^{\mathfrak{m} ( s_{k}+ s_{k+1})}
	p_{s_{k}+s_{k+1}+2\varepsilon^2}(0)\\
	& = \prod_{k=1}^{m} \frac{\lambda_{\ve}^{2} e^{\mathfrak{m} ( s_{k}+
	s_{k+1})}  }{2 \pi (s_{k}+ s_{k+1}+ 2 \ve^{2})}\,,
\end{aligned}
\end{equation}
with the convention that $s_{m+1}=s_1$. We note that the above kernel is invariant under
cyclic permutation of $s_1, s_2,...,s_m$.

The following lemma, which will be proved in Appendix~\ref{sec_v_cycle_exist},
establishes the existence of a ${\rm v}$-cycle in a completely contracted
tree of the form $[\tau_1,\tau_2]$.
\begin{lemma}\label{vcycleexists}
Let $\tau_1,\tau_2 \in \mT_3$ . Then, for every pairing
$\gamma\in\mY(\tau_1,\tau_2)$, the paired tree
$[\tau_1,\tau_2]_\gamma$ contains a ${\rm v}$-cycle.
\end{lemma}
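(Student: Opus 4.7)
The plan is to reduce the claim to a counting argument on a carefully chosen subgraph of $[\tau_1, \tau_2]_\gamma$. Set $\tau = [\tau_1, \tau_2]$ and consider the spanning subgraph $G \subseteq \tau_\gamma$ consisting of exactly those edges that are incident to at least one leaf of $\tau$. Explicitly, $E(G)$ is the union of the $|\mL(\tau)|$ tree edges from each leaf to its parent and the $|\mL(\tau)|/2$ contraction edges from $\gamma$ (each of which joins two leaves). By Definition~\ref{def:vcycle}, any cycle in $G$ is automatically a ${\rm v}$-cycle of $[\tau_1,\tau_2]_\gamma$, so it suffices to show that $G$ is not a forest.

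The counting is then straightforward. Clearly $|E(G)| = \tfrac{3}{2}|\mL(\tau)|$. Writing $I' \subseteq \mI(\tau)$ for the set of inner vertices of $\tau$ having at least one leaf child, one has $V(G) = \mL(\tau) \cup I'$ and so $|V(G)| = |\mL(\tau)| + |I'|$. The key input is that for $\tau_i \in \mT_3$ we have $\ell(\tau_i) = 2 i(\tau_i) + 1$, and adding the planted root of $\tau$ (which is always inner, having two descendants) gives the clean identity
\begin{equation*}
|\mL(\tau)| \,=\, 2 i(\tau_1) + 2 i(\tau_2) + 2 \,=\, 2 |\mI(\tau)|.
\end{equation*}
Combining with $|I'| \leqslant |\mI(\tau)|$ yields
\begin{equation*}
|E(G)| - |V(G)| \,=\, \tfrac{3}{2}|\mL(\tau)| - |\mL(\tau)| - |I'| \,=\, |\mI(\tau)| - |I'| \,\geqslant\, 0.
\end{equation*}

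To conclude, I would invoke Euler's formula: since $|\mL(\tau)| \geqslant 2$, the graph $G$ is nonempty, so its number of connected components satisfies $c(G) \geqslant 1$, and hence the cyclomatic number is
\begin{equation*}
\beta_1(G) \,=\, |E(G)| - |V(G)| + c(G) \,\geqslant\, 1,
\end{equation*}
so $G$ contains a cycle, which is the desired ${\rm v}$-cycle. I do not anticipate any serious obstacle here: the argument is essentially a vertex–edge tally that exploits the rigid ternary structure of $\tau_1$ and $\tau_2$. The only point to handle carefully is the accounting for the planted root of $[\tau_1, \tau_2]$, which contributes one extra inner vertex but no new leaves and is precisely what makes the identity $|\mL(\tau)| = 2|\mI(\tau)|$ hold exactly, leaving no slack to dissipate.
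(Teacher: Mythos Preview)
Your argument is correct and takes a genuinely different route from the paper. One small quibble: calling $G$ a \emph{spanning} subgraph is a slip, since you then take $V(G)=\mL(\tau)\cup I'$, i.e.\ only the endpoints of the leaf-incident edges. This does no harm (adding the remaining inner vertices as isolated points shifts $|V(G)|$ and $c(G)$ by the same amount and leaves $\beta_1$ unchanged), but it is worth saying precisely which graph you mean.

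The paper's proof is more constructive and considerably longer: it traces contraction edges starting from a leaf and argues that if the tree has no \emph{dead-ends} (inner nodes with exactly one leaf child) the path must close into a ${\rm v}$-cycle; it then reduces the general case by an elimination procedure that removes dead-ends one at a time, using an auxiliary lemma that for $\tau\in\mT_3\setminus\{\<0>\}$ the number of tridents strictly exceeds the number of dead-ends, to guarantee something survives. Your Euler-characteristic count bypasses all of this by encoding the ternary constraint directly as $|\mL(\tau)|=2|\mI(\tau)|$ and reading off $\beta_1(G)\geqslant 1$. What the paper's approach buys is an explicit recipe for locating a ${\rm v}$-cycle (useful elsewhere in the paper where cycles are extracted algorithmically); what yours buys is a two-line existence proof with no case analysis and no auxiliary induction.
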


An important procedure that we will follow in the remainder of the section, is
to spatially decouple ${\rm v}$-cycles from the rest of the integration kernel
encoded by the tree. We will be performing
such decoupling estimates sequentially until we exhaust all ${\rm v}$-cycles,
including the ${\rm v}$-cycles that will emerge through
this process. We will call this process {\it cycle removal}.
Below we formally define the cycle removal of a single cycle.

\begin{definition}[Cycle removal]\label{def:cycl-rmvl}
For any contracted  tree $ \tau_{\kappa} $, with $ \tau \in \mT $ of the form
$ \tau = [ \tau_{1} \cdots \tau_{n}]$, $ \tau_{i} \in \mT_{3}$, and any ${\rm v}$-cycle $ \mC \subseteq \tau_{\kappa}
$ (not passing through the root),  we define the contracted tree $ ( \tau \setminus \mC)_{ \widetilde{ \kappa}}
$ to be
the contracted tree obtained from $ \tau_{\kappa} $ through the following procedure:
\begin{enumerate}
\item Remove from $ \tau_\kappa $ all the edges and vertices that belong to $ \mC $.
\item We note that an inner vertex that belongs to $\mC$ will
always have four neighbours due to the tree being built from ternary trees.
Let $v$ be an inner vertex that belongs to $\mC$.
%, hence, removed by the previous step,
We then write $ v_{0}= \mf{p}(v)$ for its parent and $ v_{2}$ for the unique descendant
which is not connected to $v$ by an edge in $\mC$ (recall the representation
\eqref{remove1cycle}). We then distinguish between the following two cases:

\begin{enumerate}
\item Both $ v_{0}$ and $ v_{2}$ are not part of $\mC$, then we replace
the two edges $ \{ v_{0}, v \} $ and $\{ v_{2}, v \} $ by a single edge $
\{ v_{0}, v_{2} \} $.

\item If $ v_{0}$ (or $ v_{2}$) are part of $\mC$, we proceed going down (or
up) in the tree until we find a vertex $ w_{0}$ (or $ w_{2}$) not being part of $\mC$. Once, we
found such vertices we remove all edges crossed in this exploration (which
don't belong to $\mC$ since they connect inner nodes) and replace
them by the edge $ \{w_{0}, w_{2}\}$.\footnote{Note that we will always find vertices $
w_{0}, w_{2}$ (which might agree with $ v_{0}, v_{2} $), since going down we
will always hit the root last (which is not part of $\mC$), and going up we can
always proceed because the sub-trees are ternary.}
\end{enumerate}
\end{enumerate}
The result of this procedure is  a tree $ \tau \setminus \mC$ with the
contraction $ \widetilde{\kappa} $ consisting of the remaining edges
in $\kappa$ after the above two steps.
\end{definition}
For example, consider the following component of a tree
    \begin{align*}
        \vcenter{\hbox{
        \begin{tikzpicture}[scale=0.4]
        \draw  (0,0)   -- (-.7,1) node[dot] {}  ;
        \draw  (0,0)   -- (0,1) node[dot] {}  ;
        \draw (0,0) -- (.7,1) node[dot] {};
        \draw (0,0) -- (1,-1);
        \draw (1,0) node[dot] {} -- (1,-1);
        \draw (1.7,0) node[dot] {} -- (1,-1);
        \draw  (6,0)   -- (5.3,1) node[dot] {}  ;
        \draw (6,0) -- (6.7,1) node[dot] {};
        \draw (6,0) -- (6,1) node[dot] {};
        \draw (6,0) -- (5,-1);
        \draw[thick, dotted] (4.9,-1.1) -- (4.5,-1.5);
        \draw[thick, dotted] (2.1,-2.1) -- (2.6,-2.6);
        \draw (5,-1)--(5,0); \node at (4.9,0.4) {\scalebox{0.8}{$\tau_4$}};
        \draw (5,-1)--(4.3,0); \node at (4.1,0.4) {\scalebox{0.8}{$\tau_3$}};
        \draw (1,-1) -- (2,-2);  \draw (2,-2) -- (2,-1);  \node at (2,.-0.8) {\scalebox{0.8}{$\tau_1$}};
         \draw (2,-2) -- (2.7,-1);  \node at (2.8,.-0.8) {\scalebox{0.8}{$\tau_2$}};
        \draw[blue, thick] (-.7,1) to[out=90,in=180] (2.5,3);
        \draw[blue, thick] (2.5,3) to[out=0,in=90] (6.7,1);
        \draw[purple, thick] (0,1) to[out=90,in=180] (2.5,2.5);
        \draw[purple, thick] (2.5,2.5) to[out=0,in=90] (6,1);
        \draw[blue, thick] (0.7,1) to[out=90,in=180] (.9,1.5);
        \draw[blue, thick] (0.9,1.5) to[out=0,in=90] (1,0);
        \draw[blue, thick] (1.7,0) to[out=90,in=180] (3,1.5);
        \draw[blue, thick] (3,1.5) to[out=0,in=90] (5.3,1);
	\node[idot] at (0,0) {};
	\node[idot] at (1,-1) {};
	\node[idot] at (2,-2) {};
	\node[idot] at (5,-1) {};
	\node[idot] at (6,0) {};
        \end{tikzpicture}  }}
        \,.
    \end{align*}
    In this example there are three ${\rm v}$-cycles:  one whose
contraction-edges consists of only the blue edges,
    one whose $\kappa$ edges consists of the top blue edge and the red one and, finally, one whose
    $\kappa$ edges consists of the red one and the two blue ones below the
red.
    The process of removing the blue ${\rm v}$-cycle is depicted below.
    The middle step shows the component after removing the leaves and edges that are part of the
${\rm v}$-cycle. The rightmost tree is the final outcome after also replacing the edges incident to the
inner vertices of the ${\rm v}$-cycle by a single edge.
    \begin{align*}
        \vcenter{\hbox{
               \begin{tikzpicture}[scale=0.4]
        \draw  (0,0)   -- (-.7,1) node[dot] {}  ;
        \draw  (0,0)   -- (0,1) node[dot] {}  ;
        \draw (0,0) -- (.7,1) node[dot] {};
        \draw (0,0) -- (1,-1);
        \draw (1,0) node[dot] {} -- (1,-1);
        \draw (1.7,0) node[dot] {} -- (1,-1);
        \draw  (6,0)   -- (5.3,1) node[dot] {}  ;
        \draw (6,0) -- (6.7,1) node[dot] {};
        \draw (6,0) -- (6,1) node[dot] {};
        \draw (6,0) -- (5,-1);
        \draw[thick, dotted] (4.9,-1.1) -- (4.5,-1.5);
        \draw[thick, dotted] (2.1,-2.1) -- (2.6,-2.6);
        \draw (5,-1)--(5,0); \node at (4.9,0.4) {\scalebox{0.8}{$\tau_4$}};
        \draw (5,-1)--(4.3,0); \node at (4.1,0.4) {\scalebox{0.8}{$\tau_3$}};
        \draw (1,-1) -- (2,-2);  \draw (2,-2) -- (2,-1);  \node at (2,.-0.8) {\scalebox{0.8}{$\tau_1$}};
         \draw (2,-2) -- (2.7,-1);  \node at (2.8,.-0.8) {\scalebox{0.8}{$\tau_2$}};
        \draw[blue, thick] (-.7,1) to[out=90,in=180] (2.5,3);
        \draw[blue, thick] (2.5,3) to[out=0,in=90] (6.7,1);
        \draw[purple, thick] (0,1) to[out=90,in=180] (2.5,2.5);
        \draw[purple, thick] (2.5,2.5) to[out=0,in=90] (6,1);
        \draw[blue, thick] (0.7,1) to[out=90,in=180] (.9,1.5);
        \draw[blue, thick] (0.9,1.5) to[out=0,in=90] (1,0);
        \draw[blue, thick] (1.7,0) to[out=90,in=180] (3,1.5);
        \draw[blue, thick] (3,1.5) to[out=0,in=90] (5.3,1);
	\node[idot] at (0,0) {};
	\node[idot] at (1,-1) {};
	\node[idot] at (2,-2) {};
	\node[idot] at (5,-1) {};
	\node[idot] at (6,0) {};
        \end{tikzpicture}  }}
        \quad \mapsto \quad
        \vcenter{\hbox{
               \begin{tikzpicture}[scale=0.4]
       \draw  (0,0)   -- (0,1) node[dot] {}  ;
       \draw (0,0) -- (1,-1);
       \draw [fill, purple] (0,0)  circle [radius=0.07];
       \draw [fill, purple] (1,-1)  circle [radius=0.07];
       \draw [fill, purple] (6,0)  circle [radius=0.07];
       \draw (6,0) -- (6,1) node[dot] {};
       \draw (6,0) -- (5,-1);
       \draw[thick, dotted] (4.9,-1.1) -- (4.5,-1.5);
       \draw[thick, dotted] (2.1,-2.1) -- (2.6,-2.6);
       \draw (5,-1)--(5,0); \node at (4.9,0.4) {\scalebox{0.8}{$\tau_4$}};
       \draw (5,-1)--(4.3,0); \node at (4.1,0.4) {\scalebox{0.8}{$\tau_3$}};
       \draw (1,-1) -- (2,-2);  \draw (2,-2) -- (2,-1);  \node at (2,.-0.8) {\scalebox{0.8}{$\tau_1$}};
       \draw (2,-2) -- (2.7,-1);  \node at (2.8,.-0.8) {\scalebox{0.8}{$\tau_2$}};
       \draw[purple, thick] (0,1) to[out=90,in=180] (2.5,2.5);
       \draw[purple, thick] (2.5,2.5) to[out=0,in=90] (6,1);
	\node[idot] at (2,-2) {};
	\node[idot] at (5,-1) {};
        \end{tikzpicture}  }}
        \quad \mapsto \quad
         \vcenter{\hbox{
               \begin{tikzpicture}[scale=0.4]
       \draw (6,0) node[dot] {} -- (5,-1);
       \draw[thick, dotted] (4.9,-1.1) -- (4.5,-1.5);
       \draw[thick, dotted] (2.1,-2.1) -- (2.6,-2.6);
       \draw (5,-1)--(5,0); \node at (4.9,0.4) {\scalebox{0.8}{$\tau_4$}};
       \draw (5,-1)--(4.3,0); \node at (4.1,0.4) {\scalebox{0.8}{$\tau_3$}};
       \draw (1,-1) node[dot] {} -- (2,-2);  \draw (2,-2) -- (2,-1);  \node at (2,.-0.8) {\scalebox{0.8}{$\tau_1$}};
       \draw (2,-2) -- (2.7,-1);  \node at (2.8,.-0.8) {\scalebox{0.8}{$\tau_2$}};
       \draw[purple, thick] (1,-1) to[out=90,in=180] (4,1.5);
       \draw[purple, thick] (4,1.5) to[out=0,in=90] (6,0);
	\node[idot] at (2,-2) {};
	\node[idot] at (5,-1) {};
        \end{tikzpicture}  }} .
    \end{align*}
The following lemma provides the central estimate that quantifies the contributions coming from ${\rm v}$-cycles.
\begin{lemma}\label{lem:cnclmcycl}
Let $ \tau \in \mT$ of the form $ \tau = [ \tau_{1}\cdots \tau_{n}]$, $
\tau_{i} \in \mT_{3}$, and $ \kappa \in \mK ( \tau)$. 
For any $ m \in \NN $, let $ \mC $ be an $ m $-cycle in the contracted tree $\tau_{\kappa} $
 and denote the inner vertices of $\mC$ by $v_1,...,v_m$ %(read in the clockwise fashion)
  with associated time-space
coordinates $(s_{v_1},y_{v_1}),...,(s_{v_m}, y_{v_m})$.
Recall the kernel $K_{\tau_{\kappa,\ve}} ( s_{\mV}, y_{\mV})$ from \eqref{e:tk2}
associated to a contracted tree.
Then
\begin{equation}\label{removal_estimate}
\begin{aligned}
\int_{( \RR^{2})^{\mI_{\mC}}}
\int_{D_{t}^{ \mL_{\mC} }}
& K_{\tau_{\kappa,\ve}} (s_{\mV}, y_{\mV})  \ud s_{\mL_{\mC}} \ud
y_{\mV_{\mC}} \\
& \leqslant  \<2cycle>^{\otimes m} (s_{v_1},...,s_{v_m})
\left\{
\prod_{ v \in \mI_{\mC}} \mathds{1}_{ s_{\mf{d}_{ \kappa}(v)} \leqslant s_v \leqslant s_{ \mathfrak{p}(v)}}
\right\}
K_{ \widetilde{\tau}_{ \widetilde{\kappa}}, \ve} (s_{\mV \setminus \mC }, y_{\mV \setminus \mC}) \;,
\end{aligned}
\end{equation}
where $ \widetilde{\tau} = \tau \setminus \mC $, $ \widetilde{\kappa} $ is the
contraction induced by $ \kappa $ on $ \widetilde{\tau} $, $\mathfrak{p}(v)$
denotes the parent of a vertex $v$ in $ \tau$ and $\mf{d}_{ \kappa}(v)$ denotes the
unique descendant of $v$ which is \emph{not} part of $\mC$.
\end{lemma}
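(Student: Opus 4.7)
The plan is to reduce the integral on the left-hand side to a product involving (i) the $m$-cycle time-kernel $\<2cycle>^{\otimes m}$, (ii) the kernel associated with the cycle-removed tree, and (iii) indicator constraints inherited from the positivity of the time arguments of the heat kernel. First, I would spell out the labelling of $\mC$. Since every edge of $\mC$ is incident to a leaf and contraction edges are leaf-to-leaf while tree edges are either inner-to-inner or inner-to-leaf, the cycle must alternate a tree edge (inner $\to$ leaf), a contraction edge (leaf $\to$ leaf), a tree edge (leaf $\to$ inner), etc. Hence around $\mC$ each $v_k$ has exactly two leaf-descendants $\ell_k^{(1)},\ell_k^{(2)}$ in $\mL_\mC$ with $(\ell_k^{(1)},\ell_{k+1}^{(2)})\in\kappa$ (indices taken mod $m$), while its parent $\mathfrak p(v_k)$ and its third descendant $\mathfrak d_\kappa(v_k)$ lie outside $\mC$ because tree edges between two inner nodes cannot belong to a $v$-cycle.

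The first step is to integrate out the $4m$ spatial variables $y_{\mL_\mC}$, using the $\delta_0(s_{\ell_k^{(i)}})$ factors which force the leaf times to vanish. For each contracted pair $(\ell_k^{(1)},\ell_{k+1}^{(2)})$ the integrand contains
\[
p_{s_{v_k}}^{(\mathfrak m)}(y_{v_k}-y_{\ell_k^{(1)}})\,\lambda_\varepsilon^{2}p_{2\varepsilon^{2}}(y_{\ell_{k+1}^{(2)}}-y_{\ell_k^{(1)}})\,p_{s_{v_{k+1}}}^{(\mathfrak m)}(y_{v_{k+1}}-y_{\ell_{k+1}^{(2)}}),
\]
and a direct application of the Chapman--Kolmogorov identity, exactly as in the proof of Lemma~\ref{lem:cncl1cycl}, yields $\lambda_\varepsilon^{2}e^{\mathfrak m(s_{v_k}+s_{v_{k+1}})}p_{s_{v_k}+s_{v_{k+1}}+2\varepsilon^{2}}(y_{v_k}-y_{v_{k+1}})$. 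After doing this for each of the $m$ contractions, the $y_{\mL_\mC}$-integration has produced a cyclic product of these effective kernels linking $y_{v_1},\dots,y_{v_m}$.

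The second, crucial, step uses the uniform Gaussian bound $p_s(y)\leqslant \frac{1}{2\pi s}$ on each of these $m$ effective kernels. This decouples the spatial variables of the cycle's inner vertices from one another, and the product of the resulting prefactors is exactly $\<2cycle>^{\otimes m}(s_{v_1},\dots,s_{v_m})$ by \eqref{e_def_vident}. With the cycle spatially decoupled, each $y_{v_k}$ now appears only through the two original heat kernels connecting it to $\mathfrak p(v_k)$ and $\mathfrak d_\kappa(v_k)$. A further Chapman--Kolmogorov integration over $y_{v_k}$ replaces these by $p_{s_{\mathfrak p(v_k)}-s_{\mathfrak d_\kappa(v_k)}}^{(\mathfrak m)}(y_{\mathfrak p(v_k)}-y_{\mathfrak d_\kappa(v_k)})$, which is precisely the kernel that Definition~\ref{def:cycl-rmvl} attaches to the new edge of the reduced tree $\widetilde\tau$; since $\mathfrak p(v_k),\mathfrak d_\kappa(v_k)\notin\mC$, these are genuine vertices of $\widetilde\tau_{\widetilde\kappa}$. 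Carrying this out for every $k$ rebuilds the kernel $K_{\widetilde\tau_{\widetilde\kappa},\varepsilon}$.

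Finally, the indicator functions $\mathds 1_{s_{\mathfrak d_\kappa(v)}\leqslant s_v\leqslant s_{\mathfrak p(v)}}$ are simply retained from the original integrand, where they enforced the non-negativity of the time arguments of the heat kernels above and below $v_k$; the leaf-time constraints around $\mC$ have already been consumed by the $\delta_0$ factors in step one, so no further ordering constraints are needed. The only point requiring care is that we must not apply the Gaussian bound before integrating out the leaves (otherwise we lose the Chapman--Kolmogorov structure), and that the chosen labelling $\ell_k^{(1)},\ell_k^{(2)}$ be consistent with the cyclic orientation of $\mC$; both are bookkeeping issues rather than genuine obstacles. This yields the claimed estimate \eqref{removal_estimate}.
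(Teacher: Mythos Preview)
Your proposal follows essentially the same route as the paper: integrate out the leaf variables of $\mC$ via Chapman--Kolmogorov to obtain the cyclic product $\prod_k e^{\mathfrak m(s_{v_k}+s_{v_{k+1}})}p_{s_{v_k}+s_{v_{k+1}}+2\varepsilon^2}(y_{v_k}-y_{v_{k+1}})$, bound each factor by its value at the origin to obtain $\<2cycle>^{\otimes m}$, and then use Chapman--Kolmogorov again over the $y_{v_k}$ to rebuild $K_{\widetilde\tau_{\widetilde\kappa},\varepsilon}$. This is exactly the paper's argument.

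There is, however, one inaccuracy in your justification. You assert that $\mathfrak p(v_k)$ and $\mathfrak d_\kappa(v_k)$ lie outside $\mC$ ``because tree edges between two inner nodes cannot belong to a $\mathrm v$-cycle''. The premise is correct---the \emph{edge} $\{v_k,\mathfrak p(v_k)\}$ is not in $\mE_\mC$---but this does not force the \emph{vertex} $\mathfrak p(v_k)$ out of $\mC$: it may well be another inner node $v_{k'}$ of the cycle, reached through different (leaf-incident) edges. The paper's own pictorial example and case (2b) of Definition~\ref{def:cycl-rmvl} treat precisely this situation. Your Chapman--Kolmogorov step over the $y_{v_k}$ still works, but it must be understood as an \emph{iterated} convolution along the chain of inner nodes of $\mC$ lying between the nearest ancestor $w_0\notin\mC$ and the nearest descendant $w_2\notin\mC$, producing the single edge $\{w_0,w_2\}$ of $\widetilde\tau$. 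With that correction the argument is complete. (Also, $|\mL_\mC|=2m$, not $4m$.)
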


\begin{proof}
The proof follows the steps of the computation in Example
\ref{example:tidentbound} by crucially applying
Chapman--Kolmogorov to the integration over the space variables associated to the leaves of the
${\rm v}$-cycle, combined with a uniform bound over the space variables on the resulting product of heat kernels.
In particular, we have
\begin{equation*}
\begin{aligned}
 \int_{(\RR^{2})^{\mL_{\mC}}}& \prod_{\{w,v\} \, \in \,
\mE_{\mC} \, \colon \, v \,\in \, \mI_\mC} p_{s_v }^{(\mathfrak{m})}(y_v - y_w)
\prod_{ \{w, v\} \, \in \, \kappa \cap \mE_{\mC}  } \lambda_{\ve}^{2} \,
    p_{2\ve^2}(y_{w} - y_{v})   \,  \ud y_{\mL_{\mC}}  \\
& = \lambda_{\ve}^{2 m} \prod_{k=1}^{m} e^{\mathfrak{m}( s_{v_{k}}+
s_{v_{k+1}})} p_{s_{v_{k}}+s_{v_{k+1}}+2\varepsilon^2}(y_{v_{k+1}}-y_{v_k})
\,\, \\
&\leqslant  \lambda_{\ve}^{2 m} \prod_{k=1}^{m} e^{\mathfrak{m}( s_{v_{k}}+
s_{v_{k+1}})}p_{s_{v_{k}}+s_{v_{k+1}}+2\varepsilon^2}(0) \\
& = \,\, \<2cycle>^{\otimes m}( (s_{v_{k}})_{k =1, \ldots, m}) \;,
\end{aligned}
\end{equation*}
where we omitted integration over the time variables associated to
leaves on the left-hand side, as $ s_{w}=0$ for all $ w \in \mL_{\mC}$, because
of the Dirac--\(\delta\) at zero.
Inserting the above into the left-hand side of \eqref{removal_estimate}
we obtain the desired estimate.
It is useful to have a pictorial representation of the estimate we have just
performed:
 \begin{equation*}
\begin{aligned}
        \vcenter{\hbox{
        \begin{tikzpicture}[scale=0.4]
        \draw  (0,0)   -- (-.7,1) node[dot] {}  ;
        \draw  (0,0)   -- (0,1) node[dot] {}  ;
        \draw (0,0) -- (.7,1) node[dot] {};
        \draw (0,0) -- (1,-1);
        \draw (1,0) node[dot] {} -- (1,-1);
        \draw (1.7,0) node[dot] {} -- (1,-1);
        \draw  (6,0)   -- (5.3,1) node[dot] {}  ;
        \draw (6,0) -- (6.7,1) node[dot] {};
        \draw (6,0) -- (6,1) node[dot] {};
        \draw (6,0) -- (5,-1);
        \draw[thick, dotted] (4.9,-1.1) -- (4.5,-1.5);
        \draw[thick, dotted] (2.1,-2.1) -- (2.6,-2.6);
        \draw (5,-1)--(5,0); \node at (4.9,0.4) {\scalebox{0.8}{$\tau_4$}};
        \draw (5,-1)--(4.3,0); \node at (4.1,0.4) {\scalebox{0.8}{$\tau_3$}};
        \draw (1,-1) -- (2,-2);  \draw (2,-2) -- (2,-1);  \node at (2,.-0.8) {\scalebox{0.8}{$\tau_1$}};
         \draw (2,-2) -- (2.7,-1);  \node at (2.8,.-0.8) {\scalebox{0.8}{$\tau_2$}};
        %\draw[blue, thick] (-.7,1) to[out=90,in=180] (2.5,3);
        %\draw[blue, thick] (2.5,3) to[out=0,in=90] (6.7,1);
        \draw[blue, thick] (0,1) to[out=90,in=180] (2.5,2.5);
        \draw[blue, thick] (2.5,2.5) to[out=0,in=90] (6,1);
        \draw[blue, thick] (0.7,1) to[out=90,in=180] (.9,1.5);
        \draw[blue, thick] (0.9,1.5) to[out=0,in=90] (1,0);
        \draw[blue, thick] (1.7,0) to[out=90,in=180] (3,1.5);
        \draw[blue, thick] (3,1.5) to[out=0,in=90] (5.3,1);
         \draw[blue, thick] (-0.7,1) to[out=90,in=210] (0,2.5);
        \draw[blue, thick] (6.7,1) to[out=90,in=-30] (6,2.5);
	\node[idot] at (0,0) {};
	\node[idot] at (1,-1) {};
	\node[idot] at (2,-2) {};
	\node[idot] at (5,-1) {};
	\node[idot] at (6,0) {};
        \end{tikzpicture}  }}
&=
        \vcenter{\hbox{
        \begin{tikzpicture}[scale=0.4]
        \draw  (0,0)   -- (-.7,1) node[dot] {}  ;
        \draw (0,0) -- (1,-1);
        \draw (6,0) -- (6.7,1) node[dot] {};
        \draw (6,0) -- (5,-1);
        \draw[thick, dotted] (4.9,-1.1) -- (4.5,-1.5);
        \draw[thick, dotted] (2.1,-2.1) -- (2.6,-2.6);
        \draw (5,-1)--(5,0); \node at (4.9,0.4) {\scalebox{0.8}{$\tau_4$}};
        \draw (5,-1)--(4.3,0); \node at (4.1,0.4) {\scalebox{0.8}{$\tau_3$}};
        \draw (1,-1) -- (2,-2);  \draw (2,-2) -- (2,-1);  \node at (2,.-0.8) {\scalebox{0.8}{$\tau_1$}};
         \draw (2,-2) -- (2.7,-1);  \node at (2.8,.-0.8) {\scalebox{0.8}{$\tau_2$}};
        \draw[blue, thick] (0,0) to[out=90,in=180] (3.5,2.5);
        \draw[blue, thick] (3.5,2.5) to[out=0,in=90] (6,0);
        \draw[blue, thick] (0,0) to[out=45,in=90] (1, -1);
        \draw[blue, thick] (1,-1) to[out=90,in=115] (6,0);
        \draw[blue, thick] (-0.7,1) to[out=75,in=210] (0,2);
        \draw[blue, thick] (6.7,1) to[out=90,in=-30] (6,2.5);
	\node[idot] at (0,0) {};
	\node[idot] at (1,-1) {};
	\node[idot] at (2,-2) {};
	\node[idot] at (5,-1) {};
	\node[idot] at (6,0) {};
        \end{tikzpicture}  }}\\
 &\leqslant \,\,
\underset{\<2cycle>^{\otimes 3}( (s_{v_{k}})_{k =1, \ldots, 3})}{
\underbrace{
          \vcenter{\hbox{
               \begin{tikzpicture}[scale=0.3]
          \draw[blue, thick] (0,0) to[out=90,in=180] (3.5,2.5);
        \draw[blue, thick] (3.5,2.5) to[out=0,in=90] (6,0);
        \draw[blue, thick] (0,0) to[out=45,in=90] (1, -1);
        \draw[blue, thick] (1,-1) to[out=90,in=115] (6,0);
	\node[idot] at (0,0) {};
	\node[idot] at (1,-1) {};
	\node[idot] at (6,0) {};
       \end{tikzpicture}  }}
       }
       }
       \times \Bigg\{
          \vcenter{\hbox{
        \begin{tikzpicture}[scale=0.4]
        \draw  (0,0)   -- (-.7,1) node[dot] {}  ;
        \draw (0,0) -- (1,-1);
        \draw (6,0) -- (6.7,1) node[dot] {};
        \draw (6,0) -- (5,-1);
        \draw[thick, dotted] (4.9,-1.1) -- (4.5,-1.5);
        \draw[thick, dotted] (2.1,-2.1) -- (2.6,-2.6);
        \draw (5,-1)--(5,0); \node at (4.9,0.4) {\scalebox{0.8}{$\tau_4$}};
        \draw (5,-1)--(4.3,0); \node at (4.1,0.4) {\scalebox{0.8}{$\tau_3$}};
        \draw (1,-1) -- (2,-2);  \draw (2,-2) -- (2,-1);  \node at (2,.-0.8) {\scalebox{0.8}{$\tau_1$}};
         \draw (2,-2) -- (2.7,-1);  \node at (2.8,.-0.8) {\scalebox{0.8}{$\tau_2$}};
        \draw[blue, thick] (-0.7,1) to[out=75,in=210] (0,2);
        \draw[blue, thick] (6.7,1) to[out=90,in=-30] (6,2.5);
	\node[idot] at (2,-2) {};
	\node[idot] at (5,-1) {};
	\draw [fill, purple] (0,0)  circle [radius=0.07];
 	\draw [fill, purple] (1,-1)  circle [radius=0.07];
	\draw [fill, purple] (6,0)  circle [radius=0.07];
        \end{tikzpicture}  }}
        \Bigg\} \;,
\end{aligned}
    \end{equation*}
with the equality representing the application of Chapman--Kolmogorov, with the
blue cycle appearing on the right-hand side in the first line
representing the weight $$\lambda_{\ve}^{2 m}
\prod_{k=1}^{m}  e^{\mathfrak{m}( s_{v_{k}}+
s_{v_{k+1}})} p_{s_{v_{k}}+s_{v_{k+1}}+2\varepsilon^2}(y_{v_{k+1}}-y_{v_k})
\,,$$
and the inequality depicting the application of the uniform bound
 $$\prod_{k=1}^{m}  e^{\mathfrak{m}( s_{v_{k}}+
s_{v_{k+1}})}p_{s_{v_{k}}+s_{v_{k+1}}+2\varepsilon^2}(y_{v_{k+1}}-y_{v_k}) \leqslant
 \prod_{k=1}^{m}  e^{\mathfrak{m}( s_{v_{k}}+
s_{v_{k+1}})}p_{s_{v_{k}}+s_{v_{k+1}}+2\varepsilon^2}(0) \,.$$
The blue cycle appearing in the right-hand side in the second line represents
the space-independent kernel
 $\<2cycle>^{\otimes m}( (s_{v_{k}})_{k =1, \ldots, m})$ (hence we call this a spatial
decoupling from the remaining integral). The small red nodes
indicate the remaining spatial integrals associated to inner nodes of the extracted
cycle.
By another application of Chapman--Kolmogorov, when
integrating over the  spatial variables associated to the small red
nodes, we obtain
\begin{equation*}
\begin{aligned}
\vcenter{\hbox{
        \begin{tikzpicture}[scale=0.4]
        \draw  (0,0)   -- (-1,1) node[dot] {}  ;
        \draw (0,0) -- (1,-1);
        \draw (6,0) -- (7,1) node[dot] {};
        \draw (6,0) -- (5,-1);
        \draw[thick, dotted] (4.9,-1.1) -- (4.5,-1.5);
        \draw[thick, dotted] (2.1,-2.1) -- (2.6,-2.6);
        \draw (5,-1)--(5,0); \node at (4.9,0.4) {\scalebox{0.8}{$\tau_4$}};
        \draw (5,-1)--(4.3,0); \node at (4.1,0.4) {\scalebox{0.8}{$\tau_3$}};
        \draw (1,-1) -- (2,-2);  \draw (2,-2) -- (2,-1);  \node at (2,.-0.8) {\scalebox{0.8}{$\tau_1$}};
         \draw (2,-2) -- (2.7,-1);  \node at (2.8,.-0.8) {\scalebox{0.8}{$\tau_2$}};
        \draw[blue, thick] (-1,1) to[out=75,in=210] (0,2);
        \draw[blue, thick] (7,1) to[out=90,in=-30] (6,2.5);
	\node[idot] at (2,-2) {};
	\node[idot] at (5,-1) {};
	\draw [fill, purple] (0,0)  circle [radius=0.07];
 	\draw [fill, purple] (1,-1)  circle [radius=0.07];
	\draw [fill, purple] (6,0)  circle [radius=0.07];
        \end{tikzpicture}  }}
=
          \vcenter{\hbox{
               \begin{tikzpicture}[scale=0.4]
       \draw (6,0) node[dot] {} -- (5,-1);
       \draw[thick, dotted] (4.9,-1.1) -- (4.5,-1.5);
       \draw[thick, dotted] (2.1,-2.1) -- (2.6,-2.6);
       \draw (5,-1)--(5,0); \node at (4.9,0.4) {\scalebox{0.8}{$\tau_4$}};
       \draw (5,-1)--(4.3,0); \node at (4.1,0.4) {\scalebox{0.8}{$\tau_3$}};
       \draw (1,-1) node[dot] {}  -- (2,-2);  \draw (2,-2) -- (2,-1);  \node at (2,.-0.8) {\scalebox{0.8}{$\tau_1$}};
       \draw (2,-2) -- (2.7,-1);  \node at (2.8,.-0.8) {\scalebox{0.8}{$\tau_2$}};
        \draw[blue, thick] (1,-1) to[out=90,in=210] (1.5,0.5);
        \draw[blue, thick] (6,0) to[out=90,in=-30] (5.5,1);
	\node[idot] at (2,-2) {};
	\node[idot] at (5,-1) {};
        \end{tikzpicture}  }}\,.
\end{aligned}
\end{equation*}
This concludes the proof.
\end{proof}
The following lemma is crucial as it demonstrates that ${\rm v}$-cycles of length
larger than one have a vanishing contribution, as $\ve\to 0$, and in fact, the
contribution is even smaller the larger the cycle is (because of the factor $
\lambda_{\ve}^{2m} $).
\begin{lemma}\label{lem_v_cycle_order_bnd_new}
The following bound holds for any $m\geqslant 1$
\begin{align}\label{eq_lem_v_cycle_new}
    \int_{[0,t]^m} \<2cycle>^{\otimes m}(s_1, \dots, s_m)  \ud s_{1, \ldots, m}
    \leqslant
    \frac{(\lambda_\varepsilon e^{ \overline{\mathfrak{m}}\, t} )^{2m}}{2^m\, \pi}
   \log \big(
   1+\tfrac{t}{\varepsilon^2}
   \big)  \;,
\end{align}
where we recall that $ \overline{\mathfrak{m}} =  \max \{ \mf{m}\;, 0 \}$.
\end{lemma}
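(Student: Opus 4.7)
The plan is to first strip off the exponential using the pointwise bound $\prod_{k=1}^m e^{\mathfrak{m}(s_k+s_{k+1})} \leq e^{2m\overline{\mathfrak{m}} t}$ on $[0,t]^m$, reducing the claim to the purely geometric estimate
\[
I_m := \int_{[0,t]^m} \prod_{k=1}^m \frac{ds_k}{s_k + s_{k+1} + 2\varepsilon^2} \;\leq\; \pi^{m-1}\log\!\Big(1+\tfrac{t}{\varepsilon^2}\Big).
\]
The main obstacle is that a naive AM--GM step, $(s_k+s_{k+1}+2\varepsilon^2)^{-1} \leq (2\sqrt{(s_k+\varepsilon^2)(s_{k+1}+\varepsilon^2)})^{-1}$, produces only $I_m\leq L^m/2^m$ with $L := \log(1+t/\varepsilon^2)$; once multiplied by $\lambda_\varepsilon^{2m}\sim (\log\tfrac{1}{\varepsilon})^{-m}$, this is far too weak because it does not exploit the cancellation inherent in the cyclic structure.

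To obtain the sharp single-log bound I would perform a logarithmic change of variables. Setting $u_k := s_k + \varepsilon^2$ and then $u_k = e^{z_k}$, with the shift $w_k := z_k - \log \varepsilon^2 \in [0,L]$, the identity
\[
u_k + u_{k+1} = 2 e^{(z_k+z_{k+1})/2}\cosh\!\big((z_{k+1}-z_k)/2\big),
\]
combined with $du_k = u_k\,dz_k$, produces a cancellation of the prefactors $\prod_k e^{(z_k-z_{k+1})/2} = 1$ over the cycle and transforms the integral into
\[
I_m = \frac{1}{2^m}\int_{[0,L]^m} \prod_{k=1}^m f(w_{k+1}-w_k)\,dw, \qquad f(x) := \mathrm{sech}(x/2).
\]
Since the integrand depends only on the cyclic differences $\delta_k := w_{k+1}-w_k$, fixing $w_1\in[0,L]$ and then dropping the remaining $[0,L]$-constraints on $w_2,\ldots,w_m$ (valid by positivity of $f$) yields $I_m \leq (L/2^m)\,f^{\ast m}(0)$, where $f^{\ast m}$ is the $m$-fold convolution.

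This last quantity is explicit via Fourier inversion at zero: using the classical identity $\widehat{f}(\xi) = 2\pi\,\mathrm{sech}(\pi\xi)$,
\[
f^{\ast m}(0) = \frac{1}{2\pi}\int_{\mathbb{R}} \widehat{f}(\xi)^m\,d\xi = (2\pi)^{m-1}\!\int_{\mathbb{R}} \mathrm{sech}^m(\pi\xi)\,d\xi = \frac{(2\pi)^{m-1}}{\pi}\cdot\frac{\sqrt{\pi}\,\Gamma(m/2)}{\Gamma((m+1)/2)},
\]
the last equality being the standard $\tanh$-substitution/Wallis evaluation of $\int_\RR \mathrm{sech}^m u\,du$. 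Assembling:
\[
I_m \leq \frac{L\,\pi^{m-1}}{2\sqrt{\pi}}\cdot \frac{\Gamma(m/2)}{\Gamma((m+1)/2)} \leq \pi^{m-1}\,L,
\]
the final inequality since $x\mapsto \Gamma(x)/\Gamma(x+1/2)$ is decreasing by log-convexity of $\Gamma$, so the ratio $\Gamma(m/2)/\Gamma((m+1)/2)$ is maximized at $m=1$ with value $\sqrt{\pi}<2\sqrt{\pi}$. Restoring the prefactors $\lambda_\varepsilon^{2m}/(2\pi)^m$ and $e^{2m\overline{\mathfrak{m}} t}$ produces precisely the stated right-hand side.
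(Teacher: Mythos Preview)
Your proof is correct and takes a genuinely different route from the paper. After the common first step of stripping the exponential, the paper breaks the cycle at a single link via the AM--GM bound $s_1+s_m+2\varepsilon^2 \geq \sqrt{(s_1+2\varepsilon^2)(s_m+2\varepsilon^2)}$, and then peels off $s_m,s_{m-1},\ldots,s_2$ one at a time using the elementary one-variable estimate
\[
\int_0^t \frac{1}{\sqrt{s_k+2\varepsilon^2}}\,\frac{1}{s_{k-1}+s_k+2\varepsilon^2}\,ds_k \;\leq\; \frac{\pi}{\sqrt{s_{k-1}+2\varepsilon^2}},
\]
leaving a final integral $\int_0^t (s_1+2\varepsilon^2)^{-1}\,ds_1$ that produces the single logarithm. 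Your approach instead exploits the cyclic structure globally: the logarithmic change of variables, together with the telescoping identity $\prod_k e^{(z_k-z_{k+1})/2}=1$, rewrites $I_m$ exactly as a cyclic convolution of $\mathrm{sech}(\cdot/2)$ on $[0,L]^m$, after which extending $m-1$ variables to $\RR$ and Fourier inversion at zero yield the closed form $f^{\ast m}(0)$. The paper's argument is more elementary (pure calculus, no Fourier analysis) and perhaps quicker to verify; yours is structurally more revealing and in fact sharper, since the factor $\tfrac{1}{2\sqrt{\pi}}\,\Gamma(m/2)/\Gamma((m+1)/2)$ is at most $1/2$ and decays like $(2\pi m)^{-1/2}$ for large $m$, so you actually prove $I_m \leq \tfrac12 \pi^{m-1}L$ rather than just $\pi^{m-1}L$.
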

\begin{proof}

In the case $m=1$, the bound follows from \eqref{e:1cycle}, since
\begin{equation}\label{e:integral_1cycle}
\begin{aligned}
\int_{0}^{t} \<cycle1>_{\ve} (s) \ud s
= \lambda_\ve^2\int_0^t \frac{ e^{2\mathfrak{m}\,s} }{4\pi (s+\ve^2)} \ud s
 \leqslant \frac{\lambda_{\ve}^{2} e^{ 2\overline{\mathfrak{m}}\, t}}{2 \pi} \log{(1 + \tfrac{t}{ \ve^{2}}}) \;.
\end{aligned}
\end{equation}
Thus, we assume $m\geqslant 2$ for the remainder of the proof.
First, note that 
\begin{align*}
s_1+s_m+2\ve^2 \geqslant 2 \sqrt{(s_1+\ve^2)(s_m+\ve^2) } \geqslant
\sqrt{(s_1+2\ve^2)(s_m+2\ve^2) }\,.
\end{align*}
Therefore:
\begin{equation}\label{eq_cycle_bound_step1}
\begin{aligned}
    \int_{[0,t]^m} &\<2cycle>^{\otimes m}(s_1,...,s_m) \ud s_{1, \ldots, m}
    =
    \frac{\lambda_\varepsilon^{2m}}{(2\pi)^m}
    \int\limits_{[0,t]^m}
   \prod_{k=1}^{m}\frac{e^{\mathfrak{m}(s_{k}+
s_{k+1})}
}{s_{k}+s_{k+1}+2\varepsilon^2}   \ud  s_{1, \ldots, m} \\
   &\leqslant
   \frac{(\lambda_\varepsilon e^{ \overline{\mathfrak{m}} \, t} )^{2m}}{(2\pi)^m}
    \int\limits_{[0,t]^m}
   \frac{1}{\sqrt{s_1+2\varepsilon^2}}
   \frac{1}{\sqrt{s_m+ 2\ve^{2}}}
   \prod_{k=1}^{m-1}\frac{1}{s_{k}+s_{k+1}+2\varepsilon^2}  \ud  s_{1, \ldots,
m}\,,
\end{aligned}
\end{equation}
where we additionally used that $e^{\mathfrak{m} \, s_{k}} \leqslant
e^{ \overline{\mathfrak{m}} \, t } $.
Furthermore, for $2\leqslant k\leqslant m$, using the change of variables $r = \tfrac{s_{k}}{s_{k -1}+ 2\ve^{2}}$
together with the identity $\int_{0}^{\infty} \tfrac{1}{\sqrt{r} (1 + r)} \ud r = \pi$, we have that
\begin{align}\label{e:support1}
    \int_0^t \frac{1}{\sqrt{s_k+ 2 \ve^{2}}}
    \frac{ 1}{s_{k-1}+s_k+2\ve^2} \ud s_{k}
& \leqslant \frac{ 1}{\sqrt{s_{k-1} + 2 \ve^{2}}} \int_0^{t/(s_{k-1} + 2
\ve^{2})} \frac{1}{\sqrt{r}} \frac{1}{1+r} \ud r   \nonumber \\
& \leqslant \frac{\pi}{\sqrt{s_{k-1}+2\ve^2}}\,. 
\end{align}
Applying \eqref{e:support1} $(m-1)$-times to \eqref{eq_cycle_bound_step1},
starting from $k=m$ and going down to
$k=2$, yields
\begin{align*}
    \int_{[0,t]^m} \<2cycle>^{\otimes m}(s_1,...,s_m)  \, \ud s_{1, \ldots, m}
    &\leqslant \frac{(\lambda_\varepsilon e^{ \overline{\mathfrak{m}}\, t} )^{2m} \, \pi^{m-1}}{(2\pi)^m}
    \int_{0}^t
\frac{1}{\sqrt{s_1+2\varepsilon^2}}\frac{1}{\sqrt{s_1+2\varepsilon^2}} \ud
s_{1} \\
   & = \frac{(\lambda_\varepsilon e^{ \overline{\mathfrak{m}} \, t} )^{2m}}{2^m\, \pi}
   \log \big( 1+\tfrac{t}{2\varepsilon^2} \big)\,,
\end{align*}
which concludes the proof.
\end{proof}

Next we want to define an iterative process of extracting cycles from a paired tree and
record this process via a mapping to an element of the permutation group.
We will call this the {\it cycle extraction map} and define it below.
To define such algorithm, it will be convenient to label vertices of trees. For a given tree
$ \tau \in \mT_{3}$, we fix a representative ordered version of it and label the inner vertices of $[ \tau, \tau]$ (excluding
the root) with the numbers
$ \{ 1, \ldots, 2i( \tau ) \} $ in arbitrary order.
Once we have labeled all inner nodes, we label its leaves with the integers $\{ 2i(\tau)+1, \ldots, 4i(\tau)+2 \}$.
In particular, the arguments that follow depend on the
ordered structure of the trees under consideration. Yet, the eventual
estimates that we obtain are uniform over all
representatives of a given unordered tree, so this will not cause any problem.
Now, it will be convenient to define an ordering among sequences of labels.

\begin{definition}[Lexicographic ordering]\label{lexiorder}
For two vectors $\mathsf{V}=(v_1,...,v_d) \in \NN^{d} $ and $ \mathsf{U}=(u_1,...,u_e) \in \NN^{e} $
with $ d $ and $ e $ not necessarily equal, we say that $\mathsf{V}$ precedes
$\mathsf{U}$
in lexicographic order and write $\mathsf{V} \prec \mathsf{U}$ if
\begin{itemize}
\item either there exists a $ k \leqslant d \wedge e $ such
that $ v_{z} = u_{z} $ for $ z \leqslant k -1 $ and $ v_{k} < w_{k} $ ,
\item or $ d < e $ and $ v_{z} = u_{z} $ for $ z \leqslant d $.
\end{itemize}
\end{definition}
The lexicographic order extends naturally to a total
order on the set of ${\rm v}$-cycles of a tree.
Let $\mC$ be a ${\rm v}$-cycle, represented by the path-vector
\begin{equation}\label{e_def_pathcycle}
\begin{aligned}
\mathsf{V}_{\mC}:=
(v_{i_1}, v_{j_{1}} , v_{j_{2}}, v_{i_{2}},...,v_{i_m}, v_{j_{2m-1}},
v_{j_{2m}})\,, \quad \text{such
that} \quad v_{i_{k}} \in
\mI_{\mC}\,,\ v_{j_{k}} \in \mL_{\mC} \,,
\end{aligned}
\end{equation}
with consecutive vertices in the vector being connected by an edge in the
${\rm v}$-cycle.
Here $i_{1}$ is the minimal label in $\mI_{\mC}$, and $v_{j_{1}}$ the leaf with
minimal label neighbouring $ v_{i_{1}}$. This imposes a direction the
path-vector is represented in.
Now, let $ \mC '$ be a second ${\rm v}$-cycle represented by the vector $ \mathsf{V}_{\mC '}$,
then
\begin{equation}\label{cycleorder}
\mC \prec \mC'\qquad \text{if} \qquad \mathsf{V}_{\mC} \prec \mathsf{V}_{\mC'}\
\text{lexicographically}.
\end{equation}
In this setting we can introduce the \emph{cycle extraction map} $ \Pi_{\tau} $,
for any $ \tau \in \mT_{3} $.
\begin{definition}[Cycle extraction map]\label{def:cycle-extraction}
For any $ \tau \in \mT_{3} $ and $ \gamma \in \mY (\tau, \tau) $, we will
define inductively a sequence of ${\rm v}$-cycles extracted from $[\tau, \tau]_{\gamma}$
 as follows:
\begin{enumerate}
\item Start by defining  $\sigma_{1} := [\tau, \tau]_{\gamma} $ and $\gamma_1:=\gamma$ and denote by $\mC_1$
the minimal ${\rm v}$-cycle in $\sigma_1$ (whose existence is guaranteed by Lemma~\ref{vcycleexists})
with respect to the lexicographic order.
Define $\sigma_2 :=\sigma_1 \setminus \mC_1$ and on $\sigma_2$ the contraction $\gamma_2$ induced by
 $\gamma_1$ after the cycle removal of $\mC_1$, according to Definition \ref{def:cycl-rmvl}.
\item Assume that we have defined the contracted trees $(\sigma_i)_{\gamma_i}$, for $i=1,...,k$, as well as the
${\rm v}$-cycles $\mC_1,...,\mC_{k-1}$
belonging respectively to $(\sigma_1)_{\gamma_1},...,(\sigma_{k-1})_{\gamma_{k-1}}$. Then
proceed by defining $\mC_k$ to be the
minimal {\rm v}-cycle belonging to $\sigma_k$, with respect to the
lexicographic order. Further, define the contracted graph
 $(\sigma_{k+1})_{\gamma_{k+1}}:= (\sigma_k\setminus \mC_k)_{\tilde\gamma_k}$ via the cycle removal
 as in Definition \ref{def:cycl-rmvl}.
\item Stop at $  K(\tau, \gamma):=k $ if \(\sigma_{k+1} = \<cycle1> \).
\end{enumerate}
\end{definition}
\begin{definition}[Permutation extraction map]\label{def:permCycleMap}
For $ n \in \NN $, let $ S_{n}$ denote the symmetric group over $n$ elements, and
let $\tau\in \mT_3$.
We define as follows the permutation extraction map
\begin{equation*}
\begin{aligned}
\Pi_{\tau} \colon \mY (\tau, \tau)  \to S_{2 i (\tau)} \;.	
\end{aligned}
\end{equation*}
For any $\gamma \in  \mY (\tau, \tau)$ consider the sequence of ${\rm v}$-cycles
$ ( \mC_{k} )_{k =1}^{K (\tau, \gamma)} $ constructed from $[\tau,\tau]_\gamma$ via the cycle extraction map
from Definition \ref{def:cycle-extraction}.
For any $\mC_k$ belonging to this
sequence let $v_{i^{(k)}_1},...,v_{i^{(k)}_{m_k}}$ be the vertices in
$\mI_{\mC_{k}}$, listed in the same
order as in \eqref{e_def_pathcycle}.
We then map every cycle to a permutation cycle 
%to the ordered sequence of the labels of the inner
%vertices that it crosses
\begin{align*}
\mC_k  \mapsto
\widehat\mC_k:=
\big(i^{(k)}_1 \  i^{(k)}_2 \ \cdots  \  i^{(k)}_{m_k}\big) \in S_{2
i(\tau)}\,,
\end{align*}
where we used the cycle notation $ (i^{(k)}_1 \  i^{(k)}_2 \ \cdots  \
i^{(k)}_{m_k}) $ for the  permutation $i^{(k)}_j \mapsto
i^{(k)}_{j+1}$, for $j=1, \ldots , m_{k}$, with $ 
i^{(k)}_{m_{k}+1}=i^{(k)}_{1}$.
The image of $ \gamma $ under the permutation extraction map
$ \pi = \Pi_\tau(\gamma)$  is then defined as 
\begin{align*}
\uppi= \prod_{k=1}^{K(\tau, \gamma)}
\big(i^{(k)}_1 \  i^{(k)}_2 \ \cdots  \  i^{(k)}_{m_k}\big)\,.
\end{align*}
\end{definition}

To clarify the tools we have introduced so far, let us discuss an example.

\begin{example}\label{exampl_cycle_extr}
Consider the following paired tree
\begin{equation*}
\begin{aligned}
[ \tau , \tau]_{ \gamma}=
 \vcenter{\hbox{
               \begin{tikzpicture}[scale=0.4]
        \draw  (0,0)   -- (-.7,1) node[dot] {}  ;
        \draw  (0,0)   -- (0,1) node[dot] {}  ;
        \draw (0,0) -- (.7,1) node[dot] {};
        \draw (0,0) -- (1,-1);
        \draw (1,0) node[dot] {} -- (1,-1);
        \draw (1.7,0) node[dot] {} -- (1,-1);
        \draw  (6,0)   -- (5.3,1) node[dot] {}  ;
        \draw (6,0) -- (6.7,1) node[dot] {};
        \draw (6,0) -- (6,1) node[dot] {};
        \draw (6,0) -- (5,-1);
	\draw (5,-1) --(3,-2);
        \draw (5,-1)--(5,0)  node[dot] {}  ;
        \draw (5,-1)--(4.3,0)  node[dot] {}  ;
        \draw (1,-1) -- (3,-2);
        \draw[blue, thick] (-.7,1) to[out=90,in=180] (2.5,3);
        \draw[blue, thick] (2.5,3) to[out=0,in=90] (6.7,1);
        \draw[purple, thick] (0,1) to[out=90,in=180] (2.5,2.5);
        \draw[purple, thick] (2.5,2.5) to[out=0,in=90] (6,1);
        \draw[blue, thick] (0.7,1) to[out=90,in=180] (.9,1.2);
        \draw[blue, thick] (0.9,1.2) to[out=0,in=90] (1,0);
        \draw[blue, thick] (1.7,0) to[out=90,in=180] (3,1.5);
        \draw[blue, thick] (3,1.5) to[out=0,in=90] (5.3,1);
	\draw[purple, thick] (4.3,0) to[out=90,in=180] (4.65,0.5);
	\draw[purple, thick] (4.65,0.5) to[out=0,in=90] (5,0);
	\node (c) [label = left:{\tiny $3$}] at (0.3, 0) {};
	\node (a) [label = left:{\tiny $1$}] at (1.2, -1) {};
	\node (b) [label = right:{\tiny $2$}] at (4.7, -1) {};
	%\node [label = {\tiny $5$}] at (1.3, -.9) {};
	%\node [label = {\tiny $6$}] at (2.1, -.9) {};
	%\node [label = {\tiny $7$}] at (4.3, -0.9) {};
	%\node [label = {\tiny $8$}] at (5, -.9) {};
	%\node [label = {\tiny $9$}] at (-.7, 0) {};
	%\node [label = {\tiny $10$}] at (0, 0) {};
	%\node [label = {\tiny $11$}] at (.7, 0) {};
	\node [label = right:{\tiny $4$}] at (5.7, 0) {};
	%\node [label = {\tiny $13$}] at (5.7, 0) {};
	%\node [label = {\tiny $14$}] at (5.7, 0) {};
	\node[idot] at (0,0) {};
	\node[idot] at (1,-1) {};
	\node[idot] at (3,-2) {};
	\node[idot] at (5,-1) {};
	\node[idot] at (6,0) {};
        \end{tikzpicture}  }}=:( \sigma_{1})_{ \gamma_1}\,,
\end{aligned}
\end{equation*}
where we marked the minimal ${\rm v}$-cycle (with respect to lexicographic ordering) in blue,
which will be removed in the first iteration of the cycle extraction.
For the sake of clarity we omitted the labels of the leaves in the diagram
above, and merely represent the corresponding ${\rm v}$-cycles by their inner nodes.
Removing the blue $\rm{v}$-cycle in the diagram above yields
\begin{equation*}
\begin{aligned}
(\sigma_{2})_{ \gamma_{2}}
=
 \vcenter{\hbox{
        \begin{tikzpicture}[scale=0.4]
        \draw (6,0) -- (5,-1);
	\draw (5,-1) --(4,-2);
        \draw (5,-1)--(5,0)  node[dot] {}  ;
        \draw (5,-1)--(4.3,0)  node[dot] {}  ;
	\draw (5,-1)--(6,0)  node[dot] {}  ;
        \draw (3,-1) node[dot] {} -- (4,-2);
        \draw[purple, thick] (3,-1) to[out=90,in=180] (4.5,1);
        \draw[purple, thick] (4.5,1) to[out=0,in=90] (6,0);
	\draw[blue, thick] (4.3,0) to[out=90,in=180] (4.65,0.5);
	\draw[blue, thick] (4.65,0.5) to[out=0,in=90] (5,0);
	\node (b) [label = right:{\tiny $2$}] at (4.7, -1) {};
	\node[idot] at (4,-2) {};
	\node[idot] at (5,-1) {};
        \end{tikzpicture}  }}
\quad \text{and} \quad
\widehat{\mC}_{1} =
\big( 1\ 3 \ 4 \big)\,,
\end{aligned}
\end{equation*}
where once more we marked the new minimal ${\rm v}$-cycle in blue. Removing the
new blue cycle
then yields
\begin{equation*}
\begin{aligned}
(\sigma_3)_{ \gamma_{3}}
=
 \<cycle1> \quad \text{and} \quad
\widehat{\mC}_{2} = \big(2\big)\,.
\end{aligned}
\end{equation*}
Overall, for the above example, the permutation extraction map $
\Pi_{ \tau}$ yields
\begin{equation}\label{eq_supp_exmpl1}
\begin{aligned}
\gamma \mapsto
\big( 1\ 3 \ 4 \big) \big(2 \big) \in S_{4}\,.
\end{aligned}
\end{equation}
\end{example}

Some remarks on the permutation extraction map are due.
First, we note that the mapping is well defined. This is because once a minimal ${\rm v}$-cycle is 
to be extracted, the integers indexing its base points are removed from the permutation and the
new tree  has inner vertices indexed by the remaining integers.
A second observation is that it is not bijective.
To see that the map is not
injective, consider the tree from Example~\ref{exampl_cycle_extr},
however, now with the pairing
\begin{equation*}
\begin{aligned}
 \vcenter{\hbox{
               \begin{tikzpicture}[scale=0.4]
        \draw  (0,0)   -- (-.7,1) node[dot] {}  ;
        \draw  (0,0)   -- (0,1) node[dot] {}  ;
        \draw (0,0) -- (.7,1) node[dot] {};
        \draw (0,0) -- (1,-1);
        \draw (1,0) node[dot] {} -- (1,-1);
        \draw (1.7,0) node[dot] {} -- (1,-1);
        \draw  (6,0)   -- (5.3,1) node[dot] {}  ;
        \draw (6,0) -- (6.7,1) node[dot] {};
        \draw (6,0) -- (6,1) node[dot] {};
        \draw (6,0) -- (5,-1);
	\draw (5,-1) --(3,-2);
        \draw (5,-1)--(5,0)  node[dot] {}  ;
        \draw (5,-1)--(4.3,0)  node[dot] {}  ;
        \draw (1,-1) -- (3,-2);
        \draw[purple, thick] (-.7,1) to[out=90,in=180] (2.5,3);
        \draw[purple, thick] (2.5,3) to[out=0,in=90] (5,0);
        \draw[blue, thick] (0,1) to[out=90,in=180] (2.5,2.5);
        \draw[blue, thick] (2.5,2.5) to[out=0,in=90] (6,1);
        \draw[blue, thick] (0.7,1) to[out=90,in=180] (.9,1.5);
        \draw[blue, thick] (0.9,1.5) to[out=0,in=90] (1,0);
        \draw[blue, thick] (1.7,0) to[out=90,in=180] (3,1.5);
        \draw[blue, thick] (3,1.5) to[out=0,in=90] (5.3,1);
	\draw[purple, thick] (4.3,0) to[out=90,in=180] (5.5,3);
	\draw[purple, thick] (5.5,3) to[out=0,in=90] (6.7,1);
	\node (c) [label = left:{\tiny $3$}] at (0.3, 0) {};
	\node (a) [label = left:{\tiny $1$}] at (1.3, -1) {};
	\node (b) [label = right:{\tiny $2$}] at (4.7, -1) {};
	\node (b) [label = right:{\tiny $4$}] at (5.7, 0) {};
	\node[idot] at (0,0) {};
	\node[idot] at (1,-1) {};
	\node[idot] at (3,-2) {};
	\node[idot] at (5,-1) {};
	\node[idot] at (6,0) {};
        \end{tikzpicture}  }}\,,
\end{aligned}
\end{equation*}
where again we marked the minimal ${\rm v}$-cycles in blue.
Then $\Pi_{ \tau}$ maps the above pairing also to the permutation
\eqref{eq_supp_exmpl1}.
Moreover, the map is not surjective:
For example consider a paired tree $[ \tau, \tau]$ containing the component
\begin{equation*}
\begin{aligned}
 \vcenter{\hbox{
        \begin{tikzpicture}[scale=0.4]
        \draw  (-.3,0)   -- (-1,1) node[dot] {}  ;
        \draw  (-.3,0)   -- (-.3,1) node[dot] {}  ;
        \draw (-.3,0) -- (.4,1) node[dot] {};
        \draw (-.3,0) -- (1,-1);
        \draw (1,0) node[dot] {} -- (1,-1);
        \draw (2.3,0) -- (1,-1);
	\draw  (2.3,0)   -- (1.6,1) node[dot] {}  ;
        \draw  (2.3,0)   -- (2.3,1) node[dot] {}  ;
        \draw (2.3,0) -- (3,1) node[dot] {};
        \draw[thick, dotted] (2.3,-2) -- (3.6,-3);
        \draw (1,-1) -- (2.3,-2);
	\draw (2.3,-2) -- (2.3,-1);
	\node at (2.4,.-0.8) {\scalebox{0.8}{$\tau_1$}};
        \draw (2.3,-2) -- (3.6,-1);
	\node at (3.7,.-0.8) {\scalebox{0.8}{$\tau_2$}};
        %\draw[blue, thick] (-.7,1) to[out=90,in=180] (2.5,3);
        %\draw[blue, thick] (2.5,3) to[out=0,in=90] (6.7,1);
        %\draw[purple, thick] (0,1) to[out=90,in=180] (2.5,2.5);
        %\draw[purple, thick] (2.5,2.5) to[out=0,in=90] (6,1);
        %\draw[blue, thick] (0.7,1) to[out=90,in=180] (.9,1.5);
        %\draw[blue, thick] (0.9,1.5) to[out=0,in=90] (1,0);
        %\draw[blue, thick] (1.7,0) to[out=90,in=180] (3,1.5);
        %\draw[blue, thick] (3,1.5) to[out=0,in=90] (5.3,1);
	\node (c) [label = left:{\tiny $2$}] at (0, 0) {};
	\node (a) [label = left:{\tiny $1$}] at (1.2, -1) {};
	\node (b) [label = right:{\tiny $3$}] at (2, 0) {};
	\node[idot] at (-.3,0) {};
	\node[idot] at (1,-1) {};
	\node[idot] at (2.3,0) {};
	\node[idot] at (2.3,-2) {};
        \end{tikzpicture}  }}
\end{aligned}
\end{equation*}
and a permutation $ \uppi \in S_{n}$ containing the permutation cycle $ \big( 1\ 2\ 3\big)$.
Then there exists no pairing $ \gamma \in \mY ( \tau, \tau)$
such that $ \Pi_{\tau}( \gamma) = \uppi$,
 since it is not possible to construct
a ${\rm v}$-cycle according to this permutation cycle.
Indeed, let us try to construct a corresponding ${\rm v}$-cycle and see that this fails.
Necessarily, $\gamma$ would contain an edge contracting a leaf connected
to $v_{2}$ and $v_{3}$.
Moreover, we can only connect a single leaf neighbouring either $v_{2}$ or $
v_{3}$ to the isolated leave neighbouring $v_{1}$, say we choose a leaf at $v_2$.
Then the pairing $ \gamma$ would contain the following edges (in red):
\begin{equation*}
\begin{aligned}
 \vcenter{\hbox{
        \begin{tikzpicture}[scale=0.4]
        \draw  (-.3,0)   -- (-1,1) node[dot] {}  ;
        \draw  (-.3,0)   -- (-.3,1) node[dot] {}  ;
        \draw (-.3,0) -- (.4,1) node[dot] {};
        \draw (-.3,0) -- (1,-1);
        \draw (1,0) node[dot] {} -- (1,-1);
        \draw (2.3,0) -- (1,-1);
	\draw  (2.3,0)   -- (1.6,1) node[dot] {}  ;
        \draw  (2.3,0)   -- (2.3,1) node[dot] {}  ;
        \draw (2.3,0) -- (3,1) node[dot] {};
        \draw[thick, dotted] (2.3,-2) -- (3.6,-3);
        \draw (1,-1) -- (2.3,-2);
	\draw (2.3,-2) -- (2.3,-1);
	\node at (2.4,.-0.8) {\scalebox{0.8}{$\tau_1$}};
        \draw (2.3,-2) -- (3.6,-1);
	\node at (3.7,.-0.8) {\scalebox{0.8}{$\tau_2$}};
        \draw[purple, thick] (-1,1) to[out=90,in=180] (1,2);
   	\draw[purple, thick] (1,2) to[out=0,in=90] (3,1);
	\draw[purple, thick] (0.4,1) to[out=90,in=180] (.9,1.5);
        \draw[purple, thick] (0.9,1.5) to[out=0,in=90] (1,0);
 	\node (c) [label = left:{\tiny $2$}] at (0, 0) {};
	\node (a) [label = left:{\tiny $1$}] at (1.2, -1) {};
	\node (b) [label = right:{\tiny $3$}] at (2, 0) {};
	\node[idot] at (-.3,0) {};
	\node[idot] at (1,-1) {};
	\node[idot] at (2.3,0) {};
	\node[idot] at (2.3,-2) {};
        \end{tikzpicture}  }}\,.
\end{aligned}
\end{equation*}
Now, it is not possible to close a ${\rm v}$-cycle with $v_3$, while also crossing
$v_{1}, v_{2}$. 
Note that this construction does not depend on the specific leaves we chose. In
particular, the roles of $v_{2}$ and $v_{3}$ can be reversed.\\

The main result of this section is an upper bound on the integral represented
by a paired tree.
This bound is obtained via the permutation cycle sequence extracted by the map $ \Pi_{\tau} $.
The upper bound will turn out to be sharp when $ \Pi_{\tau} (\gamma) =
\mathrm{Id} $, meaning that only cycles of length one are extracted.
 Before we state the result, let us introduce
the following notation for the time-simplex induced by a tree. For a tree $\tau
\in \mT$ we define
\begin{equation}\label{timesimplex}
\begin{aligned}
\mathfrak{D}_{ \tau}(t) :=
 \{s \in [0,t]^{\mI( \tau)\setminus \mf{o}} \,:\,
    \text{ if $\mathfrak{p}(u)=v$
%in $\tau$
, then $s_v\geq s_u$}
    \}\,,
\end{aligned}
\end{equation}
with the usual convention that $ s_{\mf{o}}=t$ and $\mf{p}(u)$ denoting the
parent of $u$.
\begin{lemma}\label{lem_unif_bound_contraction_perm}
Let $ \tau \in \mT_{3}$ and $i( \tau)$ be the number of internal nodes of $\tau$.
For $\uppi\in S_{2i(\tau)}$ with permutation cycle decomposition $\{\widehat{
\mC}_{i}\}_{i=1}^{K ( \uppi)}$
and  $\gamma\in \Pi^{-1}_\tau(\uppi )$,
we have
\begin{align*}
    [\tau,\tau]_{\gamma,\ve}(t,x)
    & \leqslant
    \lambda_\varepsilon^2\,\Big(
    \int_{\mathfrak{D}_{[\tau,\tau]}(t)}
    \Psi_{\uppi,\varepsilon}(s_{\mI}) \ud s_{\mI}\Big)
	e^{2 \mathfrak{m} \, t}
    p_{2(t+\varepsilon^2)}(0)\,,\quad & \forall \pi \in S_{n} \;,\\
    [\tau,\tau]_{\gamma,\ve}(t,x) 
    & =
    \lambda_\varepsilon^2\,\Big(
    \int_{\mathfrak{D}_{[\tau,\tau]}(t)}
    \Psi_{\uppi,\varepsilon}(s_{\mI}) \ud s_{\mI}\Big)
	e^{2 \mathfrak{m} \, t}
    p_{2(t+\varepsilon^2)}(0)\,, \quad & \text{ if } \pi = \mathrm{Id} \;,
\end{align*}
where $\mI:=\mI([\tau,\tau])\setminus \mf{o}$ and
\begin{equation}\label{e:defPsi}
\begin{aligned}
 \Psi_{\uppi,\varepsilon}(s)
    :=
    \prod_{i=1}^{K ( \uppi)} \<2cycle>^{\otimes |\widehat{\mC}_i|}(s_v; v\in
\mI_{\widehat{\mC}_i})  \,,
\end{aligned}
\end{equation}
with $ \<2cycle>^{\otimes n}$ defined in \eqref{e_def_vident}.
Here $\mI_{ \widehat{\mC}}$ denotes all those inner vertices whose labels lie
in the permutation cycle $ \widehat{\mC}$.
\end{lemma}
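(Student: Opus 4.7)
The plan is to prove the bound by induction on the number of steps $K(\tau,\gamma)$ in the cycle extraction map of Definition~\ref{def:cycle-extraction}, applying Lemma~\ref{lem:cnclmcycl} at each extraction. The base case is when $\sigma_{K+1}$ reduces to $\<cycle1>$, i.e.\ a single inner vertex (the root) carrying a $1$-cycle; by the explicit formula \eqref{def:1cyclekernel} this contributes exactly the factor $\lambda_\varepsilon^{2}\, e^{2\mathfrak{m} t}\, p_{2(t+\varepsilon^2)}(0)$ appearing on the right-hand side of the claim, and there is no time integration to perform.

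For the inductive step, I would take the minimal $\mathrm{v}$-cycle $\mC_1$ of $\sigma_1=[\tau,\tau]_\gamma$ (which exists by Lemma~\ref{vcycleexists}) and apply Lemma~\ref{lem:cnclmcycl} to spatially decouple $\mC_1$. This produces, after integration over the spatial variables of $\mC_1$ and the time variables of its leaves, the factor
\begin{equation*}
\<2cycle>^{\otimes m_1}\!\bigl(s_v;\, v\in\mI_{\mC_1}\bigr)\;\prod_{v\in\mI_{\mC_1}}\mathds{1}_{\{s_{\mathfrak{d}_\kappa(v)}\leqslant s_v\leqslant s_{\mathfrak{p}(v)}\}}\,,
\end{equation*}
multiplied by the kernel $K_{(\sigma_2)_{\gamma_2},\varepsilon}$ of the reduced contracted tree $\sigma_2=\sigma_1\setminus\mC_1$ with induced pairing $\gamma_2$. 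Since by construction $\Pi_{\sigma_2}(\gamma_2)$ equals $\uppi$ with the cycle $\widehat{\mC_1}$ removed, the inductive hypothesis applied to $(\sigma_2,\gamma_2)$ gives a bound with the remaining factor $\prod_{i\geqslant 2}\<2cycle>^{\otimes|\widehat{\mC}_i|}$, and the time integration ranges over the simplex corresponding to the reduced tree.

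The main technical point is to verify that combining the indicator functions produced at each extraction with the natural time ordering of the reduced trees reconstructs exactly the original simplex $\mathfrak{D}_{[\tau,\tau]}(t)$. Here one uses that the cycle removal of Definition~\ref{def:cycl-rmvl} preserves the ancestor/descendant relation on the surviving inner vertices: after collapsing an extracted chain into a single edge $\{w_0,w_2\}$ the heat kernel enforces only $s_{w_0}\geqslant s_{w_2}$, but the indicators coming from Lemma~\ref{lem:cnclmcycl} reinstate the stronger chain of inequalities along the removed inner vertices. Iterating this through all $K(\tau,\gamma)$ extractions recovers precisely the constraint $s_{\mathfrak{p}(u)}\geqslant s_u$ for every $u\in\mI([\tau,\tau])\setminus\mathfrak{o}$, i.e.\ the definition \eqref{timesimplex} of $\mathfrak{D}_{[\tau,\tau]}(t)$.

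Finally, for the equality statement when $\uppi=\mathrm{Id}$, every extracted cycle is of length one, and the key inequality used inside Lemma~\ref{lem:cnclmcycl}, namely $p_{s_k+s_{k+1}+2\varepsilon^2}(y_{k+1}-y_k)\leqslant p_{s_k+s_{k+1}+2\varepsilon^2}(0)$, is trivially saturated for $m=1$ because both leaves of a $1$-cycle share the same parent and Chapman--Kolmogorov (see \eqref{e:1cycle} and the proof of Lemma~\ref{lem:cncl1cycl}) collapses the spatial integration to an exact identity independent of the spatial variable of the basis. Thus every inductive step is an equality and the overall estimate is sharp. The only genuine difficulty, which is bookkeeping rather than analytic, is checking that the conventions of Definition~\ref{def:permCycleMap} (notably the insistence on minimality with respect to the lexicographic order \eqref{cycleorder}) guarantee that the extraction procedure is well-defined for every $\gamma\in\Pi_\tau^{-1}(\uppi)$, so that the product \eqref{e:defPsi} depends only on $\uppi$.
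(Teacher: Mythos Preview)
Your proposal is correct and follows essentially the same route as the paper: iterate Lemma~\ref{lem:cnclmcycl} along the cycle extraction map of Definition~\ref{def:cycle-extraction} until only $\<cycle1>$ remains, collect the factors $\<2cycle>^{\otimes m_i}$ and the ordering indicators, and then identify the resulting constraint with the simplex $\mathfrak{D}_{[\tau,\tau]}(t)$; for $\uppi=\mathrm{Id}$ replace the inequality from Lemma~\ref{lem:cnclmcycl} by the identity of Lemma~\ref{lem:cncl1cycl}. The paper isolates the simplex-reconstruction step you sketch inline as a separate statement (Lemma~\ref{lem_2timesimplex}), but the content is the same.
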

\begin{proof}
Let $\uppi \in S_{2i{(\tau)}}$ and $\gamma \in \Pi_\tau^{-1}(\uppi)$.
The proof works by extracting cycles successively from $[ \tau , \tau]$ and
using Lemma~\ref{lem:cnclmcycl} to
obtain a bound on $[ \tau , \tau]_{ \gamma, \ve}$ in terms of these cycles.
We write $\mV = \mV ([ \tau, \tau])\setminus \mf{o}$ and $\mI=\mI([\tau,\tau])
\setminus \mf{o}$.
Starting with the extraction of the minimal ${\rm v}$-cycle
$\mC_1$ in  $[ \tau , \tau]_{ \gamma}$, we have
\begin{align}\label{e:lem_extract_supp1}
&{[\tau,\tau]}_{\gamma, \varepsilon}(t,x)\nonumber \\
    &\leqslant
	\int_{D_{t}^{\mV \setminus \mC_1}}
   	K_{([\tau, \tau]\setminus \mC_1)_{\tilde\gamma}}^{t,x}
	(s_{\mV \setminus \mC_1}, y_{\mV  \setminus \mC_1})\\
    	& \qquad
    \int_{[0,t]^{\mI_{\mC_1}}}   \<2cycle>^{\otimes |\mI_{ \mC_1}|}(s_{v}; v\in \mI_{\mC_1})
	\left\{
	\prod_{ v \in \mI_{\mC_1}} \mathds{1}_{ s_{\mf{d}_{ \tau,  \gamma}(v)}
\leqslant  s_v \leqslant s_{ \mathfrak{p}_{ \tau} (v)}}
	\right\}	\ud s_{\mI_{\mC_1}} \ud s_{\mV \setminus \mC_1} \ud
y_{\mV \setminus \mC_1}\,,\nonumber 
\end{align}
where $K_{([\tau, \tau]\setminus \mC_1)_{\tilde\gamma}}^{t,x}$ denotes the kernel
corresponding to the fully contracted tree $([\tau,
\tau]\setminus\mC_1)_{\tilde\gamma}= (\sigma_{1})_{ \tilde\gamma} $, following
the notation in Definition \ref{def:cycl-rmvl}, and $ \mf{d}_{\tau,
\gamma}(v) $ indicates the
unique descendant of $ v \in \mI_{\mC_{1}} $ not in $ \mC_{1} $ (see also Lemma~\ref{lem:cnclmcycl}).
We can then proceed iteratively by extracting the ${\rm v}$-cycles via the cycle extraction
map from Definition \ref{def:cycle-extraction}, until we reach the tree
${\scalebox{0.8}{\<cycle1>}}$. In this way we obtain the upper bound, using
Lemma~\ref{lem:cnclmcycl},
\begin{align}\label{e_supp1extrub}
&[\tau,\tau]_{\gamma, \varepsilon}(t,x) \\
&\leqslant  \<cycle1>_\varepsilon(t,x) \int_{[0,t]^{\mI}}
 \prod_{i =1}^{K( \tau, \gamma)} \left\{ \<2cycle>^{\otimes |\mI_{ \mC_i}|}(s_{v}; v\in \mI_{\mC_i})
	\left\{ \prod_{ v \in \mI_{\mC_i}} \mathds{1}_{ 
s_{\mf{d}_{\sigma_{i}, \gamma}
(v)} \leqslant  s_v \leqslant s_{ \mathfrak{p}_{ \sigma_{i}}(v)}}   \right\}
\right\} \ud s_{\mI}\,,\nonumber
\end{align}
with  the sequence of ${\rm v}$-cycles $ ( \mC_{k} )_{k =1}^{K (\tau, \gamma)} $
and the sequence of reduced trees $
(\sigma_{i})_{i=1}^{K ( \tau, \gamma)}$  from the
cycle extraction map.
Note that $K ( \tau, \gamma)$ equals the number of permutation cycles $ K (\uppi)$.
In order to avoid confusion, we added a subindex $\mf{p}_{ \sigma_{i}}$ to
the parent map $\mf{p}$ (and also to the descendant map $\mf{d}_{\gamma}$), making clear with respect to which tree the map is to be
interpreted.
In Lemma~\ref{lem_2timesimplex}, we will see that the time-integral over the
indicator functions in \eqref{e_supp1extrub} preserves the original ordering
imposed by the tree. More precisely, it is independent of the chosen
pairing $ \gamma$ and equals the integral over the tree simplex
$\mathfrak{D}_{[ \tau, \tau]}(t)$.
Thus, by application of Lemma~\ref{lem_2timesimplex}, the inequality \eqref{e_supp1extrub} reads
\begin{align*}
[\tau,\tau]_{\gamma, \varepsilon}(t,x) & \leqslant  \<cycle1>_\varepsilon(t,x) \int_{\mathfrak{D}_{[ \tau, \tau]}(t)}
 \prod_{i =1}^{K( \uppi)}  \<2cycle>^{\otimes |\mI_{ \mC_i}|}(s_{v}; v\in \mI_{\mC_i}) \ud s_{\mI}\\
 &= \lambda_\varepsilon^2 e^{2 \mathfrak{m} \, t}
    p_{2(t+\varepsilon^2)}(0) \,\Big(
    \int_{\mathfrak{D}_{[ \tau, \tau]}(t)}
    \Psi_{\uppi,\varepsilon}(s_{\mI}) \ud s_{\mI}\Big)  \,,
\end{align*}
which yields the desired upper bound.
If $ \uppi = \mathrm{Id}$, then the inequality in the first line becomes an
equality as we are successively removing
1-cycles and  apply the identity in
Lemma~\ref{lem:cncl1cycl}, rather than the upper bound in Lemma~\ref{lem:cnclmcycl}.
The proof is complete.
\end{proof}

\section{Contributing and non-contributing trees and their structure}\label{sec:estimates-single-tree}

This section is dedicated to the proof of Proposition~\ref{prop_single_tree}.
Obtaining this result requires a precise quantitative control over the limiting
behavior of contracted and paired trees. Such control will build on a systematic
application of the bounds and ideas that we have introduced in
Section~\ref{sec:exmpl}, and in particular it will build
on Lemma~\ref{lem_unif_bound_contraction_perm} above.

In the previous section we analysed \emph{paired} trees and found that it was possible to identify
${\rm v}$-cycles and remove them iteratively to obtain an upper bound (or an exact
estimate, in the case when all
$\rm{v}$-cycles are $ 1$-cycles) on the integral associated to such a tree.
In this section we start instead with an arbitrary \emph{contracted} ternary tree $
[\tau]_{\kappa} $. Our
objective is to obtain a bound (or an exact estimate) on the second moment of
the Wiener integral associated to such contracted tree.
To obtain such estimate, we must sum over all possible pairings $ \gamma $ of
$ [\tau, \tau] $ which complete the contraction $ \kappa $, and for each such
pairing we can follow the procedure described in the previous section. One of
the key points of this section is therefore to keep track of all the
combinatorial factors that appear when counting pairings and contractions
associated to arbitrarily large trees.

This section will be split into two parts. First, we
study those contracted trees that do not vanish in the limit $
\ve \to 0 $ (i.e.\ that contribute) and identify them using properties of the
underlying graph $ \tau_{ \kappa}$. We also determine their precise limiting
contribution and the size of the set of all such contractions.
In the second part, we will instead state and prove a
uniform upper bound for the rate of
convergence of contracted trees that vanish as $ \ve \to 0 $ (i.e.\ that do not
contribute).

\subsection{Contributing contractions}
We start by studying those contracted trees that contribute to the fluctuations
in the limit $ \ve \to 0
$, and for which an exact estimate of the contribution is necessary. For this reason, let
us define \emph{contributing contractions} as follows.

\begin{definition}\label{def:contr_tree}
For any tree $\tau\in \mT_3$ and contraction $\kappa \in \mK(\tau) $, we say that $ \kappa $
\emph{contributes} if there exist $(t,x) \in (0, \infty) \times \RR^{2}$ such
that
\begin{equation}\label{e:def_contrCont}
\begin{aligned}
\limsup_{\varepsilon\to 0} \; (\log \tfrac{1}{\ve}) \cdot
\EE\Big[\big|[\tau]_{\ve, \kappa} (t, x)\big|^2\Big] >0\,.
\end{aligned}
\end{equation}
We denote the set of all contributing contractions by
\begin{align*}
    C(\tau):= \{\kappa \text{ such that } \tau_\kappa \text{ contributes}\}
\subseteq \mK (\tau)\,.
\end{align*}
\end{definition}

\subsubsection{Identifying contributing contractions}

Before determining the precise contribution of contracted trees, we show that the
abstract condition in \eqref{e:def_contrCont} can be replaced by a condition
on the underlying graph structure of the contracted tree.
More precisely, we will see that the estimates of Section~\ref{sec_rem_vcycle}
imply that contracted trees contribute if and only if the
corresponding integrals lie in the first homogeneous Wiener chaos and we can
iteratively remove 1-cycles from them.

\begin{lemma}\label{lem:characContr}
Let $ \tau \in \mT_{3}$, then
\begin{enumerate}[label=\textup{(\roman*)}]
\item $ C ( \tau) = \{ \kappa \in
\mK( \tau)\,:\,  \exists  \gamma \in
\mY( \tau_{\kappa}, \tau_{ \kappa}) \text{ with }  \Pi_{ \tau}( \gamma) =
\mathrm{Id}\}$.

\item If $ \kappa \in C( \tau) $, then $ \tau_{\kappa}$ has a single uncontracted
leaf. That is,  $ [ \tau]_{\kappa}$ lies in the first Wiener chaos
and  is therefore Gaussian.
\end{enumerate}

\end{lemma}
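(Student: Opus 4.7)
The plan is to reduce both claims to the asymptotic analysis of
$$
\EE\big[|[\tau]_{\kappa,\ve}(t,x)|^2\big] = \sum_{\gamma \in \mY(\tau_\kappa, \tau_\kappa)} [\tau,\tau]_{\gamma,\ve}(t,x),
$$
which is a sum of non-negative terms. For each $\gamma$, I would apply Lemma~\ref{lem_unif_bound_contraction_perm} to bound $[\tau,\tau]_{\gamma,\ve}$ in terms of the cycle structure of $\uppi = \Pi_\tau(\gamma) \in S_{2i(\tau)}$, and Lemma~\ref{lem_v_cycle_order_bnd_new} to estimate each factor $\int \<2cycle>^{\otimes m}$ after relaxing the tree simplex $\mathfrak{D}_{[\tau,\tau]}(t)$ to a product of cubes. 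Writing $\uppi$ as a product of $K$ disjoint cycles of lengths $m_1,\ldots,m_K$ with $\sum_i m_i = 2i(\tau)$, and using $\lambda_\ve^2 = \hat{\lambda}^2 (\log\tfrac{1}{\ve})^{-1}$ together with $\log(1+t/\ve^2) \sim 2\log\tfrac{1}{\ve}$, a direct computation yields the scaling
$$
[\tau,\tau]_{\gamma,\ve}(t,x) \lesssim (\log \tfrac{1}{\ve})^{K - 2i(\tau) - 1}
$$
at fixed $(t,x)\in(0,\infty)\times\RR^2$. Since $K \leqslant \sum_i m_i = 2i(\tau)$ with equality if and only if every $m_i=1$, i.e.\ $\uppi=\mathrm{Id}$, only pairings $\gamma$ with $\Pi_\tau(\gamma)=\mathrm{Id}$ can produce a summand of order $(\log\tfrac{1}{\ve})^{-1}$.

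For (i), this immediately gives the implication that if no $\gamma\in\mY(\tau_\kappa,\tau_\kappa)$ satisfies $\Pi_\tau(\gamma)=\mathrm{Id}$, then $\kappa\notin C(\tau)$. For the converse, I would fix $\gamma^*\in\mY(\tau_\kappa,\tau_\kappa)$ with $\Pi_\tau(\gamma^*)=\mathrm{Id}$; the equality clause of Lemma~\ref{lem_unif_bound_contraction_perm} together with a dominated-convergence argument on $\mathfrak{D}_{[\tau,\tau]}(t)$ shows that $(\log\tfrac{1}{\ve})\cdot[\tau,\tau]_{\gamma^*,\ve}$ tends to a strictly positive limit, and by non-negativity of the other summands this contribution cannot be cancelled, so $\kappa\in C(\tau)$.

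For (ii), fix such $\gamma^*$ and let $n$ denote the number of uncontracted leaves of $\tau_\kappa$. The condition $\Pi_\tau(\gamma^*)=\mathrm{Id}$ means the cycle extraction produces $2i(\tau)$ successive $1$-cycles at the inner vertices of $\mI([\tau,\tau])\setminus\mf{o}$ before terminating in the $\<cycle1>$ based at the root $\mf{o}$ of $[\tau,\tau]$. The key combinatorial claim is that, by induction on the extraction step, the children of any non-root inner vertex $v$ in the reduced tree are always original descendants of $v$ in $[\tau,\tau]$: whenever an extraction affects $v$'s children, it replaces a child $w$ of $v$ by the non-cycle descendant $w_2$ of $w$, which is itself a descendant of $v$. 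Since $v\neq\mf{o}$ lies in exactly one of the two copies of $\tau$, all its children throughout the extraction lie in that same copy, so the paired leaves of the $1$-cycle extracted at $v$ must be paired by $\kappa$ rather than by a cross-copy edge of $\gamma^*$. The $2i(\tau)$ non-root $1$-cycles thus consume $2i(\tau)$ distinct $\kappa$-pairs; as only $2|\kappa|=\ell(\tau)-n=2i(\tau)+1-n$ such pairs exist, this forces $n\leqslant 1$. Conversely, the two children of $\mf{o}$ in the terminal $\<cycle1>$ are descendants of the two original children of $\mf{o}$, one in each copy of $\tau$, so the final pair is cross-copy, forcing $n\geqslant 1$. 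Combining gives $n=1$, which proves (ii). The main technical obstacle in this approach is the verification that $(\log\tfrac{1}{\ve})\cdot[\tau,\tau]_{\gamma^*,\ve}$ has a strictly positive (rather than merely finite) limit; this reduces to a factorised integration on $\mathfrak{D}_{[\tau,\tau]}(t)$ of the product of $2i(\tau)$ explicit $1$-cycle kernels $\<cycle1>_\ve$, and is essentially implicit in the sharp evaluation to be performed in the proof of Proposition~\ref{prop_single_tree}.
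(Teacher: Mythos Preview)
Your approach to part (i) is essentially the same as the paper's: both decompose the second moment as a sum over pairings $\gamma$, bound each term via Lemma~\ref{lem_unif_bound_contraction_perm}, relax the simplex to a cube, and apply Lemma~\ref{lem_v_cycle_order_bnd_new} to see that only $\Pi_\tau(\gamma)=\mathrm{Id}$ survives. The paper packages this as a separate statement (Lemma~\ref{lem:contrIdent}), which also provides the exact positive limit you flag as the ``main technical obstacle''; your dominated-convergence sketch would be replaced there by Lemma~\ref{lem:onecycleint} together with the elementary asymptotics of $\int_0^t (s+\ve^2)^{-1}e^{2\mf m s}\,\ud s$.

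For part (ii), however, you take a genuinely different route. The paper argues by induction on $i(\tau)$: given $\kappa\in C(\tau)$ and $\gamma\in\Pi_\tau^{-1}(\mathrm{Id})\cap\mY(\tau_\kappa,\tau_\kappa)$, it removes the first $1$-cycle to pass to $[\hat\tau,\tau]_{\gamma_2}$ with $i(\hat\tau)=i(\tau)-1$, then uses a Cauchy--Schwarz step $[\hat\tau,\tau]_{\gamma_2,\ve}\leqslant \|[\hat\tau]_{\kappa_1(\gamma_2),\ve}\|_{L^2}\|[\tau]_{\kappa,\ve}\|_{L^2}$ to conclude that $\kappa_1(\gamma_2)\in C(\hat\tau)$ and invoke the induction hypothesis. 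Your argument is purely combinatorial: you observe that under successive $1$-cycle removals the children of any surviving vertex remain \emph{original} descendants of that vertex, so every non-root $1$-cycle must use a within-copy (hence $\kappa$-) pair, while the terminal root cycle is forced to be cross-copy. Counting then gives $n=1$ directly. Your argument is more elementary (no analytic estimate or Cauchy--Schwarz is needed once the existence of $\gamma^*$ is known) and exposes the underlying graph-theoretic reason for Gaussianity; the paper's inductive approach, on the other hand, stays closer to the analytic framework and reuses the cycle-removal estimate already in place, which makes it integrate more seamlessly with the subsequent identification of $[\tau]_{\kappa,\ve}$ in Lemma~\ref{lem:contr}.
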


We postpone the proof of the lemma to the end of this section.
The property of being contributing, associated to a contracted tree $
[\tau]_{\kappa} $, is defined using the second moment condition
\eqref{e:def_contrCont}. We can express \eqref{e:def_contrCont} by summing $
[\tau, \tau]_{\gamma} $ over
all pairings in $ \gamma \in \mY( \tau_{\kappa},
\tau_{\kappa})$, recall \eqref{e_cov_contr_tree}.
The paired tree obtained, once we fix an element of $ \mY( \tau_{\kappa},
\tau_{\kappa})$, can be treated via Lemma~\ref{lem_unif_bound_contraction_perm}.
In particular, it turns out that for contributing trees $ \tau_{ \kappa}$ and $ \gamma \in\mY( \tau_{\kappa},
\tau_{\kappa}) $ the cycle extraction map satisfies $ \Pi_{\tau} (\gamma) =
\mathrm{Id} $.
As a consequence, we can determine precisely the limiting behavior of such 
paired trees through the last statement of
Lemma~\ref{lem_unif_bound_contraction_perm}, which is the key
ingredient in the proof of Lemma~\ref{lem:characContr}. This is the content of the
following lemma.

\begin{lemma}\label{lem:contrIdent}
Let $ \tau \in \mT_{3}$,  $ \red{ \tau} = \mathscr{T} ( \tau)$ be the
trimmed tree, as in \eqref{trimming}, and $ \uppi \in S_{2i(\tau)}$. Then for every  $\gamma
\in \Pi_{ \tau}^{-1}(\uppi )$ and all $ (t,x) \in (0, \infty) \times \RR^{2}$
\begin{equation*}
\begin{aligned}
\lim_{\ve\to 0} \,\,(\log \tfrac{1}{\ve}) \cdot [ \tau, \tau]_{\gamma, \ve} (t,x) =
\begin{cases}
\hat{\lambda}^2 \, \left\{ \frac{1}{ \red{ \tau}!}  \left(
\frac{ \hat{\lambda}^{2}}{2 \pi}
\right)^{|\red{ \tau}|}\right\}^2 \, p_{2t}^{( \mathfrak{m})} (0) \quad & \text{if } \uppi = \mathrm{Id}\,, \\
0\quad & \text{otherwise} \,.
\end{cases}
\end{aligned}
\end{equation*}
If the above limit vanishes, we say $ \gamma$ is a \emph{non-contributing
pairing}, and call it a \emph{contributing pairing} otherwise.
\end{lemma}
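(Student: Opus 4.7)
The plan is to treat the two cases $\uppi \neq \mathrm{Id}$ and $\uppi = \mathrm{Id}$ separately, invoking respectively the upper bound and the equality in Lemma~\ref{lem_unif_bound_contraction_perm}, and to reduce both to more tractable integrals.

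For $\uppi \neq \mathrm{Id}$, I would bound the integral over the tree simplex $\mathfrak{D}_{[\tau,\tau]}(t)$ by the one over the full cube $[0,t]^{\mI}$. Since $\Psi_{\uppi,\ve}$ is a product $\prod_{i=1}^{K(\uppi)} \<2cycle>^{\otimes |\widehat{\mC}_i|}$ over the permutation cycles of $\uppi$, this enlarged integral factorises across those cycles, and Lemma~\ref{lem_v_cycle_order_bnd_new} furnishes a factor bounded by $C \lambda_\ve^{2 m_i} \log(1+t/\ve^2)$ per cycle of length $m_i = |\widehat{\mC}_i|$. Since the cycle lengths sum to $\sum_i m_i = 2 i(\tau)$, combining with Lemma~\ref{lem_unif_bound_contraction_perm} and the prefactor $\log \tfrac{1}{\ve}$ yields a bound of the form
\begin{equation*}
(\log \tfrac{1}{\ve}) \cdot [\tau,\tau]_{\gamma,\ve}(t,x) \leq C(t,\mathfrak{m},\hat{\lambda}) \cdot (\log \tfrac{1}{\ve})^{K(\uppi) - 2 i(\tau)} \,.
\end{equation*}
The key combinatorial observation is that $\uppi \neq \mathrm{Id}$ forces at least one permutation cycle of length $\geq 2$, hence $K(\uppi) \leq 2 i(\tau) - 1$, so the right-hand side vanishes as $\ve \to 0$.

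For $\uppi = \mathrm{Id}$ the equality in Lemma~\ref{lem_unif_bound_contraction_perm} identifies the limit exactly, provided I can compute the asymptotics of
\begin{equation*}
I_\ve(t,\tau) := \int_{\mathfrak{D}_\tau(t)} \prod_{v \in \mI(\tau)} \<cycle1>_\ve(s_v) \, ds_v \,.
\end{equation*}
Here the key structural observation is that in the paired tree $[\tau,\tau]$ the two copies of $\tau$ meet only at the common root (at fixed time $t$), so both $\mathfrak{D}_{[\tau,\tau]}(t) = \mathfrak{D}_\tau(t) \times \mathfrak{D}_\tau(t)$ and $\Psi_{\mathrm{Id},\ve}$ factorise, giving $[\tau,\tau]_{\gamma,\ve}(t,x) = \lambda_\ve^2 \, e^{2\mathfrak{m} t} p_{2(t+\ve^2)}(0) \, I_\ve(t,\tau)^2$. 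The prefactors $\lambda_\ve^2 \log \tfrac{1}{\ve} = \hat{\lambda}^2$ and $e^{2\mathfrak{m} t} p_{2(t+\ve^2)}(0) \to p_{2t}^{(\mathfrak{m})}(0)$ already account for two of the three ingredients of the desired limit.

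The remaining task, and the main obstacle, is to establish
\begin{equation*}
I_\ve(t,\tau) \xrightarrow[\ve \to 0]{} \frac{1}{\red{\tau}!} \bigg(\frac{\hat{\lambda}^2}{2\pi}\bigg)^{|\red{\tau}|} \,.
\end{equation*}
I would prove this by induction on $|\red{\tau}|$. The base case $\tau = \<0>$ is immediate since then $\mI(\tau) = \emptyset$ and $I_\ve \equiv 1$. For the inductive step, using Lemma~\ref{lem_trimming_bij}(ii) to write $\tau = [\tau_1 \cdots \tau_n]$ with $\red{\tau} = [\red{\tau_1} \cdots \red{\tau_n}]$, the integral unfolds recursively as $I_\ve(t,\tau) = \int_0^t \<cycle1>_\ve(s) \prod_{i=1}^n I_\ve(s,\tau_i) \, ds$. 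The change of variables $v = \log((s+\ve^2)/\ve^2)$ converts $\<cycle1>_\ve(s) \, ds / \lambda_\ve^2$ into an approximately uniform density on $[0, V_\ve]$ with $V_\ve \sim 2 \log \tfrac{1}{\ve}$, so the rescaled nested integration reproduces the classical simplex volume $V_\ve^{|\red{\tau}|}/\red{\tau}!$; multiplied by the $|\red{\tau}|$ factors $\lambda_\ve^2/(4\pi)$, this recovers exactly $(\hat{\lambda}^2/(2\pi))^{|\red{\tau}|}/\red{\tau}!$. The delicate point is the uniform control of the $e^{2\mathfrak{m} s}$ factors and of the lower-order corrections across the induction: I would rely on the appendix estimate (Lemma~\ref{l_app_expInt}) to show that at each level the deviation from the claimed limit is of relative size $O(1/\log \tfrac{1}{\ve})$, uniformly in $s$ on the effective support of $\<cycle1>_\ve$, so that these errors do not accumulate along the tree.
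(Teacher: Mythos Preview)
Your treatment of the case $\uppi \neq \mathrm{Id}$ coincides exactly with the paper's argument.

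For $\uppi = \mathrm{Id}$ your plan is correct but takes a longer route than the paper. The paper does not run an induction over $|\red{\tau}|$ with error tracking; instead it observes that the integrand $\prod_{v \in \mI(\tau)} \<cycle1>_\ve(s_v)$ is a \emph{symmetric} function of the time variables, so by Lemma~\ref{symmetricintegral} the integral over the tree simplex $\mathfrak{D}_{[\tau]}(t)$ equals exactly $\tfrac{1}{\red{\tau}!}$ times the integral over the full cube $[0,t]^{\mI(\tau)}$, which then factorises as $\big(\int_0^t \<cycle1>_\ve(s)\,ds\big)^{|\red{\tau}|}$. This is recorded as Lemma~\ref{lem:onecycleint} and is an exact identity for every $\ve$, so the limit reduces to a single application of Lemma~\ref{l_app_expInt}. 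Your change of variables $v = \log((s+\ve^2)/\ve^2)$ is in effect a hands-on rediscovery of this symmetric-simplex-to-cube reduction; it works, but the ``delicate point'' you flag about propagating $O(1/\log\tfrac{1}{\ve})$ errors along the induction simply does not arise in the paper's approach, since there the only asymptotic step is the final one-variable integral.
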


The proof of Lemma~\ref{lem:contrIdent} uses the following
identity.
\begin{lemma}\label{lem:onecycleint}
Let $ \tau \in \mT_{3}$ and  $ \red{ \tau} = \mathscr{T} ( \tau)$ be the
trimmed tree as in \eqref{trimming}, then
\begin{equation}\label{e:defRedTauIntegral}
\begin{aligned}
 \int_{\mathfrak{D}_{ [\tau]}(t)} \prod_{v \in \mI ( \tau) }
\frac{\lambda_\varepsilon^2 e^{ 2{\mathfrak{m}} \, s_{v}}}{4 \pi(s_{v}+ \ve^2)}
  \ud s_{\mI(\tau)}
=
 \frac{1}{\red{\tau}!}
 \left(\frac{\lambda_\varepsilon^2}{4\pi}
\int_{0}^{t} \frac{e^{2 \mathfrak{m} \, s}}{s+ \ve^{2}}  \ud s
    \right)^{|\red{\tau}|}=: \red{[ \tau]_{\ve}}(t) \; ,
\end{aligned}
\end{equation}
with the tree-time-simplex $\mathfrak{D}_{ [\tau]}(t)$ introduced in \eqref{timesimplex}.
\end{lemma}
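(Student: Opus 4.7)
The proof reduces to a classical tree-factorial iterated-integral identity. Setting
$$f(s) := \frac{\lambda_\ve^2 e^{2\mf{m} s}}{4\pi(s+\ve^2)} \qquad \text{and} \qquad F(t) := \int_0^t f(s) \ud s,$$
the right-hand side of \eqref{e:defRedTauIntegral} reads $F(t)^{|\red{\tau}|}/\red{\tau}!$. First, I would reinterpret the domain as a time-simplex attached to the trimmed tree. The variables in $\mathfrak{D}_{[\tau]}(t)$ are indexed by $\mI([\tau])\setminus\mf{o}_{[\tau]} = \mI(\tau)$, which by the definition of trimming coincides with $\mV(\red{\tau})$. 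The parent--child ordering among inner vertices of $\tau$ is preserved under trimming (in the spirit of Lemma~\ref{lem_trimming_bij}(ii)), and the constraint $s_{\mf{o}_{\tau}} \leqslant s_{\mf{o}_{[\tau]}} = t$ translates into bounding the root of $\red{\tau}$ by $t$. Consequently, the left-hand side equals
$$K_{\red{\tau}}(t) := \int \prod_{v \in \mV(\red{\tau})} f(s_v) \ud s_v,$$
where the integration ranges over $(s_v)_{v \in \mV(\red{\tau})} \in [0,t]^{\mV(\red{\tau})}$ satisfying $s_v \leqslant s_{\mathfrak{p}_{\red{\tau}}(v)}$ for every $v \neq \mf{o}_{\red{\tau}}$ and $s_{\mf{o}_{\red{\tau}}} \leqslant t$.

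Second, I would prove by induction on $|\sigma|$ that $K_\sigma(t) = F(t)^{|\sigma|}/\sigma!$ for an arbitrary rooted tree $\sigma \in \mT$. The base case $\sigma = \mathbf{1}$ is immediate, since both sides equal $1$. For the inductive step, write $\sigma = [\sigma_1 \cdots \sigma_n]$. Conditioning on the time $s \in [0,t]$ assigned to $\mf{o}_\sigma$ decouples the subtrees into independent contributions rooted at the $\mf{o}_{\sigma_i}$'s, giving
$$K_\sigma(t) = \int_0^t f(s) \prod_{i=1}^n K_{\sigma_i}(s) \ud s.$$
Inserting the induction hypothesis and using $\sum_i |\sigma_i| = |\sigma| - 1$ yields
$$K_\sigma(t) = \frac{1}{\prod_i \sigma_i!} \int_0^t f(s)\, F(s)^{|\sigma|-1} \ud s = \frac{F(t)^{|\sigma|}}{|\sigma|\,\prod_i \sigma_i!} = \frac{F(t)^{|\sigma|}}{\sigma!},$$
where the final identity is the recursive definition of the tree factorial given in Section~\ref{sec:trees}.

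Applying this identity with $\sigma = \red{\tau}$ and the above choice of $f$ then gives \eqref{e:defRedTauIntegral}. No substantive obstacle is expected: the only point requiring care is the identification of $\mathfrak{D}_{[\tau]}(t)$ with the tree-simplex over $\mV(\red{\tau})$, which is handled cleanly because the parent relations in $\tau$, when restricted to $\mI(\tau)$, coincide with those inherited by $\red{\tau}$ from the trimming construction.
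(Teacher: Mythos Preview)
Your proof is correct. The identification of $\mathfrak{D}_{[\tau]}(t)$ with the simplex over $\mV(\red{\tau})$ is sound (parents of inner nodes in a ternary tree are again inner nodes, so the parent relation on $\mI(\tau)$ is precisely that of $\red{\tau}$), and the induction step via $\int_0^t F'(s)F(s)^{|\sigma|-1}\,\ud s = F(t)^{|\sigma|}/|\sigma|$ together with the recursive definition of $\sigma!$ closes cleanly.

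The route, however, differs from the paper's. The paper does not run a tree-by-tree induction here; it simply notes that the integrand $\prod_{v}f(s_v)$ is a \emph{symmetric} function of the variables $s_{\mI(\tau)}$ and invokes Lemma~\ref{symmetricintegral}, which states that for any symmetric $\Psi$ one has
\[
\int_{\mathfrak{D}_{[\tau]}(t)}\Psi(s_{\mI})\,\ud s_{\mI}=\frac{1}{\red{\tau}!}\int_{[0,t]^{\mI}}\Psi(s_{\mI})\,\ud s_{\mI}\,,
\]
after which the box integral factorises trivially. Your argument is more elementary and entirely self-contained for this lemma, exploiting the special product structure of the integrand. The paper's detour through Lemma~\ref{symmetricintegral} buys something else: that symmetry lemma is the workhorse again in the proof of Lemma~\ref{lem:unifBoundOverPreimage}, where the integrand $\overline{\Psi}_\ve$ is symmetric but \emph{not} of product form, so your induction would not apply there.
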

This lemma is a consequence of Lemma~\ref{symmetricintegral}, since the integrand on
the left-hand side in \eqref{e:defRedTauIntegral} is a symmetric function
over the variables $s_{\mI(\tau)}= s_{\mV(\red{\tau})}$.

\begin{proof}[Proof of Lemma~\ref{lem:contrIdent}]
Consider $ \tau \in \mT_{3}$, $ \uppi \in S_{2i(\tau)} $ and $ \gamma \in
\Pi_{\tau}^{-1}(\uppi)$. Then for all $(t,x) \in (0,\infty) \times \RR^2$,
Lemma~\ref{lem_unif_bound_contraction_perm} implies
\begin{align*}
   (\log \tfrac{1}{\ve}) \cdot  [\tau,\tau]_{\gamma,\ve}(t,x)
    \leqslant
    \hat{\lambda}^2\,\left\{
    \int_{\mathfrak{D}_{ [\tau,\tau]}(t)}
    \Psi_{\uppi,\varepsilon}(s) \ud s_{\mI} \right\}
	e^{2 \mathfrak{m} \, t}
    p_{2(t+\varepsilon^2)}(0)
\,,
\end{align*}
where we remind that $\mI:=\mI([\tau,\tau])\setminus \mf{o}$.
Extending the domain of integration from $\mathfrak{D}_{ [\tau,\tau]}(t)$ to
$[0,t]^{\mI}$, the right-hand side can be factorised
\begin{equation*}
\begin{aligned}
(\log \tfrac{1}{\ve}) \cdot  [\tau,\tau]_{\gamma,\ve}(t,x)
     \leqslant
	 \hat{\lambda}^{2}
\left\{
\prod_{i= 1}^{K ( \tau, \gamma)}
    \int_{[0,t]^{ \mI_{\mC_i}}}
     \<2cycle>^{\otimes | \mI_{\mC_i}|}(s_{v}; v\in \mI_{\mC_i})  \ud
s_{ \mI_{\mC_i}} \right\} e^{2 \mathfrak{m} \, t}
 p_{2(t+\varepsilon^2)}(0)	\,,
\end{aligned}
\end{equation*}
where $({\mC}_i )_{i=1}^{K ( \tau, \gamma)}$ denotes the
sequence of ${\rm v}$-cycles  constructed from $[\tau,\tau]_\gamma$ via the cycle extraction map (Definition
\ref{def:cycle-extraction}).
For each of the integrals we have
\begin{equation*}
\begin{aligned}
\int_{[0,t]^{ \mI_{{\mC}_i}}}
     \<2cycle>^{\otimes | \mI_{{\mC}_i}|}(s_{v};
v\in  \mI_{{\mC}_i})  \ud s_{ \mI_{{\mC}_i}}
\,\,\begin{cases}
\,\,\,=  \,\,\, \frac{\lambda_{\ve}^{2}}{4 \pi }  \int_{0}^{t}
\frac{ e^{2 \mf{m} \, s}}{s + \ve^{2}} \ud s  \quad &
 \,\, \text{if } \,\,\, | \mI_{{\mC}_i}|=1\,,\\
& \\
\,\,\, \leqslant \,\,\, \,\frac{(\lambda_\varepsilon e^{
\overline{\mathfrak{m}}\, t} )^{2| \mI_{{\mC}_i}|}}{2^{| \mI_{{\mC}_i}|}\, \pi}
   \log \big(
   1+\tfrac{t}{\varepsilon^2}
   \big)  &  \,\, \text{if } \,\,\, | \mI_{{\mC}_i}| \geqslant 2\,,
\end{cases}
\end{aligned}
\end{equation*}
where for the case $| \mI_{\mC_i}| \geqslant 2$ we used
Lemma~\ref{lem_v_cycle_order_bnd_new}. Note that the right-hand side in the
second case vanishes in the limit $ \ve \to 0 $, since $ \lambda_{\ve} \sim
(\log{\tfrac{1}{\ve}})^{- 1/2}$.
In particular, if $ \uppi \neq \mathrm{ Id}$ then at least one ${\rm v}$-cycle ${\mC}_i$ must
satisfy $ |\mI_{\mC_i}| \geqslant 2$, which yields that $[ \tau, \tau]_{
\gamma, \ve}(t,x)$ vanishes as $ \ve \to 0 $.

On the other hand, if $ \uppi = \mathrm{Id}$ all the ${\rm v}$-cycles $ \mC_i$
are 1-cycles and we can replace all inequalities with identities to obtain
\begin{equation*}
\begin{aligned}
 (\log \tfrac{1}{\ve}) \cdot  [\tau,\tau]_{\gamma,\ve}(t,x)
    &=
    \hat{\lambda}^2\, \left\{ \int_{\mathfrak{D}_{ [ \tau, \tau]}(t)}
	\prod_{ v \in \mI }  \<2cycle>^{\otimes 1}(s_{v}) \ud s_{\mI} \right\}
	e^{2 \mathfrak{m} \, t}
    p_{2(t+\varepsilon^2)}(0) \\
    &= \hat{\lambda}^2\, \left\{ \int_{\mathfrak{D}_{ [\tau]}(t)} \prod_{v \in \mI ( \tau) }
\frac{\lambda_\ve^2 e^{2 \mathfrak{m} \, s_{v}} }{4 \pi(s_{v}+ \ve^2)}  \ud s_{\mI(\tau)} \right\}^2
e^{2 \mathfrak{m} \, t}
p_{2(t+\varepsilon^2)}(0) \;.
\end{aligned}
\end{equation*}
Hence, we deduce from Lemma~\ref{lem:onecycleint} and Lemma~\ref{l_app_expInt} that
\begin{equation*}
\begin{aligned}
 \lim_{\ve\to 0} \,\, (\log \tfrac{1}{\ve}) \cdot  [\tau,\tau]_{\gamma,\ve}(t,x)
= \hat{\lambda}^2\, \left\{ \frac{1}{ \red{ \tau}!}
 \left( \frac{ \hat{\lambda}^{2}}{2 \pi}  \right)^{|\red{ \tau}|}\right\}^2
 p_{2t}^{(\mathfrak{m})} (0) \,,
\end{aligned}
\end{equation*}
which concludes the proof.
\end{proof}

Finally, we are ready to show that contributing contractions can be identified
as those that have a single uncontracted leaf and allow for an iterative removal of
1-cycles.

\begin{proof}[Proof of Lemma~\ref{lem:characContr}]
Let us start by recalling from \eqref{e_cov_contr_tree} that for any
tree $ \tau \in \mT_{3}$ and any contraction $ \kappa \in \mK(\tau) $
\begin{equation*}
\begin{aligned}
  (\log \tfrac{1}{\ve}) \cdot
\,\EE\left[[\tau]_{\kappa,\ve}^2 (t, x)\right]
= \sum_{\gamma \in \mY (\tau_{\kappa}, \tau_{\kappa})}
  (\log \tfrac{1}{\ve}) \cdot
 [ \tau,\tau]_{\gamma, \ve} (t, x) \,.
\end{aligned}
\end{equation*}
Therefore, to prove the first statement of the lemma, it suffices to prove that
$$ \lim_{\ve \to 0}(\log\tfrac{1}{\ve}) \cdot
 [ \tau,\tau]_{\gamma, \ve}(t, x) >0\,,$$ for all $ (t, x) \in (0, \infty) \times
\RR^{2} $ if and only if $ \Pi_{\tau} (\gamma) =
\mathrm{Id} $, which is implied by Lemma~\ref{lem:contrIdent}.

For the second statement we instead proceed by induction over the number
of inner nodes $ i( \tau)$.
We can check that the statement is true for $ i( \tau)=0$, i.e.  $
\tau= \<0>$, since $| \mY ( \tau, \tau)| = 1$ and
\begin{equation*}
\begin{aligned}
\limsup_{\varepsilon\to 0} \
  (\log \tfrac{1}{\ve})  \cdot
\,\EE\big[\big\vert \<10>_{\, \ve} \big\vert^2 (t, x) \big]
>0\,.
\end{aligned}
\end{equation*}
Now, let $m \in \NN $ and assume that the statement holds for all $ \tau' \in \mT_{3} $ satisfying
 $ i ( \tau') \leqslant m$.
Choose $ \tau \in \mT_{ 3}$ with $ i ( \tau) = m+1$ and let $ \kappa \in C(
\tau) $. By the first point of the present Lemma~\ref{lem:characContr}, which we have
just proven, we know there exists a $ \gamma \in \mY
( \tau_{\kappa}, \tau_{\kappa}) \cap \Pi_{ \tau}^{-1}( \mathrm{Id}) $.
In particular, let $ \mC_{1} $ be the first 1-cycle that is extracted by the
permutation-extraction map applied to the pairing $ \gamma $, and write
$ (\sigma_{2})_{ \gamma_{2}}= ([\tau,
\tau]\setminus \mC_1)_{ \gamma_{2}} = [ \hat{\tau}, \tau]_{\gamma_{2}}$, with
$ \hat{\tau}:= \tau \setminus \mC_{1}$, assuming without loss of generality
that we have removed the cycle from the left tree.

Then via \eqref{e:lem_extract_supp1}, we deduce that
\begin{equation}\label{e:lem_extract_supp1_1}
\begin{aligned}
[\tau,\tau]_{\gamma, \varepsilon}(t,x)
    &\leqslant 
\int_{D_{t}^{\mV\setminus \mC_{1}}}
   	K_{([\tau, \tau]\setminus \mC_1)_{\gamma_{2}}}^{t,x} (s_{\mV \setminus \mC_1}, y_{\mV  \setminus \mC_1})\\
    	& \int_{0}^{t}
    \<2cycle>^{\otimes 1}(s_{v}; v\in \mI_{\mC_1})
	\left\{
	\prod_{ v \in \mI_{\mC_1}} \mathds{1}_{  s_{\mf{d}_\gamma (v)} \leqslant  s_v \leqslant s_{ \mathfrak{p}(v)}}
	\right\}	\ud s_{\mI_{\mC_1}} \ud s_{\mV \setminus \mC_1} \ud
y_{\mV \setminus \mC_1}\,,
\end{aligned}
\end{equation}
where we again used $\mV = \mV([ \tau, \tau]) \setminus \mf{o}$. Note that the
product in the expression above only consists of a single term, because $
\mC_{1}$ is a 1-cycle.
Dropping the time-constraint encoded by $ \mathds{1}_{ \mf{d}_{ \gamma}( v)
\leqslant  s_v \leqslant s_{ \mathfrak{p}(v)}}
$, we therefore obtain
\begin{equation}\label{e:lem_extract_supp1_2}
\begin{aligned}
[\tau,\tau]_{\gamma, \varepsilon}(t,x)
    &\leqslant [ \hat{\tau}, \tau]_{ \gamma_{2}, \ve} (t,x)
\left\{
	 \int_0^t
    \<2cycle>^{\otimes 1}(s_{v}; v\in \mI_{\mC_1})
	\ud s_{\mI_{\mC_1}} \right\}\,.
\end{aligned}
\end{equation}
Now, taking the $\limsup$ over $ \ve \to 0 $ after multiplying both sides with
$ (\log{ \tfrac{1}{\ve}})$ yields
\begin{equation}\label{e:lem_extract_supp1_3}
\begin{aligned}
0<
\limsup_{ \ve \to 0}
 \  (\log \tfrac{1}{\ve})
\,[\tau,\tau]_{\gamma, \varepsilon}(t,x)
\leqslant \frac{ \hat{\lambda}^{2}}{2 \pi}
\limsup_{ \ve \to 0}
 \  (\log \tfrac{1}{\ve})
\, [ \hat{\tau}, \tau]_{\gamma_{2}, \ve}(t, x)  \,.
\end{aligned}
\end{equation}
Here, the first inequality holds since $ \kappa \in C(\tau)$, while the second inequality is a
consequence of \eqref{e:lem_extract_supp1_2} and
Lemma~\ref{l_app_expInt}.
In particular, the limit on the right-hand side must be positive.
Next, by Definition~\ref{def:PairingTocontraction} there exists $(
\kappa_{1}( \gamma_{2}), \kappa_{2}( \gamma_{2}))$ such that $ \gamma_{2} \in \mY(
\hat{\tau}_{\kappa_{1}( \gamma_{2})}, \tau_{\kappa_{2}(\gamma_{2})})$, with
$\kappa_2 ( \gamma_{2}) = \kappa$, so via an application of the
Cauchy--Schwartz inequality we obtain
\begin{equation}\label{e:CSfromPair2Cont}
\begin{aligned}
 [ \hat{\tau}, \tau]_{\gamma_{2}, \ve}
\leqslant 
\EE \left[ [\hat{\tau}]_{\kappa_{1}(\gamma_{2}),\ve}\, [\tau]_{\kappa,
\ve}\right]
\leqslant
\left( \EE \left[ [\hat{\tau}]_{\kappa_{1}(\gamma_{2}),\ve}^2\right]
 \right)^{ \frac{1}{2}}
\left( \EE \left[  [\tau]_{\kappa,\ve}^2\right]
 \right)^{ \frac{1}{2}} \,,
\end{aligned}
\end{equation}
which together with \eqref{e:lem_extract_supp1_3} implies $ \kappa_{1}(
\gamma_{2}) \in C(
\hat{\tau})$. By the induction assumption, $
\hat{\tau}_{\kappa_1( \gamma_2)}$ (note that $
i(\hat{\tau})=m$) has a single
uncontracted leaf, which implies that also $ \tau_{\kappa} $ has a single
uncontracted leaf. This concludes the proof.
\end{proof}

Note that Lemma~\ref{lem:characContr}, together with the identity from
Lemma~\ref{lem:contrIdent}, implies that if $ \kappa \in C (\tau) $ then $[\tau]_{\ve, \kappa}$ is (and
converges after rescaling to) a mean-zero
Gaussian, with limiting fluctuations
\begin{equation*}
\begin{aligned}
\lim_{\ve \to 0}\
\sqrt{\log{ \tfrac{1}{\ve}}} \cdot
\big\|[ \tau]_{\ve, \kappa}(t,x)\big\|_{L^{2}(\PP)}
=
 \frac{ \hat{\lambda}}{ \red{ \tau}!}  \left(
\frac{ \hat{\lambda}^{2}}{2 \pi}
\right)^{|\red{ \tau}|}
\sqrt{p^{(\mf{m})}_{2 t} (0) }
\,.
\end{aligned}
\end{equation*}
In the next subsection, we will see that a stronger statement holds true,
as we will be able to identify $[ \tau]_{\kappa, \ve }$ with
$\<10>_{\,\ve}$ up to a multiplicative factor.

\subsubsection{Determining contributions}

In the previous section, we identified
contributing pairings (and contractions) to be the ones mapped by $ \Pi_{
\tau}$ to the identity permutation, i.e.\
the algorithm defined in Definition~\ref{def:cycle-extraction} only extracts cycles of length one.
Precisely this fact will turn out to be useful, when determining the following
identity for contributing contracted trees.
%Moreover, we determine the number of contractions that contributes to the limit.

\begin{lemma}\label{lem:contr}
For every $ \tau \in \mT_{3}, \kappa \in {C}( \tau) $, $ \ve \in
(0,\tfrac{1}{2})$ and $(t,x) \in
(0, \infty) \times \RR^{2}$ we have that
\begin{equation*}
\begin{aligned}
[ \tau]_{  \kappa, \ve}(t,x)=  \frac{1}{\red{\tau}!} \left(\frac{\lambda_\varepsilon^2}{4\pi}
\int_{0}^{t} \frac{e^{2 \mathfrak{m} \, s}}{s+ \ve^{2}} \ud s
    \right)^{|\red{\tau}|} \, \<10>_{ \, \ve}(t,x)
=
\red{[ \tau]_{\ve}}(t)  \, \<10>_{\,  \ve}(t,x)
\,.
\end{aligned}
\end{equation*}
\end{lemma}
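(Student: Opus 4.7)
The plan is to proceed by induction on $i(\tau)=|\red{\tau}|$, peeling off one 1-cycle at a time via Lemma~\ref{lem:cncl1cycl}, which is an \emph{exact} identity rather than an upper bound. The base case $i(\tau)=0$ will be immediate: there $\tau=\<0>$, $\kappa=\emptyset$, both sides collapse to $\<10>_{\,\ve}(t,x)$, and $\red{[\<0>]_\ve}(t)=1$.

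For the inductive step, the first task is to exhibit a 1-cycle inside $\tau_\kappa$. By Lemma~\ref{lem:characContr}(i) I can pick $\gamma\in\mY(\tau_\kappa,\tau_\kappa)$ with $\Pi_\tau(\gamma)=\mathrm{Id}$; the minimal ${\rm v}$-cycle extracted from $[\tau,\tau]_\gamma$ must then be a 1-cycle, and since its two contracted leaves share a parent while leaves from opposite copies of $\tau$ inside $[\tau,\tau]$ never do, that 1-cycle lives entirely within one copy, producing a genuine 1-cycle $\mC$ of $\tau_\kappa$ with basis $v$. Writing $(\tilde\tau,\tilde\kappa)=(\tau\setminus\mC,\tilde\kappa)$ as in Definition~\ref{def:cycl-rmvl}, removing also the mirror copy of $\mC$ from $[\tau,\tau]_\gamma$ produces a pairing $\tilde\gamma\in\mY(\tilde\tau_{\tilde\kappa},\tilde\tau_{\tilde\kappa})$ which still satisfies $\Pi_{\tilde\tau}(\tilde\gamma)=\mathrm{Id}$ (the cycle-extraction algorithm on $[\tilde\tau,\tilde\tau]_{\tilde\gamma}$ is just the original algorithm with those two 1-cycles skipped), so $\tilde\kappa\in C(\tilde\tau)$ and the inductive hypothesis applies.

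Next I apply Lemma~\ref{lem:cncl1cycl} to $\mC$: the spatial integration inside the cycle extracts the time-only factor $\<cycle1>_{\ve}(s_v)$ and the indicator $\mathds{1}_{s_{\mf{d}(v)}\leqslant s_v\leqslant s_{\mf{p}(v)}}$, multiplying the reduced kernel $K^{t,x}_{\tilde\tau_{\tilde\kappa},\ve}$. Carrying out the remaining spatial and noise integrations against the reduced kernel produces exactly $[\tilde\tau]_{\tilde\kappa,\ve}(t,x)$, which by the inductive hypothesis equals $\red{[\tilde\tau]_\ve}(t)\,\<10>_{\,\ve}(t,x)$. Since $\<10>_{\,\ve}(t,x)$ is independent of $s_v$ it factors out of the outer $s_v$-integration, and iterating this through all $i(\tau)$ inner nodes will deliver the intermediate identity
\[
[\tau]_{\kappa,\ve}(t,x) = \bigg(\int_{\mf{D}_{[\tau]}(t)} \prod_{u\in\mI(\tau)} \<cycle1>_{\ve}(s_u)\,\ud s_{\mI(\tau)}\bigg)\,\<10>_{\,\ve}(t,x).
\]
Lemma~\ref{lem:onecycleint} then immediately evaluates the bracketed integral to $\red{[\tau]_\ve}(t)$, yielding the claim.

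The main obstacle I expect is the combinatorial identification of the domain of time integration: the indicator produced at step $i$ is phrased relative to the intermediate reduced tree $\tau^{(i-1)}$ rather than $\tau$, so I must verify that, once combined with the time-orderings implicit in the surviving heat kernels, the accumulated constraints reconstitute the tree-time-simplex $\mf{D}_{[\tau]}(t)$ of the original $\tau$ from~\eqref{timesimplex}. This is the analogue, for a single contracted tree, of the bookkeeping already carried out for paired trees in the proof of Lemma~\ref{lem_unif_bound_contraction_perm}, and I would handle it by noting that the new direct edge from $\mf{p}(v)$ to $\mf{d}(v)$ in $\tilde\tau$ carries exactly the ordering $s_{\mf{d}(v)}\leqslant s_{\mf{p}(v)}$, which together with the freshly produced $s_v$-indicator reinstates the two original edge orderings through $v$; a short induction on the number of removals then closes the argument.
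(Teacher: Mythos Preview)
Your approach is essentially the same as the paper's --- iterated removal of 1-cycles via Lemma~\ref{lem:cncl1cycl}, identification of the accumulated time constraints with $\mathfrak{D}_{[\tau]}(t)$, then Lemma~\ref{lem:onecycleint} --- and your last paragraph correctly isolates and resolves the one genuine bookkeeping issue.

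The only slip is the sentence where you invoke the inductive hypothesis on $[\tilde\tau]_{\tilde\kappa,\ve}(t,x)$ directly. After removing the 1-cycle $\mC$, Lemma~\ref{lem:cncl1cycl} leaves behind $\<cycle1>_\ve(s_v)\,\mathds{1}_{\{s_{\mf{d}(v)}\leqslant s_v\leqslant s_{\mf{p}(v)}\}}$ multiplying $K^{t,x}_{\tilde\tau_{\tilde\kappa},\ve}$; the indicator still depends on the time variables $s_{\mf{d}(v)},s_{\mf{p}(v)}$, which are being integrated \emph{inside} the reduced kernel. Hence the reduced piece does not factor as the scalar $[\tilde\tau]_{\tilde\kappa,\ve}(t,x)$, and the hypothesis cannot be applied at that point. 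The paper handles this by forgoing the induction on the integral identity: it applies Lemma~\ref{lem:cncl1cycl} $i(\tau)$ times in succession (using the sequence of 1-cycles coming from $\Pi_\tau(\gamma)=\mathrm{Id}$, restricted to one copy of $\tau$), collects the resulting product of indicators, and then checks --- exactly along the lines of your final paragraph --- that this product equals $\prod_{v\in\mI(\tau)}\mathds{1}_{\{s_v\leqslant s_{\mf{p}(v)}\}}$, i.e.\ cuts out $\mathfrak{D}_{[\tau]}(t)$. Your inductive framework is still useful, but only for the auxiliary purpose of certifying $\tilde\kappa\in C(\tilde\tau)$ at each step, so that a 1-cycle is guaranteed to exist for the next removal.
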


Note that the right-hand side of the identity in the lemma above does not
depend on the particular contraction $ \kappa \in C(\tau)$.

\begin{proof}
Fix $ \tau \in \mT_{3}$ and let $ \kappa \in {C}( \tau) $.
Choose $ \gamma \in \Pi_{ \tau}^{-1}(\mathrm{Id}) \cap \mY (
\tau_{ \kappa}, \tau_{ \kappa} )$, which exists by
Lemma~\ref{lem:characContr}(i). Since $ \gamma \in \Pi_{\tau}^{-1}
(\mathrm{Id}) $, the cycle extraction map, see Definition~\ref{def:cycle-extraction},
associates to $ \gamma $ a sequence of 1-cycles $(\mC_i)_{i=1}^{2 i (\tau)}$.
In order to distinguish between the two trees generating $[ \tau, \tau]$, let
us write $[ \tau_{1}, \tau_{2}]:= [\tau, \tau]$.
Now, let $( \mC_{i} ')_{i=1}^{ i ( \tau)}$ be the
subset of cycles whose bases belong to the tree $ \tau_{1} $. In other words, $( \mC_{i}
')_{i=1}^{ i ( \tau)}$ contains all cycles in  $(\mC_i)_{i=1}^{2 i
(\tau)}$ such that $\mI_{\mC_{i}}\subset \mI ( \tau_{1})$.
This yields an iterative rule of removing 1-cycles from $ [\tau_{1}]_{
\kappa}=[ \tau]_{\kappa}$.
In particular, each $ {\mC}_{i}$ corresponds to a unique inner
node $ v_i$ of $ \tau$.

Recall that for $v \in \mI (\tau)$ we write $ \mathfrak{p}(v) \in \mI ([\tau])$ for the
parent of $v$ and recall also the representation of the stochastic integrals in \eqref{e:tk}, which allows
us to write
\begin{equation*}
\begin{aligned}
[ \tau]_{ \kappa, \ve}(t,x)
&=
  \int_{D_{t}^{\mV(\tau)}}
K_{[\tau_{\kappa}]}^{t, x} (
s_{\mV},y_{\mV})\;  \ud s_{\mV ( \tau)} \, \ud y_{\mV(\tau)\setminus
\ell}\  \eta_\ve( \ud
y_{\ell})  \;,
\end{aligned}
\end{equation*}
where we denote by $(s_{ \ell} ,y_{ \ell})$ the space-time point associated to
the single uncontracted leaf in $ \tau_{ \kappa} $, cf. Lemma~\ref{lem:characContr}(ii).
We apply Lemma~\ref{lem:cncl1cycl} with respect to the 1-cycle $ \mC^{\prime}_{1} $,
which yields
\begin{equation*}
\begin{aligned}
[ \tau]_{ \kappa , \ve } (t,x)
&=
 \int_{D_{t}^{\mV(\tau) \setminus \mC_{1}'}}
\left\{
 \int_{0}^{t} \<2cycle>^{\otimes 1}(s_{v_{1}}) \mathds{1}_{\{ 
s_{\mf{d}_\kappa (v_{1})} \leqslant  s_{v_1} \leqslant s_{
\mathfrak{p}(v_1)} \}}  \ud s_{v_{1}} \right\}
\\
&\qquad\qquad
K_{[ \widetilde{\tau}_{ \widetilde{\kappa}}]} (s_{\mV ( \tau)
\setminus \mC_1' },y_{\mV( \tau) \setminus \mC_1'})    \ud s_{\mV ( \tau) \setminus \mC_{1}'} \ud y_{\mV(\tau)\setminus (
\ell \cup \mC_{1}')}\ \eta_\ve( \ud
y_{\ell}) \;.
  \end{aligned}
\end{equation*}
Now, by applying  Lemma~\ref{lem:cncl1cycl} successively another $ i (\tau)-1 $
times with respect to each of the
1-cycles $( \mC_{i} ')_{i=2}^{ i ( \tau)}$, we obtain
\begin{equation*}
\begin{aligned}
[ \tau]_{ \kappa, \ve} (t,x)
&=
\left\{
 \int_{[0,t]^{\mI ( \tau)} } \prod_{v \in \mI ( \tau)} \<2cycle>^{\otimes
1}(s_{v}) \mathds{1}_{\{  s_{\mf{d}_{\kappa} (v)} \leqslant  s_{v} \leqslant s_{
\mathfrak{p}(v)} \}}  \ud
s_{\mI( \tau)}
\right\}
\int_{\RR^{2}} p_t^{(\mf{m})}(y_{\ell}-x)\  \eta_\ve( \ud
y_{\ell}) \;.
\end{aligned}
\end{equation*}
The stochastic integral on the right-hand side equals $\<10>_{\, \ve}$, whereas
the time integral in the brackets can be rewritten as
\begin{equation*}
\begin{aligned}
 \int_{[0,t]^{\mI( \tau)} } \prod_{ v \in \mI ( \tau)}
\<2cycle>^{\otimes 1}(s_{v}) \mathds{1}_{\{  s_{\mf{d}_\kappa (v)} \leqslant  s_{v} \leqslant s_{
\mathfrak{p}(v)} \}}  \ud
s_{\mI( \tau)}
=
\int_{\mathfrak{D}_{ [\tau]}(t)}  \prod_{v \in \mI ( \tau)} \frac{
\lambda^{2}_{ \ve}\, e^{2 \mf{m}\, s_{v}}}{4 \pi(s_{v}+ \ve^{2}) }   \ud s_{
\mI ( \tau)}\;,
\end{aligned}
\end{equation*}
where we used that
\begin{equation*}
\begin{aligned}
\prod_{v \in \mI(\tau)} \mathds{1}_{\{  s_{\mf{d}_\kappa (v)} \leqslant  s_{v} \leqslant s_{
\mathfrak{p}(v)} \}} = \prod_{v \in \mI(\tau)} \mathds{1}_{\{ s_{v} \leqslant s_{
\mathfrak{p}(v)} \}}\;.
\end{aligned}
\end{equation*}
%arguing along the same lines as in Lemma~\ref{lem_2timesimplex}.
Together with Lemma~\ref{lem:onecycleint}, this concludes the
proof.
\end{proof}

\subsubsection{Counting contributing contractions}

In the previous section, we saw that the limit of a contributing contracted
tree is independent of the precise structure of the contraction. Thus, in order
to conclude the limit of $ X^{ \tau}$, it is only left to determine the size of
$C( \tau)$.

\begin{lemma}\label{lem:count_contr}
Let $ \tau \in \mT_{3}$, then $|{C}( \tau)| = 3^{ i( \tau)}$.
\end{lemma}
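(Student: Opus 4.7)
The plan is to proceed by induction on the number $i(\tau)$ of inner vertices of $\tau$. The base case $i(\tau) = 0$ forces $\tau = \<0>$: the only contraction is $\kappa = \emptyset$, and $\emptyset \in C(\tau)$ is immediate either by direct computation of $\|\<10>_{\, \ve}\|_{L^2(\PP)}^{2}$ or by applying Lemma~\ref{lem:characContr} to the unique pairing between the two leaves of $[\<0>\ \<0>]$ (which yields the identity permutation in the trivial group $S_0$). Hence $|C(\<0>)| = 1 = 3^0$.

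For the inductive step, fix $\tau = [\tau_1\ \tau_2\ \tau_3] \in \mT_3$ with $i(\tau) \geqslant 1$ and construct a bijection
\begin{equation*}
\Phi : \{1,2,3\} \times C(\tau_1) \times C(\tau_2) \times C(\tau_3) \;\longrightarrow\; C(\tau)\,.
\end{equation*}
Once $\Phi$ is established, the inductive hypothesis gives $|C(\tau)| = 3\prod_{i=1}^{3} 3^{i(\tau_i)} = 3^{1+\sum_i i(\tau_i)} = 3^{i(\tau)}$, closing the induction. The forward map sends $(j, \kappa_1, \kappa_2, \kappa_3)$ to $\kappa := \kappa_1 \cup \kappa_2 \cup \kappa_3 \cup \{(l_p, l_q)\}$, where $\{p,q\} = \{1,2,3\}\setminus \{j\}$ and $l_p, l_q$ are the unique uncontracted leaves of $(\tau_p)_{\kappa_p}, (\tau_q)_{\kappa_q}$, both well-defined by Lemma~\ref{lem:characContr}(ii). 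To see $\kappa \in C(\tau)$, first iteratively remove all internal $1$-cycles within each $(\tau_i)_{\kappa_i}$, which is possible since $\kappa_i \in C(\tau_i)$; the result is a tree whose root has three leaf descendants with $l_p, l_q$ paired, forming a $1$-cycle whose removal produces the lollipop carrying the surviving leaf of $\tau_j$.

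The bulk of the work lies in the reverse direction. Given $\kappa \in C(\tau)$, Lemma~\ref{lem:characContr}(ii) supplies a unique uncontracted leaf $l^* \in \mL(\tau)$, and $j$ is determined by $l^* \in \mL(\tau_j)$. Define $\kappa_i := \kappa \cap \binom{\mL(\tau_i)}{2}$ for $i \in \{1,2,3\}$. The key claim is that $\kappa$ contains exactly one \emph{cross-pair} (a pair whose two endpoints lie in distinct subtrees), equal to $\{(l_p, l_q)\}$ for $\{p,q\} = \{1,2,3\} \setminus \{j\}$ with $l_p, l_q$ the unique uncontracted leaves of $(\tau_p)_{\kappa_p}, (\tau_q)_{\kappa_q}$, and moreover $\kappa_i \in C(\tau_i)$ for each $i$. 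To prove the claim, fix any iterative $1$-cycle reduction of $\tau_\kappa$ to a lollipop (supplied by Lemma~\ref{lem:characContr}(i)): every inner vertex of $\tau$ is consumed exactly once, a $1$-cycle based at $v \in \mI(\tau_i)$ uses two leaves of $\tau$ that are descendants of $v$ in the original tree (hence internal to $\tau_i$), while the unique $1$-cycle based at the root $r$ of $\tau$ uses two leaves that are direct children of $r$ in the reduced tree, which are necessarily the surviving leaves of two of the three subtrees. Since $|\kappa| = i(\tau) = 1 + \sum_i i(\tau_i)$, this accounts for $i(\tau_i)$ internal pairs within each $\tau_i$ (certifying $\kappa_i \in C(\tau_i)$ via the corresponding internal sub-reduction) plus exactly one cross-pair, which must equal $(l_p, l_q)$.

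The main technical obstacle will be to carry out this bookkeeping rigorously, independently of the order in which cycles are extracted: one must verify that a leaf appearing as a child of $v \in \mI(\tau_i)$ at any stage of the reduction is necessarily an original leaf of $\tau$ that is a descendant of $v$ in the original tree, so that the partition of $\kappa$ into the $\kappa_i$ and a single root cross-pair is intrinsic to $\kappa$ rather than an artefact of a particular extraction order. This structural property is built into the cycle removal rule of Definition~\ref{def:cycl-rmvl} (which never creates new leaves nor alters ancestry), but its careful translation into the decomposition above is the non-trivial heart of the argument.
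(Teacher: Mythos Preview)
Your argument is correct and takes a genuinely different route from the paper. Both proofs are inductive on $i(\tau)$, but the paper peels off a single \emph{top} trident: it fixes some $v\in\mV_{\<3s>}(\tau)$, invokes Lemma~\ref{lem:toptridents_contractions} to know that two of the three leaves at $v$ are paired in any $\kappa\in C(\tau)$, and exhibits a bijection $C_{i,j}(\tau)\to C(\hat\tau)$ for each of the three choices $\{i,j\}$, where $\hat\tau$ has one fewer inner vertex. You instead decompose at the \emph{root}, writing $\tau=[\tau_1\ \tau_2\ \tau_3]$ and building a bijection with $\{1,2,3\}\times\prod_i C(\tau_i)$. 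Your approach is the natural tree recursion and bypasses Lemma~\ref{lem:toptridents_contractions} entirely; the price is the structural bookkeeping you flag at the end---showing that a $1$-cycle based at $v\in\mI(\tau_i)$ necessarily consumes two original leaves of $\tau_i$, so that $\kappa$ splits as $\kappa_1\cup\kappa_2\cup\kappa_3$ plus a single cross-pair at the root. Your justification of this (children in the reduced tree are descendants in the original tree, and each subtree contributes exactly one child of $r$ at any stage) is the right one. One small imprecision: Lemma~\ref{lem:characContr}(i) does not literally hand you a $1$-cycle reduction of $\tau_\kappa$; that extraction is carried out in the proof of Lemma~\ref{lem:contr}. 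For the forward direction it is therefore cleaner to verify $\kappa\in C(\tau)$ directly from Definition~\ref{def:contr_tree} via the identity of Lemma~\ref{lem:contr} (whose proof only uses the existence of \emph{some} $1$-cycle reduction, which you have by construction), rather than through $\Pi_\tau(\gamma)=\mathrm{Id}$.
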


In order to prove Lemma~\ref{lem:count_contr}, we first need the following
result.

\begin{lemma}\label{lem:toptridents_contractions}
Let $ \tau \in \mT_{3}$
and $ \kappa \in C( \tau)$, then every trident in $ \tau_\kappa$ has an internal contraction.
More precisely, for every $ v \in \mV_{\<3s>}( \tau) $, with
\begin{equation*}
\begin{aligned}
 \mV_{\<3s>}( \tau) := \{ v \in \mI ( \tau) \,:\, \text{there exist exactly three $ u_{1}, u_{2}, u_{3} \in \mL ( \tau)$ such
that $\mathfrak{p}(u_{i})=v$} \}\,,
\end{aligned}
\end{equation*}
we have $\{u_{i},
u_{j}\} \in \kappa$ for two distinct $i,j \in \{ 1,2,3\}$.
\end{lemma}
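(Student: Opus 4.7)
The plan is to argue by contradiction: suppose $v \in \mV_{\<3s>}(\tau)$ has three leaf children $u_1, u_2, u_3$, yet no pair $\{u_i, u_j\}$ lies in $\kappa$. By Lemma~\ref{lem:characContr}\,(i), since $\kappa \in C(\tau)$ there exists a pairing $\gamma \in \mY(\tau_\kappa, \tau_\kappa)$ with $\Pi_\tau(\gamma) = \mathrm{Id}$, i.e.\ the cycle extraction algorithm from Definition~\ref{def:cycle-extraction} applied to $[\tau,\tau]_\gamma$ extracts only $1$-cycles. Since $\mathrm{Id} \in S_{2i(\tau)}$ has exactly $2i(\tau)$ fixed points, the algorithm performs $K(\tau,\gamma) = 2i(\tau)$ extractions, each removing one inner vertex. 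The graph $[\tau,\tau]$ has $2i(\tau) + 1$ inner vertices, and the terminal configuration $\<cycle1>$ retains one. Since the inner vertices of $[\tau,\tau]$ excluding the grafting root are precisely those labeled $\{1,\dots,2i(\tau)\}$, and every such label must appear as the base of some extracted $1$-cycle, the lone surviving inner vertex must be the grafting root. In particular, viewing $v$ as a vertex in (say) the left copy of $\tau$ inside $[\tau,\tau]$, it is a non-root inner vertex and must therefore serve as the base of a $1$-cycle at some step of the extraction.

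The next step is the key observation: a $1$-cycle based at $v$ requires two of $v$'s current descendants to be leaves paired by an edge in $\kappa \cup (\gamma \setminus \kappa)$. By Definition~\ref{def:doublecont}, all edges of $\gamma \setminus \kappa$ connect the left copy of $\tau$ to the right copy. Hence any two leaves in the left copy that are paired to each other must be paired inside $\kappa$. I therefore want to show that, throughout the extraction, the set of children of $v$ does not change until $v$ itself is extracted, so that when $v$ is extracted its three children are still $u_1, u_2, u_3$, forcing two of them to be paired in $\kappa$ and producing the desired contradiction.

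Establishing this invariant is the step I expect to require the most care. The argument is that by Definition~\ref{def:cycl-rmvl}, extracting a $1$-cycle at a vertex $w$ only modifies the neighbourhood of $w$'s parent $\mathfrak{p}(w)$: a grand-descendant of $\mathfrak{p}(w)$ is promoted to be a child. For this ever to alter the children of $v$, we would need $\mathfrak{p}(w) = v$, i.e.\ $w$ to be an inner-vertex child of $v$ at some intermediate step. But initially $v$'s three children $u_1, u_2, u_3$ are all leaves, and leaves can only be removed when their own parent serves as a $1$-cycle base — so until $v$ itself is extracted, its children remain $u_1, u_2, u_3$ and no inner vertex is ever promoted to be a child of $v$. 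Iterating this observation across the extraction steps gives the invariant and closes the contradiction: when $v$ is removed, two of $u_1, u_2, u_3$ must be paired, and as noted they can only be paired through $\kappa$, contradicting our starting assumption.
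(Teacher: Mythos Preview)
Your proof is correct and follows the same approach as the paper: argue by contradiction that if some trident base $v$ has no internal contraction in $\kappa$, then for any $\gamma \in \mY(\tau_\kappa,\tau_\kappa)$ one has $\Pi_\tau(\gamma)\neq\mathrm{Id}$, contradicting Lemma~\ref{lem:characContr}(i). The paper compresses your invariant argument (that $v$'s children remain $u_1,u_2,u_3$ until $v$ itself is extracted, forcing the needed pair into $\kappa$) into a single ``it is immediate'' sentence, so your version is simply a more explicit rendering of the same idea; the only minor imprecision is that ``$\gamma\setminus\kappa$'' should be ``$\gamma\setminus(\kappa\cup\kappa')$'', but your conclusion that any intra-left pairing lies in $\kappa$ is unaffected.
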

\begin{proof}
Let $ \tau \in \mT_{3}$, $ \kappa \in C( \tau)$ and consider  $[ \tau, \tau]_{\gamma} $ for the unique $
\gamma \in \mY ( \tau_{\kappa}, \tau_{\kappa})$, see
Lemma~\ref{lem:characContr}(ii).
For any $v \in  \mV_{\<3s>}( \tau)$, we write $ u_{1}(v), u_{2}(v),
u_{3}(v) \in \mL ( \tau)$ for the three leaves it is connected to (indexing
them with $1$ to $3$ from left to right).

Now, assume there exists a $v \in  \mV_{\<3s>}( \tau)$
without an internal contraction, i.e. $\{u_{i}(v), u_{j}(v)\} \notin \kappa$
for all $1 \leqslant  i,j \leqslant 3$. Graphically, this can be represented as
follows:
\begin{equation}\label{e:topTridentsupp1}
\begin{aligned}
	\vcenter{\hbox{
    \begin{tikzpicture}[scale=0.5]
        \draw (0,0) node[dot] {} -- (1,-1);
        \draw (1,0) node[dot] {} -- (1,-1);
        \draw (2,0) node[dot] {} -- (1,-1);
        \draw[thick, dotted] (2.1,-2.1) -- (2.6,-2.6);
         \draw (1,-1) -- (2,-2);  \draw (2,-2) -- (2,-1);  \node at (2,.-0.8) {\scalebox{0.8}{$\tau_1$}};
         \draw (2,-2) -- (2.7,-1);  \node at (2.8,.-0.8) {\scalebox{0.8}{$\tau_2$}};
        %\draw[blue, thick] (-.7,1) to[out=90,in=180] (2.5,3);
        %\draw[blue, thick] (2.5,3) to[out=0,in=90] (6.7,1);
        \draw[purple, thick] (1,0) to[out=90,in=180] (2,1);
        \draw[purple, thick] (2,0) to[out=90,in=180] (3,0.5);
        \draw[purple, thick, dotted] (0,0) to[out=90,in=180] (1,1.5);
	\node (c) [label = left:{\tiny $v$}] at (1.1, -1) {};
	\node (c) [label = left:{\tiny $u_{1}$}] at (0.4, 0) {};
	\node (c) [label = left:{\tiny $u_{2}$}] at (1.4, 0) {};
	\node (c) [label = left:{\tiny $u_{3}$}] at (3.3, 0) {};
	\node[idot] at (2,-2) {};
	\node[idot] at (1,-1) {};
        \end{tikzpicture}  }}\,,
\end{aligned}
\end{equation}
for some $ \tau_{1}, \tau_{2} \in \mT_{3}$ (note that possibly one of the leaves
could be uncontracted, which is indicated in the example above by the dotted
red line).
Then it is immediate to see that $ \Pi_{\tau}( \gamma) \neq
\mathrm{Id}$, since otherwise $\{u_{i}(v), u_{j}(v)\} \in  \kappa \subset
\gamma$ for some $1 \leqslant i,j \leqslant 3$.
Thus, contradicting the assumption $ \kappa \in C( \tau)$ by
Lemma~\ref{lem:characContr}(i).
This concludes the proof.
\end{proof}

\begin{proof}[Proof of Lemma~\ref{lem:count_contr}]
We prove the statement by induction over the number of inner nodes $ i (
\tau)$, starting with $ i( \tau) =0$, i.e. $ \tau = \<0>$. In this case,
$|C( \tau) | =|\mK ( \tau)|= |\mY ( \tau, \tau)|=1$ and the claim holds.
Now assume the statement holds true for any $ \widehat{\tau} \in
\mT_{3}$ satisfying $ i ( \widehat{\tau}) \leqslant m$.

Let $ \tau \in \mT_{3}$ with $ i( \tau) =m+1$ and fix any $ v \in \mV_{\<3s>}(
\tau)$. We denote its neighbouring leaves by  $ u_{1}(v), u_{2}(v),
u_{3}(v) \in \mL ( \tau)$ (indexing
them with $1$ to $3$ from left to right):
\begin{equation}\label{e:Countsupp1}
\begin{aligned}
	\vcenter{\hbox{
    \begin{tikzpicture}[scale=0.5]
        \draw (0,0) node[dot] {} -- (1,-1);
        \draw (1,0) node[dot] {} -- (1,-1);
        \draw (2,0) node[dot] {} -- (1,-1);
        \draw[thick, dotted] (2.1,-2.1) -- (2.6,-2.6);
         \draw (1,-1) -- (2,-2);  \draw (2,-2) -- (2,-1);  \node at (2,.-0.8) {\scalebox{0.8}{$\tau_1$}};
         \draw (2,-2) -- (2.7,-1);  \node at (2.8,.-0.8) {\scalebox{0.8}{$\tau_2$}};
        \draw[purple, thick] (1,0) to[out=90,in=180] (1.5,0.5);
        \draw[purple, thick] (1.5,0.5) to[out=0,in=90] (2,0);
        \draw[purple, thick, dotted] (0,0) to[out=90,in=180] (1,1.5);
	\node (c) [label = left:{\tiny $v$}] at (1.1, -1) {};
	\node (c) [label = left:{\tiny $u_{1}$}] at (0.4, 0) {};
	\node (c) [label = left:{\tiny $u_{2}$}] at (1.4, 0) {};
	\node (c) [label = left:{\tiny $u_{3}$}] at (3.3, 0) {};
	\node[idot] at (2,-2) {};
	\node[idot] at (1,-1) {};
        \end{tikzpicture}  }}\,,
\end{aligned}
\end{equation}
for some $ \tau_{1}, \tau_{2} \in \mT_{3}$. Again note that possibly one of the leaves
could be uncontracted.
Moreover, using Lemma~\ref{lem:toptridents_contractions}, we can partition $C( \tau)$ into
three sets, $C_{1,2}( \tau), C_{1,3}( \tau), C_{2,3}( \tau)$, with
\begin{equation*}
\begin{aligned}
C_{i,j}( \tau):= \left\{ \kappa \in C ( \tau) \,:\, \{u_{i}(v), u_{j}(v) \} \in
\kappa\right\}\,.
\end{aligned}
\end{equation*}
For any contraction $ \kappa \in C( \tau)$ we define the tree resulting from
$ \tau_{\kappa}$ after removing the 1-cycle $\mC$ with $\mI_{\mC}= \{v\}$:
\begin{equation}\label{e:countContr_supp1}
\begin{aligned}
\widehat{\tau}_{ \tilde{\kappa}}:= (\tau \setminus
\mC)_{\tilde{ \kappa}}\,,
\end{aligned}
\end{equation}
using the cycle removal from Definition~\ref{def:cycl-rmvl}.
%Because $v$ and thus $ \widehat{ \tau}$ (up to labeling of one leaf via $
%u_{i}(v)$) do not depend on the choice of $ \kappa \in C(
%\tau)$ (but not the resulting contraction $ \widetilde{\kappa} $),
Expression \eqref{e:countContr_supp1} defines a map $\mathfrak{K}_{v} : C( \tau) \to \mK( \widehat{
\tau})$ with $\mathfrak{K}_{v} ( \kappa):= \Tilde{ \kappa}$.
%\simon{@Tommaso: I
%remember there was a point we wrote this, but Nikos (and I) don't understand it
%anymore.}
Moreover, we have that $ \widetilde{\kappa} $ is contributing for $
\hat{\tau} $ (namely $\Tilde{ \kappa}=\mathfrak{K}_{v} ( \kappa) \in C (
\hat{\tau})$).

% since
%\begin{equation*}
%\begin{aligned}
%0<
%\limsup_{\ve \to 0}\,
%\left( \log{ \tfrac{1}{\ve}} \right)\cdot
%\EE\big[ | [\tau]_{ \kappa, \ve} (t,x) |^{2}\big]
%\leqslant
%\left(
%\frac{ \hat{\lambda}^{2}}{ 2 \pi}
%\right)^{2}
%\limsup_{\ve \to 0}\,
%\left( \log{ \tfrac{1}{\ve}} \right)\cdot
%\EE\big[ | [\hat{\tau}]_{ \Tilde\kappa, \ve} (t,x) |^{2}\big]\,,
%\end{aligned}
%\end{equation*}
%following the same lines as in \eqref{e:lem_extract_supp1_3}.}
%\tommaso{We never explain why the resulting contraction $ \mf{K}(\kappa) $ lies
%in $ C( \hat{\tau})  $. It doesn't seem completely obvious, for example from
%Lemma~\ref{lem:characContr}.}
%\simon{resolved?}
In fact, for any choice $1 \leqslant i< j \leqslant  3 $, the map
$\mathfrak{K}_{v}|_{C_{i,j}( \tau)}$ maps onto $C( \widehat{\tau} )$ and defines a bijection.
To see this, consider an arbitrary contraction $ \widehat{ \kappa} \in C (
\widehat{ \tau})$ (with the labeling of $ \widehat{ \tau}$ induced by $
\tau$) and define $ \kappa:= \widehat{ \kappa} \cup \{u_{i}(v), u_{j}(v)\}$.
For example we have the following reconstruction of a contraction
in $C_{2,3}( \tau)$ using the inverse $(\mathfrak{K}_{v}|_{C_{2,3}(
\tau)})^{-1}$:
\begin{equation}\label{e:Countsupp2}
\begin{aligned}
[ \widehat{ \tau}]_{\widehat{ \kappa}}
=
\vcenter{\hbox{
    \begin{tikzpicture}[scale=0.5]
        \draw[thick, dotted] (2.1,-2.1) -- (2.6,-2.6);
         \draw (1,-1) node[dot] {} -- (2,-2);  \draw (2,-2) -- (2,-1);  \node at (2,.-0.8) {\scalebox{0.8}{$\tau_1$}};
         \draw (2,-2) -- (2.7,-1);  \node at (2.8,.-0.8) {\scalebox{0.8}{$\tau_2$}};
        %\draw[blue, thick] (-.7,1) to[out=90,in=180] (2.5,3);
        %\draw[blue, thick] (2.5,3) to[out=0,in=90] (6.7,1);
        \draw[purple, thick, dotted] (1,-1) to[out=90,in=180] (2,0.5);
	\node (c) [label = left:{\tiny $w$}] at (1.3, -1) {};
	\node[idot] at (2,-2) {};
	\node[idot] at (1,-1) {};
        \end{tikzpicture}  }}
\mapsto
	\vcenter{\hbox{
    \begin{tikzpicture}[scale=0.5]
        \draw (0,0) node[dot] {} -- (1,-1);
        \draw (1,0) node[dot] {} -- (1,-1);
        \draw (2,0) node[dot] {} -- (1,-1);
        \draw[thick, dotted] (2.1,-2.1) -- (2.6,-2.6);
         \draw (1,-1) -- (2,-2);  \draw (2,-2) -- (2,-1);  \node at (2,.-0.8) {\scalebox{0.8}{$\tau_1$}};
         \draw (2,-2) -- (2.7,-1);  \node at (2.8,.-0.8) {\scalebox{0.8}{$\tau_2$}};
        %\draw[blue, thick] (-.7,1) to[out=90,in=180] (2.5,3);
        %\draw[blue, thick] (2.5,3) to[out=0,in=90] (6.7,1);
        \draw[purple, thick] (1,0) to[out=90,in=180] (1.5,0.5);
        \draw[purple, thick] (1.5,0.5) to[out=0,in=90] (2,0);
        \draw[purple, thick, dotted] (0,0) to[out=90,in=180] (1,1.5);
	\node (c) [label = left:{\tiny $v$}] at (1.1, -1) {};
	\node (c) [label = left:{\tiny $w=u_{1}$}] at (0.4, 0) {};
	\node (c) [label = left:{\tiny $u_{2}$}] at (1.4, 0) {};
	\node (c) [label = left:{\tiny $u_{3}$}] at (3.3, 0) {};
	\node[idot] at (1,-1) {};
        \end{tikzpicture}  }}
=
[ \tau]_{\kappa}
\,.
\end{aligned}
\end{equation}
On the other hand, for the same $ \widehat{ \kappa}$ we can also
reconstruct the following two contractions in $C_{1,2}( \tau)$ and $C_{1,3}(
\tau)$ , respectively:
\begin{equation*}
\begin{aligned}
\vcenter{\hbox{
    \begin{tikzpicture}[scale=0.5]
        \draw (0,0) node[dot] {} -- (1,-1);
        \draw (1,0) node[dot] {} -- (1,-1);
        \draw (2,0) node[dot] {} -- (1,-1);
        \draw[thick, dotted] (2.1,-2.1) -- (2.6,-2.6);
         \draw (1,-1) -- (2,-2);  \draw (2,-2) -- (2,-1);  \node at (2,.-0.8) {\scalebox{0.8}{$\tau_1$}};
         \draw (2,-2) -- (2.7,-1);  \node at (2.8,.-0.8) {\scalebox{0.8}{$\tau_2$}};
        %\draw[blue, thick] (-.7,1) to[out=90,in=180] (2.5,3);
        %\draw[blue, thick] (2.5,3) to[out=0,in=90] (6.7,1);
        \draw[purple, thick] (0,0) to[out=90,in=180] (0.5,0.5);
        \draw[purple, thick] (0.5,0.5) to[out=0,in=90] (1,0);
        \draw[purple, thick, dotted] (2,0) to[out=90,in=180] (3,1.5);
	\node (c) [label = left:{\tiny $v$}] at (1.1, -1) {};
	\node (c) [label = left:{\tiny $u_{1}$}] at (0.4, 0) {};
	\node (c) [label = left:{\tiny $u_{2}$}] at (1.4, 0) {};
	\node (c) [label = left:{\tiny $u_{3}=w$}] at (4.5, 0) {};
	\node[idot] at (2,-2) {};
	\node[idot] at (1,-1) {};
        \end{tikzpicture}  }}
\qquad \text{and} \qquad
\vcenter{\hbox{
    \begin{tikzpicture}[scale=0.5]
        \draw (0,0) node[dot] {} -- (1,-1);
        \draw (1,0) node[dot] {} -- (1,-1);
        \draw (2,0) node[dot] {} -- (1,-1);
        \draw[thick, dotted] (2.1,-2.1) -- (2.6,-2.6);
         \draw (1,-1) -- (2,-2);  \draw (2,-2) -- (2,-1);  \node at (2,.-0.8) {\scalebox{0.8}{$\tau_1$}};
         \draw (2,-2) -- (2.7,-1);  \node at (2.8,.-0.8) {\scalebox{0.8}{$\tau_2$}};
        %\draw[blue, thick] (-.7,1) to[out=90,in=180] (2.5,3);
        %\draw[blue, thick] (2.5,3) to[out=0,in=90] (6.7,1);
        \draw[purple, thick] (0,0) to[out=90,in=180] (1,0.5);
        \draw[purple, thick] (1,0.5) to[out=0,in=90] (2,0);
        \draw[purple, thick, dotted] (1,0) to[out=90,in=180] (2,1.5);
	\node (c) [label = left:{\tiny $v$}] at (1.1, -1) {};
	\node (c) [label = left:{\tiny $u_{1}$}] at (0.4, 0) {};
	\node (c) [label = left:{\tiny $w$}] at (1.4, 0) {};
	\node (c) [label = left:{\tiny $u_{3}$}] at (3.3, 0) {};
	\node[idot] at (2,-2) {};
	\node[idot] at (1,-1) {};
        \end{tikzpicture}  }}\,.
\end{aligned}
\end{equation*}
In particular, for each set $C_{i,j}( \tau)$ there exists a unique $ \kappa \in C_{i,j}( \tau)$
such that $\mathfrak{K}_{v}(\kappa) = \widehat{\kappa}$.
As a consequence all three sets $C_{i,j}( \tau)$ have the same cardinality,
which agrees with $|C ( \widehat{ \tau})|$. Lastly, applying the induction
hypothesis to  $|C ( \widehat{ \tau})|$, yields
\begin{equation*}
\begin{aligned}
|C ( \tau )| = |C_{1,2}( \tau)|+ |C_{1,3}( \tau)|+ |C_{2,3}( \tau)| =  3 |C( \widehat{\tau})| = 3^{m+1}\,.
\end{aligned}
\end{equation*}
This concludes the proof.
\end{proof}

\subsection{Non-contributing trees}

Up to now, we have identified contributing pairings (and contractions)
to be the
ones that lie in the pre-image $
\Pi_{\tau}^{-1}(\mathrm{Id})$, when
considering a fixed tree $ \tau \in \mT_{3}$ . Moreover,
we determined their exact contribution.
Now, it is only left to control the overall contribution of the remaining
contractions, which we will prove to be negligible, in a strong
summable fashion.
We summarise the main findings of this section in the following lemma.

\begin{lemma}\label{lem_unif_estimate_all_noncontr_cont_new}
Let $T>0$, then
uniformly over any $ \ve \in (0, \tfrac{1}{T}
\wedge \tfrac{1}{2}) $,  $ \tau \in \mT_{3}^{N_{\ve}} $, for $ N_{\ve} = \lfloor
\log{\tfrac{1}{\ve}}\rfloor $, $ x \in \RR^{2}$ and uniformly for all
$ t \in [0,
 T] $, we have
\begin{align*}
    \Big\|
    \sum_{\kappa\notin C(\tau)}\sqrt{\log{\tfrac{1}{\ve}}}\cdot
    [\tau]_{\kappa,\ve}
(t, x)
    \Big\|_{L^2(\PP)}
    \leqslant
    \frac{1}{\sqrt{ 4\log \tfrac{1}{\ve}}}
    \frac{1}{ \red{\tau}!}
    \left(\frac{6e^{2 + 2\pi}\Hat{\lambda}^{2} e^{2 \overline{\mathfrak{m}}\, t} }{\pi}\right)^{|\red{\tau}|} \frac{ \hat{\lambda}
e^{ \mathfrak{m}\, t}}{\sqrt{4 (t + \ve^{2})}} \,,
\end{align*}
where $ \red{ \tau}$
denotes the trimmed tree $\mathscr{T} ( \tau)$ as in \eqref{trimming}.
In particular, for a fixed $\tau \in \mT_3$ the right-hand side vanishes in the small-$\varepsilon$ limit.
\end{lemma}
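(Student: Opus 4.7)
The plan is to square the $L^{2}(\PP)$ norm and expand via Wick's theorem together with the partition identity \eqref{e:partitionPairings}, reducing the task to bounding
\begin{equation*}
\sum_{\substack{\gamma\in\mY(\tau,\tau)\\\mathfrak{s}_{1}(\gamma),\,\mathfrak{s}_{2}(\gamma)\notin C(\tau)}} [\tau,\tau]_{\gamma,\ve}(t,x),
\end{equation*}
all of whose summands are non-negative. The first structural step is the observation that if $\Pi_{\tau}(\gamma)=\mathrm{Id}$, then every cycle extracted by the algorithm of Definition~\ref{def:cycle-extraction} is a $1$-cycle, whose two leaves (being descendants of a single inner vertex) necessarily lie on the \emph{same} copy of $\tau$ inside $[\tau,\tau]$. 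Both internal projections $\mathfrak{s}_{i}(\gamma)$ are then obtained from the internal $1$-cycles alone and are reducible by iterated $1$-cycle removal, so Lemma~\ref{lem:characContr} forces $\mathfrak{s}_{i}(\gamma)\in C(\tau)$ for $i=1,2$. By contrapositive, every surviving pairing in the sum satisfies $\Pi_{\tau}(\gamma)\neq\mathrm{Id}$, and (since the summands are non-negative) I may safely upper bound by the analogous sum over \emph{all} such $\gamma$.

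Next I would apply Lemma~\ref{lem_unif_bound_contraction_perm} to each summand and group the sum by $\uppi=\Pi_{\tau}(\gamma)\in S_{2i(\tau)}\setminus\{\mathrm{Id}\}$. After extending the tree-simplex integral $\int_{\mathfrak{D}_{[\tau,\tau]}(t)}\Psi_{\uppi,\ve}\,\ud s$ to the full cube $[0,t]^{2i(\tau)}$, it factorises cyclewise and Lemma~\ref{lem_v_cycle_order_bnd_new} produces, per permutation cycle of length $m_{j}$, a factor of order $(\lambda_{\ve}e^{\overline{\mathfrak{m}}t})^{2m_{j}}\log(1+t/\ve^{2})/(2^{m_{j}}\pi)$. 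Because $\uppi\neq\mathrm{Id}$ forces at least one $m_{j}\geqslant 2$, and since $\lambda_{\ve}^{2(m_{j}-1)}\log(1+t/\ve^{2})=O(1/\log(1/\ve))$, this is precisely the point at which the non-contributing hypothesis enters quantitatively: after taking the square root and multiplying by $\sqrt{\log(1/\ve)}$ (coming from inside the norm), this contributes the $1/\sqrt{4\log(1/\ve)}$ factor in the statement. The heat-kernel prefactor $\lambda_{\ve}^{2}e^{2\mathfrak{m}t}p_{2(t+\ve^{2})}(0)$ from Lemma~\ref{lem_unif_bound_contraction_perm} rearranges into $\hat{\lambda}e^{\mathfrak{m}t}/\sqrt{4(t+\ve^{2})}$, matching the final factor in the stated bound.

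The main obstacle I foresee is the combinatorial accounting of the outer sum over non-identity permutations weighted by the fibre cardinality $|\Pi_{\tau}^{-1}(\uppi)|$, while simultaneously recovering the factor $1/\red{\tau}!$ in the stated bound. I would recover the $1/\red{\tau}!$ by retaining the tree-simplex constraint $\mathfrak{D}_{[\tau]}(t)$ for those parts of $\uppi$ built from $1$-cycles living on a single side of $[\tau,\tau]$, via the symmetrisation identity of Lemma~\ref{lem:onecycleint}, and estimate the fibre vertex-by-vertex: at each inner vertex there are only boundedly many ways for an incident cycle to enter and leave, in the same spirit as the $|C(\tau)|=3^{i(\tau)}$ counting of Lemma~\ref{lem:count_contr}. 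Summing the resulting cyclewise contributions via the exponential generating function $\sum_{m\geqslant 1}x^{m}/m=-\log(1-x)$ in the cycle lengths should then produce the constant $6e^{2+2\pi}/\pi$ raised to $|\red{\tau}|$ in the final bound, and the overall product of estimates yields the claimed inequality.
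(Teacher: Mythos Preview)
Your overall architecture---expanding the square, reducing to pairings with $\Pi_{\tau}(\gamma)\neq\mathrm{Id}$ via Lemma~\ref{lem:characContr}, applying Lemma~\ref{lem_unif_bound_contraction_perm} termwise, and bounding $|\Pi_{\tau}^{-1}(\uppi)|\leqslant 6^{2i(\tau)}$ by a vertex-by-vertex count---matches the paper (the fibre bound is Lemma~\ref{lem:nonContrPreImage}). The genuine gap is your handling of the factor $1/\red{\tau}!$. You propose to extend $\int_{\mathfrak{D}_{[\tau,\tau]}(t)}\Psi_{\uppi,\ve}$ to the cube $[0,t]^{2i(\tau)}$ for each fixed $\uppi$, factorise cyclewise, and then recover the factorial afterwards by ``retaining the simplex constraint'' on the $1$-cycle variables. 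This cannot work: an individual $\Psi_{\uppi,\ve}$ is \emph{not} symmetric when $\uppi\neq\mathrm{Id}$, so once you pass to the cube there is no legitimate way back; and a hybrid scheme that keeps the simplex only on the $1$-cycle variables fails because the tree ordering couples those variables to the ones sitting in longer cycles, so you would recover at best a partial factorial depending on how many fixed points $\uppi$ happens to have. Without the full $(\red{\tau}!)^{-2}$ the estimate is useless, since $\sum_{\uppi\in S_n}M^{K(\uppi)}$ is of size $(n+M-1)!/(M-1)!$, which for $n\leqslant 2N_{\ve}$ overwhelms any exponential-in-$i(\tau)$ factor.

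The paper's resolution is to reverse the order of operations: keep the simplex integral intact and sum over $\uppi\in S_{n}\setminus\{\mathrm{Id}\}$ \emph{first}. The resulting integrand $\overline{\Psi}_{\ve}:=\sum_{\uppi\neq\mathrm{Id}}\Psi_{\uppi,\ve}$ \emph{is} symmetric in all time variables (Lemma~\ref{lem_phibar_symmetric}, a one-line conjugation argument in $S_n$), so Lemma~\ref{symmetricintegral} applied to $[\tau,\tau]$ converts the simplex integral into $(\red{\tau}!)^{-2}$ times the cube integral, after which the cyclewise factorisation and Lemma~\ref{lem_v_cycle_order_bnd_new} are legitimate. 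The remaining sum over $\uppi$ is then handled not via the cycle-length series $-\log(1-x)$ you suggest but via the closed-form identity $\mathrm{E}_{S_{n}}[M^{K(\uppi)}]=\binom{n+M-1}{n}$ together with the telescoping expansion~\eqref{eq_supp0_noncontr_xx} of $(n+M-1)!/(M-1)!-M^{n}$; this is Lemma~\ref{lem:unifBoundOverPreimage}, and combining it with the $6^{2i(\tau)}$ fibre bound is what produces the constant $(6e^{2+2\pi}/\pi)^{|\red{\tau}|}$.
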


\begin{remark}\label{rem_correlation}
%\simon{I added this remark. please check.}
Our methods in Section~\ref{sec_rem_vcycle} (such as the cycle extraction and the
corresponding estimates in Lemma~\ref{lem_unif_bound_contraction_perm}) also
apply to covariances of the form 
\begin{equation*}
\begin{aligned}
\left(\log{\tfrac{1}{\ve}}\right) \cdot
 \sum_{\kappa , \kappa ' \notin C(\tau)}
\EE \big[
    [\tau]_{\kappa,\ve}(t,x) [\tau]_{\kappa,\ve}(t',x') 
\big]\,,
\end{aligned}
\end{equation*}
instead of just second moments as considered in
Lemma~\ref{lem_unif_estimate_all_noncontr_cont_new}. 
For this we don't identify the roots of the trees $ [ \tau]_{ \kappa}$ and
$ [ \tau]_{ \kappa'}$, when pairing the two trees, but keep them separate with
individual time-space points associated.
Thus, instead of stopping the cycle extraction
(Definition~\ref{def:cycle-extraction}) once we see
$\<cycle1>$, we terminate the algorithm once $\<10pair10>$ appears.
For this reason, we would see a $\sqrt{ \pi e^{2\mf{m}\, t} p_{2(t+
\ve^{2})}(x-x')}$
instead of 
\begin{equation*}
\begin{aligned}
\frac{ 
e^{ \mathfrak{m}\, t}}{\sqrt{4 (t + \ve^{2})}}
=
\sqrt{ \pi
e^{2\mf{m}\, t} p_{2(t+ \ve^{2})}(0)}\,,
\end{aligned}
\end{equation*}
on the right-hand side of Lemma~\ref{lem_unif_estimate_all_noncontr_cont_new}.
In particular, we expect this to allow for treatment of the statistics of the corresponding
field associated to \eqref{e:acn} in the H\"older space $\mC^{+1
-}(\RR^{2})$ for fixed $t>0$, which we leave for future work.
\end{remark}

For the proof of Lemma~\ref{lem_unif_estimate_all_noncontr_cont_new} we need
the following two lemmas. The first is an upper bound of a ``symmetrised''
integral over {\rm v}-cycles.

\begin{lemma}\label{lem:unifBoundOverPreimage}
Let $T>0$.
Then uniformly over any $ \ve \in (0, \tfrac{1}{T}
\wedge \tfrac{1}{2}) $, $ t \in [0,
 T] $ and  $ \tau \in \mT_{3}^{N_{\ve}} $, $ N_{\ve} = \lfloor
\log{\tfrac{1}{\ve}}\rfloor $, we have
\begin{align*}
    \sum_{\uppi\in S_{2 i ( \tau)}\setminus\{\mathrm{Id}\}}
    \Big(
    \int_{\mathfrak{D}_{ [\tau, \tau]}(t)}
    \Psi_{\uppi,\varepsilon}(s_{\mI}) \ud s_{ \mI}
    \Big)
    \leqslant
    \frac{1}{\log \tfrac{1}{\ve}}
    \frac{1}{(\red{\tau}!)^2}
    \frac{(\Hat{\lambda} e^{ \overline{\mathfrak{m}}\, t} )^{4 i ( \tau)}}{4 \pi^{2 i ( \tau)-1}}
     e^{2 (2+2 \pi)\, i ( \tau)}
    \,,
\end{align*}
where $\mI := \mI([ \tau, \tau]) \setminus \mf{o}$,
and  $ \red{ \tau} = \mathscr{T} ( \tau)$
denotes the trimmed tree \eqref{trimming}.
Recall \eqref{e:defPsi} for the definition of the function $
\Psi_{\uppi,\varepsilon}$.
\end{lemma}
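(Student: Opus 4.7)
The plan is to combine two complementary estimates on $\int_{\mathfrak{D}_{[\tau,\tau]}(t)}\Psi_{\uppi,\ve}$: a pointwise AM-GM bound that yields the tree factorial factor $(\red{\tau}!)^{-2}$, and the sharper integrated cycle bound of Lemma~\ref{lem_v_cycle_order_bnd_new} applied to one distinguished cycle which produces the $1/\log(1/\ve)$ gain needed whenever $\uppi\neq\mathrm{Id}$. The AM-GM step is the very one used in the proof of Lemma~\ref{lem_v_cycle_order_bnd_new}: the inequality $s_k+s_{k+1}+2\ve^2\geq 2\sqrt{(s_k+\ve^2)(s_{k+1}+\ve^2)}$ yields
\begin{equation*}
\<2cycle>^{\otimes m}(s_{v_1},\ldots,s_{v_m})\;\leq\;\frac{(\lambda_\ve e^{\overline{\mathfrak{m}} t})^{2m}}{(4\pi)^m}\prod_{k=1}^m\frac{1}{s_{v_k}+\ve^2}\,,
\end{equation*}
and multiplying over the cycles of $\uppi$ produces a symmetric product upper bound on $\Psi_{\uppi,\ve}(s)$ that is independent of $\uppi$. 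Since $\mathfrak{D}_{[\tau,\tau]}(t)=\mathfrak{D}_{[\tau]}(t)^{2}$, applying Lemma~\ref{symmetricintegral} to each copy (as in the proof of Lemma~\ref{lem:onecycleint}) delivers the $\uppi$-uniform estimate
\begin{equation*}
\int_{\mathfrak{D}_{[\tau,\tau]}(t)}\Psi_{\uppi,\ve}\,\ud s\;\leq\;\frac{(\lambda_\ve e^{\overline{\mathfrak{m}} t})^{2n}}{(4\pi)^n(\red{\tau}!)^2}\bigl(\log(1+t/\ve^2)\bigr)^n,\qquad n:=2i(\tau),
\end{equation*}
which accounts for the $(\red{\tau}!)^{-2}$ factor but carries no $\uppi$-dependence.

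To produce the $1/\log(1/\ve)$ saving required for $\uppi\neq\mathrm{Id}$, I plan to fix a distinguished cycle $\widehat{\mC}^*\subset\uppi$ of length $m^*\geq 2$ (which necessarily exists) and replace the pointwise AM-GM bound on $\widehat{\mC}^*$ by the integrated estimate of Lemma~\ref{lem_v_cycle_order_bnd_new}, namely $\int_{[0,t]^{m^*}}\<2cycle>^{\otimes m^*}\leq (\lambda_\ve e^{\overline{\mathfrak{m}} t})^{2m^*}\log(1+t/\ve^2)/(2^{m^*}\pi)$. This is smaller than the corresponding AM-GM-times-cube integral by a factor $(2\pi)^{m^*-1}/\log(1+t/\ve^2)^{m^*-1}$. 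To insert this improvement rigorously inside the tree-simplex integral, I will dominate $\mathfrak{D}_{[\tau,\tau]}(t)$ by a product $[0,t]^{V^*}\times\mathfrak{D}'(t)$, where $V^*=\mI_{\widehat{\mC}^*}$ and $\mathfrak{D}'(t)$ is the simplex associated with the reduced tree obtained by excising the vertices of $V^*$ as in Definition~\ref{def:cycl-rmvl}, and then apply Lemma~\ref{lem_v_cycle_order_bnd_new} on the distinguished cycle together with AM-GM plus Lemma~\ref{symmetricintegral} on the remainder. Using the uniform estimate $\lambda_\ve^2\log(1+t/\ve^2)\leq C\hat{\lambda}^2$ valid for $\ve\in(0,1/T\wedge 1/2)$ and $t\in[0,T]$ (compare \eqref{e:def_const_c}) should then yield exactly the desired $1/\log(1/\ve)$ prefactor.

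Finally, the per-$\uppi$ bound has to be summed over $\uppi\neq\mathrm{Id}$ by fibering over the length $m^*$ and the support $V^*$ of the distinguished cycle. The number of permutations with a cycle of length $m^*$ on a given $m^*$-subset is $(m^*-1)!(n-m^*)!$, so the number with a distinguished cycle of that length is bounded by $\binom{n}{m^*}(m^*-1)!(n-m^*)!=n!/m^*$, and the sum over $m^*=2,\ldots,n$ is dominated by the $m^*=2$ term. The main technical obstacle will be absorbing the resulting combinatorial pre-factor into the exponential $e^{2(2+2\pi)i(\tau)}=(e^{2+2\pi})^n$ that appears on the right-hand side: this requires a Stirling-type estimate on the combined ratio $n!/(\red{\tau}!)^2$, together with a careful comparison of the reduced-tree factorial $\red{\tau'}!$ produced by the excision step with $\red{\tau}!$, uniform in the shape of $\tau$ and using crucially that $\red{\tau}\in\mT_{\leqslant 3}$ (Lemma~\ref{lem_trimming_bij}). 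Once this combinatorial absorption is carried out, the final claim of vanishing as $\ve\to 0$ for fixed $\tau$ follows immediately from the $1/\log(1/\ve)$ prefactor.
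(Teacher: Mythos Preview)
Your proposal has a genuine combinatorial gap that prevents it from closing. The strategy of bounding $\int_{\mathfrak{D}_{[\tau,\tau]}}\Psi_{\uppi,\ve}$ for each $\uppi$ separately (with one distinguished long cycle receiving the improved estimate of Lemma~\ref{lem_v_cycle_order_bnd_new}) and then summing over $\uppi\neq\mathrm{Id}$ cannot produce a bound of order $C^{n}/(\red{\tau}!)^{2}$ with a constant $C$ independent of $n=2i(\tau)$. Concretely, take $\red{\tau}$ linear, so $\red{\tau}!=(n/2)!$. Fix $m^{*}=2$ and $V^{*}$ consisting of one inner vertex in each copy; after excision the reduced factorials equal $((n/2-1)!)^{2}$. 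Your summation then contributes at least
\[
(n-2)!\cdot \Bigl(\tfrac{n}{2}\Bigr)^{2}\cdot \frac{1}{((n/2-1)!)^{2}}\cdot(\text{analytic factors})\,,
\]
and multiplying by $(\red{\tau}!)^{2}=((n/2)!)^{2}$ to compare with the target gives a term of size $\asymp n^{4}(n-2)!$, which is \emph{super-exponential} in $n$ and cannot be absorbed into $e^{2(2+2\pi)i(\tau)}=C^{n}$. Since $n$ ranges up to $2\lfloor\log\tfrac{1}{\ve}\rfloor$, this is fatal. The Stirling-type and factorial-comparison steps you propose at the end cannot rescue this: removing a bounded number of vertices only changes $\red{\tau'}!$ by a polynomial factor in $n$, while the $(n-m^{*})!$ coming from the residual permutations is factorial.

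The paper avoids this by a fundamentally different ordering of the argument. It first observes (Lemma~\ref{lem_phibar_symmetric}) that the \emph{sum} $\overline{\Psi}_{\ve}=\sum_{\uppi\neq\mathrm{Id}}\Psi_{\uppi,\ve}$ is symmetric, because conjugation by any $\upsigma\in S_{n}$ permutes $S_{n}$. Lemma~\ref{symmetricintegral} is then applied to the sum itself, yielding the exact factor $(\red{\tau}!)^{-2}$ and reducing to a \emph{box} integral $\sum_{\uppi\neq\mathrm{Id}}\int_{[0,t]^{\mI}}\Psi_{\uppi,\ve}$. On the box the integral factorises over cycles, and after applying Lemma~\ref{lem_v_cycle_order_bnd_new} to every cycle the sum over permutations becomes the cycle-number generating function $\sum_{\uppi}M^{K(\uppi)}=n!\binom{n+M-1}{n}=\tfrac{(n+M-1)!}{(M-1)!}$ with $M=M_{\ve}(t)$; the subtraction of the $\uppi=\mathrm{Id}$ term then gives $\tfrac{(n+M-1)!}{(M-1)!}-M^{n}$, which is estimated directly and produces the $1/\log\tfrac{1}{\ve}$ gain. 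The key point is that this exact identity captures the full structure of the permutation sum and circumvents the crude $n!$ counting that your per-$\uppi$ approach is forced into.
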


The next result guarantees  that the number of pairings $ \gamma $ of a
tree $ [\tau, \tau] $, which correspond to
%Before we can proceed, we must ensure that the number of terms
%$[ \tau, \tau]_{ \gamma}$ corresponding to
a permutation $ \uppi \in S_{2 i (\tau)}$,
grows at most exponentially in the number of inner nodes of a tree.

\begin{lemma}\label{lem:nonContrPreImage}
Let $ \tau \in \mT_{3}$ and $ \uppi \in S_{2 i ( \tau)}$, then
$\left| \Pi_{ \tau}^{-1} ( \uppi ) \right| \leqslant 6^{2 i (\tau)}$.
\end{lemma}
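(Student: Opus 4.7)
Fix $\tau\in\mT_3$ and $\uppi\in S_{2i(\tau)}$ with cycle decomposition $\widehat{\mC}_1\cdots\widehat{\mC}_K$. The plan is to construct an injection from $\Pi_\tau^{-1}(\uppi)$ into an explicit set of size at most $6^{2i(\tau)}$, by recording, for each pairing $\gamma$, a bounded amount of extra data that together with $\uppi$ already determines $\gamma$.

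The first step will be to observe that, for any $\gamma\in\Pi_\tau^{-1}(\uppi)$, the sequence of v-cycles $(\mC_1,\ldots,\mC_K)$ produced by the cycle extraction map of Definition~\ref{def:cycle-extraction} is almost fully prescribed by $\uppi$: both the partition of the $2i(\tau)$ non-root inner vertices of $[\tau,\tau]$ among the v-cycles, and the cyclic order within each v-cycle, are encoded by the $\widehat{\mC}_k$. The only remaining freedom will lie in the choice, at each inner vertex $v$ inside a v-cycle, of the ordered pair of leaf-children of $v$ which play the role of the two leaves adjacent to $v$ in the path-vector representation~\eqref{e_def_pathcycle}.

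Before counting such choices, I would verify that the ternary property of the underlying trees is preserved by the cycle removal of Definition~\ref{def:cycl-rmvl}: when an inner vertex of a removed v-cycle is deleted, the unique non-$\mC$ descendant (possibly reached at the end of a chain of further $\mC$-vertices) is rewired to the closest non-$\mC$ ancestor, so every surviving inner vertex retains exactly three children. In particular, at the moment $v$'s v-cycle is extracted, $v$ has at most three leaf-children, including any leaves ``promoted'' from deeper subtrees by earlier removals. This gives at most $3\cdot 2=6$ ordered choices of a pair of leaf-children at $v$, and hence at most $6^{2i(\tau)}$ choice functions overall.

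Finally, I would show that the choice function determines $\gamma$ uniquely. Once the leaf pairs are specified at every inner vertex, the contractions inside each v-cycle are forced by the cyclic order in $\widehat{\mC}_k$: the ``right'' leaf of the $j$th inner vertex pairs with the ``left'' leaf of the $(j{+}1)$st. These pairings account for exactly $\sum_k m_k = 2i(\tau)$ of the $2i(\tau)+1$ contraction edges of $\gamma$; the remaining edge must pair the two leaves that stay uncontracted after the extraction of all v-cycles, which are precisely the two leaves appearing in the terminal $\<cycle1>$. Thus the map $\gamma\mapsto(\text{choice function})$ is injective, and $|\Pi_\tau^{-1}(\uppi)|\leqslant 6^{2i(\tau)}$ follows. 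The main technical obstacle is the careful verification of ternarity-preservation under cycle removal, so that the bound $d_v\leqslant 3$ on leaf-children persists throughout the algorithm and the per-vertex counting remains uniform in the stage of extraction.
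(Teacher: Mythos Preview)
Your proposal is correct and follows essentially the same approach as the paper: both arguments reduce to the observation that at each of the $2i(\tau)$ non-root inner vertices one must select an ordered pair of (at most three) leaf-children for the v-cycle passing through it, giving at most $6$ choices per vertex. The paper phrases this as a sequential reconstruction (defining admissible sets $W^{(j)}$ and building $\gamma$ cycle by cycle), while you phrase it as an injection $\gamma\mapsto(\text{choice function})$; the content is the same, and your explicit accounting of the final pairing edge via the terminal $\<cycle1>$ is a nice touch the paper leaves implicit.
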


Having both Lemma~\ref{lem:unifBoundOverPreimage}
and~\ref{lem:nonContrPreImage} at hand, we can now prove
Lemma~\ref{lem_unif_estimate_all_noncontr_cont_new}.
The proofs of Lemma~\ref{lem:unifBoundOverPreimage} and
\ref{lem:nonContrPreImage} are  deferred to the end of this
section.

\begin{proof}[Proof of Lemma~\ref{lem_unif_estimate_all_noncontr_cont_new}]
Consider $T>0$, $ \ve \in (0, \tfrac{1}{T}
\wedge \tfrac{1}{2})$ and $(t,x) \in [0, T]\times \RR^{2}$.
By representing second moments of contracted trees in terms
of paired trees, cf. \eqref{e_cov_contr_tree}, we have %\simon{Changed equality
%to inequality, since we also consider pairings between contributing and
%non-contributing contractions in pre-images.}
\begin{equation*}
\begin{aligned}
\left\|
\sum_{\kappa\notin C(\tau)}\sqrt{\log{\tfrac{1}{\ve}}}
  \cdot  [\tau]_{\kappa,\ve}(t,x)\right\|^{2}_{L^2(\PP)}
&=
\log{\tfrac{1}{\ve}}
\sum_{\substack{\kappa, \kappa' \notin C(\tau)}}
\sum_{ \gamma \in
\mY( \tau_{ \kappa}, \tau_{ \kappa'})}
[\tau, \tau]_{ \gamma, \ve}(t,x)\\
&\leqslant 
 \log{\tfrac{1}{\ve}}
\sum_{\uppi\in S_{2 i( \tau)}\setminus\{\mathrm{Id}\}}
    \sum_{\gamma \in \Pi_\tau^{-1}(\uppi)}
    [\tau,\tau]_{\gamma,\varepsilon}(t,x)\,,
\end{aligned}
\end{equation*}
where we used additionally Lemma~\ref{lem:contrIdent} to identify
non-contributing pairings as
precisely the ones that do not map onto $\mathrm{Id}$ under $\Pi_{ \tau}
$\footnote{Because on the right-hand side we sum over all non-contributing
pairings, which may consist of one contributing contraction and one contraction
that doesn't contribute, we overestimate the left-hand side, which is the
reason for the inequality.},  and
the fact that
\begin{equation*}
\begin{aligned}
\exists  \gamma \in \mY (
\tau_{ \kappa}, \tau_{ \kappa'}) \quad
\limsup_{ \ve \to 0}\, \left( \log{ \tfrac{1}{\ve}} \right)
[ \tau , \tau]_{ \gamma, \ve}(t,x) >0  \quad \Leftrightarrow \quad
\kappa, \kappa' \in C ( \tau)\,,
\end{aligned}
\end{equation*}
which is a consequence of the Cauchy--Schwartz inequality, cf.
\eqref{e:CSfromPair2Cont}, and Lemma~\ref{lem:characContr}.
Thus, Lemmas~\ref{lem_unif_bound_contraction_perm} and
\ref{lem:nonContrPreImage} imply
\begin{equation*}
\begin{aligned}
\Bigg\|
\sum_{\kappa\notin C(\tau)}\sqrt{\log{\tfrac{1}{\ve}}}
  & \cdot  [\tau]_{\kappa,\ve}(t,x)\Bigg\|^{2}_{L^2(\PP)} \\
& \leqslant
\left(\log{\tfrac{1}{\ve}}\right)\,
 \lambda_\varepsilon^2\, 6^{2 i ( \tau)}
\sum_{\uppi\in S_{2 i ( \tau)}\setminus\{\mathrm{Id}\}}
   \Big(
    \int_{\mathfrak{D}_{ [ \tau,\tau]}(t)}
    \Psi_{\uppi,\varepsilon}(s_{\mI}) \ud s_{\mI} \Big)
\, e^{2 \mathfrak{m}\, t}  p_{2(t+\varepsilon^2)}(0)
\,.
\end{aligned}
\end{equation*}
Applying Lemma~\ref{lem:unifBoundOverPreimage}, this can be further upper bounded by
\begin{equation*}
\begin{aligned}
 \lambda_\varepsilon^2\, 6^{2 i ( \tau)}    \frac{1}{(\red{\tau}!)^2}
    \frac{(\Hat{\lambda} e^{ \overline{\mathfrak{m}}\, t} )^{4 i ( \tau)}}{4
\pi^{2 i ( \tau)-1}} & e^{ 2(2+2 \pi)\,  i ( \tau) }\, e^{2
\mathfrak{m}\, t}  p_{2(t+\varepsilon^2)}(0) \\
& = \frac{1}{4\, \log{ \tfrac{1}{\ve}}}  \frac{1}{(\red{\tau}!)^2}
\left( \frac{ 6 e^{2 + 2 \pi }
\hat{\lambda}^{2} e^{ 2\overline{\mathfrak{m}}\, t} }{ \pi} \right)^{2 i ( \tau)} \frac{ \hat{\lambda}^{2}e^{2 \mathfrak{m}\, t} }{4(t+ \ve^{2})}\,.
\end{aligned}
\end{equation*}
Since $i ( \tau)=  |\red{\tau}|$, the result follows
by taking the square root.
\end{proof}

Now we pass to the proof of Lemma~\ref{lem:unifBoundOverPreimage}. Note that
this is an improvement of Lemma~\ref{lem:contrIdent}.
Indeed, instead of extending the integration domain from $\mathfrak{D}_{ [\tau,
\tau]}(t)$ to the box
$[0,t]^{\mI([ \tau, \tau])\setminus \mf{o} }$, as it was done in the proof
of the latter lemma, we will make use of the fact that summation over all permutations
in $\uppi\in S_{n}\setminus\{\mathrm{Id}\}$ has a symmetrising effect that
allows for a more precise control of the integral.

\begin{lemma}\label{lem_phibar_symmetric}
Fix $\ve \in (0,\tfrac{1}{2})$ and $n \in \NN$. Consider
for any $ \uppi \in S_{n} $ the function $ \Psi_{\uppi,\ve}$
introduced in \eqref{e:defPsi}.
Then the function $\overline{\Psi}_\varepsilon : [0, \infty)^{n} \to \RR$ defined by
\begin{align*}
    \overline{\Psi}_\ve(s_{1}, \ldots s_{n})
    :=
    \sum_{\uppi\in S_{n}\setminus\{\mathrm{Id}\}}
    \Psi_{\uppi,\ve}(s_{1}, \ldots , s_{n})\,,
\end{align*}
is symmetric in the variables $ s_{1}, \cdots, s_{n} $.
\end{lemma}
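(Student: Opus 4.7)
The plan is to rewrite $\Psi_{\uppi,\ve}$ in a form that makes the dependence on $\uppi$ transparent as a product over the ``edges'' of the functional graph of $\uppi$, and then use that conjugation of permutations corresponds precisely to relabelling the variables.

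The first step is to derive the edge-product representation
\[
\Psi_{\uppi,\ve}(s_1,\ldots,s_n)
 = \prod_{j=1}^{n} \frac{\lambda_\ve^{2}\, e^{\mathfrak{m}(s_j + s_{\uppi(j)})}}{2\pi(s_j + s_{\uppi(j)} + 2\ve^{2})}.
\]
This follows directly from \eqref{e:defPsi} together with the definition \eqref{e_def_vident} of $\<2cycle>^{\otimes m}$: for a permutation cycle $\widehat{\mC}_k = (i^{(k)}_1\ i^{(k)}_2\ \cdots\ i^{(k)}_{m_k})$ the consecutive pairs $(i^{(k)}_j, i^{(k)}_{j+1})$ appearing in the kernel are exactly the pairs $(a,\uppi(a))$ with $a$ ranging over $\mI_{\widehat{\mC}_k}$. (It is worth checking here that the degenerate cases of 1-cycles and 2-cycles are consistent: a fixed point $\uppi(i)=i$ gives the factor $\tfrac{\lambda_\ve^{2} e^{2\mathfrak{m}s_i}}{4\pi(s_i+\ve^{2})} = \<cycle1>_\ve(s_i)$, and a transposition yields the same factor twice because $\<2cycle>^{\otimes 2}$ already counts both ordered pairs.)

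The second step is to observe that under an arbitrary relabelling $\upsigma \in S_n$ of the variables, the change of index $k = \upsigma(j)$ in the above product gives
\[
\Psi_{\uppi,\ve}(s_{\upsigma(1)},\ldots,s_{\upsigma(n)})
 = \prod_{k=1}^{n} \frac{\lambda_\ve^{2}\, e^{\mathfrak{m}(s_k + s_{(\upsigma\uppi\upsigma^{-1})(k)})}}{2\pi(s_k + s_{(\upsigma\uppi\upsigma^{-1})(k)} + 2\ve^{2})}
 = \Psi_{\upsigma\uppi\upsigma^{-1},\ve}(s_1,\ldots,s_n).
\]
Thus the action on variables is intertwined with conjugation of the permutation index.

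The last step is purely combinatorial: the conjugation map $\uppi \mapsto \upsigma\uppi\upsigma^{-1}$ is a bijection of $S_n$ onto itself, and it fixes $\mathrm{Id}$, so it restricts to a bijection of $S_n \setminus \{\mathrm{Id}\}$ onto itself. Summing over $\uppi$ in that set therefore yields
\[
\overline{\Psi}_\ve(s_{\upsigma(1)},\ldots,s_{\upsigma(n)})
 = \sum_{\uppi \in S_n \setminus \{\mathrm{Id}\}} \Psi_{\upsigma\uppi\upsigma^{-1},\ve}(s_1,\ldots,s_n)
 = \overline{\Psi}_\ve(s_1,\ldots,s_n),
\]
which is the desired symmetry. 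The only step that requires any care is the first one, matching \eqref{e:defPsi} to the edge-product form, where one must keep track of the cyclic convention and of the 1- and 2-cycle cases; everything else is a formal consequence of the conjugacy action.
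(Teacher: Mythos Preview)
Your proof is correct and follows essentially the same approach as the paper: both establish the edge-product representation $\Psi_{\uppi,\ve}(s)=\prod_{j}\tfrac{\lambda_\ve^{2}e^{\mathfrak{m}(s_j+s_{\uppi(j)})}}{2\pi(s_j+s_{\uppi(j)}+2\ve^{2})}$ and then use that relabelling the variables by $\upsigma$ amounts to conjugating $\uppi$ by $\upsigma$, which is a bijection of $S_n\setminus\{\mathrm{Id}\}$. The only cosmetic difference is that the paper first factors out the symmetric prefactor $(\lambda_\ve^{2}/2\pi)^{n}e^{2\mf{m}\sum_i s_i}$ and works with the sum over all of $S_n$ before subtracting the (symmetric) identity term, whereas you keep the full $\Psi_{\uppi,\ve}$ and restrict to $S_n\setminus\{\mathrm{Id}\}$ directly.
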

\begin{proof}
It suffices to consider the function $ \overline{\varphi}_{ \ve}$ given by
\begin{align*}
     (s_1,...,s_n) \mapsto \sum_{\uppi \in S_{n}}
    \prod_{i=1}^{n}
        \frac{1}{s_i+s_{\uppi(i)}+2\varepsilon^2}\,,
\end{align*}
since the term corresponding to the identity partition is symmetric itself
and we have 
$$\overline{\Psi}_\varepsilon = \left( \frac{ \lambda_{ \ve}^{2}}{2 \pi}
\right)^{n} e^{2\mf{m} \sum_{i =1}^{n} s_{i} } \left( \overline{\varphi}_{
\ve} - \prod_{i=1}^{n} \frac{1}{2 s_i+2\varepsilon^2}\right) \;.$$
Now, for $ {\upsigma}\in S_{n}$, if we indicate $ s_{\upsigma} =
(s_{\upsigma (i) })_{i=1}^{n} $, we have
\begin{align*}
    \overline{\varphi}_{ \ve}(s_\upsigma)
    &=
    \sum_{\uppi \in S_{n}}
    \prod_{i=1}^{n}
    \frac{1}{s_{\upsigma(i)}+s_{\upsigma(\uppi(i))}+2 \ve^{2}}
    =
    \sum_{\uppi \in S_{n}}
    \prod_{i=1}^{n}
    \frac{1}{s_{i}+s_{\upsigma\uppi\upsigma^{-1}(i)}+2 \ve^{2}}\\
    &=
    \sum_{\uppi \in S_{n}}
    \prod_{i=1}^{n}
    \frac{1}{s_{i}+s_{\uppi(i)}+2 \ve^{2}}
    =
    \overline{\varphi}_{ \ve}(s)\,,
\end{align*}
since for every $ \sigma \in S_{n} $ we have $\{ \upsigma \uppi \upsigma^{-1}\, :\, \uppi \in S_{n}\}= S_{n}$.
This concludes the proof.
\end{proof}

Now we are ready to prove Lemma~\ref{lem:unifBoundOverPreimage}.

\begin{proof}[Proof of Lemma~\ref{lem:unifBoundOverPreimage}]
Fix $T>0$, $t \in [0, T]$ and define $n:= 2i ( \tau)$.
Using Lemma~\ref{symmetricintegral} and the definition of $
\overline{\Psi}_{\ve} $ in Lemma~\ref{lem_phibar_symmetric}, we find that
\begin{equation}\label{e:unifBoundNonContr_supp1}
\begin{aligned}
\int_{\mathfrak{D}_{[ \tau, \tau]}(t)}   \overline{\Psi}_\varepsilon(s_{ \mI}) \ud s_{ \mI}
=
\frac{1}{(\red{\tau}!)^2}
    \sum_{\uppi\in S_{n}\setminus\{\mathrm{Id}\}}
    \int_{[0,t]^{\mI}}
    \Psi_{\uppi,\varepsilon}(s_{\mI}) \ud s_{\mI}\,,
\end{aligned}
\end{equation}
since $ \overline{\Psi}_\varepsilon$ is symmetric by
Lemma~\ref{lem_phibar_symmetric}.
Recall from \eqref{e:defPsi} that  $\Psi_{\uppi,\varepsilon}$ is a product over
cycles in the permutation $ \uppi$, which allows us to factorise the integral
\begin{equation*}
\begin{aligned}
\int_{[0,t]^{\mI}}
    \Psi_{\uppi,\varepsilon}(s_{\mI}) \ud s_{\mI}
=
 \prod_{i=1}^{K ( \uppi)}
\int_{[0,t]^{\widehat{\mC}_{i}}} \<2cycle>^{\otimes |\widehat{\mC}_i|}(s_{v};
v\in \widehat{\mC}_i)  \ud s_{ \widehat{\mC}_i}\,,
\end{aligned}
\end{equation*}
with $K ( \uppi)$ denoting the number of cycles in the permutation
$\uppi$.
Applying Lemma~\ref{lem_v_cycle_order_bnd_new} to each term in
the product yields
\begin{equation*}
\begin{aligned}
\int_{[0,t]^{\mI}}
    \Psi_{\uppi,\varepsilon}(s_{\mI}) \ud s_{\mI}
& \leqslant
 \prod_{i=1}^{K ( \uppi)}
     \left( \frac{(\lambda_\varepsilon e^{ \overline{\mathfrak{m}}\, t})^{2|\widehat{\mC}_i|}}{2^{|\widehat{\mC}_i|}\, \pi}
   \log \big(
   1+\tfrac{t}{\varepsilon^2}
   \big)
\right)\\
& =
\frac{(\lambda_\varepsilon e^{ \overline{\mathfrak{m}}\, t})^{2n}}{2^{n}}
   \Big(
   \frac{1}{\pi} \log \big(
   1+\tfrac{t}{\varepsilon^2}
   \big)
   \Big)^{K ( \uppi)}
\leqslant
\frac{(\lambda_\varepsilon e^{ \overline{\mathfrak{m}}\, t})^{2n}}{2^{n}}
   M_{ \ve}(t)^{K ( \uppi)}
\,,
\end{aligned}
\end{equation*}
where we introduced
\begin{equation*}
\begin{aligned}
 M_\varepsilon(t):=
\left\lceil
\frac{1}{\pi} \log \big(
   1+\tfrac{t}{\varepsilon^2}
   \big)
 \right\rceil\,.
\end{aligned}
\end{equation*}
Therefore, we obtain the following upper bound to
\eqref{e:unifBoundNonContr_supp1}:
\begin{equation}\label{e:unifBoundNonContr_supp2}
\begin{aligned}
\int_{\mathfrak{D}_{[ \tau, \tau]}(t)}   \overline{\Psi}_\varepsilon(s_{\mI}) \ud s_{\mI}
\leqslant
\frac{n!}{(\red{\tau}!)^2}
\frac{(\lambda_\varepsilon e^{ \overline{\mathfrak{m}}\, t})^{2n}}{2^{n}}
\left(
 \mathrm{E}_{S_{n}}\left[
    M_\varepsilon(t)^{K(\uppi) }
   \right]
- \frac{M_{ \ve}(t)^{n}}{n!}
\right)\,,
\end{aligned}
\end{equation}
with the expectation taken with respect to
the uniform distribution on $S_n$, which has probability mass function $
\tfrac{1}{n!}$.
Here we used the identity
\begin{equation*}
\begin{aligned}
\mathrm{E}_{S_{n}}\left[
    M_\varepsilon(t)^{K(\uppi) }
   \right]
=
\frac{M_{ \ve}(t)^{n}}{n!}
+
    \sum_{\uppi\in S_{n}\setminus\{\mathrm{Id}\}}
\frac{ M_{ \ve}(t)^{K (\uppi )}}{n!} \,.
\end{aligned}
\end{equation*}
Hence, we have reduced the problem to studying the generating function of a
discrete random variable, namely the total number of cycles in a uniformly at
random chosen permutation.
Its distribution is a well studied object and we have the explicit
identity
\begin{equation*}
\begin{aligned}
  \mathrm{E}_{S_{n}}\Big[
   M_\varepsilon(t)^{K(\uppi)}
   \Big]=
   \binom{n+M_\varepsilon(t)-1}{n}
\end{aligned}
\end{equation*}
at hand, see e.g. \cite[Equation (5.14)]{ueltschi2022universal}.
Thus, we can rewrite \eqref{e:unifBoundNonContr_supp2} as
\begin{equation}\label{e:unifBoundNonContr_supp3}
\begin{aligned}
\int_{\mathfrak{D}_{[ \tau, \tau]}(t)}   \overline{\Psi}_\varepsilon(s_{\mI}) \ud s_{\mI}
\leqslant
\frac{1}{(\red{\tau}!)^2}
\frac{(\hat{\lambda} e^{ \overline{\mathfrak{m}}\, t} )^{2n}}{(2\, \log{ \tfrac{1}{\ve}})^{n}}
\left(
\frac{(n+M_{ \ve}(t)-1)!}{(M_{ \ve}(t)-1)!} -
M_{ \ve}(t)^{n}
\right)\,.
\end{aligned}
\end{equation}
Now, we expand the difference on the right-hand side to obtain
\begin{align}\label{eq_supp0_noncontr_xx}
    \frac{(n+M_\varepsilon(t)-1)!}{(M_\varepsilon-1)!}- M_\varepsilon(t)^{n}
    &=
    \sum_{j=1}^{n-1}
    j\, M_\varepsilon(t)^j \prod_{k=j+1}^{n-1} (M_\varepsilon(t)+k)\,.
\end{align}
Note that for every $k =0, \ldots, n-1$
\begin{equation*}
\begin{aligned}
 \frac{M_\varepsilon(t)+k}{2\, \log \tfrac{1}{\ve}}
    &\leqslant
    \frac{ \tfrac{1}{\pi} \log \big(
   1+\tfrac{t}{\varepsilon^2}
   \big)+k+1}{2\, \log \tfrac{1}{\ve}}
   \leqslant
   \frac{1}{\pi}\Big(1+\frac{|\log (t+\varepsilon^2)|+\pi(k+1)}{2\, \log
\tfrac{1}{\ve}} \Big)\\
   &\leqslant
   \frac{1}{\pi} \exp \left( \frac{|\log (t+\varepsilon^2)|+\pi(k+1)}{2\,\log
\tfrac{1}{\ve}} \right)\,,
\end{aligned}
\end{equation*}
thus, together with \eqref{e:unifBoundNonContr_supp3} and
\eqref{eq_supp0_noncontr_xx}
\begin{equation}\label{e:unifBoundNonContr_supp4}
\begin{aligned}
& \int_{\mathfrak{D}_{[ \tau, \tau]}(t)}    \overline{\Psi}_\varepsilon(s_{\mI})
\ud s_{\mI}\\
& \quad \leqslant
\frac{1}{(\red{\tau}!)^2}
\frac{(\hat{\lambda} e^{ \overline{\mathfrak{m}}\, t} )^{2n}}{2\, \log{ \tfrac{1}{\ve}}}
 \sum_{j=1}^{n-1}
    j\, \frac{ M_\varepsilon(t)^j}{ (2\log{\tfrac{1}{\ve}})^{j}}  \prod_{k=j+1}^{n-1}
\frac{M_\varepsilon(t)+k}{ 2 \log{ \tfrac{1}{\ve}}} \\
& \quad \leqslant
\frac{1}{\log{ \tfrac{1}{\ve}}}
\frac{1}{(\red{\tau}!)^2}
    \frac{(\Hat{\lambda} e^{ \overline{\mathfrak{m}}\, t} )^{2n}}{2\,\pi^{n-1}}
    \sum_{j=1}^{n-1}
    j\,  \exp \left( (n-1)\frac{|\log (t+\varepsilon^2)|}{2\, \log \tfrac{1}{\ve}}
    +\pi \sum_{k=j}^{n-1}  \frac{ k+1}{2\, \log \tfrac{1}{\ve}} \right)\,.
\end{aligned}
\end{equation}
The terms in the exponent can be estimated uniformly over $ \ve$ and $t
\in [0, T]$.
For the first summand, we make use of \eqref{e:def_const_c}, which yields 
\begin{equation*}
\begin{aligned}
 (n-1)\frac{|\log (t+\varepsilon^2)|}{2\, \log \tfrac{1}{\ve}}
\leqslant 2n \,.
\end{aligned}
\end{equation*}
Moreover, since  $ n \leq 2N_{\ve} = 2 \lfloor
\log{\tfrac{1}{\ve}}\rfloor $
\begin{equation*}
\begin{aligned}
\sum_{k=j}^{n-1} \frac{k+1}{2\, \log{ \tfrac{1}{\ve}}}
\leqslant
\sum_{k=1}^n  \frac{k}{2\, \log{ \tfrac{1}{\ve}}}
=
\frac{n(n+1)}{4 \, \log{ \tfrac{1}{\ve}} } \leqslant n\,.
\end{aligned}
\end{equation*}
Combining the two estimates with \eqref{e:unifBoundNonContr_supp4} yields
\begin{equation*}
\begin{aligned}
\int_{\mathfrak{D}_{[ \tau, \tau]}(t)}   \overline{\Psi}_\varepsilon(s_{\mI}) \ud s_{\mI}
&\leqslant
\frac{1}{\log{ \tfrac{1}{\ve}}}
\frac{1}{(\red{\tau}!)^2}
    \frac{(\Hat{\lambda} e^{ \overline{\mathfrak{m}}\, t})^{2n}}{2\,\pi^{n-1}}
    \frac{n(n-1)}{2}  e^{( 2 +\pi) n }\\
&\leqslant
\frac{1}{\log{ \tfrac{1}{\ve}}}
\frac{1}{(\red{\tau}!)^2}
    \frac{(\Hat{\lambda} e^{ \overline{\mathfrak{m}}\, t} )^{2n}}{4\,\pi^{n-1}}
     e^{( 2  +2\pi) n }\,,
\end{aligned}
\end{equation*}
where we used $ n(n-1) \leqslant e^{2n} \leqslant e^{\pi n}$ in the last
step. This concludes the proof.
\end{proof}

\begin{proof}[Proof of Lemma~\ref{lem:nonContrPreImage}]

For convenience let us write $n= 2i ( \tau)$.
Our aim is to
obtain an upper bound on the total number of parings $ \gamma $ that give rise to a
given permutation $ \uppi \in S_{n} $, via the map $ \Pi_{\tau} (\gamma) $ from
Definition~\ref{def:permCycleMap}. It will be convenient to represent $ \uppi
$ as a product of permutation cycles $ \uppi = \prod_{i =1}^{K(\uppi)}
\widehat{\mC}_{i} $, for some $ K( \uppi) \in \{ 1, \cdots , n \} $. 
Here we slightly abuse  the notation
$ \widehat{\mC}_{i} $, which is already used in Definition~\ref{def:permCycleMap}.
Indeed, while the decomposition of a permutation into a product of cycles is
unique, the order in which these cycles are chosen is arbitrary. 
On the other hand, not every order of permutation cycles is admissible 
 in Definition~\ref{def:permCycleMap}, as for example  cycle $
\widehat{\mC}_{1} $ is necessarily a cycle among bases of tridents or cherries
(because it is the first cycle we extract from a paired tree). In our setting,
since we start from an arbitrary permutation $
\uppi $, we assume nothing further on $ \widehat{\mC}_{i} $ other than that they
are cycles that decompose $ \uppi $.  We now provide the desired upper bound via the
steps that follow.

\begin{enumerate}
    \item By Lemma~\ref{vcycleexists} we know that any pairing $ \gamma $ in $\Pi^{-1}_\tau(\uppi)$ must contain a ${\rm v}$-cycle
that alternates between leaves and inner nodes of $[\tau,\tau]$. This is because the first
${\rm v}$-cycle we extract must be a ${\rm v}$-cycle in the paired tree $ [\tau,
\tau]_{\gamma} $ (later ones belong instead to trees that are derived from $ [\tau,
\tau]_{\gamma} $ by extracting ${\rm v}$-cycles).
    Therefore, there must exist a cycle $ \widehat{\mC}_{i_{1}} $ that only runs
through inner vertices in $V_{\<2s>}\cup  \mV_{\<3s>}=:W^{(1)}$.
Here $V_{\<2s>}$ and $  \mV_{\<3s>}$ denote the set of cherries and tridents in
$[ \tau, \tau]$,
respectively, see Appendix~\ref{sec_v_cycle_exist} for their definition.

    Indeed, if no such cycle exists, then the given permutation cannot arise from any
paired tree $[\tau,\tau]_\gamma$ using the extraction algorithm $ \Pi_\tau$ and
the pre-image is the empty set, so that our upper bound holds true. See the discussion below
Example~\ref{exampl_cycle_extr} for an example of this kind.

    \item Since
$ \widehat{\mC}_{i_{1}} \subset V_{\<2s>}\cup \mV_{\<3s>}$, to construct a ${\rm v}$-cycle
corresponding to $ \widehat{\mC}_{i_{1}} $ we must choose for every vertex $v$
with label in $
\widehat{\mC}_{i_{1}} $ an (outgoing) leaf that connects to the next vertex of the
${\rm v}$-cycle and an (incoming) leaf that connects to the previous vertex of the
${\rm v}$-cycle. Here, which leaf is outgoing and which leaf is incoming matters,
leading to at most $ 2 ! {3 \choose 2} =6 $ choices for every vertex (for nodes in
$\mV_{\<3s>}$ we have six choices, for nodes
in $V_{\<2s>}$ only two).
Thus, there are at most $6^{| \widehat{\mC}_{i_{1}}|}$ ways to construct a ${\rm v}$-cycle
through  inner nodes labeled by $ \widehat\mC_{i_{1}}$.
%\simon{definition of $W^{(j)}$ didn't make sense before, changed it
%two the coloured part here} 
\item 
Now proceed iteratively. For $j>1$, we define the set $W^{(j)}
\subset \mI ( [ \tau, \tau]) \setminus \mf{o}$ as follows:
An inner node $v $ lies in $W^{(j)}$, if and only if
\begin{itemize}
\item there exist at least two distinct
paths from $v$ to leaves in $\mL ( [ \tau, \tau])$, which only run through
descendants of $v$ (away from the root), such that the descendants have been previously extracted,
i.e.\ they lie in $ \bigcup_{k=1}^{j-1}   \mI_{\widehat\mC_{i_{k}}}$,
\item and $v$ has not been extracted previously, i.e. $ v \not\in  \bigcup_{k=1}^{j-1}   \mI_{\widehat\mC_{i_{k}}}$.
\end{itemize}
The set $W^{(j)}$ describes the nodes that became
``admissible'' after extracting the cycles $ \{ \widehat{\mC}_i \}_{i \in \{
i_{1}, \cdots, i_{j-1} \}}$, meaning that the inner nodes of the next cycle that
we extract must belong to $ W^{(j)} $.

    \item Proceed by choosing a cycle $ \widehat{\mC}_{i_{j}}$ with nodes in $W^{(j)}$
and counting all possible choices to create a ${\rm v}$-cycle with the corresponding
nodes as inner nodes.

    \item We are done once all cycles have been removed, or we cannot find a
cycle that runs through vertices in $W^{(j)}$. In the latter case, we again
found a permutation that cannot be obtained as image of the map $ \Pi_\tau$
(so the pre-image is empty and the bound trivially true).
\end{enumerate}
In this way, we count all possible pairings that lead to $ \uppi $. Overall,
either the pre-image is empty and the stated bound is trivially true, or it is
bounded by
\begin{align*}
    6^{\sum_{i=1}^{n} |\widehat\mC_i|}
    =
    6^{n}\,,
\end{align*}
which is the desired bound and concludes the proof.
\end{proof}

\subsection{Proof of Proposition~\ref{prop_single_tree}}

\begin{proof}[Proof of Proposition~\ref{prop_single_tree}]
Fix any $T>0$, $ \ve \in (0,\tfrac{1}{T}
\wedge \tfrac{1}{2}) $ and $\tau\in \mT_3^{N_{\ve}}$.
Then by
Lemma~\ref{lem_expansion_factor} and  identity \eqref{e:homchaos}, we can
write
\begin{align*}
    X^\tau_\varepsilon
    =
    \frac{h^{( \red{ \tau})} (1)}{s(\red{\tau})}
    [\tau]_\varepsilon
    =
    \frac{h^{( \red{ \tau})} (1)}{s(\red{\tau})}
    \sum_{\kappa \in \mK(\tau)}
    [\tau]_{\kappa,\ve}
	=
    \frac{h^{( \red{ \tau})} (1)}{s(\red{\tau})}
     \left\{ \sum_{\kappa \in C(\tau)}
    [\tau]_{\kappa,\ve}
 + \sum_{\kappa \not\in C(\tau)}
    [\tau]_{\kappa,\ve}
\right\} \;.
\end{align*}
By the triangle inequality and the fact that $3^{| \red{\tau}|} =
\sum_{\kappa \in C ( \tau)} 1$, see Lemma~\ref{lem:count_contr}, we have that for every $(t,x) \in (0,
T] \times
\RR^2$
\begin{equation}\label{e:singleTree_supp0}
\begin{aligned}
\Bigg\| \sqrt{ \log{ \tfrac{1}{\ve}}}\cdot &X^\tau_\varepsilon(t,x) - \frac{h^{( \red{ \tau})} (1)}{\red{\tau}!\, s(\red{\tau})}
\left(\frac{3\Hat{\lambda}^2}{2\pi}\right)^{|\red{\tau}|} \hat{\lambda}
e^{\mathfrak{m}\, t} P_{t+
\ve^{2}}
\eta (x)  \Bigg\|_{L^{2}(\PP)}\\
 \leqslant  &
\frac{|h^{( \red{ \tau})} (1)|}{s(\red{\tau})}
 \sum_{\kappa \in C(\tau)}
\left\|      \sqrt{ \log{ \tfrac{1}{\ve}}}\cdot [\tau]_{\kappa,\ve}(t,x)
 - \frac{1}{ \red{ \tau}!}
\left(\frac{\Hat{\lambda}^2}{2\pi}\right)^{|\red{\tau}|} \hat{\lambda} e^{\mathfrak{m}\, t} P_{t+
\ve^{2}}
\eta (x)  \right\|_{L^{2}(\PP)}\\
&\qquad  +
 \frac{|h^{( \red{ \tau})} (1)|}{s(\red{\tau})}
\left\| \sum_{\kappa \not\in C(\tau)}
 \sqrt{ \log{ \tfrac{1}{\ve}}}\cdot
    [\tau]_{\kappa,\ve}(t,x)
 \right\|_{L^{2}(\PP)}\,.
\end{aligned}
\end{equation}
The second term on the right-hand side can be directly estimated using
Lemma~\ref{lem_unif_estimate_all_noncontr_cont_new} as follows:
\begin{equation}\label{e:singleTree_supp1}
\begin{aligned}
\left\| \sum_{\kappa \not\in C(\tau)}
 \sqrt{ \log{ \tfrac{1}{\ve}}}\cdot
    [\tau]_{\kappa,\ve}(t,x)
 \right\|_{L^{2}(\PP)}
\leqslant
\frac{1}{ \red{ \tau}!}
 \frac{1}{\sqrt{ 4\log \tfrac{1}{\ve}}}
    \left(\frac{6e^{2 + 2 \pi}\Hat{\lambda}^{2} e^{2 \overline{\mathfrak{m}}\, t} }{\pi}\right)^{|\red{\tau}|} \frac{ \hat{\lambda} e^{\mathfrak{m}\, t}
}{\sqrt{4 (t + \ve^{2})}}\,.
\end{aligned}
\end{equation}
For the first term, by applying Lemma~\ref{lem:contr} and
Lemma~\ref{lem:count_contr} together with the identity $\<10>_{\, \ve}(t,x)=
\hat{\lambda} ( \log{ \tfrac{1}{\ve}})^{- \frac{1}{ 2} } e^{\mathfrak{m}\, t} P_{t+ \ve^{2}} \eta(x)$, we obtain
%\begin{equation}
\begin{align}
\sum_{\kappa \in C(\tau)}&
\left\|      \sqrt{ \log{ \tfrac{1}{\ve}}}\cdot  [\tau]_{\kappa,\ve}(t,x)
 - \frac{1}{ \red{ \tau}!}
\left(\frac{\Hat{\lambda}^2}{2\pi}\right)^{|\red{\tau}|} \hat{\lambda} e^{\mathfrak{m}\, t}  P_{t+
\ve^{2}}
\eta (x)  \right\|_{L^{2}(\PP)}\nonumber\\
=&
\frac{1}{ \red{ \tau}!}
\left(
\frac{ 3 \hat{\lambda}^{2}}{ 2 \pi} \right)^{|\red{ \tau}|}
\left\| \left( \left(
\frac{1}{2 \log{ \frac{1}{ \ve}}}
\int_{0}^{t} \frac{e^{2 \mathfrak{m} \, s}}{s+ \ve^{2}} \ud s
        \right)^{|\red{\tau}|}  -1 \right)
\hat{\lambda} 
 e^{\mathfrak{m}\, t} P_{t+ \ve^{2}}
\eta (x)  \right\|_{L^{2}(\PP)}\nonumber  \\
\leqslant &
\frac{
1}{ \red{ \tau}!}
\left(
\frac{
9 \hat{\lambda}^{2} e^{2 \overline{\mathfrak{m}} \, t+1}
}{2 \pi}
\right)^{| \red{ \tau}|}
\frac{ e^{2 |\mathfrak{m}|\, t} + | \log{(t + \ve^{2})}
| }{2 \log{ \frac{1}{ \ve}}}
\frac{ \hat{\lambda} e^{\mathfrak{m}\, t}}{ \sqrt{4 \pi (t+ \ve^{2})}}
 \,, \label{e:singleTree_supp2}
\end{align}
where in the last step, we used $\| P_{t+ \ve^{2}} \eta (x)\|_{L^{2}(\PP)}=
\sqrt{4 \pi (t+
\ve^{2})}^{- 1} $ and applied Corollary~\ref{c_1cycle_taylor}, which yields
\begin{equation*}
\begin{aligned}
 \left| \left(
\frac{1}{2 \log{ \frac{1}{ \ve}}}
\int_{0}^{t} \frac{e^{2 \mathfrak{m} \, s}}{s+ \ve^{2}} \ud s
        \right)^{|\red{\tau}|}
 -
1 \right|
& \leqslant
| \red{ \tau}| \,  \left(
3 e^{2 \overline{\mathfrak{m}} \, t}
\right)^{| \red{ \tau}|-1}
\frac{ e^{2 |\mathfrak{m}|\, t} + | \log{(t + \ve^{2})}
| }{2 \log{ \frac{1}{ \ve}}}
\\
& \leqslant
  \left( 3
e^{2 \overline{\mathfrak{m}} \, t+1}
\right)^{| \red{ \tau}|}
\frac{ e^{ 2|\mathfrak{m}|\, t} + | \log{(t + \ve^{2})}
| }{2 \log{ \frac{1}{ \ve}}}
\,,
\end{aligned}
\end{equation*}
where we used additionally \eqref{e:def_const_c} in the first
inequality,
as well as the bound $ | \red{ \tau}| \leqslant 3 e^{ | \red{ \tau}| }$ in
the second inequality.

Now, combining  \eqref{e:singleTree_supp0}, \eqref{e:singleTree_supp1} and \eqref{e:singleTree_supp2} yields
\begin{equation*}
\begin{aligned}
\bigg\| \sqrt{ \log{ \tfrac{1}{\ve}}}\cdot X^\tau_\varepsilon(t,x) & - \frac{h^{( \red{ \tau})} (1)}{\red{\tau}!\, s(\red{\tau})}
\left(\frac{3\Hat{\lambda}^2}{2\pi}\right)^{|\red{\tau}|} \hat{\lambda} e^{\mathfrak{m}\, t} P_{t+
\ve^{2}}
\eta (x)  \bigg\|_{L^{2}(\PP)}\\
&\leqslant
 \frac{|h^{( \red{ \tau})} (1)|}{\red{\tau}!\, s(\red{\tau})} ( \hat{\lambda}
e^{ \overline{\mathfrak{m}} \,t})^{2 | \red{ \tau}|}
\Bigg(
\left(
\frac{
9 \, e
}{2 \pi}
\right)^{| \red{ \tau}|}
\frac{ e^{2 |\mathfrak{m}|\, t} + | \log{(t + \ve^{2})}
| }{2 \log{ \frac{1}{ \ve}}}
\frac{1}{\sqrt{ \pi}}\\
& \qquad \qquad \qquad \qquad \qquad +
 \frac{1}{2\sqrt{ \log \frac{1}{ \ve}}}
    \left(\frac{6e^{2 + 2\pi}}{\pi}\right)^{|\red{\tau}|}
\Bigg)
 \frac{ \hat{\lambda} e^{\mathfrak{m}\, t}
}{\sqrt{4 (t + \ve^{2})}}
\\
&\leqslant
 \frac{|h^{( \red{ \tau})} (1)|}{\red{\tau}!\, s(\red{\tau})}
\left(C\, \hat{\lambda}^{2} e^{2 \overline{\mathfrak{m}}\, t} \right)^{ |\red{ \tau}|}
\frac{ e^{2 |\mathfrak{m}|\, t}+|\log{(t+
\ve^{2})}| + \sqrt{\log \tfrac{1}{\ve}} }{2 \log{ \tfrac{1}{\ve}}}
 \frac{ \hat{\lambda} e^{\mathfrak{m}\, t}
}{\sqrt{4 (t + \ve^{2})}}\,,
\end{aligned}
\end{equation*}
with $C$ being the constant defined in \eqref{e_def_CT}, satisfying
\begin{equation*}
\begin{aligned}
C
=
\frac{6e^{ 2 + 2 \pi}}{\pi}
\geqslant
\max \left\{
\frac{
9\,  e
}{2 \pi}
,
\frac{6e^{ 2 + 2 \pi}}{\pi}
\right\}
\,.
\end{aligned}
\end{equation*}
This concludes the proof.
\end{proof}

%\begin{remark}\label{rem}
%Finally, let us note that our techniques should apply almost verbatim when
%considering dimensions $ d \geqslant 3$ and a weak coupling of
%order $ \hat{\lambda} \ve^{\frac{2-d }{2}} $ instead of $ \hat{\lambda} (\log{
%\tfrac{1}{\ve}})^{- \frac{1}{2} } $.
%In this case the equivalent statement of Theorem~\ref{thm:main} would
%read
%\begin{equation*}
%\begin{aligned}
%\lim_{\ve \to 0} \EE \Big[ \big| \mU_{ \ve}(t, x) - \hat{\lambda}\,
%\sigma_{ \hat{\lambda}}(d) e^{\mf{m} t} p_{t} \star \eta (x)
%\big|^{2} \Big] = 0 \;, \quad  \sigma_{\Hat{\lambda}}(d) :=
%\left(\sqrt{1+
%\left(\tfrac{d}{2}
%{-} 1\right)^{-1}\tfrac{6\Hat{\lambda}^2}{(4
%\pi)^{d/2}}} \right)^{- \frac{1}{2}}\,,
%\end{aligned}
%\end{equation*}
%and this result seems to follow almost verbatim from our arguments.
%\end{remark}

\section{Link to a McKean-Vlasov SPDE}\label{sec:MV}

This section is dedicated to the proof of Proposition~\ref{p_homog_ac}.
Before we do so, we must clarify the meaning of solution to mean-field SPDEs of
the form
\begin{equation} \label{e:mfr}
\begin{aligned}
\partial_t v = \frac{1}{2} \Delta v
+\mf{m}\, v - \alpha 
\EE \left[ v^{2} \right] 
v \,, \qquad v (0,x) = v_{0} (x)\,, \qquad \forall (t, x) \in (0, \infty)
\times \RR^{2} \;,
\end{aligned}
\end{equation}
for some $ \mf{m} \in \RR, \alpha > 0 $. To simplify the notation in the next
definition, let us write $ \mE $ for the set of functions $ f \colon
\RR^{2} \to \RR$ such that for some $ \lambda (f) > 0 $ we have $ \sup_{x
\in \RR^{2}} | f(x) | e^{- \lambda |x|} < \infty$.

\begin{definition}\label{def:solmf}
We say that $ v $ is a solution to
\eqref{e:mfr} if -- in addition to satisfying the equation -- it is smooth on $ (0,
\infty) \times \RR^{2} $, and if $ x \mapsto \sup_{0 \leqslant
s \leqslant T} | v (s, x) | \in \mE $ and  similarly $ x \mapsto \sup_{0
\leqslant s \leqslant T} \EE\left[ v^{2} (s, x)
\right] \in \mE $ for all $ T \in (0, \infty)$, $\PP$--almost surely.
\end{definition}

In this setting, our first result is uniqueness of solutions to
\eqref{e:mfr}. We observe that well-posedness of Mc-Kean--Vlasov SPDEs of this
type \emph{on finite volume} follows for example through the same arguments as
in \cite{hao}. Yet the extension to infinite volume is not entirely trivial,
and as a matter of fact we only prove existence of solutions in a special,
Gaussian, case. Instead, our argument for uniqueness works in full generality.

\begin{proposition} \label{prop:uniqueness}
For any $ v_{0} \in \mE  $ such that 
$$ 
C_{0} : = \sup_{x \in \RR^{2}} \EE[ v_{0}^{2}(x)] < \infty\,,
$$
there exists at most one solution $v$ to \eqref{e:mfr} with initial data $ v_{0} $.
\end{proposition}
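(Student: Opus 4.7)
Let $v_1, v_2$ be two solutions of \eqref{e:mfr} with common initial datum $v_0$, and set $w := v_1 - v_2$. The algebraic identity
\[
\EE[v_1^2]v_1 - \EE[v_2^2]v_2 \;=\; \EE[v_1^2]\,w \;+\; \EE[w(v_1+v_2)]\,v_2
\]
shows that $w$ solves the linear equation
\[
\partial_t w \;=\; \tfrac12 \Delta w + \bigl(\mathfrak m - \alpha \EE[v_1^2]\bigr)\,w \;-\; \alpha\,\EE[w(v_1+v_2)]\,v_2, \qquad w(0,\cdot)\equiv 0.
\]
Since the deterministic potential $\alpha\EE[v_1^2]$ is non-negative, the same Feynman--Kac argument used in the proof of Proposition~\ref{prop_max_principal}---dropping the contribution of this non-negative potential in absolute value---yields the pathwise bound
\[
|w(t,x)| \;\leq\; \alpha e^{\overline{\mathfrak m}\,t}\,\mathbf{E}_x\!\left[\int_0^t \bigl|\EE[w(v_1+v_2)](t-s,\beta_s)\bigr|\,|v_2(t-s,\beta_s)|\,\mathrm ds\right].
\]

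Next, I would take the $L^2(\PP)$-norm on both sides, push it inside the Brownian expectation via Minkowski, and bound the inner (deterministic) expectation by Cauchy--Schwarz as $|\EE[w(v_1+v_2)]| \leq \|w\|_{L^2(\PP)}\|v_1+v_2\|_{L^2(\PP)}$. Setting $F(t,x):= \|w(t,x)\|_{L^2(\PP)}$ and $H(s,y):= \alpha e^{\overline{\mathfrak m}\,T}\|(v_1+v_2)(s,y)\|_{L^2(\PP)}\|v_2(s,y)\|_{L^2(\PP)}$, this produces the closed Volterra-type inequality
\[
F(t,x) \;\leq\; \int_0^t\!\!\int_{\RR^2} p_{t-s}(y-x)\,H(s,y)\,F(s,y)\,\mathrm dy\,\mathrm ds, \qquad F(0,\cdot)\equiv 0.
\]
By Definition~\ref{def:solmf}, the maps $(s,y)\mapsto \EE[v_i^2(s,y)]$ lie in $\mE$ uniformly for $s\in[0,T]$, hence $H(s,y) \leq C_T e^{\lambda|y|}$ and, a priori, $F(s,y) \leq C'_T e^{\lambda'|y|}$ for constants depending only on $T$ and $C_0$.

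The remaining task---deducing $F\equiv 0$ from this integral inequality---is the main obstacle. A naive Picard iteration does not close globally: the convolution operator on the right maps $\mE$-class functions into functions with strictly larger exponential growth rate, and the Gaussian moment-generating prefactors $\exp(\lambda^2(t-s)/2)$ accumulated along successive iterations overwhelm the time-simplex gain $t^n/n!$. To circumvent this I would identify $F$ as a non-negative subsolution of the linear parabolic equation $\partial_t u = \tfrac12 \Delta u + H\,u$ with vanishing initial datum, and invoke a Tychonoff/Widder-type uniqueness statement for such equations with unbounded (but $\mE$-class) coefficients in the class of solutions of at most exponential spatial growth. Concretely, I expect a weighted energy estimate against a Gaussian weight of the form $e^{-|x|^2/(2\sigma(t))}$, with $\sigma(t)$ chosen small enough to dominate simultaneously the heat kernel and the growth of $H$, to force $F\equiv 0$ on a short initial interval $[0,t_0]$ with $t_0$ depending only on $\lambda$ and $\alpha$. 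Iterating this conclusion along a partition of $[0,T]$ completes the proof; the delicate point throughout is to keep the constants uniform in the partition and independent of the (exponentially unbounded) pointwise bounds on $v_1,v_2$.
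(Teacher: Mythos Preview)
Your overall architecture---write the equation for $w=v_1-v_2$, drop the non-negative potential in Feynman--Kac, bound the mean-field term $\EE[w(v_1+v_2)]$ by Cauchy--Schwarz, and close a Volterra/Gr\"onwall inequality for $F(t,x)=\|w(t,x)\|_{L^2(\PP)}$---is exactly what the paper does. The difference is that you are missing the paper's opening move, and that omission is precisely what creates the obstacle you then struggle with.

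The paper first proves the a~priori estimate
\[
\sup_{x\in\RR^2}\EE\bigl[v(t,x)^2\bigr]\;\leqslant\; C_0\,e^{2\overline{\mathfrak m}\,t}
\]
for \emph{any} solution $v$ in the sense of Definition~\ref{def:solmf}. This is obtained by observing that $v^2$ satisfies
\[
\partial_t v^2 \;=\; \tfrac12\Delta v^2 + 2\mathfrak m\, v^2 - |\nabla v|^2 - 2\alpha\,\EE[v^2]\,v^2,
\]
and the last two terms are non-positive, so the maximum principle (or Feynman--Kac with the sign dropped) gives $v^2(t,x)\leqslant P_t^{(2\mathfrak m)}v_0^2(x)$ pathwise; taking $\EE$ and using $\sup_x\EE[v_0^2]\leqslant C_0$ yields the bound uniform in~$x$.

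With this in hand, your coefficient $H(s,y)=\alpha e^{\overline{\mathfrak m}T}\|v_1+v_2\|_{L^2(\PP)}\|v_2\|_{L^2(\PP)}$ is \emph{uniformly bounded} in $y$ on $[0,T]$, not merely in $\mE$. Your Volterra inequality then reads
\[
F(t,x)\;\leqslant\; C_T\int_0^t\!\!\int_{\RR^2}p_{t-s}(y-x)\,F(s,y)\,\ud y\,\ud s
\;\leqslant\; C_T\int_0^t \sup_{y}F(s,y)\,\ud s,
\]
and taking $\sup_x$ on the left closes a standard Gr\"onwall argument with $F(0)=0$, giving $F\equiv 0$. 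No Tychonoff-type weighted estimate is needed. (The paper actually organises the Cauchy--Schwarz slightly differently---splitting inside the Brownian expectation and squaring before taking $\EE$---but the substance is the same.)

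In short: the genuine gap in your proposal is not the endgame but the missing a~priori second-moment bound. Once you have it, the ``main obstacle'' you describe disappears, and the remaining argument is an elementary Gr\"onwall step rather than a delicate Widder-type uniqueness result.
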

\begin{proof}
We start by proving the following a priori estimate:
\begin{align}\label{apriori1}
\sup_{t \geqslant 0 \,,\, x\in \RR^2} \mathbb{E}\big[ v (t,x)^2\,\big]
\leqslant  C_{0} \;.
\end{align}
To establish the above, we start by observing that since $ v $ is smooth in
space and time, $v^2$ solves the initial value problem
\begin{align*}
\partial_t v^2 &= \frac{1}{2} \Delta v^2 + 2 \mf{m} v^{2} - |\nabla v|^2 -2 \alpha\mathbb{E}\big[
v^2 \big] \, v^2,  \qquad && (t,x)\in (0,\infty)\times \RR^2 \;, \\
\qquad v^2(0,x) & = v_{0}^2(x)\;,\qquad  && x\in \RR^2 \;.
\end{align*}
By the maximum principle or the Feynman-Kac formula one can therefore see that for
every $t \geqslant 0$ and $x\in \RR^2$ it holds that
\begin{align*}
v^2(t,x) \leqslant  \int_{\RR^2} v^2_{0}(x-y) p_t^{(\mf{m})}(y) \, \ud y \;, \qquad  & x\in \RR^2,
\end{align*}
and upon taking expectations on both sides we obtain
\begin{align*}
\mathbb{E}[ v^{2} (t,x) ] \leqslant  C_{0} \int_{\RR^2}  p_t^{(\mf{m})}(y) \ud  y =
C_{0}e^{ \mf{m} t} \;,
\end{align*}
which confirms \eqref{apriori1}.

Let us now proceed with the claim of uniqueness. Let $u,v$ be two solutions to
\eqref{e:mfr} with the same initial condition $ v_{0} $.
We then show that the difference $w (t,x):=u (t,x)-v (t,x)$ is
identically equal to zero. The see this, note that $ w $
satisfies the initial value problem
\begin{equation} \label{weq}
\begin{aligned}
\partial_t w &= \frac{1}{2}  \Delta w + \mf{m} w - \alpha \Big( \mathbb{E}\big[ u^2 \big] -
\mathbb{E}\big[ v^2 \big] \Big) \, u
  - \alpha \mathbb{E}\big[ v^2 \big] \, w, & & \forall (t, x) \in (0, \infty) \times
\RR^{2} \;, \\
w(0,x) & = 0 \;,  & & \forall  x\in \RR^2 \;. 
\end{aligned}
\end{equation}
Denote by $Z(t,x):= \mathbb{E}\big[ u(t,x)^2 \big] - \mathbb{E}\big[
v(t,x)^2 \big] = 
\mathbb{E}\big[ \big(u(t,x) - v(t,x) \big)  \big(u(t,x) + v (t,x) \big) \big]$.
From Cauchy--Schwarz, it follows that
\begin{align}\label{apriori2}
|Z (t,x)| 
& \leqslant  \mathbb{E}\big[ w (t,x)^2 \big]^{1/2} \, \mathbb{E}\big[ (u
+v )^2(t,x) \big]^{1/2} \leqslant   2\, \sqrt{C_{0}} \,\,\mathbb{E}\big[ w(t,x)^2 \big]^{1/2 } \;,
\end{align} 
where in the last we use the triangle inequality and the a priori estimate
\eqref{apriori1}. Then, by the Feynman--Kac formula and in view of the growth
assumption on solutions in Definition~\ref{def:solmf}, we can express $w$
from \eqref{weq} through
\begin{align*}
w (t,x) = \alpha \E_x \left[  \int_0^t  Z (t-s,\beta (s)) \, u (t-s, \beta (s)) \,\,
e^{\mf{m}s- \alpha \int_0^s \mathbb{E}\big[ v (s-r,\beta (r))^2 \big] \ud r } \ud s\right] \;,
\end{align*} 
where $ \mathbf{E}_{x} $ indicates the expectation over a Brownian motion $
\beta $ (independent from all other random variables) started in $ \beta(0) = x $.
From here, using \eqref{apriori2} and bounding the exponential term in the
Feynman--Kac representation, it follows that
\begin{align*}
| w(t,x) | \leqslant   2 \alpha \, \sqrt{C_{0}} e^{ \overline{\mf{m}} t}\, 
  \E_x\left[ \int_0^t \mathbb{E} \big[ w(t-s,\beta(s))^2 \big]^{1/2 } \, | u
(t-s, \beta(s)) | \, \ud s\right],
\end{align*}
with $ \overline{ \mf{m}} = \max \{ 0, \mf{m} \} $.
Next, by Cauchy--Schwarz, we further have the estimate
\begin{align*}
&| w(t,x) | 
\leqslant   2 \alpha \, \sqrt{C_{0}}\, e^{ \overline{\mf{m}} t}   \E_x
\left[ \int_0^t \mathbb{E} \big[
w(t-s,\beta(s))^2 \big]\, \ud s\right]^{1/2} 
        \,  \E_x \left[ \int_0^t u (t-s, \beta(s))^2 \, \ud s \right]^{1/2},
\end{align*}
and then
\begin{align*}
\mathbb{E}\big[ | w(t,x) |^2\big]  
& \leqslant   4 \alpha^{2}  C_{0}   \, e^{ 2\overline{ \mf{m}} t} \E_x \left[
\int_0^t \mathbb{E} \big[ w(t-s,\beta(s))^2
\big]\, \ud s\right] \,
        \, \E_x \left[ \int_0^t \mathbb{E} [u (t-s, \beta(s))^2] \, \ud s \right] \\
& \leqslant  4 \alpha^{2} t C_{0}^2  e^{ 2\overline{ \mf{m}} t} \, \E_x \left[
\int_0^t \mathbb{E} \big[ w(t-s,\beta(s))^2
\big]\, \ud s \right] \\
& = 4 \alpha^{2}t C_{0}^2 e^{ 2\overline{ \mf{m}} t} \int_0^t  \int_{\RR^2} \mathbb{E} \big[ w(t-s,y)^2
\big]\, p_s(y-x) \, \ud y \, \ud s \;,
\end{align*}
where in the second inequality we used, again, the a priori bound \eqref{apriori1}. 
Taking the supremum over the $ x $ variable on both sides we conclude
\begin{equation*}
\begin{aligned}
\sup_{x \in \RR^{2}} \EE[ | w (t, x) |^{2} ] \leqslant 4 \alpha^{2} t C_{0}^{2}
e^{ 2 \overline{\mf{m}} t}
\int_{0}^{t} \sup_{x \in \RR^{2}} \EE[| w (t-s, x) |^{2}] \ud s \;.
\end{aligned}
\end{equation*}
Now, setting $F(t):= \sup_{x \in \RR^{2}}  \mathbb{E}  | w(t,x) |^2 $, we
conclude that for every $T>0$ and $0<t \leqslant T$
\begin{align*}
F(t) \leqslant  \overline{C}_T \int_0^t F(s) \, \ud s,\qquad \text{where} \quad
\overline{C}_T:=4 \alpha^{2} T C_{0}^2 e^{2 \overline{\mf{m}} t} \;.
\end{align*}
Since $F(0)=0$, it follows that $F(t)=0$ for every $0<t \leqslant  T$ and
$T>0$.
\end{proof}

We are now ready for the main result of this section.

\begin{proof}[Proof of Proposition~\ref{p_homog_ac}]

In the particular case of $ v_{0} = p_{\ve^{2}} \star \eta $ and $ \alpha =
\frac{ 3 }{\log{ \frac{1}{\ve}}} $, we can check that the McKean--Vlasov equation
\eqref{e:mfr} admits the solution
\begin{equation*}
\begin{aligned}
v_{ \hat{\lambda}, \ve}(t,x) := \hat{\lambda}\, \sigma_{\hat{\lambda}, \ve} (t) P_{t}^{(\mf{m})}
(p_{\ve^{2}} \star \eta)(x)
 \;,
\end{aligned}
\end{equation*}
where $ \sigma_{\hat{\lambda}, \ve} $ is the solution to the ODE
\begin{equation}\label{e_ode_mckean}
\begin{aligned}
\partial_{t} \sigma_{ \hat{\lambda}, \ve} = - \frac{3 \hat{\lambda}^{2}}{
\log{ \tfrac{1}{\ve}}}\frac{  e^{2 \mf{m} t}}{4 \pi
(t + \ve^{2})}  \sigma_{ \hat{\lambda}, \ve}^{3} \;,
\quad \sigma_{\hat{\lambda}, \ve} (0) = 1 \;.
\end{aligned}
\end{equation}
For this, let us substitute $v_{ \hat{\lambda}, \ve}$ into the mild formulation of
\eqref{e:mfr}, which yields 
\begin{equation*}
\begin{aligned}
& \hat{\lambda}\,  \sigma_{\hat{\lambda}, \ve} (t) P_{t}^{(\mf{m})}
(p_{\ve^{2}} \star \eta)(x)\\
&=
 \hat{\lambda}\,
P_{t}^{(\mf{m})}
(p_{\ve^{2}} \star \eta)(x)\\
&\qquad -
\frac{ 3 \hat{\lambda}^{3} }{\log{ \tfrac{1}{\ve}}} 
\int_{0}^{t}
 \sigma_{\hat{\lambda}, \ve} (s)^{3}
\int_{\RR^{2}} 
p_{t-s}^{(\mf{m})}(y-x)
\EE \big[
| P_{s}^{(\mf{m})}
(p_{\ve^{2}} \star \eta)(y)
 |^{2}
\big]
P_{s}^{(\mf{m})}
(p_{\ve^{2}} \star \eta)(y)
 \ud y
 \ud s\\
&=
 \hat{\lambda}\,
P_{t}^{(\mf{m})}
(p_{\ve^{2}} \star \eta)(x)
-
\frac{ 3 \hat{\lambda}^{3} }{\log{ \tfrac{1}{\ve}}} 
\left\{
\int_{0}^{t}
 \sigma_{\hat{\lambda}, \ve} (s)^{3}
\frac{ e^{2\mf{m} \, s}}{4 \pi(s+ \ve^{2})} 
 \ud s\right\}
\;
P_{t}^{(\mf{m})}
(p_{\ve^{2}} \star \eta)(x)\,,
\end{aligned}
\end{equation*}
where we used that $ \EE \big[
| P_{s}^{(\mf{m})}
(p_{\ve^{2}} \star \eta)(y)
 |^{2}
\big]
 = e^{2 \mf{m}\, s} (4 \pi(s + \ve^{2}))^{-1}$ and integrated out the spatial
variable using Chapman--Kolmogorov. Now, dividing by $ \hat{\lambda}\,P_{t}^{(\mf{m})}
(p_{\ve^{2}} \star \eta)(x)$ on both sides and taking the temporal derivative
yields \eqref{e_ode_mckean}. To verify that $ v_{ \hat{\lambda}, \ve} $ is a
solution in the sense of Definition~\ref{def:solmf}, it now suffices to check
that $ \sup_{0 \leqslant s \leqslant T} | p_{s+\ve^{2}} \star \eta| \in \mE $
for any $ \ve, T > 0 $. This follows for
example because $ \EE \big[ \sup_{| x |_{\infty} \leqslant 1} \sup_{0 \leqslant
s \leqslant T}| p_{s+\ve^{2}} \star
\eta (x)| \big] < \infty  $ (where $ | x |_{\infty} = \max_{i=1,2} | x_{i} | $), so that
\[ \EE \left[ \sum_{z \in \ZZ^{2}} \frac{\sup_{| x - z |_{\infty} \leqslant 1}
\sup_{0 \leqslant s \leqslant T} | p_{s+\ve^{2}} \star
\eta (x) |}{1 + |z|^3} \right] < \infty \;,\] 
which yields the desired result (cf. \cite[Lemma 1.1]{HL}).

Now, the differential equation \eqref{e_ode_mckean} admits the explicit solution 
\begin{equation}\label{e:explicit}
\begin{aligned}
{\sigma}_{ \hat{\lambda}, \ve}(t)
=
\frac{1}{\sqrt{1 + \frac{3 \hat{\lambda}^{2}}{ \pi} e^{- 2 \mf{m}\, 
\ve^{2}} c_{ \ve}(t)
}} \,,
\qquad \text{with}
\qquad
c_{ \ve}(t):=
\frac{1}{ 2 \log{ \frac{1}{ \ve}}} 
\int_{0 }^{t } \frac{ e^{2 \mf{m} s}}{s + \ve^{2}}   \ud s \,.
\end{aligned}
\end{equation}
Moreover, the solution $v_{ \hat{\lambda}, \ve}$  to \eqref{e:mfr} is
unique, which follows from Proposition~\ref{prop:uniqueness}.
Finally, by Lemma~\ref{l_app_expInt}, we have that $c_{ \ve}(t) \to 1
 $ as $ \ve \to 0$, for every $ t>0$. Hence, 
\begin{equation*}
\begin{aligned}
\lim_{ \ve \to 0}
\sigma_{ \hat{\lambda},  \ve}(t) 
=
\frac{1}{\sqrt{1 + \frac{3 \hat{\lambda}^{2}}{ \pi}
}}=  \sigma_{ \hat{\lambda}} \qquad \forall t >0 \,.
\end{aligned}
\end{equation*} 
This completes the proof.
\end{proof}

\appendix
\section{Appendix}

\subsection{On the exponential integral}

Throughout the paper, a crucial ingredient is to understand the small-$ \ve $
behaviour of the integral
\begin{equation*}
\begin{aligned}
 \int_{0}^{t} \<cycle1>_{ \ve} (s) \ud s
=
\frac{\lambda_\varepsilon^2}{4\pi}
\int_{0}^{t} \frac{e^{2 \mathfrak{m} \, s}}{s+ \ve^{2}}  \ud s  \,, \qquad t
\in (0, \infty) \,, \ \ve \in ( 0, \tfrac{1}{2})\,.
\end{aligned}
\end{equation*}
In this appendix we will prove some useful estimates concerning this integral.

\begin{lemma}\label{l_app_expInt}
Let $\mathfrak{m} \in \RR$ and $t \in (0, \infty) $, then
\begin{equation*}
\begin{aligned}
\left|
\frac{1}{2 \log{ \frac{1}{ \ve}}}
\int_{0}^{t} \frac{e^{2 \mathfrak{m} \, s}}{s+ \ve^{2}} \ud s   -1
        \right|
\leqslant
\frac{ e^{ 2|\mathfrak{m}|\, t} + | \log{(t + \ve^{2})}
| }{2 \log{ \frac{1}{ \ve}}}
\,,
\end{aligned}
\end{equation*}
for every $ \ve \in (0,\tfrac{1}{2})$, with $ \overline{\mathfrak{m}} =
 \max \{ \mf{m}\;, 0 \}$. In particular, for every $ t \in (0,
\infty)$
\begin{equation}\begin{aligned}
\lim_{ \ve \to 0}
\int_{0}^{t} \<cycle1>_{ \ve} (s) \ud s
= \frac{ \hat{\lambda}^{2}}{2 \pi} \,.
\end{aligned}
\end{equation}\end{lemma}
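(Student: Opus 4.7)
The plan is to isolate the logarithmic divergence $2\log\tfrac{1}{\varepsilon}$ explicitly and show that the remainder is uniformly bounded in $\varepsilon$. Concretely, I would write
\begin{equation*}
\int_{0}^{t} \frac{e^{2\mathfrak{m} s}}{s+\varepsilon^{2}}\,\ud s
 \;=\; \int_{0}^{t} \frac{\ud s}{s+\varepsilon^{2}}
 \;+\; \int_{0}^{t} \frac{e^{2\mathfrak{m} s}-1}{s+\varepsilon^{2}}\,\ud s
 \;=\; \log(t+\varepsilon^{2}) + 2\log\tfrac{1}{\varepsilon}
 \;+\; R_{\varepsilon}(t),
\end{equation*}
where $R_{\varepsilon}(t)$ denotes the last integral. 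Rearranging and applying the triangle inequality immediately gives
\[
\Bigl|\tfrac{1}{2\log(1/\varepsilon)}\textstyle\int_{0}^{t} \tfrac{e^{2\mathfrak{m} s}}{s+\varepsilon^{2}}\,\ud s - 1\Bigr|
\;\leqslant\; \tfrac{|\log(t+\varepsilon^{2})| \,+\, |R_{\varepsilon}(t)|}{2\log(1/\varepsilon)},
\]
so it only remains to check that $|R_{\varepsilon}(t)|\leqslant e^{2|\mathfrak{m}|t}$ uniformly in $\varepsilon\in(0,\tfrac{1}{2})$.

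To bound $R_\varepsilon(t)$, I would split on the sign of $\mathfrak{m}$. If $\mathfrak{m}\geqslant 0$, the integrand is non-negative, so bounding $s+\varepsilon^{2}\geqslant s$ and using the elementary inequality $e^{x}-1\leqslant xe^{x}$ for $x\geqslant 0$ gives
\[
R_{\varepsilon}(t)\;\leqslant\; \int_{0}^{t}\frac{e^{2\mathfrak{m} s}-1}{s}\,\ud s \;\leqslant\; \int_{0}^{t} 2\mathfrak{m}\, e^{2\mathfrak{m} s}\,\ud s \;=\; e^{2\mathfrak{m} t}-1 \;\leqslant\; e^{2|\mathfrak{m}|t}.
\]
If instead $\mathfrak{m}<0$, then $|e^{2\mathfrak{m} s}-1|=1-e^{2\mathfrak{m} s}\leqslant 2|\mathfrak{m}|s$ by the standard inequality $1-e^{-x}\leqslant x$, and hence
\[
|R_{\varepsilon}(t)| \;\leqslant\; 2|\mathfrak{m}|\int_{0}^{t}\frac{s}{s+\varepsilon^{2}}\,\ud s \;\leqslant\; 2|\mathfrak{m}|\,t \;\leqslant\; e^{2|\mathfrak{m}|t},
\]
using $x\leqslant e^{x}$ for $x\geqslant 0$. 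Combining the two cases yields the first inequality of the lemma.

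For the second assertion, I would simply observe that for any fixed $t>0$ both $e^{2|\mathfrak{m}|t}$ and $|\log(t+\varepsilon^{2})|$ remain bounded as $\varepsilon\to 0$, so the right-hand side of the first inequality vanishes. This gives $\tfrac{1}{2\log(1/\varepsilon)}\int_{0}^{t}\tfrac{e^{2\mathfrak{m} s}}{s+\varepsilon^{2}}\ud s\to 1$, and multiplying by $\lambda_{\varepsilon}^{2}/(4\pi)=\hat\lambda^{2}/(4\pi\log\tfrac{1}{\varepsilon})$ and noting that the prefactor equals $\tfrac{\hat\lambda^{2}}{2\pi}\cdot\tfrac{1}{2\log(1/\varepsilon)}$ produces the claimed limit $\tfrac{\hat\lambda^{2}}{2\pi}$. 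No step in this argument is substantive: the only delicate point is choosing the right elementary comparison ($e^{x}-1\leqslant xe^{x}$ for $\mathfrak{m}\geqslant 0$ versus $1-e^{-x}\leqslant x$ for $\mathfrak{m}<0$) so that the $s$ in the numerator cancels the $1/s$ singularity and one lands precisely on the bound $e^{2|\mathfrak{m}|t}$ rather than on the cruder $2|\mathfrak{m}|t\,e^{2|\mathfrak{m}|t}$ one gets from the universal estimate $|e^{y}-1|\leqslant|y|e^{|y|}$.
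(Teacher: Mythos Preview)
Your proof is correct and follows essentially the same approach as the paper: both isolate the $\mathfrak{m}=0$ integral $\int_0^t (s+\varepsilon^2)^{-1}\,\ud s = \log(t+\varepsilon^2)+2\log\tfrac{1}{\varepsilon}$ and then show that the remainder $\int_0^t \frac{e^{2\mathfrak{m}s}-1}{s+\varepsilon^2}\,\ud s$ is bounded in absolute value by $e^{2|\mathfrak{m}|t}$. The only cosmetic difference is that the paper bounds the remainder via the Taylor expansion $e^{2\mathfrak{m}s}-1=s\sum_{k\geqslant 1}(2\mathfrak{m})^k s^{k-1}/k!$ together with $\tfrac{s}{s+\varepsilon^2}\leqslant 1$ (handling both signs of $\mathfrak{m}$ at once), whereas you split on the sign of $\mathfrak{m}$ and use the inequalities $e^x-1\leqslant xe^x$ and $1-e^{-x}\leqslant x$; both routes are equally elementary.
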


\begin{proof}
First expanding the exponential, we write
\begin{equation*}
\begin{aligned}
 \int_{0}^{t} \frac{e^{2\mathfrak{m}\,
s}-1}{s+ \ve^{2}} \ud s
&=
\int_{0}^{t}
\frac{ s}{s + \ve^{2}} \sum_{k=1}^{ \infty}
\frac{(2\mathfrak{m})^{k}\, s^{k-1}}{k!}
\ud s\,.
\end{aligned}
\end{equation*}
Thus, we obtain
\begin{equation*}
\begin{aligned}
\left|
 \int_{0}^{t} \frac{e^{2 \mathfrak{m}\,
s}-1}{s+ \ve^{2}} \ud s
\right|
\leqslant
\sum_{k=1}^{ \infty}
\int_{0}^{t}
 \frac{|2 \mathfrak{m}|^{k}\, s^{k-1}}{k!}
\ud s \leqslant e^{2 | \mathfrak{m}|\, t} \,.
\end{aligned}
\end{equation*}
In addition, we have 
\begin{equation*}
\begin{aligned}
\left|
\frac{1}{2 \log{ \frac{1}{ \ve}}}
\int_{0}^{t} \frac{1}{s+ \ve^{2}} \ud s   -1
\right|
=
\frac{| \log{( t+ \ve^{2} )} |}{ 2 \log{ \frac{1}{ \ve}}}\,,
\end{aligned}
\end{equation*}
so that the statement follows from the triangle inequality.
The second part of the statement is now an immediate consequence, since 
\begin{equation*}
\begin{aligned}
 \int_{0}^{t} \<cycle1>_{ \ve} (s) \ud s
= \frac{\hat\lambda^{2}}{2 \pi } \frac{ 1}{ 2 \log \frac{1}{\ve}}  \int_{0}^{t} \frac{ e^{2\mf{m} \, s}}{s+ \ve^{2}} \ud s\,.
\end{aligned}
\end{equation*}
This completes the proof.
\end{proof}

\begin{corollary}\label{c_1cycle_taylor}
Let $\mathfrak{m} \in \RR$, $T>0$ and $k \in \NN $, then
\begin{equation*}
\begin{aligned}
 \left| \left(
\frac{1}{2 \log{ \frac{1}{ \ve}}}
\int_{0}^{t} \frac{e^{2 \mathfrak{m} \, s}}{s+ \ve^{2}} \ud s
        \right)^{k}
 -
1 \right|
\leqslant
k \, \left(
3e^{ 2\overline{\mathfrak{m}}\, t}
\right)^{k-1}
\frac{ e^{ 2|\mathfrak{m}|\, t} + | \log{(t + \ve^{2})}
| }{2 \log{ \frac{1}{ \ve}}}
\,,
\end{aligned}
\end{equation*}
for every $t \in [0,T]$ and $ \ve \in (0,\tfrac{1}{T}\wedge \tfrac{1}{2})$.
\end{corollary}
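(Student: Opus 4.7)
The plan is to combine the standard algebraic factorisation $a^{k}-1 = (a-1)\sum_{j=0}^{k-1} a^{j}$ with the pointwise bound from Lemma~\ref{l_app_expInt}. Concretely, let me set
\begin{equation*}
a = a(t,\ve) := \frac{1}{2 \log \frac{1}{\ve}} \int_{0}^{t} \frac{e^{2\mf{m}\, s}}{s+\ve^{2}} \ud s \;,
\end{equation*}
so that the left-hand side in the corollary is $|a^{k}-1|$. Factorising and using $a\geqslant 0$, I immediately get
\begin{equation*}
|a^{k} - 1| \leqslant |a-1| \cdot k \cdot \max\bigl\{a,\, 1\bigr\}^{k-1} \;.
\end{equation*}
Lemma~\ref{l_app_expInt} directly supplies the bound on $|a-1|$ that appears as the last factor in the statement, so the only remaining task is to show the a priori bound $a \leqslant 3 e^{2 \overline{\mf{m}}\, t}$, which would then give $\max\{a,1\}^{k-1} \leqslant (3 e^{2 \overline{\mf{m}}\, t})^{k-1}$ (note that $3 e^{2 \overline{\mf{m}}\, t} \geqslant 1$).

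To prove the a priori bound on $a$, I would estimate crudely $e^{2 \mf{m}\, s} \leqslant e^{2 \overline{\mf{m}}\, t}$ on $[0,t]$ and compute the remaining integral explicitly:
\begin{equation*}
a \leqslant \frac{e^{2 \overline{\mf{m}}\, t}}{2 \log \frac{1}{\ve}} \int_{0}^{t} \frac{1}{s+\ve^{2}} \ud s = e^{2 \overline{\mf{m}}\, t} \cdot \frac{\log(t+\ve^{2}) + 2 \log \frac{1}{\ve}}{2 \log \frac{1}{\ve}} \leqslant e^{2 \overline{\mf{m}}\, t}\left(1 + \frac{|\log(t+\ve^{2})|}{2 \log \frac{1}{\ve}}\right)\;.
\end{equation*}
The factor of $3$ now comes from the uniform estimate \eqref{e:def_const_c}, which gives $\sup_{0 \leqslant t \leqslant T} \frac{|\log(t+\ve^{2})|}{2\log \frac{1}{\ve}} \leqslant 2$ for $\ve \in (0, \tfrac{1}{T} \wedge \tfrac{1}{2})$, hence $a \leqslant 3 e^{2 \overline{\mf{m}}\, t}$. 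Putting the three pieces together yields precisely the inequality claimed in the corollary.

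There is no real obstacle here; the whole statement is just a Taylor-type control of $a^{k}-1$ around $a=1$, with the factor $(3 e^{2 \overline{\mf{m}}\, t})^{k-1}$ arising from the uniform upper bound on $a$ on the admissible range of $(t,\ve)$. The only point that requires any care is distinguishing the two regimes $a\geqslant 1$ and $a<1$ when bounding $\sum_{j=0}^{k-1} a^{j}$, but in both cases one gets the bound by $k \max\{a,1\}^{k-1}$, which suffices.
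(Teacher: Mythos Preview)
Your proposal is correct and follows essentially the same approach as the paper: the paper phrases the key step as a first-order Taylor expansion of $x\mapsto x^{k}$ around $1$ (i.e.\ the mean value theorem), which is exactly your algebraic factorisation $a^{k}-1=(a-1)\sum_{j=0}^{k-1}a^{j}$, and then bounds $a$ and $|a-1|$ using the same two ingredients you cite, namely \eqref{e:def_const_c} and Lemma~\ref{l_app_expInt}.
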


\begin{proof}
Using a first order Taylor expansion of the monomial of order $k$ around $1$, we have
\begin{equation} \label{e:supapp}
\begin{aligned}
& \left| \left(
\frac{1}{2 \log{ \frac{1}{ \ve}}}
\int_{0}^{t} \frac{e^{2 \mathfrak{m} \, s}}{s+ \ve^{2}} \ud s
        \right)^{k}
 -
1 \right|\\
&\leqslant
k \,\left\{\sup_{x \in [1,b_{\ve}(t) \vee 1]} x^{k-1} \right\}
\left|
\frac{1}{2 \log{ \frac{1}{ \ve}}}
\int_{0}^{t} \frac{e^{2 \mathfrak{m} \, s}}{s+ \ve^{2}} \ud s   -1
        \right|\,,
\end{aligned}
\end{equation}
with
\begin{equation*}
\begin{aligned}
b_{ \ve}(t)
:=
\frac{1}{2 \log{ \frac{1}{ \ve}}}
\int_{0}^{t} \frac{e^{2 \mathfrak{m} \, s}}{s+ \ve^{2}} \ud s
\leqslant
e^{2 \overline{\mathfrak{m}} \, t}
\left( 
1+
\frac{| \log{(t+ \ve^{2})} |}{ 2 \log{
\frac{1}{\ve}}
}
\right)
\,.
\end{aligned}
\end{equation*}
In particular, we find that
\begin{equation*}
\begin{aligned}
\sup_{x \in [1,b_{\ve}(t) \vee 1]} x^{k-1}
\leqslant
e^{2(k-1) \overline{\mathfrak{m}} \, t} \left(
1+
\frac{| \log{(t+ \ve^{2})} |}{ 2 \log{
\frac{1}{\ve}}
}
\right)^{k-1}
\leqslant
\big( 3 
e^{2 \overline{\mathfrak{m}} \, t} \big)^{k-1}
\,,
\end{aligned}
\end{equation*}
where we used \eqref{e:def_const_c} in the last step.
The statement follows now by upper bounding the last term in \eqref{e:supapp}
via Lemma~\ref{l_app_expInt}.
\end{proof}

\subsection{Symmetric functions and trees}

\begin{lemma}\label{symmetricintegral}
For a tree $\tau \in \mT $ of the form $ \tau= [\tau_1\ \cdots\ \tau_n]$ such that  $\mI
:= \mI ( \tau) \setminus \mf{o}_{ \tau}\neq \emptyset$  and any symmetric
function $\Psi\colon \RR^{\mI}\to \RR$, we have that
\begin{align*}
\int_{\mathfrak{D}_{\tau}(t)} \Psi( s_{\mI}) \ud s_{\mI} =
\frac{1}{\red{\tau_{1}} !\cdots \red{ \tau_{n}}!} \int_{[0,t]^{\mI}} \Psi(s_{\mI }) \ud
s_{\mI }\,,
\end{align*}
with the domain $\mathfrak{D}_{\tau}(t)$ defined in \eqref{timesimplex} and $ \red{ \tau} =
\mathscr{T} ( \tau)$ denoting the trimmed tree defined in \eqref{trimming}.
\end{lemma}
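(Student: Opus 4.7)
The plan is to reduce the claim to a statement about linear extensions of a forest partial order, and then compute the number of such extensions via the tree factorial identity. I first observe that $\mathfrak{D}_\tau(t)$ is the subset of $[0,t]^\mI$ defined by the constraints $s_v \leqslant s_{\mathfrak{p}(v)}$ for $v \in \mI$, with the convention $s_{\mathfrak{o}}=t$. Since every vertex $v \in \mI$ has its parent either equal to the root (in which case the constraint is automatic, as $s_v \leqslant t$) or equal to another element of $\mI$ (in which case the constraint is $s_v \leqslant s_{\mathfrak{p}(v)}$), the region $\mathfrak{D}_\tau(t)$ is exactly the set of $(s_v)_{v \in \mI} \in [0,t]^\mI$ that are compatible with the partial order $\preceq$ on $\mI$ induced by the tree structure (where $u \preceq v$ iff $u$ is a descendant of $v$ and both lie in $\mI$). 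By Lemma~\ref{lem_trimming_bij}(ii) this poset is precisely the descendant poset of $\red{\tau}$ with the root of $\red{\tau}$ removed, i.e. the disjoint union of the descendant posets of the rooted trees $\red{\tau_1}, \ldots, \red{\tau_n}$.

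Next, I would exploit the symmetry of $\Psi$. Writing $N := |\mI|$, the box $[0,t]^\mI$ decomposes, up to a set of Lebesgue measure zero, into the $N!$ chambers
\begin{equation*}
R_\pi := \{ s \in [0,t]^\mI \,:\, s_{\pi(1)} \leqslant s_{\pi(2)} \leqslant \cdots \leqslant s_{\pi(N)} \} \,, \qquad \pi \in S_N\,,
\end{equation*}
and for every $\pi$ the change of variables $s \mapsto s \circ \pi^{-1}$ maps $R_\pi$ onto $R_{\mathrm{Id}}$. Since $\Psi$ is symmetric, $\int_{R_\pi} \Psi \, \ud s = \frac{1}{N!} \int_{[0,t]^\mI} \Psi \, \ud s$ for each $\pi$. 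The tree simplex $\mathfrak{D}_\tau(t)$ is the (a.e.~disjoint) union of those $R_\pi$ for which $\pi$ is a linear extension of the poset $(\mI, \preceq)$. Hence, denoting by $e(\mI)$ the number of such linear extensions,
\begin{equation*}
\int_{\mathfrak{D}_\tau(t)} \Psi(s_\mI) \, \ud s_\mI = \frac{e(\mI)}{N!} \int_{[0,t]^\mI} \Psi(s_\mI) \, \ud s_\mI \,.
\end{equation*}

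It remains to identify $e(\mI)/N!$ with $\prod_i 1/\red{\tau_i}!$, which is the main combinatorial input. Setting $N_i := |\red{\tau_i}|$, I would use the classical hook-length formula for rooted trees, which asserts that for any rooted tree $\sigma$ the number of linear extensions of its descendant poset equals $|\sigma|!/\sigma!$; this formula follows directly from the recursive definition of the tree factorial $\sigma! = |\sigma|\, \sigma_1! \cdots \sigma_k!$ and a one-step induction on $|\sigma|$. Since the forest $(\red{\tau_1}, \ldots, \red{\tau_n})$ is made of independent components, its linear extensions are obtained by choosing an interleaving of the $N_i$ vertices (a multinomial factor $\binom{N}{N_1, \ldots, N_n}$) and then a linear extension of each component, giving
\begin{equation*}
e(\mI) = \binom{N}{N_1, \ldots, N_n} \prod_{i=1}^n \frac{N_i!}{\red{\tau_i}!} = \frac{N!}{\prod_{i=1}^n \red{\tau_i}!}\,.
\end{equation*}
Substituting this into the previous display yields the claim. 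The only non-trivial step is the hook-length identity $e(\sigma) = |\sigma|!/\sigma!$ for a single rooted tree $\sigma$, which is a one-line induction: conditioning on the position of the root in the linear extension (necessarily the largest), then applying the inductive hypothesis and the multinomial factor to the children subtrees reproduces the recursion defining $\sigma!$.
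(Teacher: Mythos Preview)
Your proof is correct and takes a genuinely different route from the paper's argument. The paper proceeds by induction on $i(\tau)$: it integrates out the ``maximal'' time variable first (the root of one of the subtrees), uses the symmetry of $\Psi$ to argue that the choice of which variable plays the role of the maximum is irrelevant, and thereby picks up a factor $1/|\mI|$ at each step, reproducing the recursion $\red{\tau_i}! = |\red{\tau_i}|\, \red{\tau_{i,1}}! \cdots \red{\tau_{i,k}}!$ analytically.

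Your argument instead decomposes $[0,t]^{\mI}$ into the $N!$ open chambers $R_\pi$, observes that $\mathfrak{D}_\tau(t)$ is (up to null sets) the union of those $R_\pi$ indexed by linear extensions of the forest poset $\bigsqcup_i \red{\tau_i}$, and then counts the linear extensions via the tree hook-length identity $e(\sigma)=|\sigma|!/\sigma!$. This cleanly separates the measure-theoretic step (equal $\Psi$-integral over each chamber, by symmetry) from the purely combinatorial step (counting chambers). The paper's induction is effectively an analytic re-derivation of the hook-length identity, so both proofs have the same combinatorial core; yours makes that core explicit and is arguably more transparent, while the paper's is more self-contained. One minor remark: the appeal to Lemma~\ref{lem_trimming_bij}(ii) is unnecessary (and that lemma is stated only for $\mT_3$); the identification of $(\mI,\preceq)$ with the disjoint union of the descendant posets of the $\red{\tau_i}$ follows directly from the definitions, since the non-root inner vertices of $\tau$ are exactly the inner vertices of the $\tau_i$, and the only common ancestor of two such vertices lying in different $\tau_i$'s is the root.
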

\begin{proof}
We prove the statement by induction over $ i ( \tau) \geqslant 2$. Note that the
case $i ( \tau)=1$ is excluded as the single inner node must necessarily be
the root.
If $i (\tau) = 2$, then $ \tau$ must be of the form $
\tau = [  \begin{tikzpicture}[baseline=2,scale=.8]
			\draw (0,0) -- (-.5,0.4) node[dot] {};
			\draw (0,0) -- (-.2,0.4) node[dot] {} ;
			\draw (0,0) -- (0,0.3) node  at (0.2,.4) {\tiny$\cdots$} ;
			\draw (0,0) -- (0.2,0.3) ;
			\draw (0,0) node[idot] {} --  (0.5,0.4) node[dot] {}  ;
\end{tikzpicture}
\  \<0>\  \cdots\  \<0>]$ and $\mI =\{v\}$ for some node $ v $. Then
\begin{equation*}
\begin{aligned}
\int_{\mathfrak{D}_{\tau}(t)} \Psi( s_{\mI}) \ud s_{\mI}
= \int_{0}^{t}  \Psi( s_{v})  \ud s_{v} \,,
\end{aligned}
\end{equation*}
which is the desired statement since $ \mathscr{T} ( \begin{tikzpicture}[baseline=2,scale=.8]
			\draw (0,0) -- (-.5,0.4) node[dot] {};
			\draw (0,0) -- (-.2,0.4) node[dot] {} ;
			\draw (0,0) -- (0,0.3) node  at (0.2,.4) {\tiny$\cdots$} ;
			\draw (0,0) -- (0.2,0.3) ;
			\draw (0,0) node[idot] {} --  (0.5,0.4) node[dot] {}  ;
\end{tikzpicture}
) ! =1 $.
Now assume that the statement holds true for any tree $ \tau'= [
\tau'_{1} \ \cdots \tau'_{n'}]$ such that $i (
\tau') \leqslant N$, for some $N \geqslant 2$, and write $ \mI ' :=
\mI ( \tau') \setminus \mf{o}'$ with $\mf{o}' := \mf{o}_{ \tau'}$.
Let  $ \tau \in \mT$ be of the form $ \tau = [ \tau'\  \<0> \ \cdots\
\<0>]$, so that $ \red{ \tau} = [ \red{ \tau'}]$.
Then
\begin{equation*}
\begin{aligned}
\int_{\mathfrak{D}_{\tau}(t)} \Psi( s_{\mI}) \ud s_{\mI}
&=
\int_{0}^{t} \int_{\mathfrak{D}_{\tau'}(s_{\mf{o}'})}
\Psi(s_{\mf{o}'}, s_{\mI'})
\ud s_{\mI'}
\ud s_{\mf{o}'}\\
&=
\frac{1}{ \red{ \tau '_{1}}! \cdots \red{ \tau '_{n'}}!  }
\int_{0}^{t}
\int_{[0, s_{\mf{o}'}]^{\mI'}}
\Psi(s_{\mf{o}'}, s_{\mI'})
\ud s_{\mI'}
\ud s_{\mf{o}'}\,,
\end{aligned}
\end{equation*}
where we used the induction hypothesis and the fact
that $\Psi(s_{\mf{o}'}, \cdot) \colon \RR^{\mI^{\prime}} \to \RR $ is a symmetric function. Using the symmetry of the function $\Psi$, we further see that the
identification of variable $s_{\mf{o}'}$ as the maximum
variable is irrelevant with regards to the integration, and the assignment of any of the
variables $s_{\mI}$ as being the maximum would result in the same value.
Thus, the above is equal to
\begin{equation*}
\begin{aligned}
\int_{\mathfrak{D}_{\tau}(t)} \Psi( s_{\mI}) \ud s_{\mI}
=
\frac{1}{|\mI|\cdot   \red{ \tau'_{1}}! \cdots  \red{ \tau'_{n'}}! }
\int_{[0, t]^{\mI }}
\Psi(s_{\mI})
\ud s_{\mI}
\end{aligned}\,,
\end{equation*}
and the statement follows now for $ \tau$ because $ \red{ \tau'}!= |\red{ \tau'}|\cdot  \red{
\tau '_{1}}! \cdots \red{ \tau'_{n'}}!  =|\mI|\cdot  \red{
\tau '_{1}}! \cdots \red{ \tau'_{n'}}!$.
Furthermore, we notice that
\begin{equation*}
\begin{aligned}
\int_{\mathfrak{D}_{\tau}(t)} \Psi( s_{\mI}) \ud s_{\mI}
=
\int_{\mathfrak{D}_{[\tau']}(t)} \Psi( s_{\mI( \tau')}) \ud s_{\mI( \tau')} \;,
\end{aligned}
\end{equation*}
meaning that the additional occurrences of $\<0>$ in the grafting of $ \tau$ do not
affect the integral.
Finally, consider the case $ \tau \in \mT$ such that $i ( \tau) = N+1$, with $ \tau = [
\tau_{1}\  \cdots \  \tau_{k}\  \<0>\ \cdots \ \<0>]$, $k \geqslant 2 $ and $
\tau_{i}\neq \<0>$.
Again the extra occurrences of $\<0>$ in the grafting of $ \tau$ do not affect
the value of the integral. Moreover, the integration domain
$\mathfrak{D}_{\tau}(t)$ can be written as an union of sub-tree-simplices:
\begin{equation*}
\begin{aligned}
\int_{\mathfrak{D}_{\tau}(t)} \Psi( s_{\mI}) \ud s_{\mI}
=
\int_{\mathfrak{D}_{[\tau_{1}]}(t)}
\cdots
\int_{\mathfrak{D}_{[\tau_{k}]}(t)}
 \Psi( s_{\mI}) \ud s_{\mI(
\tau_{k})} \cdots \ud s_{\mI(
\tau_{1})}\,.
\end{aligned}
\end{equation*}
The statement follows from the induction hypothesis, since the restriction
of $\Psi$ to a subset of variables remains a symmetric
function.
\end{proof}

\subsection{Proof of Lemma \ref{vcycleexists}}\label{sec_v_cycle_exist}

Here we prove Lemma \ref{vcycleexists}, which guarantees the existence of a
${\rm v}$-cycle in the gluing of two ternary trees.
First, let us introduce some notation. For $ \tau \in \mT_{ \leqslant 3} $,
we partition the subset of inner nodes neighbouring leaves in $\mL ( \tau)$
as follows. Let
\begin{itemize}
\item $\mV_{\<3s>}( \tau)$ be the subset of inner nodes $v \in \mI ( \tau)$ that
is a \emph{basis of a
trident}, i.e.\ there exist exactly three $ u_{1}, u_{2}, u_{3} \in \mL ( \tau)$ such
that $\mathfrak{p}(u_{i})=v$.
\item $\mV_{\<2s>}( \tau)$ be the subset of inner nodes $v \in \mI ( \tau)$ that
is a \emph{basis of a
cherry}, i.e.\ there exist exactly two $ u_{1}, u_{2} \in \mL ( \tau)$ such
that $\mathfrak{p}(u_{i})=v$.
\item $\mV_{\<10s>}( \tau)$ be the subset of inner nodes $v \in \mI ( \tau)$ that
is a \emph{basis of a
lollipop}, i.e.\ there exist exactly one $ u \in \mL ( \tau)$ such
that $\mathfrak{p}(u)=v$.
In the following, we will call elements in $\mV_{\<10s>}( \tau)$
\textbf{dead-ends}.
\end{itemize}

\begin{proof}
Let us start by noting that if $[\tau_1,\tau_2]$ does not contain any dead-ends, then the paired tree $[\tau_1,\tau_2]_\gamma$
contains a ${\rm v}$-cycle. To see this, first notice that every leaf  of $[\tau_1,\tau_2]$ belongs either to a cherry or to a trident.
Now,
consider an arbitrary leaf, call it $v_0$, and let $v_1$ be the unique leaf in $[\tau_1,\tau_2]$, which is  connected to $v_0$ via $\gamma$.
If $v_1$ is inside the same cherry or trident component as  $v_0$, then we have already identified a ${\rm v}$-cycle, which in this case is of length $1$.
If not, then let $v_2$ be a different leaf inside the same cherry or trident
component as that of $v_1$ and denote by $v_3$ the leaf which is
connected to $v_2$ via $\gamma$. Again, if $v_2$ and $v_3$ fall inside the same component (which in this case would necessarily be a trident), then
a ${\rm v}$-cycle comprising of leaves $v_2,v_3$ and the corresponding base point of the trident is identified.
If not, then continue the procedure. Since there is only a finite number
of leaves, we will either encounter somewhere in the process a ${\rm v}$-cycle of
length $1$, or the path will return to a component previously visited
during
this process,
%bring us back to the component that leaf $v_0$ belonged to
thus identifying a ${\rm v}$-cycle. Diagrammatically, we have the following representation:
\begin{align*}
%first trident
\begin{tikzpicture}[scale=0.5]
\draw  [fill] (1, 1)  circle [radius=0.1];
\draw[-, thick] (0,0)--(1, 1); \draw[-, thick] (0,0)--(-1, 1);
\draw[-, thick] (0,0) --(0,1);
\node at (-1.1,1.5) {\scalebox{0.7}{$\sigma_1$}};
\draw  [fill] (0, 1)  circle [radius=0.1];
\draw[densely dotted, thick] (0,0) node[idot] {}  -- (1,-1) ;
\node at (2.5,0) {$\cdots$};
\draw  [fill] (5, 1)  circle [radius=0.1];
 \draw[-, thick] (5,0)--(6, 1); \draw[-, thick] (5,0)--(4, 1);
\draw[-, thick] (5,0)--(5,1);
\node at (4.1,1.5) {\scalebox{0.7}{$\sigma_2$}};
\draw  [fill] (6, 1)  circle [radius=0.1];
\draw[densely dotted, thick] (5,0) node[idot] {}  -- (6,-1) ;
\node at (7.5,0) {$\cdots$};
%third trident
\draw  [fill] (10, 1)  circle [radius=0.1];
 \draw[-, thick] (10,0)--(11, 1); \draw[-, thick] (10,0)--(9, 1);
\draw[-, thick] (10,0)--(10,1);
\node at (9.1,1.5) {\scalebox{0.7}{$\sigma_3$}};
\draw  [fill] (11, 1)  circle [radius=0.1];
\draw[densely dotted, thick] (10,0) node[idot] {}  -- (11,-1) ;
\node at (12.5,0) {$\cdots$};
%gamma links
\draw [thick, purple] (1,1)  to [out=90,in=90] (5,1); \draw [thick, purple]  (6,1) to [out=90,in=90]  (10,1);
\draw [thick, purple] (0,1)  to [out=90,in=90] (11,1);
\end{tikzpicture}\,,
\end{align*}
where sub-trees $\sigma_1, \sigma_1,\sigma_3$ may be identical to just a single
leaf, i.e. $\<0>$,
and even though we did not include them, there are $\gamma$ links emanating from the leaves of
these trees.

We will next reduce the case that $[\tau_1,\tau_2]$ contains also dead-ends to a situation of no dead-ends.
Dead-ends present a problem: When tracing contractions in $\gamma$, we may hit a dead-end
and thus are not able to continue to complete a ${\rm v}$-cycle. What we will show is that by eliminating paths that
start from a dead-end, the resulting sub-graph is one that consists of only cherries and tridents linked through
$\gamma$. Thus, a ${\rm v}$-cycle exists within this sub-graph by the previous argument.

Let us start by picking an arbitrary dead-end of $[\tau_1,\tau_2]$.
Call $v_0$ its associated leaf and suppose it connects via $ \gamma$ to another leaf
of $[\tau_1,\tau_2]$, which we call $v_1$. Now, remove this connection as
follows:
\begin{align*}
\begin{tikzpicture}[baseline={([yshift=-1.5ex]current bounding box.center)},vertex/.style={anchor=base,
    circle,fill=black!25,minimum size=18pt,inner sep=2pt}, scale=0.5]%dead-end
\draw  [fill] (0, 1)  circle [radius=0.1];
\draw[-, thick] (0,0)--(1, 1); \draw[-, thick] (0,0)--(-1, 1);
\draw[-, thick] (0,0)--(0,1);
\node at (-1.1,1.5) {\scalebox{0.7}{$\sigma_1$}};
\node at (1.1,1.5) {\scalebox{0.7}{$\sigma_2$}};
\draw[densely dotted, thick] (0,0) node[idot] {}  -- (1,-1) ;
\node at (3,0) {$\cdots$};
%connection
\draw  [fill] (5, 1)  circle [radius=0.1];
 \draw[-, thick] (5,0)--(6, 1); \draw[-, thick] (5,0)--(4, 1);
\draw[-, thick] (5,0)--(5,1);
\node at (4.1,1.5) {\scalebox{0.7}{$\sigma_3$}};
\node at (6, 1.5)  {\scalebox{0.7}{$\sigma_4$}};
\draw[densely dotted, thick] (5,0) node[idot] {}  -- (6,-1) ;
\node at (7.5,0) {$\cdots$};
%gamma link
\draw [thick, purple] (0,1)  to [out=90,in=90] (5,1);
\node at (-.5,0) {\scalebox{0.7}{$v_{0}$}};
\node at (4.5,0) {\scalebox{0.7}{$v_{1}$}};
\end{tikzpicture}
\longmapsto
%next tree
\begin{tikzpicture}[baseline={([yshift=-1.5ex]current bounding box.center)},vertex/.style={anchor=base,
    circle,fill=black!25,minimum size=18pt,inner sep=2pt}, scale=0.5]%dead-end
\draw[-, thick] (0,0)--(1, 1); \draw[-, thick] (0,0)--(-1, 1);
\node at (-1.1,1.5) {\scalebox{0.7}{$\sigma_1$}};
\node at (1.1,1.5) {\scalebox{0.7}{$\sigma_2$}};
\draw[densely dotted, thick] (0,0) node[idot] {}  -- (1,-1) ;
\node at (3,0) {$\cdots$};
%connection
\draw[-, thick] (5,0)--(6, 1); \draw[-, thick] (5,0)--(4, 1);
\node at (4.1,1.5) {\scalebox{0.7}{$\sigma_3$}};
\node at (6, 1.5)  {\scalebox{0.7}{$\sigma_4$}};
\draw[densely dotted, thick] (5,0) node[idot] {}  -- (6,-1) ;
\node at (7.5,0) {$\cdots$};
\node at (-.5,0) {\scalebox{0.7}{$v_{0}$}};
\node at (4.5,0) {\scalebox{0.7}{$v_{1}$}};
\end{tikzpicture},
\end{align*}
where again we understand that, even not shown, there are $\gamma$-links emanating from trees
$\sigma_1,\sigma_2,\sigma_3,\sigma_4$. Moreover, we agree that neither
$\sigma_1$ nor $\sigma_2$ equals $\<0>$
(they might be $\emptyset$, though),
so that this part of the tree corresponds to a dead-end, while $\sigma_3,\sigma_4$
 might be comprising a $\<0>$.

In the resulting (contracted) tree on the right-hand side, we distinguish three cases:
\begin{itemize}
\item[1.] Neither of $\sigma_3$ and $\sigma_4$ are single leaves. In this case we have eliminated two dead-ends, while
not affecting the number of cherries and tridents.
\item[2.] Only one of the  $\sigma_3$ and $\sigma_4$ is a $\<0>$.  In this case, we have eliminated a dead-end in the left part
of the tree, while we have also created a new dead-end in the right part, corresponding to either $\sigma_3$ or $\sigma_4$, whichever
happens to be the $\<0>$. In this case, we have also reduced by one the number of cherries
 (by eliminating the cherry that was present in the right part of the sub-tree)
but, nevertheless, we have not reduced the number of tridents.
\item[3.] Both $\sigma_3$ and $\sigma_4$ are $\<0>$. In this case, in the right part of the tree we had, before the elimination,
 a trident. Thus, after the elimination we reduced both the number of dead-ends
and tridents by one, while the number
of cherries actually increased by one.
\end{itemize}
As we will prove in Lemma~\ref{lem_trid_vs_deadends} below, the total number of tridents  in $[\tau_1,\tau_2]$ is strictly larger than
the number of dead-ends. In all three cases above, the elimination procedure preserves this inequality.
Indeed, in Case 1.\ the number of dead-ends is reduced by $2$ while the number of tridents remains the same,
in Case 2.\ both the number of tridents and dead-ends remain the same (the number of cherries is reduced by one but this
has no effect) and in Case 3.\ both the number of tridents and dead-ends is reduced by $1$
(the number of cherries increases by $1$). Thus, continuing to eliminate dead-ends,
we necessarily end up with a sub-tree of $[\tau_1,\tau_2]_\gamma$, which will only contain cherries and tridents.
We can now return to the beginning of the proof and the situation of a (sub-ternary) tree that consists of only tridents and cherries,
which necessarily contains a ${\rm v}$-cycle.
\end{proof}
\begin{lemma}\label{lem_trid_vs_deadends}
Let $\tau\in \mT_3\setminus \{\<0>\}$, then $|\mV_{\<10s>} (\tau)|\leqslant
|\mV_{\<3s>}(\tau)|-1$.
\end{lemma}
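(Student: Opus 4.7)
My plan is to prove this by a simple double count that leverages the ternary structure. The key identity I will rely on is the standard fact that for $\tau \in \mT_3$ the number of leaves and inner vertices satisfy $\ell(\tau) = 2 i(\tau) + 1$, which was already recorded at the end of Section~\ref{sec:trees}.

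Concretely, I would classify each inner node $v \in \mI(\tau)$ according to how many of its three descendants are leaves, and write
\begin{equation*}
i(\tau) = a + b + c + d,
\end{equation*}
where $a = |\mV_{\<3s>}(\tau)|$ counts bases of tridents (three leaf children), $b = |\mV_{\<2s>}(\tau)|$ counts bases of cherries (two leaf children), $c = |\mV_{\<10s>}(\tau)|$ counts dead-ends/bases of lollipops (one leaf child), and $d$ counts inner nodes all of whose three descendants are themselves inner nodes. Since every leaf of $\tau$ is a child of a unique inner node, the leaves are partitioned according to the class of their parent, giving
\begin{equation*}
\ell(\tau) = 3a + 2b + c.
\end{equation*}

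Combining this with $\ell(\tau) = 2 i(\tau) + 1 = 2a + 2b + 2c + 2d + 1$ and cancelling the common $2a + 2b$ yields
\begin{equation*}
a = c + 2d + 1,
\end{equation*}
so in particular $c \leq a - 1$, which is exactly the claimed inequality (with equality iff every inner node is adjacent to at least one leaf, i.e.\ $d=0$). I expect no real obstacle here; the only care needed is to check that the classification $\mI(\tau) = \mV_{\<3s>} \sqcup \mV_{\<2s>} \sqcup \mV_{\<10s>} \sqcup \{v : \text{no leaf children}\}$ is well-defined for $\tau \in \mT_3 \setminus\{\<0>\}$, which is immediate since every inner node of a ternary tree has exactly three descendants. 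Note that the case $\tau = \<0>$ is excluded precisely because then $\tau$ has no inner nodes (the root counts as a leaf by the convention of Section~\ref{sec:trees}) and both sides vanish trivially, yet the strict inequality would fail.
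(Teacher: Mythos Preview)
Your proof is correct and takes a genuinely different route from the paper's. The paper argues by structural induction: it checks the base case $\tau = \<3>$, then observes that any larger tree in $\mT_3$ is obtained by successively gluing a trident $\<3>$ onto a leaf, and verifies in each of three cases (gluing onto a leaf of a trident, a cherry, or a dead-end) that the difference $|\mV_{\<3s>}| - |\mV_{\<10s>}|$ does not decrease. Your argument is instead a direct double count: classifying inner nodes by their number of leaf children and combining $\ell(\tau) = 3a + 2b + c$ with $\ell(\tau) = 2i(\tau) + 1$ yields the exact identity $a = c + 2d + 1$, from which the claim is immediate. Your approach is shorter and more informative, since it gives the precise value of $|\mV_{\<3s>}| - |\mV_{\<10s>}|$ and characterises the equality case ($d=0$), whereas the paper's inductive argument only tracks that the inequality is preserved at each step.
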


\begin{proof}
Clearly the statement is true for $\tau=\<3>$.
Any larger tree in $\mT_3$ can be constructed from $\<3>$ by successively
gluing tridents $\<3>$ onto leaves. Now, there are three possibilities for a
trident to be glued onto an existing tree in $\tau \in \mT_{3}$, as described
in the table below.
\begin{table}[h]
\centering
\begin{tabular}{cccc}
 Pre-gluing & Post-gluing & $|\mV_{\<10s>}|$ & $|\mV_{\<3s>}|$ \vspace{0.5em} \\
 \toprule
\vspace{0.5em}
 $ \vcenter{\hbox{
\begin{tikzpicture}[scale=0.3]
\draw[dotted, thick]  (0,0) -- (0,-.75) ;
\draw[thick]  (0,0) -- (0,1) node[dot] {};
\draw[thick]  (-.9,1)  node[dot] {} -- (0,0) node[idot] {} -- (.9,1)  node[dot] {};
\end{tikzpicture}  }}$
&
 $\vcenter{\hbox{
\begin{tikzpicture}[scale=0.3]
\draw[dotted, thick]  (0,0) -- (0,-.75) ;
\draw[thick]  (0,0) -- (0,1) node[dot] {};
\draw[thick]  (-.9,1) -- (0,0) node[idot] {} -- (.9,1) node[dot] {};
\draw[thick]  (-1.6,2) node[dot] {} -- (-.9,1)  -- (-.2,2) node[dot] {};
\draw[thick] (-.9,1) node[idot] {} -- (-.9,2) node[dot] {};
\end{tikzpicture}  }}$
& $+0$ &   $+0$ \vspace{0.5em}\\
 $\vcenter{\hbox{
\begin{tikzpicture}[scale=0.3]
\draw[dotted, thick]  (0,0) -- (0,-.75) ;
\draw[thick]  (0,0) -- (0,1) node[dot] {};
\draw[thick]  (-.9,1)  node[dot] {} -- (0,0) node[idot] {} -- (.9,1)  node[] {};
\draw (1.2,1.2) node {\scriptsize{$ \sigma_1$}};
\end{tikzpicture}  }}$
&
 $\vcenter{\hbox{
\begin{tikzpicture}[scale=0.3]
\draw[dotted, thick]  (0,0) -- (0,-.75) ;
\draw[thick]  (0,0) -- (0,1) node[dot] {};
\draw[thick]  (-.9,1) -- (0,0) node[idot] {} -- (.9,1);
\draw[thick]  (-1.6,2) node[dot] {} -- (-.9,1)  -- (-.2,2) node[dot] {};
\draw[thick] (-.9,1) node[idot] {} -- (-.9,2) node[dot] {};
\draw (1.2,1.2) node {\scriptsize{$ \sigma_1$}};
\end{tikzpicture}  }}$
  & $+1$   & $+1$ \vspace{1.5em}\\
$\vcenter{\hbox{
\begin{tikzpicture}[scale=0.3]
\draw[dotted, thick]  (0,0) -- (0,-.75) ;
\draw[thick]  (0,0) -- (0,1);
\draw[thick]  (-.9,1)  node[dot] {} -- (0,0) node[idot] {} -- (.9,1);
\draw (0,1.2) node {\scriptsize{$ \sigma_1$}};
\draw (1.2,1.2) node {\scriptsize{$ \sigma_2$}};
\end{tikzpicture}  }}$
 &
 $\vcenter{\hbox{
\begin{tikzpicture}[scale=0.3]
\draw[dotted, thick]  (0,0) -- (0,-.75) ;
\draw[thick]  (0,0) -- (0,1);
\draw[thick]  (-.9,1) -- (0,0) node[idot] {} -- (.9,1);
\draw[thick]  (-1.6,2) node[dot] {} -- (-.9,1)  -- (-.2,2) node[dot] {};
\draw[thick] (-.9,1) node[idot] {} -- (-.9,2) node[dot] {};
\draw (0,1.2) node {\scriptsize{$ \sigma_1$}};
\draw (1.2,1.2) node {\scriptsize{$ \sigma_2$}};
\end{tikzpicture}  }}$
 & $-1$&  $+1$ \vspace{0.5em} \\
\bottomrule
\end{tabular}
\end{table}
Here, $ \sigma_{1}, \sigma_{2} \in \mT_{3} \setminus \{ \<0>\}$ are placeholders for corresponding
sub-trees. % further up in the tree structure.
In all three cases the claimed inequality remains true as we can only create a new dead-end
by creating a trident at the same time.
\end{proof}

\subsection{Extracting the tree simplex}

In the proof of Lemma~\ref{lem_unif_bound_contraction_perm}, we eventually
integrate time over indicator functions which we collected from the estimate
\eqref{removal_estimate}. 
The following lemma states that the restrictions imposed by these indicator
functions agree with the corresponding tree-simplex \eqref{timesimplex}.  
In particular, the cycle removal estimate in
Lemma~\ref{lem_unif_bound_contraction_perm} is independent of the chosen
pairing.

\begin{lemma}\label{lem_2timesimplex}
Let $ t>0$, $ \tau \in \mT_{3}$ and write $ \mI := \mI ( [ \tau, \tau]) \setminus \mf{o}$.
Then for every pairing $ \gamma \in \mY ( \tau, \tau)$, we have
\begin{equation}\label{e_2timesimplex}
\begin{aligned}
[0,t]^{\mI}\cap
\bigcap_{i=1}^{K( \tau, \gamma)}
\bigcap_{v \in \mI_{\mC_{i}}} \{  s_{\mf{d}_{ \sigma_{i}, \gamma} (v)}
\leqslant  s_v \leqslant s_{ \mathfrak{p}_{ \sigma_{i}}(v)}  \}
=
\mathfrak{D}_{ [ \tau, \tau]}(t) \,,
\end{aligned}
\end{equation}where $ ( \mC_{k} )_{k =1}^{K (\tau, \gamma)} $ denotes  the sequence of ${\rm v}$-cycles
and $
(\sigma_{i})_{i=1}^{K ( \tau, \gamma)}$ the sequence of reduced trees,
 constructed from $[\tau,\tau]_\gamma$ via the cycle extraction map (Definition
\ref{def:cycle-extraction}).
The set $\mathfrak{D}_{[ \tau, \tau]}(t)$ was defined in \eqref{timesimplex}.
\end{lemma}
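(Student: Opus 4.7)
The plan is to prove the claimed set equality by two-way inclusion, relying on one structural invariant of the cycle-extraction algorithm of Definition~\ref{def:cycle-extraction}: the ancestor-descendant partial order of the \emph{surviving} inner nodes is preserved at every step. Concretely, for any $i \leqslant K(\tau,\gamma)$ and any $x, y \in \mI(\sigma_i)$, $x$ is an ancestor of $y$ in $\sigma_i$ if and only if $x$ is an ancestor of $y$ in $[\tau, \tau]$. This follows by induction on $i$ directly from Definition~\ref{def:cycl-rmvl}: removing an inner node and relinking its unique surviving neighbour above to its unique surviving neighbour below is precisely a ``shortcut'' along the original tree path, so ancestry between any two surviving nodes is neither created nor destroyed. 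In particular, for $v \in \mI_{\mC_i}$, $\mf{p}_{\sigma_i}(v)$ is the nearest ancestor of $v$ in $[\tau, \tau]$ that has not been removed in any earlier step, and $\mf{d}_{\sigma_i,\gamma}(v)$ lies on the unique $[\tau,\tau]$-path from $v$ down into its non-cycle subtree.

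For the inclusion $\mathfrak{D}_{[\tau,\tau]}(t) \subseteq $ (extraction domain), take $(s_v) \in \mathfrak{D}_{[\tau,\tau]}(t)$ and fix $v \in \mI_{\mC_i}$. By the invariant, $\mf{p}_{\sigma_i}(v)$ is an ancestor of $v$ in $[\tau,\tau]$, so iterating the defining inequalities of $\mathfrak{D}_{[\tau,\tau]}(t)$ along the path from $v$ up to $\mf{p}_{\sigma_i}(v)$ yields $s_v \leqslant s_{\mf{p}_{\sigma_i}(v)}$. The inequality $s_{\mf{d}_{\sigma_i, \gamma}(v)} \leqslant s_v$ is obtained symmetrically along the descending path (and is trivial when $\mf{d}_{\sigma_i,\gamma}(v)$ is a leaf, since then the corresponding time variable is $0$).

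For the converse inclusion, take $(s_v)$ satisfying every extraction constraint and fix an edge $(u,v)$ of $[\tau,\tau]$ with $v = \mf{p}_{[\tau,\tau]}(u)$, $u \in \mI \setminus \mf{o}$; we must show $s_u \leqslant s_v$. Let $i = \theta(u)$ be the (unique) step at which $u$ is extracted. If $v \in \sigma_i$, then by the invariant $v$ is the closest surviving ancestor of $u$, hence $v = \mf{p}_{\sigma_i}(u)$, and the desired inequality is literally one of the extraction constraints at step $i$. Otherwise $v$ was extracted at some earlier step $j < i$; since $u$ lies in the non-cycle subtree of $v$ in $\sigma_j$, the node $w := \mf{d}_{\sigma_j,\gamma}(v)$ is an ancestor of $u$ in $[\tau,\tau]$ and the extraction at step $j$ yields $s_w \leqslant s_v$. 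It remains to show $s_u \leqslant s_w$, which follows by downward induction on the depth in $[\tau,\tau]$ between $u$ and $w$ (equivalently, on the remaining number of extraction steps to reach $u$), the base case being $w = u$.

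The main obstacle is a careful bookkeeping of how parent-child relations mutate when the second case of Definition~\ref{def:cycl-rmvl} applies, i.e.\ when the parent $v_0$ or the non-cycle descendant $v_2$ of an extracted node also belongs to $\mC$; the invariant above is precisely what encapsulates this bookkeeping cleanly. An equivalent route would be a direct induction on $K(\tau,\gamma)$: extract only $\mC_1$, verify by hand that the constraints attached to $v \in \mI_{\mC_1}$ together with the tree-simplex $\mathfrak{D}_{\sigma_2}(t)$ of the reduced paired tree exactly reproduce $\mathfrak{D}_{[\tau,\tau]}(t)$, and then apply the inductive hypothesis to $\sigma_2$ with $K(\tau,\gamma) - 1$ extraction steps.
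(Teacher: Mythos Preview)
Your proof is correct and follows essentially the same two-inclusion argument as the paper's, with the ancestry-preservation invariant stated more explicitly than in the original (where it is only implicit in the existence of the path $(u_1,\ldots,u_m,v,w_{m'},\ldots,w_1)$ in $[\tau,\tau]$). One small remark: the downward induction you invoke in the second case of the converse direction is in fact trivial. Since $u$ is a \emph{direct} child of $v$ in $[\tau,\tau]$, and both $u$ and $v$ survive to $\sigma_j$ (because $u$ is extracted only at step $i>j$), your invariant forces $u$ to still be a child of $v$ in $\sigma_j$; as $u\in\mI$ is an inner node while the two cycle-children of $v$ in $\sigma_j$ are leaves, necessarily $u=\mf{d}_{\sigma_j,\gamma}(v)=w$, and $s_u\leqslant s_v$ follows immediately. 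This is exactly the paper's observation in its case $i'<i$.
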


\begin{proof}
Let $v \in \mI$ be arbitrary. Then there exists an $i =1,\ldots, K ( \tau, \gamma
)$ such that $ v \in \mI_{\mC_{i}}$ and we define $u_{1}:= \mf{d}_{ \sigma_{i},
\gamma}(v)$, $ w_{1}:=
\mf{p}_{ \sigma_{i}}(v) $.
Notably, there  exists a unique path  $(u_{1},\ldots,  u_{m}, v, w_{m'} ,\ldots,
w_{1})$ in the tree $[ \tau, \tau]$ with $u_{j},w_{j} \in \mI$, $j \geqslant 2$.

If $s_{\mI} \in \mathfrak{D}_{ [ \tau, \tau]}(t) $, then
\begin{equation*}
\begin{aligned}
0 \leqslant s_{ \mf{d}_{ \sigma_{i}, \gamma}(v)}=s_{u_{1}} \leqslant \cdots \leqslant
s_{u_{m}} \leqslant s_{v} \leqslant s_{w_{m'}} \leqslant \cdots \leqslant
s_{w_{1}}= s_{\mf{p}_{ \sigma_{i}}(v)} \leqslant t \,,
\end{aligned}
\end{equation*}
which implies in particular that $ s_{ \mf{d}_{ \sigma_{i}, \gamma}(v)}
\leqslant s_{v} \leqslant s_{\mf{p}_{ \sigma_{i}}(v)} $.

On the other hand, the node $ \mf{p} (v)= \mf{p}_{[ \tau, \tau]}(v) = w_{m'}$ lies in $\mI_{\mC_{i'}}$ for some $ i'=1,\ldots, K ( \tau , \gamma)$.
Thus, if $s_{\mI}$ lies in the left-hand side of
\eqref{e_2timesimplex}, then
\begin{itemize}
\item either the parent of $v$ has not been removed by the cycle extraction map in an
earlier iteration, i.e. $i' \geqslant i$, in which case
\begin{equation*}
\begin{aligned}
s_{v} \leqslant s_{\mf{p}_{ \sigma_{i}} (v)}= s_{w_{m'}} = s_{\mf{p} (v)} \,,
\end{aligned}
\end{equation*}

\item or the parent of $v$ has been removed in a previous iteration, i.e.\ $
i' < i$, then
\begin{equation*}
\begin{aligned}
s_{v}= s_{\mf{d}_{ \sigma_{i'}, \gamma}(w_{m'})} \leqslant s_{w_{m'}} =
s_{\mf{p} (v)}\,.
\end{aligned}
\end{equation*}
\end{itemize}
As the choice of $v$ was arbitrary, this concludes the proof.
\end{proof}

\bibliography{cites}
\bibliographystyle{alpha}

\end{document}